\renewcommand\theequation{\thesection.\arabic{equation}}
\newcommand{\BA}{{\mathbb {A}}}
\newcommand{\BC}{{\mathbb {C}}}
\newcommand{\BG}{{\mathbb {G}}}
\newcommand{\BR}{{\mathbb {R}}}
\newcommand{\CA}{{\mathcal {A}}}
\newcommand{\CC}{{\mathcal {C}}}
\newcommand{\CE}{{\mathcal {E}}}
\newcommand{\CF}{{\mathcal {F}}}
\newcommand{\CI}{{\mathcal {I}}}
\newcommand{\CJ}{{\mathcal {J}}}
\newcommand{\CL}{{\mathcal {L}}}
\newcommand{\CM}{{\mathcal {M}}}
\newcommand{\CN}{{\mathcal {N}}}
\newcommand{\CO}{{\mathcal {O}}}
\newcommand{\CP}{{\mathcal {P}}}
\newcommand{\CW}{{\mathcal {W}}}
\newcommand{\CZ}{{\mathcal {Z}}}
\newcommand{\FE}{{\mathfrak {E}}}
\newcommand{\Fb}{{\mathfrak {b}}}
\newcommand{\Fc}{{\mathfrak {c}}}
\newcommand{\Fd}{{\mathfrak {d}}}
\newcommand{\Fg}{{\mathfrak {g}}}
\newcommand{\Fl}{{\mathfrak {l}}}
\newcommand{\Fm}{{\mathfrak {m}}}
\newcommand{\Fn}{{\mathfrak {n}}}
\newcommand{\Fp}{{\mathfrak {p}}}
\newcommand{\Fr}{{\mathfrak {r}}}
\newcommand{\RI}{{\mathrm {I}}}
\newcommand{\RJ}{{\mathrm {J}}}
\newcommand{\RO}{{\mathrm {O}}}
\newcommand{\RU}{{\mathrm {U}}}
\newcommand{\Ad}{{\mathrm{Ad}}}
\newcommand{\Aut}{{\mathrm{Aut}}}
\newcommand{\aut}{{\mathrm{aut}}}
\newcommand{\Asai}{{\mathrm{As}}}
\newcommand{\cusp}{{\mathrm{cusp}}}
\newcommand{\Cent}{{\mathrm{Cent}}}
\newcommand{\disc}{{\mathrm{disc}}}
\newcommand{\diag}{{\mathrm{diag}}}
\newcommand{\el}{{\mathrm{ell}}}
\newcommand{\Gal}{{\mathrm{Gal}}}
\newcommand{\GL}{{\mathrm{GL}}}
\newcommand{\GSp}{{\mathrm{GSp}}}
\newcommand{\Hom}{{\mathrm{Hom}}}
\newcommand{\Ind}{{\mathrm{Ind}}}
\newcommand{\Isom}{{\mathrm{Isom}}}
\newcommand{\Int}{{\mathrm{Int}}}
\newcommand{\Mat}{{\mathrm{Mat}}}
\newcommand{\out}{{\mathrm{Out}}}
\newcommand{\PGL}{{\mathrm{PGL}}}
\renewcommand{\Re}{{\mathrm{Re}}}
\newcommand{\Res}{{\mathrm{Res}}}
\newcommand{\simp}{{\mathrm{sim}}}
\newcommand{\SL}{{\mathrm{SL}}}
\newcommand{\SO}{{\mathrm{SO}}}
\newcommand{\GO}{{\mathrm{GO}}}
\newcommand{\GSO}{{\mathrm{GSO}}}
\newcommand{\SU}{{\mathrm{SU}}}
\newcommand{\Sp}{{\mathrm{Sp}}}
\newcommand{\sm}{{\mathrm{ss}}}
\newcommand{\st}{{\mathrm{st}}}
\newcommand{\Span}{{\mathrm{Span}}}
\newcommand{\subr}{{\mathrm{subr}}}
\newcommand{\tr}{{\mathrm{tr}}}
\newcommand{\unit}{{\mathrm{unit}}}
\newcommand{\ud}{\,\mathrm{d}}
\newcommand{\ovl}{\overline}
\newcommand{\udl}{\underline}
\newcommand{\wt}{\widetilde}
\newcommand{\apair}[1]{\left\langle {#1} \right\rangle}
\newcommand{\cpair}[1]{\left\{{#1}\right\}}
\newcommand{\ppair}[1]{\left( {#1} \right)}
\newcommand{\ol}{\overline}
\def\bks{{\backslash}}
\def\eps{{\epsilon}}
\def\lam{{\lambda}}
\def\sym{{\rm sym}}
\def\sig{{\sigma}}
\newtheorem{thm}{Theorem}[section]
\newtheorem{conj}[thm]{Conjecture}
\newtheorem{cor}[thm]{Corollary}
\newtheorem{lem}[thm]{Lemma}
\newtheorem{prop}[thm]{Proposition}
\newtheorem{prin}[thm]{Principle}
\newtheorem{ques/conj}[thm]{Question/Conjecture}
\newtheorem{rmk}[thm]{Remark}
\newcommand{\Rmnum}[1]{\expandafter\@slowromancap\romannumeral #1@}
\begin{document}
\renewcommand{\theequation}{\arabic{equation}}
\numberwithin{equation}{section}

\title[Arthur Parameters and Cuspidal Automorphic Modules]
{Arthur Parameters and Cuspidal Automorphic Modules of Classical Groups}

\author{Dihua Jiang}
\address{School of Mathematics,
University of Minnesota,
Minneapolis, MN 55455, USA}
\email{dhjiang@math.umn.edu}

\author{Lei Zhang}
\address{Department of Mathematics,
National University of Singapore,
Singapore 119076}
\email{matzhlei@nus.edu.sg}

\subjclass[2010]{Primary 11F70, 22E50; Secondary 11F85, 22E55}

\date{\today}


\thanks{The research of the first named author is supported in part by the NSF Grants DMS--1301567, DMS--1600685 and DMS--1901802; 
and that of the second named author is supported in part by the start-up grant and AcRF Tier 1 grant R-146-000-237-114 of National University of Singapore.}

\keywords{Arthur Parameters, Cuspidal Automorphic Modules, Bessel-Fourier Coefficients, Twisted Automorphic Descent, Classical Groups, Global Gan-Gross-Prasad Conjecture}

\begin{abstract}
The endoscopic classification via the stable trace formula comparison provides certain character relations between irreducible cuspidal
automorphic representations of classical groups and their global Arthur parameters, which are certain automorphic representations
of general linear groups. It is a question of J. Arthur and W. Schmid that asks: {\sl How to construct concrete modules for irreducible
cuspidal automorphic representations of classical groups in term of their global Arthur parameters?}
In this paper, we formulate a general construction of concrete modules, using Bessel periods, for cuspidal automorphic representations
of classical groups with generic global Arthur parameters. Then we establish the theory for orthogonal and unitary groups, based on
certain well expected conjectures. Among the consequences of the theory in this paper is that the global Gan-Gross-Prasad conjecture for those
classical groups is proved in full generality in one direction and with a global assumption in the other direction.
\end{abstract}

\maketitle

\tableofcontents


\section{Introduction}


Let $F$ be a number field and $\BA$ be the ring of the adeles of $F$. Let $G$ be a classical group defined over $F$.
The theory of endoscopic classification gives a parameterization of the irreducible automorphic representations of $G(\BA)$
occurring in the discrete spectrum of all square-integrable automorphic functions on $G(\BA)$, up to global Arthur packets,
by means of global Arthur parameters. These parameters are formal sums of
certain irreducible square-integrable automorphic representations of general linear groups.
This fundamental theory has been established by J. Arthur in \cite{A13} for $G$ to be either symplectic groups or $F$-quasisplit special orthogonal groups,
with outline on general orthogonal groups in \cite[Chapter 9]{A13}. Following the fundamental work of Arthur (\cite{A13}), several authors
made progress for more general classical groups. C.-P. Mok established the theory for $F$-quasisplit unitary groups (\cite{Mk15}). More recently,
Kaletha, Minguez, Shin, and White in \cite{KMSW} made progress on more general unitary groups. We refer to the work of B. Xu (\cite{Xu})
for progress on the cases of similitude classical groups $\GSp_{2n}$ and $\GO_{2n}$.
We remark that all those works depend on the stabilization of the
twisted trace formula, which has been achieved through a series of works of C. M\oe glin and J.-L. Waldspurger that are now given in their books
(\cite{MW-Book1} and \cite{MW-Book2}).

In Problem No.\ $5$ in the {\sl Open Problems in honor of W. Schmid} (\cite{A-P5}), Arthur explains that the trace formula method
establishes certain {\sl character relation} between irreducible cuspidal automorphic representations of classical groups and their
global Arthur parameters.
It was Schmid who asks: {\sl ``What about modules...?"}. This means how to construct a concrete
module for any irreducible cuspidal automorphic representation in terms of its global Arthur parameter.
In \cite{A-P5}, Arthur posed this question and pointed out that the work of the first named author (\cite{J14}) has
the potential to give an answer to this question.

Our objective is to formulate, in the spirit of the constructive theory described in \cite{J14} and also \cite{J17},
a general construction (Principle \ref{prin}) of concrete modules for cuspidal automorphic representations of
general classical groups, which provides an answer to the question of Arthur-Schmid.

In this paper we establish the theory of concrete modules (Conjecture \ref{pmc}), under certain well expected conjectures (Conjecture \ref{bpconj}, for instance), for cuspidal automorphic representations with generic global Arthur parameters (Theorem \ref{th-mcgeneral}).
The key idea in the theory is to introduce the method of {\sl twisted automorphic descents}, which
extends the method of automorphic descents of Ginzburg-Rallis-Soudry (\cite{GRS11}) from $F$-quasisplit classical groups to general classical groups,
and from generic cuspidal automorphic representations to general cuspidal automorphic representations with generic global Arthur parameters.

One of the main technical issues in the method
is to establish the global non-vanishing of the twisted automorphic descents that are constructed from the given data. This is treated
by establishing the {\sl reciprocal non-vanishing for Bessel periods} (Theorem \ref{th-rnbp}), which depends heavily on the extension of the global and
local theory of the global zeta integrals that represent the tensor product $L$-functions to the generality considered in this paper from the
work of Ginzburg, Piatetski-Shapiro and Rallis (\cite{GPSR97}), the work of the current authors (\cite{JZ14}), and the recent work of Soudry
(\cite{S-I} and \cite{S-II}). Those previously done works mainly treat the $F$-quasisplit classical groups. The extension of the global theory is discussed
in Section \ref{sec-bpgzi} of this paper, and that of the local theory is given in our joint work with Soudry in \cite{JSdZ}.
Another technical issue is to prove the irreducibility of the concrete modules constructed via the twisted automorphic descents, which
is carried out by using the local Gan-Gross-Prasad conjecture (Conjecture \ref{smoconj}) as input.
As a consequence, we are able to establish one direction of the global
Gan-Gross-Prasad conjecture in full generality (Theorem \ref{gggp1}), while the other direction of the conjecture with a global assumption (Theorem \ref{gggp2}), except some special cases (Corollary \ref{cor-gggpm1}, and also \cite{JLXZ}), where such a global assumption can be established.

The global Gan-Gross-Prasad conjecture that we refer to is Conjectures 24.1 (and Conjecture 26.1 for a different formulation) in \cite{GGP12}.
It was first made by B. Gross and D. Prasad in \cite{GP92} and \cite{GP94} for orthogonal groups, and
was reformulated in full generality for all classical groups, including the metaplectic groups, by Gan, Gross and Prasad in \cite{GGP12}.
The progress towards the proof of the global Gan-Gross-Prasad conjecture can be traced back to the pioneering
work of Harder-Langlands-Rapoport on the Tate conjecture for Hilbert-Blumenthal modular surfaces (\cite{HLR}), and has been well explained
in \cite{GGP12}, \cite{GGP}, and also in \cite{Gan-AMV}.

It is important to point out that the work of W. Zhang (\cite{ZW-1} and \cite{ZW-2}) established the global Gan-Gross-Prasad conjecture for a special family of unitary groups with certain global and local constraints. The approach taken up in \cite{ZW-1} and \cite{ZW-2} is to use the relative trace formula developed by H. Jacquet and S. Rallis in \cite{JR11} for unitary groups.
However, such a relative trace formula that can be used to attack the global Gan-Gross-Prasad conjecture for orthogonal groups is so far
not known to be available.
The global Gan-Gross-Prasad conjecture for generic cuspidal automorphic representations with simple global Arthur parameters
was considered in \cite{GJR04}, \cite{GJR05}, and \cite{GJR09} for symplectic and metaplectic groups, orthogonal groups, and unitary groups, respectively.
The method is a combination of the Bessel or Fourier-Jacobi periods of certain residual representations with the Arthur truncation method. It was recently
discovered that there is a technical gap in the argument towards the end of the proof, which needs to be filled up.
A similar approach with the Arthur truncation replaced by the Jacquet-Lapid-Rogawski truncation is applied to the case of $\RU_{n+1}\times\RU_n$
by A. Ichino and S. Yamana in \cite{IY}.
We refer to Section \ref{sec-gggp} for a more detailed account.

The approach taken up in this paper treats the global Gan-Gross-Prasad conjecture uniformly for unitary groups and orthogonal groups, and can be used to take
care of the symplectic group and metaplectic group situation by using the Fourier-Jacobi periods (\cite{JZ-Howe}). It avoids the technical difficulties
that occur in the work (\cite{GJR04}, \cite{GJR05}, and \cite{GJR09}), which seem hopeless to be smoothly handled
when one considers general cuspidal automorphic representations with generic global
Arthur parameters and general classical groups. More importantly, the approach in this paper is much naturally related to the theory
of twisted automorphic descents and the general Rankin-Selberg method, so that one may regard the global Gan-Gross-Prasad conjecture as part of the
theory developed in our work. Finally, the results on the global Gan-Gross-Prasad conjecture in this paper (Theorems \ref{gggp1} and \ref{gggp2})
do not assume that the cuspidal multiplicity should be one, while this cuspidal multiplicity one assumption was taken for the
global Gan-Gross-Prasad conjecture in \cite{GGP12}. This cuspidal multiplicity one issue was also discussed by H. Xue in \cite[Section 6]{XueH17}.

We also refer to \cite{ZW-1} and \cite{ZW-2} for
a beautiful explanation of the relation between the Gan-Gross-Prasad conjecture and certain important problems in arithmetic and geometry,
and for a more complete account of the progress on lower rank examples and other special cases towards the global conjecture and its refinement.

It is worthwhile to mention that the basic theoretic framework and technical results developed in this paper have been used in some recent work (\cite{JLX} and \cite{JZ}) to study the 
{\sl  automorphic branching problem and its reciprocal problem}, 
and to establish certain cases of the global Gan-Gross-Prasad conjecture for {\sl non-tempered} global Arthur parameters, which has been recently formulated as \cite[Conjecture 9.1]{GGP-ng}. 

\subsection{Main ideas and arguments in the theory}
In order to illustrate the main ideas and arguments of the theory in this introduction, we take $G$ to be an odd special orthogonal group. The general case
will be discussed in the main body of this paper.

We denote by $G_n^*=\SO(V^*,q^*)$ the $F$-split odd special orthogonal group of $2n+1$ variables.
Let $G_n=\SO(V,q)$ be the odd special orthogonal group defined by a $2n+1$ dimensional non-degenerate quadratic space $(V,q)$ over $F$.
Then $G_n$ is a pure inner form of $G_n^*$ over $F$, in the
sense of Vogan (in \cite{V93} and also in \cite{GGP12}, \cite{Klocal} and \cite{Kglobal}).
Following the work of Arthur (\cite{A13}, Chapter 9, in particular),
the discrete spectrum of $G_n$ are parameterized by the $G_n$-relevant, global Arthur parameters of $G_n^*$, the set of which is
denoted by $\wt{\Psi}_2(G_n^*)_{G_n}$. The global Arthur parameters of $G_n^*$ are multiplicity-free formal sums of the type
\begin{equation}\label{ap}
\psi=(\tau_1,b_1)\boxplus\cdots\boxplus(\tau_r,b_r)\in\wt{\Psi}_2(G_n^*),
\end{equation}
where $\tau_i$ is an irreducible unitary self-dual cuspidal automorphic representation of $\GL_{a_i}(\BA)$ for $i=1,2,\cdots,r$,
having the property that
when $\tau_i$ is of orthogonal type, the integer $b_i$ must be even, and when $\tau_i$ is of symplectic type, the integer $b_i$ must be odd.

Following \cite{A13}, a global Arthur parameter $\psi$ is called {\sl generic} if $b_i=1$ for $i=1,2,\cdots,r$. The subset of the generic
parameters is denoted by $\wt{\Phi}_2(G_n^*)$ and that of the $G_n$-relevant ones is denoted by $\wt{\Phi}_2(G_n^*)_{G_n}$.
Hence the generic global Arthur parameters are of the form
\begin{equation}\label{gap}
\phi=(\tau_1,1)\boxplus\cdots\boxplus(\tau_r,1).
\end{equation}
It follows that for a generic global Arthur parameter $\phi$ in \eqref{gap}, the cuspidal automorphic
representations $\tau_1,\cdots,\tau_r$ are all of
symplectic type and $\tau_i$ is not equivalent to $\tau_j$ if $i\neq j$.

By \cite{A13}, in particular, \cite[Chapter 9]{A13},
for any $\pi\in\CA_\cusp(G_n)$, the set of equivalence classes of irreducible automorphic representations of $G_n$ that occur in the
cuspidal spectrum, there is a $G_n$-relevant, global Arthur parameter $\psi\in\wt{\Psi}_2(G_n^*)$, such that
$\pi\in\wt{\Pi}_\psi(G_n)$, the global Arthur packet of $G_n$ associated to $\psi$. One has the following diagram:
\begin{equation}\label{diag1}
\begin{matrix}
                & &\wt{\Psi}_2(G_n^*)& & &\\
                &&&&&\\
                &&\psi&&&\\
                &&&&&\\
                &\swarrow& &\searrow&&\\
                &&&&&\\
  \CA_\disc(G_n)\cap \wt{\Pi}_\psi(G_n)&       & {\iff}        &           & \CA_\disc(G_n^*) \cap \wt{\Pi}_\psi(G_n^*)\\
\end{matrix}
\end{equation}
where $\CA_\disc(G_n)$ is the set of equivalence classes of irreducible automorphic representations of $G_n(\BA)$ that occur in the discrete spectrum.

When a parameter $\psi\in\wt{\Psi}_2(G_n^*)$ is generic, i.e. $\psi=\phi$ as given in \eqref{gap},
the global packet $\wt{\Pi}_\phi(G_n^*)$ contains an irreducible generic cuspidal automorphic representation $\pi_0$ of $G_n^*(\BA)$.
This $\pi_0$ can be constructed by the automorphic descent of Ginzburg, Rallis and Soudry in \cite{GRS11} and
in \cite{JS03}. This construction produces a
concrete module for $\pi_0$ by using only the generic global
Arthur parameter $\phi$. However, it remains a {\sl big problem} to construct other cuspidal members in the global packet $\wt{\Pi}_\phi(G_n^*)$, and
even more generally, to construct all cuspidal members in $\wt{\Pi}_\psi(G_n)$ for all pure inner forms $G_n$ of $G_n^*$.

It seems clear from Diagram \eqref{diag1} that one has to take more invariants of $\pi$ into consideration,
in order to develop a reasonable theory that constructs
concrete modules of all cuspidal members in $\wt{\Pi}_\psi(G)$ for general classical groups $G$.
One of the natural choices is to utilize the structure of Fourier coefficients of
cuspidal automorphic representations $\pi$, in addition to the global Arthur parameters $\psi$. We use $\CF(\pi,G)$ to denote a certain
piece of information about the structure of Fourier coefficients of $\pi$. Here is the principle of the theory.

\begin{prin}[Concrete Modules]\label{prin}
Let $G^*$ be an $F$-quasisplit classical group and $G$ be a pure inner form of $G^*$. For an irreducible cuspidal automorphic representation
$\pi$ of $G(\BA)$, assuming that $\pi$ has a $G$-relevant global Arthur parameter $\psi\in\wt{\Psi}_2(G^*)$, there exists a datum
$\CF(\pi,G)$ such that one is able to construct a concrete irreducible module $\CM(\psi, \CF(\pi,G))$, depending on the data $(\psi, \CF(\pi, G))$,
with the property that
$$
\pi\cong\CM(\psi, \CF(\pi, G)).
$$
Moreover, if $\pi$ occurs in the cuspidal spectrum of $G$ with multiplicity one, then $\pi=\CM(\psi, \CF(\pi, G))$.
\end{prin}

We remark that if $G=G^*$ is $F$-quasisplit and $\pi$ is generic, the concrete module expected in Principle \ref{prin} should
coincide with the module constructed from the automorphic descents of Ginzburg-Rallis-Soudry in \cite{GRS11}.
This will be explained in Corollary \ref{generic}.

We still take $G_n$ to be an odd special orthogonal group.
For the case when the global Arthur parameter $\psi$ is generic, we propose the {\sl Main Conjecture} (Conjecture \ref{pmc}) of the theory
developed in this paper that specifies
the {\sl Principle of Concrete Modules} (Principle \ref{prin}) with the datum $\CF(\pi,G_n)$ explicitly given in Conjecture \ref{bpconj}.
The nature of Conjecture \ref{bpconj} will be briefly discussed in Section \ref{sec-fcpt} and will be considered in our future
work.
With $\CF(\pi, G_n)$ as described in Conjecture \ref{bpconj}, and with the generic global Arthur parameter $\phi$ for $\pi$,
the construction of the concrete module $\CM(\phi, \CF(\pi, G_n))$ for the given $\pi$ is carried out by the {\sl twisted automorphic descent} as
illustrated in Diagram \eqref{diag}.

One of the key results in this paper is Theorem \ref{th-rnbp}, which gives a {\sl reciprocal non-vanishing for Bessel periods}.
Such a non-vanishing property is proved using a refined theory of the global zeta integrals for the tensor product $L$-functions for $G_n$
and a general linear group. The global theory of the global zeta integrals goes back to
the pioneering work of Ginzburg, Piatetski-Shapiro and Rallis for orthogonal groups (\cite{GPSR97}),
which has been extended to a more general setting,
including unitary groups by the authors of this paper in \cite{JZ14}.
We established the global results of the global zeta integrals for the most general situation
in Section \ref{sec-bpgzi}. In order to obtain Theorem \ref{th-rnbp}, we need the explicit unramified
computation of the local zeta integrals. This is done in \cite{JSdZ}, which extends the work of Soudry (\cite{S-I} and \cite{S-II})
for split orthogonal groups to the generality considered in this paper.

By using the {\sl reciprocal non-vanishing for Bessel periods}, we are able to show that certain Fourier coefficient of a residual representation,
which is denoted by $\CF^{\CO_{\kappa_0}}(\CE_{\tau\otimes\sigma})$, is nonzero. The notation is referred to Theorem \ref{th-rnbp}.
This is the candidate for the concrete module of $\pi$, as explained in the main conjecture of the theory (Conjecture \ref{pmc}).
Conjecture \ref{pmc} for $G_n$ asserts that $\CF^{\CO_{\kappa_0}}(\CE_{\tau\otimes\sigma})$ is an
irreducible cuspidal automorphic representation of $G_n(\BA)$ that is isomorphic to the given $\pi$.
We note that when $G_n$ is an even special orthogonal group, this assertion has to be modified due to the extra outer involution.
We refer to Conjecture \ref{pmc} for detail.

One of the main results of this paper (Theorem \ref{th-mcgeneral}) is to prove that Conjecture \ref{pmc} holds
under the assumption of Conjecture \ref{bpconj} and
Conjecture \ref{smoconj},
In two special cases, the results are stronger as given in
Corollary \ref{generic} and Corollary \ref{sboc}. It is worthwhile to mention that by a different argument, this theory recovered the
classical Jacquet-Langlands correspondence for $\PGL(2)$ in \cite{JLXZ}.

We remark that the irreducibility of $\CF^{\CO_{\kappa_0}}(\CE_{\tau\otimes\sigma})$ is deduced from Conjecture \ref{smoconj}, which is the local Gan-Gross-Prasad conjecture for local Vogan packets. The recent progress towards this conjecture is recorded as Theorem \ref{mwslmo}, according to
the work of M\oe glin-Waldspurger (\cite{MW12}), the work of R. Beuzart-Plessis (\cite{BP12} and \cite{BP}), the work of Gan-Ichino (\cite{GI}),
the work of H. He (\cite{He}), and the PhD Thesis of Zhilin Luo (\cite{Luo-thesis}).

Based on our theory, Theorem \ref{gggp1}
proves one direction of the global Gan-Gross-Prasad conjecture in full generality for the classical groups considered in this paper, while
Theorem \ref{gggp2} proves
the other direction of the conjecture with a global assumption (Conjecture \ref{gnvc}), which is about certain structure of
Fourier coefficients of the relevant residual representations. We refer to
\cite[Section 4]{J14} and \cite{JL-Cogdell} for discussion of the general issue related to the conjecture.

\subsection{Structure of this paper}
A more detailed description of the content in each section is in order. In Section \ref{sec-ccg}, we discuss the family of classical groups
considered in this paper and recall their basic structures. The global Arthur parameters and the discrete spectrum for those classical groups
are discussed in Section \ref{sec-dsap}. We recall from \cite{J14} and \cite{JL-Cogdell} the general notion of Fourier coefficients of automorphic
forms associated to the partitions or nilpotent orbits in Section \ref{sec-fcpt} and give a more detailed account for the special type of
Fourier coefficients, which is often called the {\sl Bessel-Fourier coefficients}. Based on the {\sl tower property} for Bessel-Fourier coefficients
of cuspidal automorphic forms (Proposition \ref{cfc}), we state Conjecture \ref{bpconj}. This is our starting point in the theory of construction
of concrete modules for irreducible cuspidal automorphic representations for general classical groups via the twisted automorphic descents. In Section \ref{sec-pif}, we show
(Proposition \ref{piform}) the construction illustrated by Diagram \eqref{diag} covers all the classical groups considered in this paper
as described in Section \ref{sec-ccg}.

The local Gan-Gross-Prasad conjecture (as in Conjecture \ref{smoconj}) is one of the key inputs in the proof of the irreducibility of
the constructed modules. We recall from \cite{GGP12} for the cases considered in this paper in Section \ref{sec-ggp}, and state
Conjecture \ref{smoconj}, which is needed for one of the main results in the paper (Theorem \ref{th-mcgeneral}).
The known cases of Conjecture \ref{smoconj} are stated in Theorem \ref{mwslmo}.
In Section \ref{sec-bpgzi}, we consider a family of global zeta integrals, which represent the tensor product
$L$-functions for the classical groups defined in Section \ref{sec-ccg} and the general linear groups. We show that they can be written as
an Euler product of local zeta integrals (Theorem \ref{thm:j>l} and Theorem \ref{thm:j<l}). With the explicit results on the unramified calculation
of the local zeta integrals in terms of the local $L$-factors (Theorem \ref{urmL}), the global zeta integral can be written in a formula in \eqref{formula1}.
Based on what was discussed in Section \ref{sec-bpgzi}, we establish in Section \ref{sec-rnbp}
the necessary analytic properties of the local zeta integrals in Section \ref{sec-nlzi}, which are needed to establish the
{\sl reciprocal non-vanishing for Bessel periods} (Theorem \ref{th-rnbp}). While some of the properties of the local zeta integrals can be deduced
from the global argument based on the formula in \eqref{formula1} for the global zeta integrals, one of the most technical local results is Proposition
\ref{nlzip:nonzero}, which asserts a general non-vanishing of the local zeta integrals for the data with certain global constraints, and will be proved
in Appendix \ref{A}.
It is also important to mention that Theorem \ref{nlio}
on the analytic properties of the normalized local intertwining operators is another key input in this theory. We will prove
Theorem \ref{nlio} in Appendix \ref{B}.
As a consequence, we obtain in Theorem \ref{gggp1} one direction of the global Gan-Gross-Prasad conjecture in full generality.
With Conjecture \ref{bpconj}
and Theorem \ref{th-rnbp}, in addition to Theorem \ref{nlio}, we are able to obtain the non-vanishing of the Fourier coefficient of
the particular residual representation $\CF^{\CO_{\kappa_0}}(\CE_{\tau\otimes\sigma})$, which is one of the key points in the theory.
The basic properties of $\CF^{\CO_{\kappa_0}}(\CE_{\tau\otimes\sigma})$ are established in Section \ref{sec-ccam}, which are similar to those in
the automorphic descents of Ginzburg, Rallis and Soudry (\cite{GRS11}) and in our previous work joint with Liu and Xu (\cite{JLXZ}).
As a consequence, we obtain results towards another direction of the global Gan-Gross-Prasad conjecture (Theorem \ref{gggp2}).

Diagram \eqref{diag} illustrates the main idea and process of the construction of concrete modules for irreducible
cuspidal automorphic representations of $G$ that have generic global Arthur parameters. In the framework of the construction given by
Diagram \eqref{diag}, we state the main conjecture of the theory (Conjecture \ref{pmc}). As one of the main results of this paper, we prove
Theorem \ref{th-mcgeneral} that Conjecture \ref{pmc} holds under the assumption of Conjecture \ref{bpconj} and
Conjecture \ref{smoconj}. In two special cases, Conjecture \ref{bpconj} is trivial or can be easily verified. Hence
we can have stronger results for those two special cases. The one attached to the regular partition (Corollary \ref{generic}) is essentially the automorphic
descents in \cite{GRS11}, and the other attached to the subregular partition (Corollary \ref{sboc}) is new, and
is a generalization of the construction considered
in \cite{JLXZ}.

There are two appendices following the main body of this paper. Appendix \ref{A} proves Proposition \ref{nlzip:nonzero} in a more general setting.
We put this as one of the two appendices so as to ensure a smoother logic flow in the main body of this paper.
Appendix \ref{B} proves Theorem \ref{nlio}. We leave this out of the main body because the proof needs different preparation, although
there is a possibility to put it in Section \ref{sec-ggp}.

Finally, we would like to thank J. Arthur and W. Schmid for asking and posing this very interesting and important problem in 2013,
which stimulates and encourages us to carry out the work in this paper. We hope the main results and conjectures in this paper to be
helpful towards  the understanding of the nature of their problem.
We are grateful to D. Soudry for his help in finding the proof presented in Appendix \ref{A}, which works uniformly for
all local places.
We would also like to thank
C. M\oe glin, F. Shahidi, D. Vogan, and B. Xu for very helpful conversation
about the proof of the results in Appendix \ref{B},
and thank W. T. Gan for his helpful comments and suggestions on several issues on the theory considered here.
Last, but not least, we would like to thank P. Sarnak for his interest in
and encouraging comments on the theory and results developed
in this paper, and to thank the referee for very important and useful comments and suggestions, which well improved the exposition of the paper.

\section{Discrete Spectrum and Fourier Coefficients}\label{dsfc}

\subsection{Certain classical groups}\label{sec-ccg}
The classical groups considered in this paper are unitary groups and special orthogonal groups that are explicitly defined below.

Let $F$ be a number field and $\BA=\BA_F$ be the ring of adeles of $F$. Let $F(\sqrt{\varsigma})$ be a quadratic field extension of $F$,
with $\varsigma$ is a non-square in $F^\times$. Let $E$ be either $F$ or $F(\sqrt{\varsigma})$, and consider the Galois group
$\Gamma_{E/F}=\Gal(E/F)$.
It is trivial if $E=F$, and has a unique non-trivial element $\iota$ \label{pg:iota} if $E=F(\sqrt{\varsigma})$.
Let $(V,q)$ be an $\Fn$-dimensional non-degenerate vector space over $E$, which is Hermitian if $E=F(\sqrt{\varsigma})$ and
is symmetric (or quadratic) if $E=F$. Denote by $G_n=\Isom(V,q)^\circ$ the identity connected component of
the isometry group of the space $(V,q)$, with $n=[\frac{\Fn}{2}]$.
Let $G_n^*=\Isom(V^*,q^*)^\circ$ be an $F$-quasisplit group of the same type, so that
$G_n$ is a pure inner form of $G_n^*$ over the field $F$, following \cite{V93} and \cite{GGP12}.

Let $(V_0,q)$ be the $F$-anisotropic kernel of $(V,q)$ with dimension $\Fd_0=\Fn-2\Fr$, where the $F$-rank $\Fr=\Fr_\Fn=\Fr(G_n)$ of $G_n$ is
the same as the Witt index of $(V,q)$.
Let $V^{+}$  be a maximal totally isotropic subspace of $(V,q)$, with $\{e_{1},\dots,e_{\Fr}\}$ being its basis.
Choose $E$-linearly independent vectors $\{e_{-1},\dots,e_{-\Fr}\}$ in $(V,q)$
such that
$$
q(e_{i},e_{-j})=\delta_{i,j}
$$
for all $1\leq i,j\leq\Fr$.
Denote by $V^{-}=\Span\{e_{-1},\dots,e_{-\Fr}\}$ the dual space of $V^+$.
Then $(V,q)$ has the following polar decomposition
$$
V=V^{+}\oplus V_{0}\oplus V^{-},
$$
where $V_{0}=(V^{+}\oplus V^{-})^{\perp}$ is an $F$-anisotropic kernel of $(V,q)$.
We choose an orthogonal basis $\{e'_{1},\dots,e'_{\Fd_0}\}$ of $V_{0}$ with the property that
$$
q(e'_{i},e'_{i})=d_{i},
$$
where $d_{i}$ is nonzero for all $1\leq i\leq \Fd_0$. Set $G_{d_0}=\Isom(V_{0},q)^\circ$ with $d_0=[\frac{\Fd_0}{2}]$,
which is anisotropic over $F$ and is regarded as an $F$-subgroup of $G_n$.

We put the above bases together in the following order to form a basis of $(V,q)$:
\begin{equation}\label{bs}
e_{1},\dots,e_{\Fr},e'_{1},\dots,e'_{\Fd_0},e_{-\Fr},\dots,e_{-1},
\end{equation}
and fix the following full isotropic flag in $(V,q)$:
$$
\Span\{e_{1}\}\subset\Span\{e_{1},e_{2}\}\subset
\cdots\subset
\Span\{e_{1},\dots,e_{\Fr}\},
$$
which defines a minimal parabolic $F$-subgroup $P_0$.
Moreover, $P_0$ contains a maximal $F$-split torus $S$, consisting of elements
$$
\diag\{t_{1},\dots,t_{\Fr},1,\dots,1,{t}^{-1}_{\Fr},\dots,t^{-1}_{1}\},
$$
with $t_i\in F^\times$ for $i=1,2,\cdots,\Fr$. Then the centralizer $Z(S)$ in $G_n$ is $\Res_{E/F}S\times G_{d_0}$, the Levi subgroup
of $P_0$, where $\Res_{E/F}S$ is the Weil restriction of $S$ from $E$ to $F$.
Then $P_0$ has the Levi decomposition:
$$
P_0=(\Res_{E/F}S\times G_{d_0})\ltimes N_0
$$
where $N_0$ is the unipotent radical of $P_0$.
Also, with respect to the order of the basis in \eqref{bs}, the group $G_n$ is also defined by the following symmetric matrix:
\begin{equation} \label{eq:J}
J_{\Fr}^\Fn=\begin{pmatrix}
&&1\\&J_{\Fr-1}^{\Fn-2}&\\1&&
\end{pmatrix}_{\Fn\times\Fn}
\text{ and }
J_{0}^{\Fd_0}=\diag\{d_{1},\dots,d_{\Fd_0}\}
\end{equation}
as defined inductively.

Let $_{F}\!\Phi(G_n,S)$ be the root system of $G_n$ over $F$.
Let $_{F}\!\Phi^{+}(G_n,S)$ be the positive roots corresponding to the minimal parabolic $F$-subgroup $P_0$,
and $_{F}\!\Delta=\{\alpha_{1},\dots,\alpha_{\Fr}\}$ be a set of simple roots in $_{F}\!\Phi^{+}(G_n,S)$.
When $G_n$ is an orthogonal group, the root system $_{F}\!\Phi(G_n,S)$ is of type $B_{\Fr}$ unless $\Fn=2\Fr$,
in which case it is of type $D_{\Fr}$.
When $G_n$ is a unitary group, the root system $_{F}\!\Phi(G_n,S)$ is non-reduced of type $BC_{\Fr}$ if $2\Fr<\Fn$;
otherwise, $_{F}\!\Phi(G_n,S)$ is of type $C_{\Fr}$.

For a subset $J\subset\{1,\dots,\Fr\}$, let $_{F}\!\Phi_{J}$ be the root subsystem  of $_{F}\!\Phi(G_n,S)$
generated by the simple roots $\{\alpha_{j}\colon j\in J\}$.
Let $P_{J}=M_{J}U_{J}$ be the standard parabolic $F$-subgroup of $G_n$,
whose Lie algebra consists of all roots spaces $\Fg_{\alpha}$ with $\alpha\in {_{F}\!\Phi^{+}(G_n,S)}\cup {_{F}\!\Phi_{J}}$.
For instance,  if set $\hat{i}:=\{1,\dots,\Fr\}\setminus\{i\}$,
then $P_{\hat{i}}=M_{\hat{i}}U_{\hat{i}}$ is the standard maximal parabolic $F$-subgroup of $G_n$,
which stabilizes the rational isotropic space $V^+_i$,
where $V^{\pm}_{i}:=\Span\{e_{\pm1},\dots,e_{\pm i}\}$.
Here $U_{\hat{i}}$ is the unipotent radical of $P_{\hat{i}}$ and
the Levi subgroup $M_{\hat{i}}$ is isomorphic to $G_{E/F}(i)\times G_{n-i}$. Following the notation of \cite{A13} and \cite{Mk15},
$G_{E/F}(i):=\Res_{E/F}\GL_i$ denotes the Weil restriction of $E$-group $\GL_{i}$ restricted to $F$.
Write  $V_{(i)}=(V^{+}_{i}\oplus V^{-}_{i})^{\perp}$ and hence $V_{(\Fr)}=V_0$ is the $F$-anisotropic kernel of $(V,q)$.

We recall simply from \cite{GGP12} the classification of pure inner $F$-forms of $F$-quasisplit classical groups $G_n^*$ for
a local field and then for a number field.

For a local field $F$ of characteristic zero, we recall the notion of a {\sl relevant pair} of classical groups.
As above, we let $G_n:=\Isom(V,q)^\circ$ be defined for an $\Fn$-dimensional non-degenerate space $(V,q)$ with $n=[\frac{\Fn}{2}]$.
Take an $\Fm$-dimensional non-degenerate subspace $(W,q)$ of $(V,q)$ with the property that the orthogonal complement $(W^\perp,q)$ is $F$-split and
has an odd dimension. Define $H_m:=\Isom(W,q)^\circ$ with $m=[\frac{\Fm}{2}]$.
By \cite[Section 2]{GGP12}, the pair $(G_n,H_m)$ forms a {\sl relevant} pair.

If $G_n':=\Isom(V',q')^\circ$ and $H_m':=\Isom(W',q')^\circ$ form another relevant pair, and
if $G_n'$ and $H_m'$ are pure inner $F$-form of $G_n$ and $H_m$, respectively, the product
$G_n'\times H_m'$ is defined to be {\sl relevant} to the product
$G_n\times H_m$
if the orthogonal complement $((W')^\perp,q')$ is equivalent to the orthogonal complement $(W^\perp,q)$, as Hermitian vector spaces.
From \cite[Lemma 2.2, Part (i)]{GGP12}, one can have an easy list of all $F$-relevant pairs $(G_n,H_m)$ whose product
$G_n\times H_m$ is relevant to the $F$-quasisplit product $G_n^*\times H_m^*$.

For a number field $F$, $G_n$ is a pure inner $F$-form of an $F$-quasisplit $G_n^*$ if
it is obtained by inner twisting by elements in the pointed set $H^1(F,G_n)$. It follows that
at every local place $\nu$, $G_n$ is a pure inner $F_\nu$-form of $G_n^*$.
The notion of {\sl relevance} is defined in the same way. We will come back to this in Section \ref{sec-ggp} when
we discuss Vogan packets and the Gan-Gross-Prasad conjectures.

\subsection{Discrete spectrum and Arthur packets}\label{sec-dsap}
For a reductive algebraic group $G$ defined over $F$, denote by $\CA_\disc(G)$ the set of equivalence classes of irreducible
unitary representations $\pi$ of $G(\BA)$ occurring in the discrete spectrum $L^2_\disc(G)$ of $L^2(G(F)\bks G(\BA)^1)$, when $\pi$ is restricted to
$G(\BA)^1$. Also denote by $\CA_\cusp(G)$ for the subset of $\CA_\disc(G)$, whose elements occur in the cuspidal spectrum $L^2_\cusp(G)$.
The theory of endoscopic classification for classical groups $G_n$ is to parameterize the set $\CA_\disc(G_n)$ by means of the global Arthur parameters,
which can be realized as certain automorphic representations of general linear groups. We recall from the work of Arthur (\cite{A13}),
the work of Mok (\cite{Mk15}) and the work of Kaletha, Minguez, Shin, and White (\cite{KMSW}) the theory for the (special) orthogonal groups and
the unitary groups considered in this paper.

First, we take an $F$-quasisplit classical group $G_n^*$, of which $G_n$ is a pure inner $F$-form. Both $G_n^*$ and $G_n$ share the same
$L$-group ${^LG_n^*}={^LG_n}$. Define $\Fn^\vee$ to be $\Fn$ if $G_n$ is
a unitary group or an even special orthogonal group; and to be $\Fn-1$ if $G_n$ is an odd special orthogonal group.
This number $\Fn^\vee$ is denoted by $N$ in \cite{A13}, \cite{Mk15} and \cite{KMSW}.

Following \cite{A13}, \cite{Mk15} and \cite{KMSW}, we denote by $\wt{\CE}_\simp(N)$ (with $N=\Fn^\vee)$ the set of the equivalence classes of simple twisted
endoscopic data. Each member in $\wt{\CE}_\simp(N)$ is represented by a triple $(G,s,\xi)$, where $G$ is an $F$-quasisplit classical group,
$s$ is a semi-simple
element as described in \cite[Page 11]{A13} and \cite[Page 16]{Mk15}, and $\xi$ is the $L$-embedding
$$
{^LG}\rightarrow {^LG_{E/F}(N)}.
$$
Note that when $G$ is an $F$-quasisplit unitary group, the $L$-embedding $\xi=\xi_{\chi_\kappa}$ depends on $\kappa=\pm1$.
As in \cite[Page 18]{Mk15}, for a simple twisted endoscopic datum $(\RU_{E/F}(N), \xi_{\chi_\kappa})$ of $G_{E/F}(N)$,
the sign $(-1)^{N-1}\cdot\kappa$ is called the {\sl parity} of the datum.
The set of global Arthur parameters for $G_n^*$ is denoted by $\wt{\Psi}_2(G_n^*,\xi)$, or simply by $\wt{\Psi}_2(G_n^*)$ if the
$L$-embedding $\xi$ is well understood in the discussion.

In order to explicate the structure of the parameters in $\wt{\Psi}_2(G_n^*,\xi)$, we first recall from \cite{A13} and \cite{Mk15} the
description of the conjugate self-dual, elliptic, global Arthur parameters for $G_{E/F}(N)$, the set of which is denoted by $\wt{\Psi}_\el(N)$.
We refer to \cite{A13}, \cite{Mk15} and also \cite{KMSW} for detailed discussion about general global Arthur parameters. The elements of
$\wt{\Psi}_\el(N)$ are denoted by $\psi^N$, which have the form
\begin{equation}\label{ellgap}
\psi^N=\psi_1^{N_1}\boxplus\cdots\boxplus\psi_r^{N_r}
\end{equation}
with $N=\sum_{i=1}^rN_i$. The formal summands $\psi_i^{N_i}$ are {\sl simple} parameters of the form
$$
\psi_i^{N_i}=\mu_i\boxtimes\nu_i
$$
with $N_i=a_ib_i$, where $\mu_i=\tau_i\in\CA_\cusp(G_{E/F}(a_i))$ and $\nu_i$ is a $b_i$-dimensional representation of $\SL_2(\BC)$.
Following the notation used in our previous paper \cite{J14}, we also denote
$$
\psi_i^{N_i}=(\tau_i,b_i)
$$
for $i=1,2,\cdots,r$.
A global parameter $\psi^N$ is called {\sl conjugate self-dual} if each simple parameter $\psi_i^{N_i}$
occurs in the decomposition of $\psi^N$ is conjugate self-dual in the sense that $\tau_i$ is conjugate self-dual. An irreducible
cuspidal automorphic representation $\tau$ of $G_{E/F}(a)$ is called {\sl conjugate self-dual} if $\tau\cong\tau^*$, where
$\tau^*=\iota(\tau)^\vee$ the contragredient of $\iota(\tau)$ with $\iota$ being the non-trivial element in $\Gamma_{E/F}$ if $E\neq F$;
otherwise, $\iota=1$.
The global parameter $\psi^N$ is called {\sl elliptic} if it is conjugate self-dual and its decomposition
into the simple parameters is multiplicity free, i.e. $\psi_i^{N_i}$ and $\psi_j^{N_j}$ are not equivalent if $i\neq j$
in the sense that either $\tau_i$ is not equivalent to $\tau_j$ or $b_i\neq b_j$.
A global parameter $\psi^N$ in $\wt{\Psi}_\el(N)$ is called {\sl generic} if $b_i=1$ for $i=1,2,\cdots,r$.
The set of generic, elliptic, global Arthur parameters for $G_{E/F}(N)$ is denoted by $\wt{\Phi}_\el(N)$. Hence elements $\phi$ in
$\wt{\Phi}_\el(N)$ are of the form:
\begin{equation}\label{ellggap}
\phi^N=(\tau_1,1)\boxplus\cdots\boxplus(\tau_r,1).
\end{equation}
When $r=1$, the parameters are called {\sl simple}. The corresponding sets are denoted by $\wt{\Psi}_\simp(N)$ and
$\wt{\Phi}_\simp(N)$, respectively. It is clear that the set $\wt{\Phi}_\simp(N)$ is in one-to-one correspondence with the set
of equivalence classes of the conjugate self-dual, irreducible cuspidal automorphic representations of $G_{E/F}(N)(\BA_F)$.
By \cite[Theorem 1.4.1]{A13} and \cite[Theorem 2.4.2]{Mk15}, for a simple parameter $\phi=\phi^a=(\tau,1)$ in $\wt{\Phi}_\simp(a)$
there exists a unique endoscopic datum $(G_\phi,s_\phi,\xi_\phi)$, such that the parameter $\phi^a$ descends to a global parameter
for $(G_\phi,\xi_\phi)$ in sense that there exists an irreducible automorphic representation $\pi$ in $\CA_2(G_\phi)$, whose
Satake parameters are determined by the Satake parameters of $\phi^a$.

When $E\neq F$, $G_\phi=\RU_{E/F}(a)$ is a unitary group,
the $L$-embedding carries a sign $\kappa_a$, which
determines the nature of the {\sl base change} from the unitary group $\RU_{E/F}(a)$ to $G_{E/F}(a)$.
By \cite[Theorem 2.5.4]{Mk15}, the (partial) $L$-function
$$
L(s,(\tau,1),\Asai^{\eta_{(\tau,1)}})
$$
has a (simple) pole at $s=1$ with the sign $\eta_{(\tau,1)}=\kappa_a\cdot(-1)^{a-1}$
(see also \cite[Theorem 8.1]{GGP12} and \cite[Lemma 2.2.1]{Mk15}).
Then the irreducible cuspidal automorphic representation $\tau$ or equivalently the simple generic parameter $(\tau,1)$
is called {\it conjugate orthogonal} if $\eta_{(\tau,1)}=1$ and {\it conjugate symplectic} if $\eta_{(\tau,1)}=-1$, following the terminology of
\cite[Section 3]{GGP12} and \cite[Section 2]{Mk15}.
Here
$L^S(s,(\tau,1),\Asai^+)$ is the (partial) Asai $L$-function of $\tau$ and $L^S(s,(\tau,1),\Asai^-)$ is the (partial) Asai $L$-function of
$\tau\otimes\omega_{E/F}$, where $\omega_{E/F}$ is the quadratic character associated to $E/F$ by the global class field theory.

The sign of a simple global Arthur parameter $\psi=\psi^{ab}=(\tau,b)\in\wt\Psi_2(ab)$ can be calculated following \cite[Section 2.4]{Mk15}.
Fix the sign $\kappa_a$ as before for the
endoscopic datum $(\RU_{E/F}(a),\xi_{\chi_{\kappa_a}})$, the sign of $(\tau,1)$ is
$
\eta_{(\tau,1)}=\eta_\tau=\kappa_a(-1)^{a-1}.
$
Hence the sign of $(\tau,b)$ is given by
$$
\eta_{(\tau,b)}=\kappa_a(-1)^{a-1+b-1}=\kappa_a(-1)^{a+b}=\eta_\tau(-1)^{b-1}.
$$
As in \cite[Equation (2.4.9)]{Mk15}, define
$
\kappa_{ab}:=\kappa_a(-1)^{ab-a-b+1}.
$
Then we have $\kappa_{ab}(-1)^{ab-1}=\eta_\tau(-1)^{b-1}=\eta_{(\tau,b)}$ and hence $\kappa_{ab}=\eta_\tau(-1)^{(a-1)b}$, which
gives the endoscopic datum $(\RU_{E/F}(ab),\xi_{{\kappa_{ab}}})$. More generally, for an elliptic parameter $\psi^N$ as in \eqref{ellgap},
following from \cite[Section 2.1]{Mk15}, each simple parameter $\psi_i^{N_i}$ determines the simple twisted endoscopic datum
$(\RU_{E/F}(N_i),\xi_{\chi_{\kappa_i}})$ with
$
\kappa_i=(-1)^{N-N_i}=\eta_{\tau_i}(-1)^{(a_i-1)b_i},
$
and hence determines the parity of the $\tau_i\in\CA_\cusp(G_{E/F}(a_i))$ for the simple parameter $\psi_i^{N_i}=(\tau_i,b_i)$.

When $E=F$, the notion of conjugate self-dual becomes just self-dual in the usual sense. A self-dual $\tau\in\CA_\cusp(a)$ is called {\it of symplectic type} if the (partial) exterior square $L$-function
$L^S(s,\tau,\wedge^2)$ has a (simple) pole at $s=1$; otherwise, $\tau$ is called {\it of orthogonal type}. In the latter case,
the (partial) symmetric square $L$-function $L^S(s,\tau,\sym^2)$ has a (simple) pole at $s=1$.

More generally, from
\cite[Section 1.4]{A13} and \cite[Section2.4]{Mk15}, for any parameter $\psi^N$ in $\wt{\Psi}_\el(N)$, there is a twisted elliptic
endoscopic datum $(G,s,\xi)\in\wt{\CE}_\el(N)$ such that the set of the global parameters $\wt{\Psi}_2(G,\xi)$ can be identified
as a subset of $\wt{\Psi}_\el(N)$. We refer to \cite[Section 1.4]{A13}, \cite[Section2.4]{Mk15}, and \cite[Section 1.3]{KMSW} for
more constructive description of the parameters in $\wt{\Psi}_2(G,\xi)$.
The elements of $\wt{\Psi}_2(G_n^*,\xi)$, with $N=\Fn^\vee$ and $n=[\frac{\Fn}{2}]$, are of the form
\begin{equation}\label{aps}
\psi=(\tau_1,b_1)\boxplus\cdots\boxplus(\tau_r,b_r).
\end{equation}
Here $N=N_1+\cdots+N_r$ and $N_i=a_i\cdot b_i$, and $\tau_i\in\CA_\cusp(G_{E/F}(a_i))$ and $b_i$ represents the $b_i$-dimensional
representation of $\SL_2(\BC)$. Note that each simple parameter $\psi_i=(\tau_i,b_i)$ belongs to $\wt{\Psi}_2(G_{n_i}^*,\xi_i)$ with
$n_i=[\frac{\Fn_i}{2}]$ and $N_i=\Fn_i^\vee$, for $i=1,2,\cdots,r$; and for $i\neq j$, $\psi_i$ is not equivalent to $\psi_j$. The parity for $\tau_i$ and $b_i$
is discussed as above. The subset of generic elliptic global Arthur parameters in $\wt{\Psi}_2(G_n^*,\xi)$ is denoted by
$\wt{\Phi}_2(G_n^*,\xi)$, whose elements are in the form of \eqref{ellggap}. 

Without lose of generality and for convenience, we choose $\xi$ with sign $\kappa=1$ {\sl throughout this paper}, which is consistent with the choices in  
the Gan-Gross-Prasad conjecture (\cite[Section, Page 35]{GGP}) and in 
the automorphic descents of Ginzburg-Rallis-Soudry (\cite[Page 55]{GRS11}).
That is, when $G^*_n$ is an odd unitary group, its parameters are conjugate orthogonal;
when $G^*_n$ is an even unitary group, its parameters are conjugate symplectic.

The following is   a simplified version of the endoscopic classification
for classical groups established in \cite{A13}, \cite{Mk15}, and \cite{KMSW}.

\begin{thm}[Endoscopic Classification]\label{ds}
For any $\pi\in\CA_\disc(G_n)$, there is a $G_n$-relevant global Arthur parameter $\psi\in\wt{\Psi}_2(G_n^*,\xi)$,
such that $\pi$ belongs to the global Arthur packet, $\wt{\Pi}_{\psi}(G_n)$, attached to the global Arthur parameter $\psi$.
\end{thm}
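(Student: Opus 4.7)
The plan is to invoke the endoscopic classification for classical groups; for the cases of interest in this paper the result is not new but is a synthesis of the works cited. I would structure the proof as a reduction chain, recording at each step where one of the three references intervenes, so that the relevant verifications are transparent.

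First I would treat the $F$-quasisplit case, so that $G_n=G_n^*$. When $G_n^*$ is a symplectic or quasisplit special orthogonal group, the assertion is the discrete multiplicity formula of Arthur \cite[Theorem 1.5.2]{A13}. When $G_n^*$ is a quasisplit unitary group, one invokes the corresponding Mok's theorem \cite[Theorem 2.5.2]{Mk15}. In both situations, the engine is the stabilization of the trace formula of $G_n^*$ together with the stabilization of the twisted trace formula on $G_{E/F}(N)$ (with $N=\Fn_{G_n^\vee}$) due to Waldspurger and M\oe glin--Waldspurger, which identifies stable tempered/unitary characters of $G_n^*(\BA)$ with twisted characters on $G_{E/F}(N)(\BA)$. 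An induction on $N$, seeded by the case of simple parameters $\psi=(\tau,b)$, yields for any $\pi\in\CA_2(G_n^*)$ a parameter $\psi\in\wt\Psi_2(G_n^*,\xi)$ together with a character $\epsilon_\psi$ on the centralizer group $\CS_\psi$ such that $\pi\in\wt\Pi_\psi(G_n^*)$, the compatibility being encoded by the character relations $f_{G_n^*}(\psi)=\sum_{\pi}\langle\cdot,\pi\rangle f(\pi)$.

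Next I would handle pure inner forms. For a pure inner $F$-form $G_n$ of $G_n^*$, the work of Kaletha--Minguez--Shin--White \cite{KMSW} extends Arthur's and Mok's classifications to all pure inner forms of unitary groups, and analogous results in the orthogonal case can be obtained by the same mechanism: pure inner twisting produces a local transfer of test functions at each place, the global character relations of \cite{A13} and \cite{Mk15} carry over using Vogan's formalism as recalled in Section~\ref{sec-ccg}, and the only parameters that contribute to the discrete spectrum of $G_n(\BA)$ are the $G_n$-relevant ones, i.e.\ those in $\wt\Psi_2(G_n^*,\xi)_{G_n}$. The parity conditions on $(\tau_i,b_i)$ are forced by the $L$-function identifications: symplectic vs.\ orthogonal type is detected by the pole of $L^S(s,\tau_i,\wedge^2)$ or $L^S(s,\tau_i,\Sym^2)$ at $s=1$, while conjugate orthogonal vs.\ conjugate symplectic type is detected by the appropriate Asai $L$-function, as spelled out in \cite[Theorem 2.5.4]{Mk15}. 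These in turn determine the sign $\kappa_i$ associated to each simple constituent via the formula $\kappa_i=\eta_{\tau_i}(-1)^{(a_i-1)b_i}$.

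The hardest step, hidden inside the cited theorems, is the seed case of the induction: showing that each conjugate self-dual cuspidal representation $\tau$ of $G_{E/F}(a)(\BA)$ descends to a simple generic parameter for a uniquely determined quasisplit classical group, and that the resulting local and global packets satisfy the character identities. This requires the full strength of the stabilization of the twisted trace formula and of the weighted fundamental lemma. For the purposes of this paper one only needs to apply the final statement of \cite{A13}, \cite{Mk15}, \cite{KMSW}, so the remaining work is bookkeeping: matching the parametrization of $\wt\Psi_2(G_n^*,\xi)$ given in \eqref{aps} with the simple twisted endoscopic data $(G_n^*,s,\xi)\in\wt{\CE}_\simp(\Fn_{G_n^\vee})$, and observing that every $\pi\in\CA_2(G_n)$ is tempered when $\psi$ is generic so the Arthur packet $\wt\Pi_\psi(G_n)$ reduces to the corresponding Langlands $L$-packet.
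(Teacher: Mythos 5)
The paper itself gives no proof of Theorem~\ref{ds}: it is a citation theorem, and the bracketed references \cite{A13}, \cite{Mk15}, \cite{KMSW} constitute the entirety of the paper's justification. Your proposal is a correct and reasonably detailed outline of what those references actually establish, and it tracks the same division of labor the paper implicitly relies on (Arthur for quasisplit symplectic and special orthogonal, Mok for quasisplit unitary, Kaletha--Minguez--Shin--White for inner forms of unitary, with the stabilized twisted trace formula as the engine and Vogan's pure-inner-form formalism to phrase relevance). In that sense you are taking essentially the same route.

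Two caveats on the details. First, your statement that for pure inner forms the orthogonal case "can be obtained by the same mechanism" is slightly stronger than what the paper is willing to rest on: the introduction explicitly notes that for non-quasisplit orthogonal groups Arthur's Chapter~9 gives only an \emph{outline}, and \cite{KMSW} itself treats unitary groups. The paper's own subsequent construction is careful to keep this conditional. Second, your closing observation --- that for generic $\psi$ every $\pi\in\CA_2(G_n)$ is tempered, so $\wt\Pi_\psi(G_n)$ is an $L$-packet --- overreaches. That the Arthur packet of a generic parameter coincides with the corresponding $L$-packet is correct (the $\SL_2$-factor is trivial), but temperedness of the local components requires the Ramanujan conjecture for $\GL_N$, which is open; the paper addresses exactly this subtlety in Section~\ref{sec-gap} by working with the non-tempered parameters in $\wt{\Phi}^+_\unit(G_n^*(F_\nu))$ à la M\oe glin--Waldspurger. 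Neither point affects the truth of the theorem, which does not use temperedness, but both deserve to be stated more cautiously.
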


Following \cite{A13}, \cite{Mk15} and \cite{KMSW},
when $G_n$ is {\sl not} an even special orthogonal group, the multiplicity of $\pi\in\CA_\disc(G_n)$ realizing in the discrete spectrum $L_\disc^2(G_n)$ is
expected to be one. However, when $G_n$ is an even special orthogonal group, the discrete multiplicity of $\pi\in\CA_\disc(G_n)$ could be two.
The multiplicities of the discrete automorphic representations of classical groups depend on the multiplicity property of local Arthur packets,
which is known for the $p$-adic and complex cases for general local Arthur parameters. However,
For the generic local Arthur packets that is what we need in this paper, the multiplicity property holds for all local fields
Hence the expected multiplicities of the automorphic representations in generic global Arthur packets are known.
In the following, we may fix a realization of $\pi\in\CA_\disc(G_n)$ in the discrete spectrum $L_\disc^2(G_n)$, which will be denoted by $\CC_\pi$,
especially when the discrete multiplicity of $\pi$ is two.

Recall the notation from the definition of \cite[Chapter 8]{A13} that
\begin{equation}\label{wtO}
\wt{\RO}(G_n):=\wt{\out}_N(G_n):=\wt{\Aut}_N(G_n)/\wt{\Int}_N(G_n)
\end{equation}
is regarded as the diagonal subgroup of $\wt{\out}_N(G_n(\BA))$.
When $G_n$ is an even special orthogonal group,
one may take $\varepsilon\in \RO_{2n}(F)$ with $\det\varepsilon=-1$ and $\varepsilon^2=I_{2n}$, \label{pg:eps}, such that
the action of $\wt{\RO}(G_n)$ on $\pi$ can be realized as the $\varepsilon$-conjugate on $\pi$,
i.e., $\pi^\varepsilon(g)=\pi(\varepsilon g\varepsilon^{-1})$. Hence the $\wt{\RO}(G_n)$-orbit of $\pi$ has one or two elements.
If $\wt{\RO}(G_n)$ acts freely on $\pi$, following the notation in \cite{A13}, we denote the $\wt{\RO}(G_n)$-torsor of $\pi$ by $\{\pi,\pi_\star\}$. \label{pg:star}
When $G_n$ is not an even special orthogonal group, the group $\wt{\RO}(G_n)$ is trivial, so is its action. Hence in this case,
the $\wt{\RO}(G_n)$-orbit of $\pi$ contains only $\pi$ itself.

When $G_n$ is an even special orthogonal group, an elliptic global Arthur parameter $\psi^N$ as in \eqref{ellgap} may
descend to two different global Arthur parameters $\psi$ and $\psi_\star$ for $G_n$, which form an $\wt{\RO}(G_n)$-orbit.
If the $\wt{\RO}(G_n)$-orbit of $\psi^N$ is an $\wt{\RO}(G_n)$-torsor $\{\psi,\psi_\star\}$, then they define
different global Arthur packets and different global Vogan packets. However, following \cite{A13}, their tensor product $L$-functions with any
cuspidal automorphic representations of general linear groups are the same. We refer to Chapter 8 of \cite{A13} and Section 6 of \cite{AG} for more detailed discussion.

In the rest of this paper, when we say that $\psi^N$ is a global Arthur parameter of an even special orthogonal group $G_n$,
we really mean that $\psi^N$ is identified with either $\psi$ or $\psi_\star$, through a specific twisted endoscopic datum.

\subsection{Fourier coefficients and partitions}\label{sec-fcpt}
For an $F$-quasisplit classical group $G_n^*$ defined by an $\Fn$-dimensional non-degenerate space $(V^*,q^*)$ with the Witt index
$n=[\frac{\Fn}{2}]$, the relation between Fourier coefficients of automorphic forms $\varphi$ of $G_n^*(\BA)$ and the partitions of
type $(\Fn,G_n^*)$ has been discussed with details in \cite{J14} and also in \cite{JL-Cogdell}. We denote by $\CP(\Fn,G_n^*)$ the set of
all partitions of type $(\Fn,G_n^*)$. The set $\CP(\Fn,G_n^*)$ parameterizes the set of all $F$-stable nilpotent adjoint orbits in the
Lie algebra $\Fg_n^*(F)$ of $G_n^*(F)$, and hence each partition $\udl{p}\in\CP(\Fn,G_n^*)$ defines an $F$-stable nilpotent adjoint orbit $\CO_{\udl{p}}^\st$.
For an $F$-rational orbit $\CO_{\udl{p}}\in\CO_{\udl{p}}^\st$, the datum $(\udl{p},\CO_{\udl{p}})$ determines
a datum $(V_{\udl{p}},\psi_{\CO_{\udl{p}}})$
for defining Fourier coefficients as explained in \cite{J14} and \cite{JL-Cogdell}. Here $V_{\udl{p}}$ is a unipotent subgroup of $G_n^*$ and
$\psi_{\CO_{\udl{p}}}$ is a non-degenerate character of $V_{\udl{p}}(\BA)$, which is trivial on $V_{\udl{p}}(F)$ and determined by a given non-trivial character $\psi_F$ of $F\bks\BA$.

For an automorphic form $\varphi$ on $G_n^*(\BA)$, the $\psi_{\CO_{\udl{p}}}$-Fourier coefficient of $\varphi$ is defined by the
following integral:
\begin{equation}\label{fcg}
\CF^{\psi_{\CO_{\udl{p}}}}(\varphi)(g)
:=
\int_{V_{\udl{p}}(F)\bks V_{\udl{p}}(\BA)}\varphi(vg)\psi_{\CO_{\udl{p}}}^{-1}(v)dv.
\end{equation}
Let $N_{G_n^*}(V_{\udl{p}})^\sm$ be the connected component of the semi-simple part of the normalizer of the subgroup $V_{\udl{p}}$ in $G_n^*$. Define
\begin{equation}\label{stab}
H^{\CO_{\udl{p}}}:=\Cent_{N_{G_n^*}(V_{\udl{p}})^\sm}(\psi_{\CO_{\udl{p}}})^\circ,
\end{equation}
the identity connected component of the stabilizer.
It is clear that the $\psi_{\CO_{\udl{p}}}$-Fourier coefficient of $\varphi$, $\CF^{\psi_{\CO_{\udl{p}}}}(\varphi)(g)$, is left
$H^{\CO_{\udl{p}}}(F)$-invariant, smooth when restricted on $H^{\CO_{\udl{p}}}(\BA)$, and of moderate growth on a Siegel set of
$H^{\CO_{\udl{p}}}(\BA)$.

For any $\pi\in\CA_\disc(G_n^*)$, we denote by $\CC_\pi$ a realization of $\pi$ in the discrete spectrum $L^2_\disc(G_n^*)$.
We define $\CF^{{\CO_{\udl{p}}}}(\CC_\pi)$ (or simply $\CF^{{\CO_{\udl{p}}}}(\pi)$ when no confusion is caused) to be the space spanned by all
$\CF^{\psi_{\CO_{\udl{p}}}}(\varphi_\pi)$ with $\varphi_\pi$ running in the space of $\CC_\pi$, and call $\CF^{{\CO_{\udl{p}}}}(\CC_\pi)$
a {\sl $\psi_{\CO_{\udl{p}}}$-Fourier module} of $\pi$. We note that if the discrete multiplicity of $\pi$ is one, it has a unique
$\psi_{\CO_{\udl{p}}}$-Fourier module.
For a given $\pi\in\CA_\disc(G_n^*)$, we denote by $\Fp(\CC_\pi)$ (or simply $\Fp(\pi)$) the subset of
$\CP(\Fn,G_n^*)$ consisting all partitions $\udl{p}$ with the property that the $\psi_{\CO_{\udl{p}}}$-Fourier module,
$\CF^{{\CO_{\udl{p}}}}(\CC_\pi)$, is nonzero for some choice
of the $F$-rational orbit $\CO_{\udl{p}}$  in the $F$-stable orbit $\CO_{\udl{p}}^\st$, and denote by $\Fp^m(\pi)$ (short for $\Fp^m(\CC_\pi)$)
the subset of all maximal members in $\Fp(\pi)$.
In the rest of this paper, we may write $\CF^{{\CO_{\udl{p}}}}(\pi)$ to be $\CF^{{\CO_{\udl{p}}}}(\CC_\pi)$
and $\Fp^m(\pi)$ to be $\Fp^m(\CC_\pi)$ for a discrete realization $\CC_\pi$ of $\pi$.

For a pure inner $F$-form $G_n$ of $G_n^*$, a partition $\udl{p}$ in the set $\CP(\Fn,G_n^*)$ is called {\sl $G_n$-relevant} if
the unipotent subgroup $V_{\udl{p}}$ of $G_n$ as algebraic groups over the algebraic closure $\overline{F}$ is actually defined over $F$.
We denote by $\CP(\Fn,G_n^*)_{G_n}$ the subset of the set $\CP(\Fn,G_n^*)$ consisting of all $G_n$-relevant partitions of
type $(\Fn,G_n^*)$. It is easy to see that the above discussion about Fourier coefficients and Fourier modules
can be applied to all $\pi\in\CA_\disc(G_n)$ and
all $\udl{p}\in\CP(\Fn,G_n^*)_{G_n}$, without change.

Following R. Howe (\cite{H79} and \cite{H81}), N. Kawanaka (\cite{K87}), M\oe glin and Waldspurger
(\cite{MW87}), and M\oe glin (\cite{M96}), one expects that the partitions $\udl{p}$ in $\Fp^m(\pi)$, the $F$-rational orbits
$\CO_{\udl{p}}$ in the $F$-stable orbits $\CO_{\udl{p}}^\st$, and the automorphic spectrum of the Fourier modules
$\CF^{{\CO_{\udl{p}}}}(\pi)$ as representations of $H^{\CO_{\udl{p}}}(\BA)$ carry fundamental information about the given
automorphic representation $\pi$ of $G_n(\BA)$. However, it is usually not easy to obtain explicit information about those
data from the given $\pi$. In reality, we may consider certain special pieces of those data which may already carry enough information for
us to understand the given representation $\pi$ in the theory discussed in this paper.

We consider a family of partitions of type $(\Fn,G_n^*)$, which leads to the so called Bessel-Fourier coefficients of automorphic forms
on $G_n(\BA)$. These partitions are of the form
\begin{equation}\label{bfpt}
\udl{p}_{\ell}=[(2\ell+1)1^{\Fn-2\ell-1}].
\end{equation}
They are of type $(\Fn,G_n^*)$. The partition $\udl{p}_{\ell}$ is $G_n$-relevant if $\ell$ is less than or equal to the $F$-rank $\Fr$ of $G_n$.
For example, if $G_n$ is $F$-anisotropic, then the only $G_n$-relevant partition is the trivial partition $\udl{p}_0=[1^\Fn]$.
For $\pi\in\CA_\disc(G_n)$, and for a partition $\udl{p}_{\ell}\in\CP(\Fn,G_n^*)_{G_n}$,
the Fourier module $\CF^{{\CO_{\udl{p}_\ell}}}(\pi)$ will be called the {\sl $\ell$-th Bessel module} of $\pi$. As explained before, the
$\ell$-th Bessel module $\CF^{{\CO_{\udl{p}_\ell}}}(\pi)$ consists of moderately increasing automorphic functions on
$H^{\CO_{\udl{p}_\ell}}(\BA)$, and is a representation of $H^{\CO_{\udl{p}_\ell}}(\BA)$ by the right translation.

To simplify the notation, we set $\psi_{\CO_\ell}:=\psi_{\CO_{\udl{p}_\ell}}$, $H^{\CO_\ell}:=H^{\CO_{\udl{p}_\ell}}$, and
$\CF^{{\CO_\ell}}(\pi):=\CF^{{\CO_{\udl{p}_\ell}}}(\pi)$.
In this case, the $F$-algebraic group
$H^{\CO_\ell}$ is the classical group $H_{\ell^-}^{\CO_\ell}=\Isom(W^{\CO_\ell},q)^\circ$, where $(W^{\CO_\ell},q)$ is an $\Fl^-$-dimensional
non-degenerate subspace of $(V,q)$ with the properties:
\begin{itemize}
\item $\Fl^-=\Fn-2\ell-1$ and $\ell^-=[\frac{\Fl^-}{2}]$,
\item the product $G_n\times H_{\ell^-}^{\CO_\ell}$ is relevant in the
sense of the Gan-Gross-Prasad conjecture (\cite{GGP12}), and
\item the product $G_n\times H_{\ell^-}^{\CO_\ell}$ is a pure inner $F$-form of an $F$-quasisplit $G_n^*\times H_{\ell^-}^*$.
\end{itemize}
We refer to Section \ref{sec-pif} for more detailed discussion. One may extend the proof of \cite[Theorem 7.3]{GRS11} to the current case and prove
the cuspidality of the maximal Bessel module of $\pi$.

\begin{prop}[Cuspidality of Bessel Modules]\label{cfc}
For any $\pi$ belonging to $\CA_\cusp(G_n)$ with a cuspidal realization $\CC_\pi$, the $\ell$-th Bessel module $\CF^{\CO_\ell}(\CC_\pi)$ of $\CC_\pi$ enjoys the following property:
There exists an integer $\ell_0$ in $\{0,1,\cdots,\Fr\}$, where $\Fr$ is the $F$-rank of $G_n$, such that
\begin{enumerate}
\item  the $\ell_0$-th Bessel module
$\CF^{\CO_{\ell_0}}(\CC_\pi)$ of $\CC_\pi$ is nonzero, but for any $\ell\in\{0,1,\cdots,\Fr\}$ with $\ell>\ell_0$, the $\ell$-th Bessel module
$\CF^{\CO_\ell}(\CC_\pi)$ is identically zero; and
\item the  $\ell_0$-th Bessel module $\CF^{\CO_{\ell_0}}(\CC_\pi)$ is cuspidal in the sense that its constant terms along all the parabolic subgroups of
$H_{\ell_0^-}^{\CO_{\ell_0}}$ are zero.
\end{enumerate}
\end{prop}

We note that when the cuspidal multiplicity of $\pi$ is two, the index $\ell_0$ of $\pi$ in Proposition \ref{cfc} may depend on a particular realization
$\CC_\pi$ of $\pi$ in the cuspidal spectrum $L_\cusp^2(G_n)$. Hence we write $\ell_0=\ell_0(\CC_\pi)$ to be a {\sl first occurrence index} of $\pi$.
Of course, if the cuspidal multiplicity of $\pi$ is one, $\pi$ has the unique first occurrence index, which may be written as $\ell_0=\ell_0(\pi)$.

By Proposition \ref{cfc}, for any $\pi\in\CA_\cusp(G_n)$, the $\ell_0$-th Bessel module $\CF^{\CO_{\ell_0}}(\pi)$, or more precisely,
$\CF^{\CO_{\ell_0}}(\CC_\pi)$, as a representation of
$H_{\ell_0^-}^{\CO_{\ell_0}}(\BA)$, is nonzero and can be embedded as a submodule in the cuspidal spectrum
$L^2_\cusp(H_{\ell_0^-}^{\CO_{\ell_0}})$, and hence can be written as the following Hilbert direct sum of
irreducible cuspidal automorphic representations of $H_{\ell_0^-}^{\CO_{\ell_0}}(\BA)$:
\begin{equation}\label{dcom}
\CF^{\CO_{\ell_0}}(\pi)
=
\sigma_1\oplus\sigma_2\oplus\cdots\oplus\sigma_t\oplus\cdots
\end{equation}
where all $\sigma_i\in\CA_\cusp(H_{\ell_0^-}^{\CO_{\ell_0}})$. By the uniqueness of local Bessel models for classical groups
(\cite{AGRS}, \cite{SZ}, \cite{GGP12} and \cite{JSZ}), it is easy to deduce that the decomposition \eqref{dcom} is multiplicity free.
Furthermore, we have the following conjecture.

\begin{conj}[Generic Summand]\label{bpconj}
Assume that $\pi\in\CA_\cusp(G_n)$ has a $G_n$-relevant, generic global Arthur parameter $\phi\in\wt{\Phi}_2(G_n^*)$. Then there exists a
cuspidal realization $\CC_\pi$ of $\pi$ in $L_\cusp^2(G_n)$ with the first occurrence index
$\ell_0=\ell_0(\CC_\pi)$, such that there exists an $F$-rational orbit
$\CO_{\ell_0}=\CO_{\udl{p}_{\ell_0}}$ in the $F$-stable orbits $\CO_{\udl{p}_{\ell_0}}^\st$ associated to the partition $\udl{p}_{\ell_0}$
with the {\bf Generic Summand Property:}
There exists at least one $\sigma$ in $\CA_\cusp(H_{\ell_0^-}^{\CO_{\ell_0}})$ with
an $H_{\ell_0^-}^{\CO_{\ell_0}}$-relevant, generic global Arthur parameter $\phi_\sigma$ in $\wt{\Phi}_2(H_{\ell_0^-}^*)$, and with a cuspidal
realization $\CC_\sig$ of $\sig$ in $L_\cusp^2(H_{\ell_0^-}^{\CO_{\ell_0}})$,
such that the $L^2$-inner product
$$
\left<\CF^{\psi_{\CO_{\ell_0}}}(\varphi_\pi),\varphi_\sigma\right>_{H_{\ell_0^-}^{\CO_{\ell_0}}}
$$
in the Hilbert space $L^2_\cusp(H_{\ell_0^-}^{\CO_{\ell_0}})$ is nonzero for some $\varphi_\pi\in\CC_\pi$ and $\varphi_\sigma\in\CC_\sigma$.
\end{conj}

It is clear that the Generic Summand Conjecture seeks a refined structure of the generalized branching law for automorphic representations with help of
the endoscopic classification theory. We introduce such a property of invariant theoretic nature into the explicit construction of cuspidal automorphic
modules. Some interesting examples of this nature are obtained through a simple relative trace formula approach by W. Zhang in \cite{ZW-1}.
In Section \ref{sec-mcsro}, we consider the situation that a cuspidal automorphic member $\pi$ in $\wt{\Pi}_\phi(G_n)$ has the property that
$\Fp^m(\pi)=\{\udl{p}_\subr\}$, where $\udl{p}_\subr$ is the partition associated to the subregular nilpotent orbit,
and prove in Proposition \ref{prop-subr} that Conjecture \ref{bpconj} holds for this case. Further discussions on the Generic Summand Conjecture:
its variants and applications can be found in our work (\cite{JZ-Howe} and \cite{JZ-UnU1}). In \cite{JZ-SLD}, we establish the local analogy of
the Generic Summand Conjecture for orthogonal groups defined over $p$-adic local fields of characteristic zero.

\subsection{Rationality of $H_{\ell^-}^{\CO_{\ell}}$}\label{sec-pif}
We are going to make more explicit the parametrization of the $F$-rational orbits $\CO_\ell$ in the $F$-stable orbit $\CO_{\udl{p}_\ell}^\st$ for
the family of partitions $\udl{p}_\ell$, which define the family of Bessel modules. This yields more explicit structure about
the groups $H_{\ell^-}^{\CO_\ell}$.

For the partition $\udl{p}_\ell=[(2\ell+1)1^{\Fn-2\ell-1}]$ of type $(\Fn,G_n^*)$, which is $G_n$-relevant,
the unipotent subgroup $V_\ell=V_{\udl{p}_\ell}$ of $G_n$ can be chosen to consist of all unipotent elements of the form:
\begin{equation}\label{nell}
V_{\ell}=\cpair{v=\begin{pmatrix}z&y&x\\&I_{\Fn-2\ell}&y'\\&&z^{*} \end{pmatrix}\in G_n \mid z\in Z_{\ell}},
\end{equation}
where $Z_{\ell}$ is the standard maximal (upper-triangular) unipotent subgroup of $G_{E/F}(\ell)$. It follows that
the $F$-rational nilpotent orbits $\CO_\ell$ in the $F$-stable nilpotent orbit $\CO^\st_{\udl{p}_\ell}$ are
in one to one correspondence with the $G_{E/F}(1)\times G_{n-\ell}$-orbits of $F$-anisotropic vectors in
$(E^{\Fn-2\ell},q)$, viewed as a subspace of $(V,q)$. Hence the generic character $\psi_{\CO_\ell}$ of $V_\ell(\BA)$
may also be explicitly defined as follows.
Fix a nontrivial character $\psi_F$ of $F\bks \BA$
and define a character $\psi_E$ of $E\bks \BA_{E}$ by
$$
\psi_E(x):=\begin{cases}
\psi_F(x) & \text{ if } E=F;\\
\psi_F(\frac{1}{2}\tr_{E/F}(\frac{x}{\sqrt{\varsigma}}))& \text{ if } E=F(\sqrt{\varsigma}).
\end{cases}
$$
Consider the following identification:
$$
V_{\ell}/[V_{\ell},V_{\ell}]\cong\oplus_{i=1}^{\ell-1}\Fg_{\alpha_{i}}\oplus E^{\Fn-2\ell}.
$$
Let $w_{0}$ be an anisotropic vector in $(E^{\Fn-2\ell},q)$ and define a character $\psi_{\ell,w_{0}}$ of $V_{\ell}(\BA_F)$ by
\begin{equation}\label{chw0}
\psi_{\CO_\ell}(v)=\psi_{\ell,w_{0}}(v):=\psi_E(\sum^{\ell-1}_{i=1}z_{i,i+1}+q(y_{\ell}, w_{0})),
\end{equation}
where $y_{\ell}$ is the last row of $y$ as defined in  \eqref{nell}.
The Levi subgroup of $\CP_{\{\ell+1,\dots,\Fr\}}$
normalizes the unipotent subgroup $V_{\ell}$
and acts on the set of such defined characters $\psi_{\ell,w_{0}}$.
The group $H_{\ell^-}^{\CO_\ell}=H_{\ell^-}^{w_0}$ is the identity connected component of the
stabilizer of  $\psi_{\ell,w_{0}}$, which is give by
\begin{equation}\label{Lellw0}
\cpair{\begin{pmatrix} I_{\ell}&&\\&\gamma&\\&&I_{\ell} \end{pmatrix}\in G_n \mid
\gamma J_{\Fn-2\ell}w_{0}=J_{\Fn-2\ell}w_{0}},
\end{equation}
where $\ell^-=[\frac{\Fl^-}{2}]$ with $\Fl^-:=\Fn-2\ell-1$.
As introduced in Section \ref{sec-ccg}, we may write $V_{(\ell)}=E^{\Fn-2\ell}$ and view $(V_{(\ell)},q)$ as a non-degenerate subspace of
$(V,q)$ under the natural embedding.
Hence the group $H_{\ell^-}^{\CO_\ell}=H_{\ell^-}^{w_0}$ can also be identified as $\Isom(V_{(\ell)}\cap w_0^\perp,q)^\circ$.
Write $W_{\Fl^-}:=V_{(\ell)}\cap w_0^\perp$ so that $(W_{\Fl^-},q)$ is an $\Fl^-$-dimensional non-degenerate subspace of
$(V,q)$. It follows that the dimension $\Fd_0^-$ of its anisotropic kernel of the space $(W_{\Fl^-},q)$ is $\Fd_0\pm 1$,
depending on the choice of $w_0$. Note that $(W_{\Fl^-},q)$ is isometric to $(W^{\CO_\ell},q)$ as introduced in Section \ref{sec-fcpt}.
Define $\Fr^{-}$ to be the Witt index of $(W_{\Fl^-},q)$,
which equals $\Fr-\ell$ or $\Fr-\ell-1$, depending on $\Fd_0^-=\Fd_0-1$ or $\Fd_0^-=\Fd_0+1$, respectively.

For further explicit calculation, we may take the representative $w_0$ of the $F$-anisotropic vectors corresponding to
the $F$-rational nilpotent orbits $\CO_\ell$ in the $F$-stable nilpotent orbit $\CO^\st_{\udl{p}_\ell}$ as follows.
The representative $w_0$ is an $F$-anisotropic vector in the space $(E^{\Fn-2\ell},q)$,
which defines the character $\psi_{\ell,w_0}$.
Under the action of the product $G_{E/F}(1)\times G_{n-\ell}$ on the space $(E^{\Fn-2\ell},q)$, in particular,
on the set of $F$-anisotropic vectors $w_0$, we may choose, if $\ell<\Fr$,
\begin{equation} \label{eq:w0}
w_{0}=y_{\kappa}=e_{\Fr}+(-1)^{\Fn+1}\frac{\kappa}{2}e_{-\Fr}
\end{equation}
for some $\kappa\in F^{\times}$, using the following lemma.

\begin{lem}\label{w0}
If the Witt index of $(V_{(\ell)},q)$ is not zero, i.e. if $\ell<\Fr$,
then there exists an element $g$ in $G_{\Fn-2\ell}(F)=\Isom(V_{(\ell)},q)^\circ(F)$
such that
$$
g\cdot w_0=e_{\Fr}+(-1)^{\Fn+1}\frac{\kappa}{2}e_{-\Fr}
$$
for some $\kappa\in F^{\times}$.
\end{lem}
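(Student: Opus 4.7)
The plan is to deduce the statement from Witt's extension theorem applied to the one-dimensional non-degenerate subspaces $F\cdot w_0$ and $F\cdot v_\kappa$ inside $(V_{(\ell)},q)$, where I set $v_\kappa := e_{\Fr}+(-1)^{\Fn+1}\tfrac{\kappa}{2}e_{-\Fr}$. The hypothesis $\ell<\Fr$ guarantees that the hyperbolic plane $\Span_E\{e_{\Fr},e_{-\Fr}\}$ is contained in $V_{(\ell)}$, so $v_\kappa$ is an honest vector of $V_{(\ell)}$ for every $\kappa\in F^\times$.

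First I would match the norms. Set $c:=q(w_0,w_0)$; since $w_0$ is $F$-anisotropic, $c\neq 0$, and in fact $c\in F^\times$ (automatic in the orthogonal case; in the Hermitian case because $c=\bar c$). Using $q(e_{\Fr},e_{\Fr})=q(e_{-\Fr},e_{-\Fr})=0$ and $q(e_{\Fr},e_{-\Fr})=1$, a direct computation valid in both the symmetric and the Hermitian settings (because the coefficient $(-1)^{\Fn+1}\kappa/2$ lies in $F$, hence is fixed by the conjugation $\iota$) gives $q(v_\kappa,v_\kappa)=(-1)^{\Fn+1}\kappa$. Choosing $\kappa:=(-1)^{\Fn+1}c\in F^\times$ then makes $q(v_\kappa,v_\kappa)=c=q(w_0,w_0)$, so the assignment $w_0\mapsto v_\kappa$ is an isometry between one-dimensional non-degenerate subspaces of $V_{(\ell)}$.

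Next I would invoke Witt's extension theorem for non-degenerate quadratic and Hermitian spaces (valid since $\mathrm{char}\,F=0$) to extend this isometry to an element $g'\in\Isom(V_{(\ell)},q)(F)$ with $g'\cdot w_0=v_\kappa$. It remains to arrange that $g'$ lies in the identity component $G_{\Fn-2\ell}=\Isom(V_{(\ell)},q)^\circ$. In the unitary case $\Isom(V_{(\ell)},q)$ is already connected, so $g:=g'$ works.

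In the orthogonal case the identity component is $\SO(V_{(\ell)},q)$, and if $\det g'=-1$ I would replace $g'$ by $g:=s_u g'$, where $s_u$ is the reflection in an anisotropic vector $u$ of the non-degenerate complement $v_\kappa^\perp\cap V_{(\ell)}$; this reflection fixes $v_\kappa$ and has determinant $-1$, so $g\in\SO(V_{(\ell)},q)(F)$ still satisfies $g\cdot w_0=v_\kappa$. The only point requiring care is the existence of such a $u$: the subspace $v_\kappa^\perp\cap V_{(\ell)}$ has dimension $\Fn-2\ell-1\geq 1$ and is non-degenerate (since $v_\kappa$ is anisotropic); when its dimension is one every nonzero vector is anisotropic, and when its dimension is at least two a non-degenerate space cannot be totally isotropic. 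Thus an anisotropic $u$ exists, and the lemma follows. The main (mild) obstacle is precisely this determinant adjustment; everything else is a direct application of Witt's theorem.
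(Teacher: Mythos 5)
Your proof is correct, and it fills in exactly the kind of argument the paper is gesturing at when it declares the lemma ``straightforward'' and omits the details. Matching the norms $q(w_0,w_0)=q(v_\kappa,v_\kappa)$ by the choice $\kappa=(-1)^{\Fn+1}q(w_0,w_0)$, invoking Witt's extension theorem (valid in both the quadratic and Hermitian settings in characteristic $0$), and then correcting the determinant in the orthogonal case by composing with a reflection $s_u$ with $u$ an anisotropic vector in $v_\kappa^\perp\cap V_{(\ell)}$ is the natural route, and your verification that such a $u$ exists (dimension $\Fn-2\ell-1\geq 1$, non-degeneracy of $v_\kappa^\perp$) closes the one genuine gap one could worry about. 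The only point worth flagging is notational: one should confirm that $\ell<\Fr$ does put $e_{\Fr},e_{-\Fr}$ inside $V_{(\ell)}=(V_\ell^+\oplus V_\ell^-)^\perp$, which you implicitly use and which indeed holds since $V_{(\ell)}$ contains $e_{\ell+1},\dots,e_{\Fr}$ and $e_{-(\ell+1)},\dots,e_{-\Fr}$.
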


\begin{proof}
The proof is straightforward. We omit the details here.
\end{proof}

It is clear that if $\ell=\Fr$, then the subspace $(E^{\Fn-2\Fr},q)$ is $F$-anisotropic, and hence is not sensitive to the choice of the $F$-anisotropic vector
$w_0$. The structure of $H_{\ell^-}^{\CO_\ell}$ is summarized in the following proposition.
\begin{prop}\label{coform}
For the partition $\udl{p}_\ell$, let $\CO_\ell$ in $\CO_{\udl{p}_\ell}^\st$ be determined by the $F$-anisotropic vector $w_0$ as in Lemma \ref{w0}.
Then the classical group $H_{\ell^-}^{\CO_\ell}=H_{\ell^-}^{w_0}$ is defined by an
$\Fl^-$-dimensional non-degenerate subspace $(W_{\Fl^-},q)$ of $(V,q)$ with a $(\Fd_0-1)$-dimensional $F$-anisotropic kernel if
$y_{-\kappa}$ belongs to the $G_{E/F}(1)\times G_{n-\ell}$-orbit of a nonzero vector in the $F$-anisotropic kernel $(V_0,q)$ of $(V,q)$;
and is defined by an $\Fl^-$-dimensional non-degenerate subspace $(W_{\Fl^-},q)$ of $(V,q)$ with a $(\Fd_0+1)$-dimensional $F$-anisotropic
kernel if $y_{-\kappa}$ does not belong to the $G_{E/F}(1)\times G_{n-\ell}$-orbit of any nonzero vector in the $F$-anisotropic kernel $(V_0,q)$.
\end{prop}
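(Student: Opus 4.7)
By definition \eqref{Lellw0}, the group $H_{\ell^-}^{w_0}$ consists of those $\gamma\in G_{\Fn-2\ell}$ which fix $J_{\Fn-2\ell}w_0$, hence fix the vector $w_0$ itself. Under the identification of $V_{(\ell)}$ with $E^{\Fn-2\ell}$, this is precisely the connected isometry group of the codimension-one subspace $W_{\Fl^-}=V_{(\ell)}\cap w_0^\perp$, which has dimension $\Fl^-=\Fn-2\ell-1$. So the task is to determine the Witt decomposition of $W_{\Fl^-}$ inside $V_{(\ell)}$, whose Witt decomposition is $V_{(\ell)}=\bigl(\bigoplus_{i=\ell+1}^{\Fr}\Span\{e_i,e_{-i}\}\bigr)\oplus V_0$ with anisotropic kernel of dimension $\Fd_0$.

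Next I would exploit that $w_0=y_\kappa$ lies in the single hyperbolic plane $\Span\{e_\Fr,e_{-\Fr}\}$. A direct computation using $q(e_i,e_{-j})=\delta_{i,j}$ shows that the orthogonal complement of $y_\kappa$ inside this plane is spanned by $y_{-\kappa}$, with $q(y_{-\kappa},y_{-\kappa})=-q(y_\kappa,y_\kappa)$. Consequently
\[
W_{\Fl^-}\;=\;\Bigl(\bigoplus_{i=\ell+1}^{\Fr-1}\Span\{e_i,e_{-i}\}\Bigr)\oplus V_0\oplus\Span\{y_{-\kappa}\},
\]
which already produces $\Fr-\ell-1$ hyperbolic planes. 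Thus the Witt index of $W_{\Fl^-}$ is either $\Fr-\ell-1$ or $\Fr-\ell$, according to whether the orthogonal summand $V_0\oplus\Span\{y_{-\kappa}\}$ of dimension $\Fd_0+1$ is anisotropic or has Witt index $1$. Accordingly, the anisotropic kernel of $W_{\Fl^-}$ has dimension $\Fd_0+1$ in the first case and $\Fd_0-1$ in the second.

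It remains to recognize the trichotomy in terms of the orbit condition in the statement. Writing a general vector of the excess summand as $v_0+\alpha y_{-\kappa}$ with $v_0\in V_0$ and $\alpha\in E$, its norm equals $q(v_0,v_0)+N_{E/F}(\alpha)\,q(y_{-\kappa},y_{-\kappa})$ (respectively $q(v_0,v_0)+\alpha^2 q(y_{-\kappa},y_{-\kappa})$ in the orthogonal case). The summand therefore contains an isotropic line if and only if there exists a nonzero $v_0\in V_0$ whose norm agrees with that of $y_{-\kappa}$ up to the group of norms (resp.\ squares). On the other hand, since $G_{n-\ell}$ acts by isometries on $V_{(\ell)}$ while the factor $G_{E/F}(1)$ rescales anisotropic vectors by elements $\lambda\in E^\times$ and thus multiplies their norms by $N_{E/F}(\lambda)$, Witt's extension theorem implies that $y_{-\kappa}$ lies in the $G_{E/F}(1)\times G_{n-\ell}$-orbit of a nonzero vector of $V_0$ if and only if the same representability condition holds. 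Combining these two equivalences with the preceding dimension count gives the proposition.

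The step requiring the most care is the last one, namely matching precisely the $G_{E/F}(1)\times G_{n-\ell}$-orbit structure on anisotropic vectors of $(V_{(\ell)},q)$ with the arithmetic condition that a prescribed square/norm class is represented by the anisotropic kernel $(V_0,q)$. This must be done uniformly in the orthogonal, Hermitian, even and odd cases (and with proper attention to the sign $(-1)^{\Fn+1}$ appearing in \eqref{eq:w0} and to the distinction between Witt indices $\Fr-\ell$ and $\Fr-\ell-1$), after which the rest of the argument reduces to the Witt decomposition bookkeeping already sketched.
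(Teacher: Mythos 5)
The paper does not supply a proof of Proposition~\ref{coform}: it is stated immediately after Lemma~\ref{w0} (whose proof is itself omitted as ``straightforward''), so there is nothing in the paper's text to compare your argument against step by step.

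Your plan is the natural one and its first part is correct: identify $H_{\ell^-}^{w_0}$ with the connected isometry group of $W_{\Fl^-}=V_{(\ell)}\cap w_0^\perp$, note that $w_0=y_\kappa$ lies in the hyperbolic plane $\Span\{e_{\Fr},e_{-\Fr}\}\subset V_{(\ell)}$ with orthogonal complement in that plane spanned by $y_{-\kappa}$ and $q(y_{-\kappa},y_{-\kappa})=-q(y_\kappa,y_\kappa)$, and therefore
\[
W_{\Fl^-}=\Bigl(\bigoplus_{i=\ell+1}^{\Fr-1}\Span\{e_i,e_{-i}\}\Bigr)\perp V_0\perp\Span\{y_{-\kappa}\},
\]
so the dimension of the anisotropic kernel is $\Fd_0\pm 1$ according to whether the $(\Fd_0+1)$-dimensional summand $V_0\perp\Span\{y_{-\kappa}\}$ is anisotropic or not.

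There is, however, a sign gap in your final step. From the norm formula you wrote down, $q(v_0+\alpha y_{-\kappa})=q(v_0,v_0)+N_{E/F}(\alpha)\,q(y_{-\kappa},y_{-\kappa})$, the existence of an isotropic vector forces $q(v_0,v_0)\in -q(y_{-\kappa},y_{-\kappa})\cdot N_{E/F}(E^\times)=q(y_\kappa,y_\kappa)\cdot N_{E/F}(E^\times)$, \emph{not} $q(y_{-\kappa},y_{-\kappa})\cdot N_{E/F}(E^\times)$. The two cosets differ by $-1$, which in general is not a norm (resp.\ not a square): over $\BQ_p$ with $p\equiv 3\pmod 4$, for instance, $-1\notin(\BQ_p^\times)^2$. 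Thus the representability condition your Witt decomposition actually yields is that $q(w_0,w_0)=q(y_\kappa,y_\kappa)$ be represented by $(V_0,q)$ up to norms/squares --- equivalently, that $w_0=y_\kappa$ itself (not $y_{-\kappa}$) lie in the $G_{E/F}(1)\times G_{n-\ell}$-orbit of a nonzero vector of $V_0$. This is also the intrinsic condition one expects: it says precisely that the representative $w_0$ of $\CO_\ell$ could have been chosen inside the anisotropic kernel. The sentence ``contains an isotropic line if and only if there exists $v_0\in V_0$ whose norm agrees with that of $y_{-\kappa}$ \ldots'' does not follow from the displayed norm formula two lines earlier; you need to track the $(-1)$ explicitly (and the sign $(-1)^{\Fn+1}$ in~\eqref{eq:w0} that feeds into $q(y_\kappa,y_\kappa)$), and reconcile the result with the statement of the proposition, rather than silently absorbing the sign into the norm/square group.
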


Following the explicit discussions on pure inner forms of $F$-quasisplit classical groups in \cite{GGP12}, it is easy to obtain the following
proposition.

\begin{prop}\label{piform}
Let $H_m^*$ be an $F$-quasisplit classical group as introduced in Section \ref{sec-ccg}. For any pure inner
$F$-form $H_m$ of $H_m^*$, there exist
\begin{itemize}
\item a classical group $G_n$ defined over $F$ that is a pure inner form of an $F$-quasisplit classical group $G_n^*$, and
\item a datum $(\udl{p}_\ell,\CO_\ell)$ for the Fourier coefficients for automorphic forms on $G_n(\BA)$,
\end{itemize}
such that $m=\ell^-$ and $H_m\cong H_{\ell^-}^{\CO_\ell}$. Moreover, the product $G_n\times H_m$ is a relevant pure inner form of the $F$-quasisplit
$G_n^*\times H_m^*$ in the sense of the Gan-Gross-Prasad conjecture.
\end{prop}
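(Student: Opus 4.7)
The plan is constructive: given $H_m$, embed its defining space $W$ as an orthogonal summand of a larger space $V$ whose orthogonal complement in $V$ consists of an $F$-anisotropic line together with $\ell$ hyperbolic planes. The corresponding Bessel--Fourier stabilizer on $G_n = \Isom(V,q)^\circ$ will then recover $H_m$, and relevance will be built in by construction.

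More precisely, I would write $H_m = \Isom(W,q)^\circ$ with $(W,q)$ of dimension $\Fm$, then pick an integer $\ell \ge 0$ and an $F$-anisotropic vector $w_0$ with $q(w_0,w_0) = \kappa \in F^\times$, and set
\begin{equation*}
V \;:=\; W \,\perp\, \langle w_0 \rangle \,\perp\, H^{\perp \ell},
\end{equation*}
where $H$ denotes a hyperbolic $E$-plane of the same (quadratic or Hermitian) type. Put $G_n := \Isom(V,q)^\circ$, so that $\Fn = \Fm + 1 + 2\ell$, and let $G_n^*$ be the $F$-quasisplit classical group of the same dimension, type, and discriminant as $V$; by construction $G_n$ is a pure inner form of $G_n^*$. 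Identify the $\ell$ hyperbolic planes with $\Span\{e_{\pm i} : 1 \le i \le \ell\}$ in the polar decomposition of $V$, so that $V_{(\ell)} = W \perp \langle w_0 \rangle$. Form the partition $\udl{p}_\ell = [(2\ell+1)\,1^{\Fm}]$; because the Witt index of $V$ is at least $\ell$ by construction, $\udl{p}_\ell$ is $G_n$-relevant and the unipotent $V_\ell$ of \eqref{nell} is $F$-rational. Finally, let $\CO_\ell$ be the $F$-rational orbit in $\CO^\st_{\udl{p}_\ell}$ corresponding to $w_0$ via \eqref{chw0}.

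With these choices, the description \eqref{Lellw0} of the stabilizer yields
\begin{equation*}
H_{\ell^-}^{\CO_\ell} \;=\; \Isom\bigl(V_{(\ell)} \cap w_0^\perp,\, q\bigr)^\circ \;=\; \Isom(W,q)^\circ \;=\; H_m,
\end{equation*}
with $\ell^- = [(\Fn - 2\ell - 1)/2] = [\Fm/2] = m$. The orthogonal complement of $W$ inside $V$ is $\langle w_0 \rangle \perp H^{\perp \ell}$, an odd-dimensional $F$-split (respectively, quasi-split Hermitian) space, so $(G_n, H_m)$ is a relevant pair in the sense of Section \ref{sec-ccg}. To verify relevance to $(G_n^* \times H_m^*)$ I would note that $V^*$ decomposes as $W^* \perp \langle w_0 \rangle \perp H^{\perp \ell}$ where $(W^*,q^*)$ defines $H_m^*$, because adding an $F$-split summand preserves the quasi-split property and the invariants of $V$ coincide with those of $W^* \perp \langle w_0 \rangle \perp H^{\perp \ell}$. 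Hence both $W \subset V$ and $W^* \subset V^*$ share the same orthogonal complement, which is exactly the relevance condition stated in Section \ref{sec-ccg}.

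The one subtlety I expect is the discrete-invariant bookkeeping: in the orthogonal case one must check that for any prescribed pure inner form $H_m$ the invariants (dimension, discriminant, Hasse invariant, parity) of $V$ can be arranged so that $G_n$ is a pure inner form of some quasi-split $G_n^*$ of the same type as a given target, and symmetrically for the unitary case with dimension parity. Because we have two free parameters ($\kappa \in F^\times$ and $\ell \in \BZ_{\ge 0}$), this reduces to elementary facts on the classification of quadratic and Hermitian forms (together with \cite[Lemma 2.2(i)]{GGP12}), and presents no genuine obstacle.
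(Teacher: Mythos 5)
Your constructive approach — starting from $H_m$, adjoining an anisotropic line $\langle w_0\rangle$ and $\ell$ hyperbolic planes to form $V$, setting $G_n=\Isom(V,q)^\circ$, and reading off the stabilizer $H_{\ell^-}^{\CO_\ell}=\Isom(V_{(\ell)}\cap w_0^\perp,q)^\circ$ — is the right one, and is in fact more explicit than the paper, which simply appeals to the discussion of pure inner forms in \cite{GGP12}. The identification $H_{\ell^-}^{\CO_\ell}\cong H_m$, the computation $\ell^-=m$, the $G_n$-relevance of $\udl{p}_\ell$, and the observation that $(G_n,H_m)$ is a relevant pair because $W^\perp=\langle w_0\rangle\perp H^{\perp\ell}$ is odd-dimensional and $F$-split are all correct.

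There is, however, a genuine gap in the final step. You assert that $W^*\perp\langle w_0\rangle\perp H^{\perp\ell}$ is the quasisplit $V^*$ because ``adding an $F$-split summand preserves the quasi-split property.'' That is false for the one-dimensional piece $\langle w_0\rangle$: any one-dimensional quadratic or Hermitian space is vacuously $F$-split (Witt index $0$ is maximal), yet the class of $q(w_0)=\kappa$ in $F^\times$ (modulo squares, resp. norms) matters. Over $F=\BQ$, take $W^*=\langle 1,1\rangle$ and $\kappa=1$, $\ell=0$: then $W^*\perp\langle w_0\rangle=\langle 1,1,1\rangle$ is anisotropic, hence not the $F$-split $V^*$ of that dimension and discriminant, so $V^*$ does not contain $W^*$ at all. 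Your follow-up claim that ``the invariants of $V$ coincide with those of $W^*\perp\langle w_0\rangle\perp H^{\perp\ell}$'' is likewise not automatic, since $W$ and $W^*$ can differ in Hasse invariant exactly when $H_m$ is a nontrivial pure inner form of $H_m^*$.

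What is actually needed — and what your closing remark on ``discrete-invariant bookkeeping'' gestures at but does not pin down — is to choose $\kappa$ so that $W^*\perp\langle w_0\rangle$ already has a one-dimensional anisotropic kernel; concretely, if $W^*$ is split this is automatic, and if $W^*$ has two-dimensional anisotropic kernel $W_0^*$ one picks $\kappa$ with $-\kappa$ represented by $W_0^*$ (up to the type-appropriate sign). Hyperbolic planes $H^{\perp\ell}$ do preserve maximality of Witt index, so after this adjustment $V^*:=W^*\perp\langle w_0\rangle\perp H^{\perp\ell}$ really is the quasisplit form, $(W^*)^\perp\cong\langle w_0\rangle\perp H^{\perp\ell}\cong W^\perp$, and the relevance condition of Section~\ref{sec-ccg} is satisfied. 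The point you flag — that $G_n$ is a pure inner form of \emph{some} quasisplit $G_n^*$ — is automatic from the classification and is not where the difficulty sits; it is the compatibility of the quasisplit pair $(G_n^*,H_m^*)$ with the prescribed complement that forces the constraint on $\kappa$.
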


We will recall the Gan-Gross-Prasad conjecture and related notions in Section \ref{sec-ggp}.


\section{The Local Gan-Gross-Prasad Conjecture}\label{sec-ggp}


We recall the local Gan-Gross-Prasad conjecture from \cite{GGP12} for the cases considered in this paper.
The version of the local Gan-Gross-Prasad conjecture, which will be stated as Conjecture \ref{smoconj},
was proved by Waldspurger and by M\oe glin and
Waldspurger in a series of papers (see \cite{W} and \cite{MW12}, for instance) for orthogonal groups over $p$-adic local fields. 
Over archimedean local fields, it is proved by Z. Luo for tempered local $L$-parameters in \cite{Luo-thesis}, but the case of general generic local $L$-parameters is still in progress. 
For unitary groups, Beuzart-Plessis (\cite{BP12} and \cite{BP}) proves the conjecture (Conjecture \ref{smoconj})
for tempered local $L$-parameters over all local fields,
and in \cite{He}, H. He proves the conjecture for discrete representations over $\BR$ via a different approach.
The extension to the generic local $L$-parameters was obtained by Gan and Ichino (\cite{GI}) for $p$-adic local fields, but over archimedean local fields, such an extension remains an open problem, 
as far as the authors knew. 
In the proof of the main conjecture (Conjecture \ref{pmc}), we need the local Gan-Gross-Prasad conjecture (as in Conjecture \ref{smoconj}) for generic local parameters at
all local places as an input. In the process towards the proof of Conjecture \ref{pmc},
we are able to prove the global Gan-Gross-Prasad conjecture (with one direction having an extra assumption).
This will be explained in Sections \ref{sec-gggp} and \ref{sec-wfs}.

\subsection{Generic Arthur parameters}\label{sec-gap}
We consider generic local Arthur parameters for the classical groups considered in this paper. This has been extensively discussed
in \cite{GGP12} and in \cite{MW12}. We recall the basics for the case of orthogonal groups, and refer to \cite{GI} for the case of unitary groups.
Let $G_n^*=\SO(V^*,q^*)$ be the special orthogonal group defined by a non-degenerate, $\Fn$-dimensional quadratic space $(V^*,q^*)$ with $n=[\frac{\Fn}{2}]$, which is $F$-quasisplit. We recall that the generic global Arthur parameters for $G_n^*$ are of the form
\begin{equation}\label{gapgn}
\phi=(\tau_1,1)\boxplus\cdots\boxplus(\tau_r,1)
\end{equation}
as in \eqref{gap}, where $\tau_1,\cdots,\tau_r$ are irreducible unitary cuspidal automorphic representations
of $\GL_{a_1}(\BA),\cdots,\GL_{a_r}(\BA)$, respectively, with required constraints to make $\phi$ a global Arthur parameter of $G_n^*$. As
before, the set of generic global Arthur parameters of $G^*_n$ is denoted by $\wt{\Phi}_2(G_n^*)$. It is known that
the global Arthur packet $\wt{\Pi}_\phi(G_n^*)$ associated to a generic global Arthur parameter $\phi$ contains a generic member. We refer to \cite[Theorem 3.3]{JL-Cogdell} for the detail.

With the assumption of the Ramanujan conjecture for general linear groups, at each local place $\nu$ of $F$, the localization $\phi_{\nu}$
of the generic global Arthur parameter $\phi$ must be a tempered local $L$-parameter for $G_n^*(F_{\nu})$. Hence with possible failure of the
Ramanujan conjecture for general linear groups, one has to figure out the possible structure of the localization $\phi_{\nu}$
of the generic global Arthur parameter $\phi$. We recall from a work of M\oe glin and Waldspurger (\cite{MW12})
for special orthogonal groups and refer to \cite{GI} for the unitary group case.

For each local place $\nu$ of $F$, we denote by $\CW_{F_{\nu}}$ the local Weil group of $F_{\nu}$. The local Langlands group of $F_{\nu}$, which is denoted by
$\CL_{F_{\nu}}$, is equal to the local Weil-Deligne group. Hence the local Langlands group $\CL_{F_{\nu}}$, as usual, may be taken to be $\CW_{F_{\nu}}\times\SL_2(\BC)$ or
equivalently $\CW_{F_{\nu}}\times\SU(2)$ if $\nu$ is a finite local place, and
to be the local Weil group $\CW_{F_{\nu}}$ if $\nu$ is an archimedean local place. The local $L$-parameters for $G_n^*(F_{\nu})$ are of the form
\begin{equation}\label{llp}
\phi_{\nu}\ :\ \CL_{F_{\nu}}\rightarrow {^LG_n^*}
\end{equation}
with the property that the restriction of $\phi_{\nu}$ to the local Weil group $\CW_{F_{\nu}}$ is Frobenius semisimple and trivial on an open
subgroup of the inertia group $\CI_{F_{\nu}}$ of $F_{\nu}$, and the restriction to $\SL_2(\BC)$ is algebraic.
By the local Langlands conjecture for general linear groups (\cite{L89}, \cite{H00}, \cite{HT}, and \cite{Sch}),
the localization $\phi_{\nu}$ at a local place $\nu$ of $F$
of a generic global Arthur parameter $\phi$ is a local $L$-parameter, for which
there exists a datum $(L^*_{\nu},\phi^{L^*}_{\nu},\udl{\beta})$ with the following properties:
\begin{enumerate}
\item $L^*_{\nu}$ is a Levi subgroup of $G^*(F_{\nu})$ of the form
$$
L^*_{\nu}=\GL_{n_1}\times\cdots\times\GL_{n_t}\times G_{n_0}^*,
$$
where $\GL_{n_1},\cdots,\GL_{n_t}$ and $G_{n_0}^*$ depend on the local place $\nu$,
\item $\phi^{L^*_\nu}$ is a local $L$-parameter of $L^*$ given by
$$
\phi^{L^*_\nu}:=\phi_1\oplus\cdots\oplus\phi_t\oplus\phi_0\ :\  \CL_{F_{\nu}}\rightarrow {^LL^*},
$$
where $\phi_j$ is a local tempered $L$-parameter of $\GL_{n_j}$ for $j=1,2,\cdots,t$, and $\phi_0$ is a local tempered $L$-parameter
of $G_{n_0}^*$, with dependence on the local place $\nu$,
\item $\udl{\beta}:=(\beta_1,\cdots,\beta_t)\in \BR^t$, such that $\beta_1>\beta_2>\cdots>\beta_t>0$, which is also dependent of the local
place $\nu$.
\end{enumerate}
With the given datum, following \cite{MW12}, which is expressed in terms of the parabolic induction, one can write
$$
\phi_{\nu}
=(\phi_1\otimes|\cdot|_\nu^{\beta_1}\oplus\phi_1^\vee\otimes|\cdot|_\nu^{-\beta_1})\oplus\cdots\oplus
(\phi_t\otimes|\cdot|_\nu^{\beta_t}\oplus\phi_t^\vee\otimes|\cdot|_\nu^{-\beta_t})\oplus\phi_0.
$$
Following \cite{A13} and also \cite{MW12}, the local $L$-packets can be formed for all local $L$-parameters $\phi_{\nu}$ as displayed above, and
are denoted by $\wt{\Pi}_{\phi_{\nu}}(G_n^*)$. A local $L$-parameter $\phi_\nu$ is called {\sl generic} if the associated local $L$-packet
$\wt{\Pi}_{\phi_\nu}(G_n^*)$ contains a generic member, i.e. a member with a non-zero Whittaker model with respect to a certain Whittaker data for $G_n^*$.
Using the notation of \cite{A13}, the set of all generic local $L$-parameters is denoted by $\wt{\Phi}_\unit^+(G_n^*(F_{\nu}))$.
All the members in any generic local $L$-packet are irreducible and unitary.
It is clear that the localization $\phi_{\nu}$ of a generic global Arthur parameter $\phi$ is
a generic local $L$-parameter according the definition in \cite{MW12} since there exists a generic member in the local $L$-packet
$\wt{\Pi}_{\phi_{\nu}}(G_n^*)$. Hence, following \cite{MW12}, the local Gan-Gross-Prasad conjecture can be formulated for the localization
$\phi_{\nu}$ of all generic global Arthur parameters $\phi$ in $\wt{\Phi}_2(G_n^*)$, which will be discussed in the following section.

\subsection{The local Gan-Gross-Prasad conjecture}\label{sec-lggp}
We are going to recall the local Gan-Gross-Prasad conjecture that was explicitly formulated in \cite{GGP12} for general classical groups.
We discuss the case of orthogonal groups with details and refer the unitary group case to \cite{GGP12} and \cite{BP12}, \cite{BP}, and \cite{GI}
for the details.

Assume that in this section, $F$ is a local field of characteristic zero.
Recall that an $F$-quasisplit special orthogonal group $G_n^*=\SO(V^*,q^*)$ and its pure inner $F$-forms $G_n=\SO(V,q)$
share the same $L$-group $^LG_n^*$.
As explained in \cite[Section 7]{GGP12}, if the dimension $\Fn=\dim V=\dim V^*$ is odd,
one may take $\Sp_{\Fn-1}(\BC)$ to be the $L$-group $^LG_n^*$, and
if the dimension $\Fn=\dim V=\dim V^*$ is even, one may take $\RO_{\Fn}(\BC)$ to be $^LG_n^*$ when $\disc(V^*)$ is not a square in $F^\times$ and
take $\SO_{\Fn}(\BC)$ to be $^LG_n^*$ when $\disc(V^*)$ is a square in $F^\times$.

For a relevant pair $G_n=\SO(V,q)$ and $H_m=\SO(W,q)$, and an $F$-quasisplit relevant pair $G_n^*=\SO(V^*,q^*)$ and $H_m^*=\SO(W^*,q^*)$
as recalled in Section \ref{sec-ccg} from \cite{GGP12},
we are going to discuss the local Langlands parameters for the group $G_n^*\times H_m^*$ and its relevant pure inner $F$-form
$G_n\times H_m$.
As in Section \ref{sec-gap}, we use $\CL_F$ to denote the local Langlands group associated to $F$. We only consider the local Langlands
parameters that satisfy the three properties in Section \ref{sec-gap}:
\begin{equation}\label{llpgh}
\phi\ :\ \CL_F\rightarrow {^LG_n^*}\times{^LH_m^*}.
\end{equation}
Hence they are the localization of the generic global Arthur parameters for the product of the $F$-quasisplit relevant pair
$G_n^*$ and $H_m^*$.
The set of such local Langlands parameters is denoted by $\wt{\Phi}^+_\unit(G_n^*\times H_m^*)$. As in Section \ref{sec-gap},
each local $L$-parameter $\phi$ in $\wt{\Phi}^+_\unit(G_n^*\times H_m^*)$ defines a local $L$-packet
$\wt{\Pi}_\phi(G_n^*\times H_m^*)$.
For any relevant pure inner $F$-form $G_n\times H_m$, if a parameter
$\phi\in \wt{\Phi}^+_\unit(G_n^*\times H_m^*)$
is $G_n\times H_m$-relevant, it defines a local $L$-packet
$\wt{\Pi}_\phi(G_n\times H_m)$,
as in Section \ref{sec-gap}, following \cite{A13} and \cite{MW12}.
If a parameter
$\phi\in\wt{\Phi}^+_\unit(G_n^*\times H_m^*)$
is not $G_n\times H_m$-relevant,
the corresponding local $L$-packet $\wt{\Pi}_\phi(G_n\times H_m)$ is defined to be the empty set. The local Vogan packet for
a local Langlands parameter
$\phi$ belonging to $\wt{\Phi}^+_\unit(G_n^*\times H_m^*)$
is defined to be the union of the local $L$-packets
$\wt{\Pi}_\phi(G_n\times H_m)$
over all pure inner $F$-forms $G_n\times H_m$ of the $F$-quasisplit group $G_n^*\times H_m^*$, and is denoted by
\begin{equation}\label{lvp}
\wt{\Pi}_\phi[G_n^*\times H_m^*].
\end{equation}

In order to state the local Gan-Gross-Prasad conjecture and relevant progress, we have to
introduce the local analogue of the Fourier coefficients as introduced in Section \ref{sec-fcpt}, which is usually called the local Bessel models.
For a given relevant pair $(G_n,H_m)$, take a partition of the form:
$
\udl{p}_\ell=[(2\ell+1)1^{\Fn-2\ell+1}],
$
where $2\ell+1=\dim W^\perp=\Fn-\Fm$.
The $F$-stable nilpotent orbit $\CO_{\udl{p}_\ell}^\st$ corresponding to the partition $\udl{p}_\ell$ defines a unipotent subgroup
$V_{\udl{p}_\ell}$ and a generic character $\psi_{\CO_\ell}$ associated to any $F$-rational orbit $\CO_\ell$ in the $F$-stable orbit $\CO_{\udl{p}_\ell}^\st$.
According to the discussion in Section \ref{sec-pif}, there is an $F$-rational orbit $\CO_\ell$ in the $F$-stable orbit
$\CO_{\udl{p}_\ell}^\st$, such
that the subgroup $H_m=H_{\ell^-}^{\CO_\ell}$ normalizes the unipotent subgroup $V_{\udl{p}_\ell}$ and stabilizes the character $\psi_{\CO_\ell}$.
We define the following subgroup of $G_n$:
$$
R_{\CO_\ell}:=H_m\ltimes V_{\udl{p}_\ell}=H_{\ell^-}^{\CO_\ell}\ltimes V_{\udl{p}_\ell}.
$$
Let $\pi$ be an irreducible admissible representation of $G_n(F)$ and $\sigma$ be an irreducible admissible representation of $H_m(F)$.
The local functionals we considered belong to the following $\Hom$-space
\begin{equation}\label{lfn}
\Hom_{R_{\CO_\ell}(F)}(\pi\otimes\sigma,\psi_{\CO_\ell}).
\end{equation}
This is usually called the space of local Bessel functionals. The uniqueness of local Bessel functionals asserts that
$$
\dim \Hom_{R_{\CO_\ell}(F)}(\pi\otimes\sigma,\psi_{\CO_\ell})\leq 1.
$$
This was proved in \cite{AGRS}, \cite{SZ}, \cite{GGP12}, and \cite{JSZ}. The stronger version in terms of local Vogan packets
for more general classical groups is given as follows, which will be called as the {\sl local Gan-Gross-Prasad conjecture} in the rest of the paper.

\begin{conj}\label{smoconj}
Let $G_n^*$ and $H_m^*$ be a relevant pair of $F$-quasisplit classical groups.
For a given local $L$-parameter $\phi$ in $\wt{\Phi}^+_\unit(G_n^*\times H_m^*)$, the following identity holds:
\begin{equation}\label{smo}
\sum_{\pi\otimes\sigma\in\wt{\Pi}_\phi[G_n^*\times H_m^*]}
\dim \Hom_{R_{\CO_\ell}(F)}(\pi\otimes\sigma,\psi_{\CO_\ell})=1.
\end{equation}
\end{conj}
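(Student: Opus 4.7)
The plan is to reduce Conjecture \ref{smoconj} from generic local $L$-parameters to tempered ones, and then to invoke the known tempered cases of the local Gan--Gross--Prasad conjecture. Using the structural description from Section \ref{sec-gap}, every $\phi \in \wt{\Phi}^+_\unit(G_n^*\times H_m^*)$ arises from a datum $(L^*,\phi^{L^*},\udl{\beta})$ that expresses $\phi$ as a twist of a tempered local $L$-parameter $\phi^0$ for a product of smaller classical groups by characters $|\cdot|^{\beta_j}$ with $\beta_1 > \cdots > \beta_t > 0$. Hence every member $\pi\otimes\sigma$ of the Vogan packet $\wt{\Pi}_\phi[G_n^*\times H_m^*]$ is isomorphic to a parabolically induced representation
\[
\pi \cong \Ind_{P_G(F)}^{G_n(F)}\bigl(\tau_1|\cdot|^{\beta_1}\otimes\cdots\otimes\tau_t|\cdot|^{\beta_t}\otimes\pi_0\bigr), \quad
\sigma \cong \Ind_{P_H(F)}^{H_m(F)}\bigl(\tau'_1|\cdot|^{\beta'_1}\otimes\cdots\otimes\pi'_0\bigr),
\]
where $\pi_0\otimes\sigma_0$ runs through a tempered Vogan packet $\wt{\Pi}_{\phi^0}[G^0\times H^0]$ for some smaller relevant pair $(G^0,H^0)$.

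The next step is a Bessel--Mackey filtration analysis, in the spirit of \cite[\S15--19]{GGP12} and \cite{MW12}. The idea is to write $G_n(F)\times H_m(F)$ as a finite union of $R_{\CO_\ell}(F)\times (P_G(F)\times P_H(F))$--double cosets, and for each orbit to compute the contribution to $\Hom_{R_{\CO_\ell}(F)}(\pi\otimes\sigma,\psi_{\CO_\ell})$ using Frobenius reciprocity. One expects that the strict positivity $\beta_1>\cdots>\beta_t>0$ combined with the temperedness of the inducing data forces all non-open double cosets to contribute zero (via asymptotic estimates on matrix coefficients, as in the proofs of the Casselman--Wallach criteria used by Beuzart-Plessis), while the unique open orbit provides a canonical isomorphism
\[
\Hom_{R_{\CO_\ell}(F)}(\pi\otimes\sigma,\psi_{\CO_\ell}) \ \cong\
\Hom_{R_{\CO_{\ell'}}(F)}(\pi_0\otimes\sigma_0,\psi_{\CO_{\ell'}})
\]
for a modified pair $(\ell',\CO_{\ell'})$ attached to the smaller relevant pair $(G^0,H^0)$. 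Summing the right-hand side over the tempered Vogan packet $\wt{\Pi}_{\phi^0}[G^0\times H^0]$ and using the fact that the induction-restriction of Vogan packets intertwines the sum on the two sides would reduce \eqref{smo} to the analogous identity for $\phi^0$.

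Having reduced to the tempered case, the third step is to invoke the existing literature. For $p$-adic special orthogonal groups the tempered case is the theorem of Waldspurger and of M\oe glin--Waldspurger (\cite{W}, \cite{MW12}); for unitary groups, both $p$-adic and archimedean, it is the theorem of Beuzart-Plessis (\cite{BP12}, \cite{BP}). In each of these cases the sum over the tempered Vogan packet is equal to one, yielding the desired identity after the reduction above. The remaining unsettled case is archimedean orthogonal groups, which must be handled separately.

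The main obstacle I expect is twofold. First, the archimedean orthogonal case for tempered parameters is currently open, and would require adapting the Beuzart-Plessis local trace formula method (or Waldspurger's twisted endoscopic approach) to the real setting, using real-group analysis of orbital integrals of Schwartz functions. Second, the Mackey reduction in Step 2 has a subtle point: the Jacquet module analysis needed to discard the non-open double cosets must carefully use the constraint that $\phi^0$ is tempered (not merely the shifted parameter is unitary) to rule out contributions from proper parabolic orbits. This is already delicate in the $p$-adic orthogonal case treated by M\oe glin--Waldspurger, and one should expect the archimedean extension to require an explicit analysis of leading exponents and asymptotic decay along the roots of $R_{\CO_\ell}$, as carried out by Beuzart-Plessis for unitary groups.
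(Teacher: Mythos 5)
The statement you are attempting to prove is Conjecture \ref{smoconj}, which the paper does not prove: it is stated as a conjecture, invoked as a standing hypothesis in the main results (notably Theorem \ref{th-mcgeneral}), and the paper explicitly records that the archimedean orthogonal case is still open and that the passage from tempered to generic parameters for unitary groups is only expected, not established. Theorem \ref{mwslmo} is a quotation of M\oe glin--Waldspurger's theorem for $p$-adic special orthogonal groups, already covering generic (not merely tempered) parameters in that case, so there is no ``paper's own proof'' to compare your sketch against.

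Your outline accurately reconstructs the standard reduction strategy (standard-module description of a generic local $L$-packet as in Section~\ref{sec-gap} plus a Mackey/Bessel filtration argument to drop to the tempered case), and you are right to observe that it falls short of a proof. Two points deserve to be made explicit. First, the archimedean orthogonal tempered case is genuinely open, so your reduction cannot close the conjecture; the proposal should therefore be read as a survey of partial results rather than a proof. Second, an essential ingredient for your Step~2 that is left implicit is the irreducibility of all standard modules attached to a generic local $L$-parameter, which is Proposition~\ref{prop:MW-generic-packet} in Appendix~\ref{B} (M\oe glin--Waldspurger, Heiermann, Adams--Barbasch--Vogan). Without this fact the members of the Vogan packet need not coincide with the induced representations on which the Mackey filtration is performed, and the reduction of the sum in \eqref{smo} to the tempered case would not be well posed. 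Finally, even granting irreducibility, the vanishing of the non-open double-coset contributions and the matching of Vogan packets under the open-orbit isomorphism have only been carried out in the $p$-adic orthogonal setting by M\oe glin--Waldspurger; extending this to unitary and archimedean groups is a substantive task, not a formal consequence of the tempered results.
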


The known cases of Conjecture \ref{smoconj} can be summarized as follows.

\begin{thm}\label{mwslmo}
Conjecture \ref{smoconj} holds for the following cases:
\begin{enumerate}
\item the relevant orthogonal group pair $G_n^*$ and $H_m^*$ over a p-adic local field $F$, by M\oe glin and Waldspurger in \cite{MW12} for
generic local $L$-parameters;
\item the relevant orthogonal group pair $G_n^*$ and $H_m^*$ over an archimedean local field $F$, by Zhilin Luo in his PhD Thesis \cite{Luo-thesis}, for
tempered local $L$-parameters;
\item the relevant unitary group pair $G_n^*$ and $H_m^*$ over a p-adic local field $F$ or over the real number field $\BR$, by Beuzart-Plessis in
\cite{BP12} and \cite{BP} for tempered local $L$-parameters; and
\item the relevant unitary group pair $G_n^*$ and $H_m^*$ over a p-adic local field $F$, extended by Gan and Ichino in \cite{GI} to generic local $L$-parameters.
\end{enumerate}
\end{thm}

We remark that over the real number field $\BR$, H. He proves in \cite{He} the local Gan-Gross-Prasad conjecture for discrete series representations of unitary groups 
via a different approach.


\section{Bessel Periods and Global Zeta Integrals}\label{sec-bpgzi}


The automorphic analog of the local Bessel functionals is the notion of Bessel periods for a pair of cuspidal automorphic forms or representations. When
one of cuspidal automorphic forms is replaced by a certain Eisenstein series, the Bessel periods become the global zeta integrals that represent
the tensor product $L$-functions. We extend such a construction of the global zeta integrals considered in our previous work (\cite{JZ14}) to a more general
setting that is needed for the main results of this paper. For quasi-split orthogonal groups, a special family of the global zeta integrals was first investigated by Ginzburg, Piatetski-Shapiro and Rallis in \cite{GPSR97}.

\subsection{Global zeta integrals}\label{sec-gzi}
The global zeta integrals that we are going to study are defined for the following three families of classical groups:
\begin{enumerate}
\item $G_{b}=\SO_{2b+1}(V,q)$ and $H_{c}=\SO_{2c}(W,q)$, such that the product $G_{b}\times H_{c}$ is a relevant
pure inner form of an $F$-quasisplit $G_{b}^*\times H_{c}^*$ over $F$.
\item $G_{b}=\SO_{2b}(V,q)$ and $H_{c}=\SO_{2c+1}(W,q)$, such that the product $G_{b}\times H_{c}$ is a relevant
pure inner form of an $F$-quasisplit $G_{b}^*\times H_{c}^*$ over $F$.
\item $G_{b}=\RU_\Fb(V,q)$ and $H_c=\RU_\Fc(W,q)$ with $\Fb$ and $\Fc$ being of different parity and $b=[\frac{\Fb}{2}]$
and $c=[\frac{\Fc}{2}]$, such that the product
$G_b\times H_c$ is a relevant pure inner form of an $F$-quasisplit $G_b^*\times H_c^*$.
\end{enumerate}
In the following, we use the notation that $G_\star=\Isom(V,q)^\circ$ and $H_\square=\Isom(W,q)^\circ$, such that
$G_\star\times H_\square$ is a relevant pure inner form of an $F$-quasisplit $G_\star^*\times H_\square^*$.
Because the global zeta integrals considered in this paper extend what were studied for $F$-quasisplit groups by the authors in \cite{JZ14},
which generalizes the work of Ginzburg, Piatetski-Shapiro and Rallis in \cite{GPSR97}, we will try to follow the arguments and proofs used in \cite{JZ14} and
provide explanation only for the steps that are necessary for understanding of the main results of this section.

In order to formulate the families of global zeta integrals,
we take $\tau$  to be an irreducible unitary automorphic representation
of $G_{E/F}(a)(\BA_F)$ of the following isobaric type:
\begin{equation}\label{tau8}
\tau=\tau_1\boxplus\tau_2\boxplus\cdots\boxplus\tau_r,
\end{equation}
where $\tau_i\in\CA_\cusp(G_{E/F}(a_i))$, $\sum^{r}_{i=1}a_{i}=a$, and $\tau_i\not\cong\tau_j$ if $i\neq j$.

We note that as in \cite{JZ14}, the global unfolding of the global zeta integrals and the unramified calculation for the unramified local zeta integrals in this section only needs to assume that 
$\tau$ is a generic isobaric automorphic representation, which means that some $\tau_i$ and $\tau_j$ could be equivalent.

Take $H_m=\Isom(W,q)^\circ$ with $\dim W=\Fm$ and $m=[\frac{\Fm}{2}]$.
Let $\sigma$ be an irreducible automorphic representation of $H_{m}(\BA)$. Note that we need not assume the cuspidality of $\sig$ in this section.

Let $M_{\hat{a}}=G_{E/F}(a)\times H_{m}$ be an $F$-Levi subgroup of $H_{a+m}$, so that
$P_{\hat{a}}=M_{\hat{a}}U_{\hat{a}}$ is a standard parabolic $F$-subgroup of $H_{a+m}$.
Following from Section I.1.4 in \cite{MW95},
we denote $X_{M_{\hat{a}}}$ to be the group of continuous homomorphisms of $M_{\hat{a}}(\BA)$ into $\BC^\times$ which are trivial on
$M^1_{\hat{a}}:=\cap_{\chi\in\Hom(M_{\hat{a}},\BG_m)}\ker|\chi|$. Since the parabolic subgroup $P_{\hat{a}}$ is maximal, the $\BC$-vector space
$X_{M_{\hat{a}}}$ is one-dimensional.
As in Section 2.2 in \cite{JZ14}, for any $s\in\BC$, we define that $\lam_s(m,h):=|\det m|^s_{\BA_E}$ for $(m,h)\in G_{E/F}(a)(\BA)\times H_{m}(\BA)$.
It is clear that $\lam_s\in X_{M_{\hat{a}}}$. Via the Iwasawa decomposition, we may make the trivial extension of $\lam_s$
to be a function on $H_{a+m}(\BA)$.

For any
\begin{equation}\label{af}
\phi=\phi_{\tau\otimes\sigma}\in \CA(U_{\hat{a}}(\BA)M_{\hat{a}}(F)\bks H_{a+m}(\BA))_{\tau\otimes\sigma}.
\end{equation}
we may set $\phi_s:=\lam_s\cdot\phi$ and form the associated Eisenstein series to be
\begin{equation}\label{es}
E(h,\phi,s)=E(h,\phi_{\tau\otimes\sigma},s)=\sum_{\delta\in P_{\hat{a}}(F)\bks H_{a+m}(F)}\phi_s(\delta g)
\end{equation}
Note that the character $\lam_s$ is normalized as in \cite{Sh10}.
The theory of Langlands on Eisenstein series (\cite{L76} and \cite{MW95}) shows that $E(h,\phi,s)$ converges absolutely for $\Re(s)$ large, has
meromorphic continuation to the complex plane $\BC$, and defines an automorphic form on $H_{a+m}(\BA)$ when $s$ is not a pole.

Take a family of $H_{a+m}$-relevant partitions $\udl{p}_\ell=[(2\ell+1) 1^{\Fm+2a-2\ell-1}]$ of type $(\Fm+2a,H_{a+m}^*)$, with
$\ell\leq a+\Fr_\Fm$, where $\Fr_\Fm:=\Fr(H_m)$ is the $F$-rank of $H_m$ and is the Witt index of $\Fm$-dimensional non-degenerate space
$(W,q)$ that defines $H_m$.
We define the Bessel-Fourier coefficient of the Eisenstein series $E(h,\phi,s)$ on $H_{a+m}(\BA)$:
\begin{equation}\label{bfces}
\CF^{\psi_{\ell,w_{0}}}(E(\cdot,\phi,s))(h)
:=
\int_{N_\ell(F)\bks N_\ell(\BA)}E(nh,\phi,s)\psi_{\ell,w_{0}}^{-1}(n) \ud n,
\end{equation}
where the unipotent subgroup $N_\ell$ of $H_{a+m}$ determined by the partition $\udl{p}_\ell$ is similar to the unipotent subgroup
$V_\ell$ of $G_n$ considered in Section~\ref{sec-fcpt}. We use $N_\ell$ in this section in order to match the notation used in \cite{JZ14},
since we have to recall from there some technical computations of the global zeta integrals.

As in Lemma \ref{w0}, one may choose the representative $w_0$ that defines the character $\psi_{\ell,w_{0}}$ for
Fourier coefficient, as in \eqref{eq:w0}:
\begin{equation} \label{eq:w0a}
w_{0}=y_{\kappa}=e_{a+\Fr_\Fm}+(-1)^{\Fm+1}\frac{\kappa}{2}e_{-(a+\Fr_\Fm)}
\end{equation}
for some $\kappa\in F^{\times}$. Following Proposition \ref{piform}, we have that
$m^-=[\frac{\Fm^-}{2}]$ and $\Fm^-:=2a+\Fm-2\ell-1$ and $\ell< a+\Fr_\Fm$, and that
the stabilizer $G_{m^-}^{w_0}$ and the subgroup $H_m$ form a relevant pair in the sense of the
Gan-Gross-Prasad conjecture (see Section \ref{sec-ggp}). Of course, when $\ell=a+\Fr_\Fm$, the representative $w_0$ is any $F$-anisotropic vector
in the $F$-anisotropic kernel $(W_0,q)$ of $(W,q)$. It is clear that the pair $(G_{m^-}^{w_0},H_m)$ is relevant in this case.
Note that in this case, we must have
$$
\Fm^-=2a+\Fm-2\ell-1=\Fd_0(W)-1,
$$
and the group $G_{m^-}^{w_0}$ is $F$-anisotropic.

As in \cite{JZ14} we define the following semi-direct product of subgroups:
\begin{equation}\label{rgp}
R_{\ell}^{w_{0}}:=G_{m^-}^{w_0}\ltimes N_\ell.
\end{equation}
For an automorphic form  $\varphi_{2a+\Fm}$ on $H_{a+m}(\BA)$, and an automorphic form $\varphi_{\Fm^-}$ on $G_{m^-}^{w_0}(\BA)$,
we define the Bessel period by
\begin{equation}\label{bp}
\CP^{\psi_{\ell,w_{0}}}(\varphi_{2a+\Fm},\varphi_{\Fm^-})
:=
\int_{G_{m^-}^{w_0}(F)\bks G_{m^-}^{w_0}(\BA)}
\CF^{\psi_{\ell,w_{0}}}(\varphi_{2a+\Fm})(g)
\varphi_{\Fm^-}(g) \ud g.
\end{equation}
It is absolutely convergent if one of the automorphic forms $\varphi_{2a+\Fm}$ and $\varphi_{\Fm^-}$ is cuspidal.
In fact, following \cite[Chapter 2]{MW95}, the Fourier coefficients of an automorphic form with moderate
growth is still of moderate growth on the stabilizer of the character that defines the Fourier coefficients. Moreover, if an
automorphic form is rapidly decreasing on a Siegel set, then its Bessel-Fourier coefficient is also rapidly decreasing on a Siegel set of the
stabilizer of the character that defines the Bessel-Fourier coefficient. We refer to \cite[Lemma 10.1]{GRS11}, \cite[Lemma 2.1]{BAS1} and \cite{BAS2}
(and also \cite[Proposition 2.1]{JZ14}) for details.
Otherwise, some regularization may be needed to define this integral in \eqref{bp}. We define the $L^2$-inner product of automorphic functions $\varphi_1$
and $\varphi_2$ over $G_{m^-}^{w_0}(\BA)$ by
\begin{equation}\label{L2ip}
\left<\varphi_1,\varphi_2\right>_{G_{m^-}^{w_0}}
:=
\int_{G_{m^-}^{w_0}(F)\bks G_{m^-}^{w_0}(\BA)}
\varphi_1(x)\ovl{\varphi}_2(x)\ud x
\end{equation}
assuming it converges, where $\ovl{\varphi}(x)=\ovl{\varphi(x)}$ defines the complex conjugation of the function $\varphi$.

For $\pi\in\CA_\cusp(G_{m^-}^{w_0})$,
the {\sl global zeta integral} $\CZ(s,\phi_{\tau\otimes\sigma},\varphi_\pi,\psi_{\ell,w_{0}})$, as in (2.17) of \cite{JZ14},
is defined by the following Bessel period:
\begin{equation}\label{gzi}
\CZ(s,\varphi_\pi,\phi_{\tau\otimes\sigma},\psi_{\ell,w_{0}})
:=
\CP^{\psi_{\ell,w_{0}}}(E(\phi_{\tau\otimes\sigma},s),\varphi_\pi),
\end{equation}
where the Bessel period is written in our convention in this paper by means of the $L^2$-inner product as follows:
$$
\CP^{\psi_{\ell,w_{0}}}(E(\phi_{\tau\otimes\sigma},s),\varphi_\pi)
=
\left<\varphi_\pi,\overline{\CF^{\psi_{\ell,w_{0}}}(E(\phi_{\tau\otimes\sigma},s))}\right>_{G_{m^-}^{w_0}}.
$$
As given in Proposition~2.1 of \cite{JZ14}, $\CZ(s,\phi_{\tau\otimes\sigma},\varphi_\pi,\psi_{\ell,w_{0}})$
converges absolutely and hence is holomorphic at $s$ where the Eisenstein series $E(h,\phi,s)$ has no poles.

From the theory of Eisenstein series and induced representations, it is clear that the Eisenstein series is an automorphic realization of the following
induced representation
\begin{equation}\label{Is}
\RI_s(\tau,\sigma):=\Ind^{H_{a+m}(\BA)}_{P_{\hat{a}}(\BA)}(\tau|\det|^s\otimes\sigma).
\end{equation}
The global zeta integral $\CZ(s,\phi_{\tau\otimes\sigma},\varphi_\pi,\psi_{\ell,w_{0}})$ when $s$ is away from the pole of $E(\phi_{\tau\otimes\sigma},s)$
defines a global Bessel functional $\ell^{\aut}$ belonging to the following $\Hom$-space,
\begin{equation}\label{hom-1}
\Hom_{R_{\ell}^{w_{0}}(\BA)}(\RI_s(\tau,\sigma),\pi^\vee\otimes\psi_{\ell,w_{0}}),
\end{equation}
which has the following restricted tensor product decomposition:
\begin{equation}\label{hom-tensor}
\otimes_\nu
\Hom_{R_{\ell}^{w_{0}}(F_\nu)}(\RI_s(\tau_\nu,\sigma_\nu),\pi_\nu^\vee\otimes\psi_{\ell,w_{0},v}).
\end{equation}
By the local uniqueness of the Bessel models as proved in \cite{AGRS}, \cite{SZ}, \cite{GGP12}, and \cite{JSZ}, this $\Hom$-space has dimension at
most one (at least for $\Re(s)$ large). Hence we expect that this global functional $\ell^\aut$ in \eqref{hom-1} can be written as an Euler product of the local Bessel functionals
\begin{equation}\label{ed-bf}
\ell^\aut=c\cdot\prod_\nu\ell_\nu
\end{equation}
with certain normalization on $\ell_\nu$ when the data are unramified. The global unfolding process (or the global calculation) of the global zeta integral
$\CZ(s,\phi_{\tau\otimes\sigma},\varphi_\pi,\psi_{\ell,w_{0}})$, when $\Re(s)$ is large, is to find explicitly the Euler product factorization in
\eqref{ed-bf} and explicit formula for the local Bessel functionals $\ell_\nu$. This global calculation contains two main steps. The first is to calculate
the Fourier coefficient of the Eisenstein series $\CF^{\psi_{\ell,w_{0}}}(E(\phi_{\tau\otimes\sigma},s))$ and by using the cuspidality of $\pi$ to show
that $\CZ(s,\phi_{\tau\otimes\sigma},\varphi_\pi,\psi_{\ell,w_{0}})$ is equal to an integration associated to the Zariski open dense double coset from
$P_{\hat{a}}\bks H_{a+m}/R_{\ell}^{w_{0}}$. The second step is to show that this remaining integration is in fact an Euler product of local
zeta integrals.

\subsection{Fourier coefficients of Eisenstein series}\label{sec-fces}
This is to calculate the Fourier coefficient
of the Eisenstein series as defined in \eqref{bfces}. Since the calculation is very similar to that in the proof of \cite[Proposition 3.3]{JZ14},
we will not repeat every detail from there, but point out the key steps in the proof.

By assuming that $\Re(s)$ is large, we are able to unfold the Eisenstein series in the integral defining the Fourier coefficient
$\CF^{\psi_{\ell,w_{0}}}(E(\phi_{\tau\otimes\sigma},s))$. This leads to calculate the double coset decomposition
$P_{\hat{a}}\bks H_{a+m}/R_{\ell}^{w_{0}}$. First, we consider the generalized Bruhat decomposition
$P_{\hat{a}}\bks H_{a+m}/P_{\hat{\ell}}$, as a preliminary step towards the calculation.
This decomposition corresponds to the double coset decomposition
$W_{\hat{a}}\bks W_{_{F}\!\Delta}/ W_{\hat{\ell}}$.
Here $W_{_{F}\!\Delta}$ is the Weyl group of $H_{a+m}$ relative to $F$,
which is generated by the simple reflections $s_\alpha$ associated to the roots $\alpha \in {}_{F}\!\Delta$.
Similarly, $W_{\hat{a}}$ is the subgroup of $W_{{}_{F}\!\Delta}$
generated by the simple reflections $s_\alpha$ for $\alpha \in {}_{F}\!\Delta\smallsetminus \{\alpha_a\}$,
so is $W_{\hat{\ell}}$.

We discuss the group $H_{a+m}$ in the following four cases:
\begin{enumerate}
\item If $H_{a+m}$ is the quasi-split even unitary group (i.e.\ $\Fm=2\Fr_\Fm$ and $E=F(\sqrt{\varsigma})$),
then $W_{_{F}\!\Delta}=W(C_{a+\Fr_\Fm})$;
\item If $H_{a+m}$ is a unitary group, but not a quasi-split even unitary group (i.e.\ $2\Fr_\Fm<\Fm$ and $E=F(\sqrt{\varsigma})$),
then $W_{_{F}\!\Delta}=W(B_{a+\Fr_\Fm})$;
\item If $H_{a+m}$ is the split even special orthogonal group (i.e.\ $\Fm=2\Fr_\Fm$ and $E=F$),
then $W_{_{F}\!\Delta}=W(D_{a+\Fr_\Fm})$;
\item If $H_{a+m}$ is a special orthogonal group but not a split even special orthogonal group (i.e.\ $2\Fr_\Fm<\Fm$ and $E=F$),
then $W_{_{F}\!\Delta}=W(B_{a+\Fr_\Fm})$.
\end{enumerate}
Here $W(X_{a+\Fr_\Fm})$ is the Weyl group of the split classical group of type $X$ with rank $a+\Fr_\Fm$.
Following from \cite[Section 3.1]{JZ14}, we put the double coset decomposition
$W_{\hat{a}}\bks W_{{}_{F}\!\Delta}/ W_{\hat{\ell}}$ into three cases for discussions, i.e. {\bf Case (1-1)}, {\bf Case (2-1)} and {\bf Case (2-2)}.
Both {\bf Case (2-1)} and {\bf Case (2-2)} in \cite[Section 3.1]{JZ14} are only for the split even special orthogonal groups.
The result that we are to prove here has already been proved in \cite{JZ14}.
Hence, we assume that $H_{a+m}$ {\it is not the split even special orthogonal group}, which is {\bf Case (1-1)} in \cite[Section 3.1]{JZ14}.
We extend below the proof for {\bf Case (1-1)} in \cite[Section 3.1]{JZ14} to the current general case considered in this section.

In this situation, the double coset decomposition $P_{\hat{a}}\bks H_{a+m}/ P_{\hat{\ell}}$ is in bijection parameterized by
the set of pairs of nonnegative integers
$$
\FE_{a,\ell}=\{\epsilon_{\alpha,\beta}\mid 0\leq \alpha\leq \beta\leq a \text{ and }
a\leq \ell+\beta-\alpha\leq a+\Fr_\Fm\}.
$$
The representatives $\epsilon_{\alpha,\beta}$ are chosen as in \cite[Section 4.2]{GRS11}.
For each double coset $P_{\hat{a}}\epsilon_{\alpha,\beta}P_{\hat{\ell}}$, we take a further decomposition
$P_{\hat{a}}\bks P_{\hat{a}}\epsilon_{\alpha,\beta}P_{\hat{\ell}}/R^{w_0}_{\ell}$, where the group $R^{w_0}_{\ell}$ is defined as in \eqref{rgp}.
It is equivalent to consider the decomposition $P^{\epsilon_{0,\beta}}_{\hat{\ell}}\bks P_{\hat{\ell}}/ R^{w_0}_{\ell}$ with
$P^{\epsilon_{0,\beta}}_{\hat{\ell}}:= \epsilon^{-1}_{0,\beta} P_{\hat{a}}\epsilon_{0,\beta} \cap P_{\hat{\ell}}$.
Let $\CN_{\beta,\ell,w_0}$ be the set of representatives of
$P^{\epsilon_{0,\beta}}_{\hat{\ell}}(F)\bks P_{\hat{\ell}}(F)/ R^{w_0}_{\ell}(F)$,
and set
\begin{equation}\label{eq:W}
W^{\pm}_{\ell,i}={\rm Span}_{E}\{e_{\pm(\ell+1)},e_{\pm(\ell+2)},\dots,e_{\pm(\ell+i)}\}
\end{equation}
for $1\leq i\leq a+\Fr_\Fm-\ell$, which are totally isotropic subspaces of $(W_{\Fm+2a},q)$.
Following the same argument in \cite[Lemmas 3.1 and 3.2]{JZ14}, we can prove that Proposition~3.3 in \cite{JZ14} also holds for the
more general cases in this paper that $H_{a+m}$ may not be $F$-quasisplit.
\begin{prop}\label{bfcc}
For $\Re(s)$ large, the Bessel-Fourier coefficient of the Eisenstein series as in \eqref{bfces},
$\CF^{\psi_{\ell,w_{0}}}(E(\cdot,\phi_{\tau\otimes\sigma},s))(h)$,
is equal to
\begin{equation*}
\sum_{\epsilon_{\beta}}\sum_{\eta}\sum_{\delta}
\int_{N^{\eta}_{\ell}(\BA)\bks N_{\ell}(\BA)}\int_{N^{\eta}_{\ell}(F)\bks N^{\eta}_{\ell}(\BA)}
\phi_s(\epsilon_{\beta}\eta\delta unh)\psi^{-1}_{\ell,w_{0}}(un)\ud u\ud n,
\end{equation*}
where $N^\eta_\ell=N_\ell\cap \eta^{-1}P^{\eps_{0,\beta}}_{\hat{\ell}}\eta$ and
$G^\eta_{m^-}:=G^{w_0}_{m^{-}}\cap \gamma^{-1}P'_w\gamma$; and the summations are over the following representatives:
\begin{itemize}
\item $\eps_{\beta}=\eps_{0,\beta}\in\FE_{a,\ell}^0$, which is the subset of $\FE_{a,\ell}$ consisting of elements with
$\alpha=0$;
\item $\eta=\diag(\eps,\gamma,\eps^{*})$ belongs to $\CN_{\beta,\ell,w_0}^0$, which is the subset of
$\CN_{\beta,\ell,w_0}$ consisting of elements with $\alpha=0$, $\eps=\begin{pmatrix}
0&I_{\ell-t}\\ I_{t}&0
\end{pmatrix}$, and $t=a-\beta$, and has the property that
if $\beta>\max\cpair{a-\ell,0}$, then $\gamma w_{0}$ is orthogonal to $W^{-}_{\ell,\beta}$ for $\gamma\in P'_{w}(F)\bks H_{a+m-\ell}(F)/G^{w_0}_{m^{-}}(F)$
where $P'_w=H_{a+m-\ell}\cap \epsilon^{-1}_{0,\beta} P_{\hat{a}} \epsilon_{0,\beta}$; and
\item $\delta$ belongs to $G^{\eta}_{m^-}(F)\bks G^{w_0}_{m^{-}}(F)$.
\end{itemize}
\end{prop}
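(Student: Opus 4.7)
The plan is to extend the unfolding argument of \cite[Proposition 3.3]{JZ14} from the split even special orthogonal case to the three remaining cases indicated in the excerpt, namely the non-split (quasi-split or not) unitary cases and the non-split orthogonal case. The key structural fact that makes a uniform argument possible is that in all three remaining cases the relative Weyl group $W_{{}_F\!\Delta}$ is of type $B_{a+\wt{\Fm}}$ or $C_{a+\wt{\Fm}}$, so the double coset space $W_{\hat{a}}\bks W_{{}_F\!\Delta}/W_{\hat{\ell}}$ is parameterized by the very same set $\FE_{a,\ell}$ (with representatives $\epsilon_{\alpha,\beta}$ chosen exactly as in \cite{GRS11}) as in \textbf{Case (1-1)} of \cite[Section 3.1]{JZ14}.

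First, for $\Re(s)$ large enough to ensure absolute convergence, I substitute the definition \eqref{es} of $E(h,\phi_{\tau\otimes\sigma},s)$ into \eqref{bfces}, interchange summation and integration, and decompose
$$
P_{\hat{a}}(F)\bks H_{a+m}(F) = \bigsqcup_{(\alpha,\beta)\in\FE_{a,\ell}} P_{\hat{a}}(F)\bks P_{\hat{a}}(F)\,\epsilon_{\alpha,\beta}\,P_{\hat{\ell}}(F).
$$
Each piece is then refined by writing cosets of $P_{\hat{\ell}}^{\epsilon_{\alpha,\beta}}(F)\bks P_{\hat{\ell}}(F)$ through representatives in $\CN_{\beta,\ell,w_0}$ together with cosets in $G_{m^-}^\eta(F)\bks G_{m^-}^{w_0}(F)$, producing the inner integrals over $N_\ell$ claimed in the proposition. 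The swap between the finite-versus-infinite parts of $N_\ell(\BA)$ and the folding back into $N_\ell^\eta(F)\bks N_\ell^\eta(\BA)$ with a residual integral over $N_\ell^\eta(\BA)\bks N_\ell(\BA)$ is a standard unipotent integration manipulation once the stabilizer $N_\ell\cap \eta^{-1}P_{\hat{\ell}}^{\epsilon_{0,\beta}}\eta$ is identified.

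The main obstacle, and the heart of the proof, is showing that only representatives with $\alpha=0$ actually contribute, i.e.\ that the indexing sets reduce to $\FE_{a,\ell}^0$ and $\CN_{\beta,\ell,w_0}^0$, and that the extra orthogonality condition ``$\gamma w_0 \perp W_{\ell,\beta}^-$ when $\beta>\max\{a-\ell,0\}$'' holds for the surviving $\gamma$. This is the direct analogue, in the current generality, of \cite[Lemma 3.1, Lemma 3.2]{JZ14}: when $\alpha>0$, or when the orthogonality fails, one exhibits a one-parameter root subgroup $U_{\beta'}\subset N_\ell$ such that $\epsilon_{\alpha,\beta}\,U_{\beta'}\,\epsilon_{\alpha,\beta}^{-1}\subset U_{\hat{a}}$, while the restriction of the character $\psi_{\ell,w_0}$ in \eqref{chw0} to $U_{\beta'}(\BA)$ is nontrivial; the corresponding inner integral then vanishes by orthogonality of characters on $U_{\beta'}(F)\bks U_{\beta'}(\BA)$. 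Since the root computations rely only on the simple roots of type $A$ along the Levi $G_{E/F}(a)$ and on the single branching root $\alpha_\ell$, and since the definition \eqref{chw0} of $\psi_{\ell,w_0}$ in terms of $z_{i,i+1}$ and $w_0=y_\kappa$ is uniform across the Hermitian and quadratic settings, the argument of \cite[Section 3]{JZ14} carries over case-by-case with only bookkeeping changes.

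Finally, collecting the non-vanishing contributions, writing each as an integral over $N_\ell(F)\bks N_\ell(\BA)$, and then splitting that integral through the subgroup $N_\ell^\eta$ along with the inner-outer decomposition yields precisely the asserted formula. The absolute convergence needed to justify all the interchanges of summation and integration is guaranteed in the region of $\Re(s)$ where the Eisenstein series converges absolutely, which is exactly the hypothesis of the proposition; meromorphic continuation is not needed at this stage and will be invoked separately when these formulas are used in later sections.
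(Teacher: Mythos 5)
Your proposal follows essentially the same route the paper takes: it observes that once $H_{a+m}$ is not the split even special orthogonal group, the relative Weyl group is of type $B_{a+\wt{\Fm}}$ or $C_{a+\wt{\Fm}}$, so the double coset decomposition $W_{\hat{a}}\backslash W_{{}_F\Delta}/W_{\hat{\ell}}$ is parameterized by the same set $\FE_{a,\ell}$ as in \textbf{Case (1-1)} of \cite[Section 3.1]{JZ14}, and then reproduces the vanishing argument of \cite[Lemmas 3.1, 3.2]{JZ14} (a root subgroup conjugated into $U_{\hat{a}}$ on which $\psi_{\ell,w_0}$ remains nontrivial) to kill the $\alpha>0$ cosets and impose the orthogonality condition on $\gamma w_0$. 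The only slight imprecision is your opening framing of what is being extended: the paper's extension is of \cite{JZ14}'s \textbf{Case (1-1)} (already covering the non-split-even-SO quasi-split groups) to arbitrary pure inner forms, not an extension from the split even SO case; but this does not affect the substance of the argument, which matches the paper's.
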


\subsection{Euler product decomposition}\label{sec-epd}
Next, we apply the expression in Proposition \ref{bfcc} to the further calculation of the global zeta integral \eqref{gzi}
and have
\begin{eqnarray}
&&\CZ(s,\phi_{\tau\otimes\sig},\varphi_\pi,\psi_{\ell,w_0})\label{eq:P-E1} \\
&=&\sum_{\epsilon_{\beta};\eta}
\int_{h}\varphi_{\pi}(h)\int_{n}\int_{[N^{\eta}_{\ell}]}
\phi_s(\epsilon_{\beta}\eta unh)
\psi^{-1}_{\ell,w_{0}}(un)\ud u\ud n\ud h, \nonumber
\end{eqnarray}
where the summation over $\epsilon_\beta$ and $\eta$ is a finite sum as in Proposition \ref{bfcc}, and
the integration $\int_h$ is over $G^{\eta}_{m^{-}}(F)\bks G^{w_0}_{m^{-}}(\BA)$ and
$\int_n$ is over $N^\eta_\ell(\BA)\bks N_\ell(\BA)$.
Similar to \cite[Lemma 3.4]{JZ14}, for each $\eta=\diag(\epsilon,\gamma,\epsilon^*)$,
if the stabilizer $G^{\eta}_{m^{-}}$ is a proper maximal $F$-parabolic subgroup of $G^{w_0}_{m^{-}}$,
then the summand over such $\eta$ vanishes due to the cuspidality of $\varphi_\pi$.

To proceed with the calculation, we need to study the double coset decomposition $P'_w\bks H_{a+m-\ell}/ G^{w_0}_{m^{-}}$ as given in Proposition \ref{bfcc},
and
extend the calculation in \cite[Section 3.2]{JZ14} to the current setting. With the choice of the $w_0$, it is easy to see that the group $H_{a+m-\ell}$
has its $F$-rank no less than one. We may apply \cite[Proposition 4.4]{GRS11} to the current situation, and show that only one integral associated to the Zariski open dense double coset in $P_{\hat{a}}\bks H_{a+m}/R_{\ell}^{w_{0}}$ remains and all other integrals in the
summation are zero for any choice of data.
In other words, similar to Proposition 3.6 in \cite{JZ14}, we still obtain the following expression for the global zeta integrals,
which have two different forms according to the two cases: $a\leq \ell$ and $a>\ell$.

If $a\leq \ell$, we must have that $\beta=0$, $\eta=\diag\{\epsilon,I_{\Fm},\epsilon^*\}$ with
$\epsilon=\ppair{\begin{smallmatrix}
0&I_{\ell-a} \\ I_{a} &	0
\end{smallmatrix}}$ and
\begin{align}
\CZ(s,\phi_{\tau\otimes\sig},\varphi_\pi,\psi_{\ell,w_0})=&
\int_{G^{w_0}_{m^{-}}(F)\bks G^{w_0}_{m^{-}}(\BA)}\varphi_\pi(h) \int_{N^\eta_\ell(\BA)\bks N_\ell(\BA)} \nonumber \\
&\int_{[N^\eta_\ell]}\phi_s(\eps_{0,\beta}\eta unh)  \psi^{-1}_{\ell,w_0}(un) \ud u \ud n \ud h, \label{eq:gzi-a<l}
\end{align}
where  $[N^\eta_\ell]=N^\eta_\ell(F)\bks N^\eta_\ell(\BA)$.

If $a>\ell$, we must have that $\beta=a-\ell$,
$\eta=\diag\{I_\ell,\gamma_0,I_\ell\}$ and
\begin{align}
\CZ(s,\phi_{\tau\otimes\sig},\varphi_\pi,\psi_{\ell,w_0})=&
\int_{G^\eta_{m^{-}}(F)\bks G^{w_0}_{m^{-}}(\BA)}\varphi_\pi(h) \int_{N^\eta_\ell(\BA)\bks N_\ell(\BA)} \nonumber \\
&\int_{[N^\eta_\ell]}\phi_s(\eps_{0,\beta}\eta unh)  \psi^{-1}_{\ell,w_0}(un) \ud u \ud n \ud h,\label{eq:gzi-a>l}
\end{align}
where
$\gamma_0$ is a representative in the open double coset of
$$P'_w(F)\bks H_{a+m-\ell}(F)/G^{w_0}_{m^{-}}(F),$$
with the property that $\gamma_0 w_0$ is not orthogonal to $W^-_{\ell,\beta}$.

It remains to show that those global integrals are in fact integrals over adelic domains and can be written as Euler products of local zeta integrals.
In order to continue the calculation, we have to recall the relevant calculations in \cite{JZ14} with replacement of notation used here.
Section \ref{sec-al} will deal with the case of $a>\ell$ and hence is for the integral in \eqref{eq:gzi-a>l}. Section \ref{la} will
deal with the case of $a\leq\ell$ and hence is for the integral in \eqref{eq:gzi-a<l}.

\subsection{Case $a>\ell$}\label{sec-al}
We are studying the integral in \eqref{eq:gzi-a>l}.
For convenience, we recall the open coset representative $\epsilon_{0,\beta}$ with $\beta=a-\ell$ in Equation (4.14) in \cite{GRS11},
\begin{equation}\label{eq:eps-beta}
\eps_{0,\beta}= w_q^{\ell}\cdot \begin{pmatrix}
0&I_{a-\ell}&0&0&0\\
0&0&0&0&I_\ell\\
0&0&I_\Fm&0&0\\
I_\ell&0&0&0&0\\
0&0&0&I_{a-\ell}&0	
\end{pmatrix}.
\end{equation}
Note that when  $E=F$, $w_q$ is defined by
$$
\begin{cases}
	-I_{\Fm+2a}  &\text{ if  $\Fm$ is odd,}\\
	\diag\{I_{\frac{\Fm}{2}+a-1},\ppair{\begin{smallmatrix} 1& 0\\ 0 &-1 \end{smallmatrix}},I_{\frac{\Fm}{2}+a-1}\}  &\text{ if $\Fm$ is even and $H_{a+m}$ is not split,}\\
	\diag\{I_{\Fr_\Fm+a-1},\ppair{\begin{smallmatrix}0 &1\\ 1&0 \end{smallmatrix}},I_{\Fr_\Fm+a-1}\}  &\text{ if
$\Fm=2\Fr_\Fm$ and $H_{a+m}$ is split;}\\
\end{cases}
$$
and  when $E=F(\sqrt{\varsigma})$, $w_q=I_{\Fm+2a}$.

First, we write the integral in \eqref{eq:gzi-a>l} as
\begin{equation}\label{eq:gzi-a>l-2}
\CZ(s,\phi_{\tau\otimes\sig},\varphi_\pi,\psi_{\ell,w_0})
=
\int_{G^\eta_{m^{-}}(F)\bks G^{w_0}_{m^{-}}(\BA)}\varphi_\pi(h)\Phi_s(h)\ud h
\end{equation}
where the function $\Phi_s(h)$ is defined, as in \cite[(3.34)]{JZ14}, to be
\begin{equation}\label{eq:Phi-0}
\Phi_s(h):=\int_{N^\eta_\ell(\BA)\bks N_\ell(\BA)}
\int_{[N^\eta_\ell]}\phi_s(\eps_{0,\beta}\eta unh)  \psi^{-1}_{\ell,w_0}(un) \ud u \ud n. 	
\end{equation}
To calculate the function $\Phi_s(h)$, we first consider $(\eps_{0,\beta}\eta)u(\eps_{0,\beta}\eta)^{-1}$,
similar to Section 3.3 of \cite{JZ14}.  Note that $N^{\eta}_{\ell}$ consists of elements of the form
$$
u=\begin{pmatrix}
c&0&0&0&y_6&0&0\\
&I_{\Fr_{\Fm}}&0&0&0&0&0\\
&&I_{a-\ell}&0&0&0&y'_6\\
&&&I_{\Fm-2\Fr_{\Fm}}&0&0&0\\
&&&&I_{a-\ell}&0&0\\
&&&&&I_{\Fr_{\Fm}}&0\\
&&&&&&c^*	
\end{pmatrix}\in N_\ell
$$
where $c\in Z_\ell$. Then the conjugation $(\eps_{0,\beta}\eta)u(\eps_{0,\beta}\eta)^{-1}$ is of the form
$$
\begin{pmatrix}
I_{a-\ell}&y'_6&&&\\
0&c^*&&&\\
&&I_{\Fm}&&\\
&&&c&y_6\\
&&&0&I_{a-\ell}	
\end{pmatrix}.
$$
Hence the stabilizer $(\eps_{0,\beta}\eta) N^\eta_\ell (\eps_{0,\beta}\eta)^{-1}$ as a subgroup of $P_{\hat{a}}$ is in fact contained
in $G_{E/F}(a)$-part of the Levi subgroup of $P_{\hat{a}}$. We denote it by $Z'_\ell$. We may write elements of $Z'_\ell$ as $\hat{z}'$ with
$z'=\ppair{\begin{smallmatrix}
I_{a-\ell}&y\\
0&z	
\end{smallmatrix}}$. Accordingly, the character $\psi^{-1}_{\ell,w_0}(u)$ becomes
$$
\psi_{Z'_\ell,\kappa}(z'):=\psi((-1)^{\Fm+1}\frac{\kappa}{2}z_{\beta,\beta+1}+z_{\beta+1,\beta+2}+\cdots+z_{a-1,a}).
$$
Hence we obtain
\begin{equation}\label{eq:Phi}
\Phi_s(h)=\int_{N^\eta_\ell(\BA)\bks N_\ell(\BA)}\phi^{\psi_{Z'_\ell,\kappa}}_s(\eps_{0,\beta}\eta n h)\psi^{-1}_{\ell,w_0}(n)\ud n, 	
\end{equation}
with
\begin{equation}\label{FC-Z-lk}
\phi^{\psi_{Z'_\ell,\kappa}}_s(h):=\int_{[Z'_\ell]}\phi_s(\hat{z}'h)\psi_{Z'_\ell,\kappa}(z')\ud z'.
\end{equation}

Next, we need to calculate the integration over $G^\eta_{m^{-}}(F)\bks G^{w_0}_{m^{-}}(\BA)$ in \eqref{eq:gzi-a>l} and in \eqref{eq:gzi-a>l-2}, in order
to show that the global zeta integral is in fact an integration over an adelic domain.
Similar to the decomposition in \cite[Equation (3.33)]{JZ14}, we also have the decomposition
$$
G^\eta_{m^{-}}=(G_{E/F}(W^+_{\Fr_{\Fm}+a-1,\beta-1}) \times H(q_{\eta^{-1}W_{(a)}}))\ltimes V_{\beta-1,\eta},
$$
where $W^+_{\Fr_{\Fm}+a-1,\beta-1}$ is defined in \eqref{eq:W}, $W_{(a)}=(W^+_a\oplus W^-_a)^\perp$, and $V_{\beta-1,\eta}$ is the unipotent radical of the stabilizer $G^\eta_{m^{-}}$, as described on Page 573 of \cite{JZ14}.
More precisely, in Proposition \ref{bfcc}, take $\eta=\eta_{I,\gamma_0}$, and then $V_{\beta-1,\eta}$ consists of the elements of the form
\begin{equation}\label{eq:stab-H-L}
\eta^{-1} \begin{pmatrix}
I_{\ell}&&&&&&\\
&I_{\beta-1}&d_{1}&u&v_{1}&v&\\
&&1&0&0&v'_{1}&\\
&&&I_{\Fm}&0&u'&\\
&&&&1&d'_{1}&\\
&&&&&I_{\beta-1}&\\
&&&&&&I_{\ell}
\end{pmatrix}\eta
\end{equation}
with $d_{1}+(-1)^{\Fm+1}\frac{\kappa}{2}v_{1}=0$,
where $d_{1}$ and $v_{1}$ are column vectors of dimension  $\beta-1$.
Let $Z^\eta_{\ell,\beta-1}$ be the maximal unipotent subgroup of $G_{E/F}(W^+_{\Fr_{\Fm}+a-1,\beta-1})$,
consisting of elements of following type:
$$
\eta^{-1}\cdot \diag\{I_\ell,d,I_{\Fm+2},d^*,I_\ell\}\cdot \eta
$$
with $d\in Z_{\beta-1}$.

Write $N^\eta_{\ell,\beta-1}:=Z^\eta_{\ell,\beta-1} V_{\beta-1,\eta}$. It is a unipotent subgroup of $G^{w_0}_{m^-}$ associated to the nilpotent orbit
with partition $[(2(a-\ell-1)+1)1^{\Fm}]$. Fixing the anisotropic vector $y_{-\kappa}$ that defines the character of $N^\eta_{\ell,\beta-1}$, we deduce
that the corresponding stabilizer in $G^{w_0}_{m^-}$ is $\Isom(\eta^{-1}W_{(a)},q)^\circ$. Hence $\Isom(\eta^{-1}W_{(a)},q)^\circ=\eta^{-1}H_m\eta$.
The elements of $N^\eta_{\ell,\beta-1}$ have the form
\begin{equation}\label{eq:N-eta-ell-beta-1}
(\eps_{0,\beta}\eta)^{-1}\begin{pmatrix}
d&d_1&0&u&0&v_1&v\\
&1&0&0&0&0&v'_1\\
&&I_\ell&0&0&0&0\\
&&&I_{\Fm}&0&0&u'\\
&&&&I_\ell&0&0\\
&&&&&1&d'_1\\
&&&&&&d^*	
\end{pmatrix}(\eps_{0,\beta}\eta),	
\end{equation}
where $d\in Z_{\beta-1}$.
Remark that $Z^\eta_{\ell,\beta-1}$ is the set of all matrices of the form \eqref{eq:N-eta-ell-beta-1} with all entries 0 except $d$.
Denote $Z_{\beta,\eta}$ (resp. $C_{\beta-1,\eta}$) to be the subgroup of $N^\eta_{\ell,\beta-1}$
consisting of all matrices in \eqref{eq:N-eta-ell-beta-1} with all entries 0 except $d$ and $d_1$ (resp. with $d=I_{\beta-1}$ and $d_1=0$).
Then $N^\eta_{\ell,\beta-1}=Z_{\beta,\eta}C_{\beta-1,\eta}$.

Similar to \cite[Page 575]{JZ14}, we have the following isomorphism
\begin{equation}\label{eq:H-eta}
C_{\beta-1,\eta}\bks G^\eta_{m^{-}}\cong P^1_\beta\times H^\eta_m
\qquad \text{ ($H^\eta_m:=\eta^{-1}H_m\eta$),}	
\end{equation}
where $H^\eta_m$ is a subgroup of $G^{w_0}_{m^{-1}}$ and
$P^1_\beta$ is the mirabolic subgroup of $G_{E/F}(\beta)$ containing $Z_{\beta,\eta}$.
Continuing with the global zeta integral as displayed in \eqref{eq:gzi-a>l-2}, we obtain that the global zeta integral $\CZ(s,\cdot)$ is equal to
\begin{equation}\label{eq:gzi-a>l-3}
\int_{P^1_\beta(F)H^\eta_m(F)C_{\beta-1,\eta}(\BA)\bks G^{w_0}_{m^-}(\BA)}
\Phi_s(h)\int_{[C_{\beta-1,\eta}]}\varphi_{\pi}(ch)\ud c\ud h.	
\end{equation}
This is the integral similar to that displayed in (3.36) of \cite{JZ14}. We note that there is  a typo in the integration domain in Equation (3.36) of
\cite{JZ14}, and the integral in \eqref{eq:gzi-a>l-3} gives the correct version.

Following closely the argument in \cite{JZ14},
we apply the Fourier expansion on $\varphi_\pi$ along the mirabolic subgroup $P^1_\beta$ repeatedly and obtain the same expansion as that displayed in
Equation (3.38) in \cite{JZ14}.
Plugging so obtained expansion into \eqref{eq:gzi-a>l-3} and combining the integrals, we obtain
\begin{align*}
 &\CZ(s,\phi_{\tau\otimes\sig},\varphi_\pi,\psi_{\ell,w_0})\\
=&\int_{Z_{\beta,\eta}(F)H^\eta_m(F)C_{\beta-1,\eta}(\BA)\bks G^{w_0}_{m^-}(\BA)}
\Phi_s(h)\CF^{\psi^{-1}_{\beta-1,y_{-\kappa}}}(\varphi_\pi)(h)\ud h,
\end{align*}
where $\CF^{\psi^{-1}_{\beta-1,y_{-\kappa}}}(\varphi_\pi)$ is the $(\beta-1)$-th Bessel coefficient with respect to $\psi_{\beta-1,y_{-\kappa}}$ (with
$\beta=a-\ell$),
as defined in \eqref{fcg}, by
\begin{equation}\label{eq:F-pi}
\CF^{\psi^{-1}_{\beta-1,y_{-\kappa}}}(\varphi_\pi)(h)
=
\int_{N^\eta_{\ell,\beta-1}(F)\bks N^\eta_{\ell,\beta-1}(\BA)}\varphi_{\pi}(nh)\psi_{\beta-1,y_{-\kappa}}(n)\ud n.
\end{equation}
Since $\CF^{\psi^{-1}_{\beta-1,y_{-\kappa}}}(\varphi_\pi)$ is left $(Z_{\beta,\eta},\psi^{-1}_{\beta-1,y_{-\kappa}})$-equivariant, $\CZ(s,\cdot)$
is equal to (see \cite[(3.40)]{JZ14})
\begin{equation}\label{eq:zeta-H-a>l}
\int_{H^{\eta}_{m}(F)N^\eta_{\ell,\beta-1}(\BA)\bks G^{w_0}_{m^-}(\BA)}
\CF^{\psi^{-1}_{\beta-1,y_{-\kappa}}}(\varphi_\pi)(h)
\int_{[Z_{\beta,\eta}]}\Phi_s(zh)\psi^{-1}_{\beta-1,y_{-\kappa}}(z)\ud z\ud h.	
\end{equation}
Let us now focus on the inner integral
$\int_{[Z_{\beta,\eta}]}\Phi_s(zh)\psi^{-1}_{\beta-1,y_{-\kappa}}(z)\ud z$, which by definition (as in \eqref{eq:Phi}) is equal to
$$
\int_{[Z_{\beta,\eta}]}\int_{N^\eta_\ell(\BA)\bks N_\ell(\BA)}\phi^{\psi_{Z'_\ell,\kappa}}_s(\eps_{0,\beta}\eta n z h)\psi^{-1}_{\ell,w_0}(n)\ud n~
\psi^{-1}_{\beta-1,y_{-\kappa}}(z)\ud z.
$$
By the definition of $\phi^{\psi_{Z'_\ell,\kappa}}_s$ (see \eqref{FC-Z-lk}), we may combine the two integrals over $[Z_{\beta,\eta}]$ and $[N^{\eta}_\ell]$.
As a subgroup of $P_{\hat{a}}$, $(\eps_{0,\beta}\eta)N^\eta_{\ell}Z_{\beta,\eta}(\eps_{0,\beta}\eta)^{-1}$ consists of elements of the form
$$
\begin{pmatrix}
d&d_1&(y_6)'_{*,*}&&&&\\
0&1&(y_6)'_{\beta,*}&&&&\\
0&0&c^*&&&&\\	
&&&I_{\Fm}&&&\\
&&&&c&(y_6)_{*,\beta}&(y_6)_{*,*}\\
&&&&0&1&d'_1\\
&&&&0&0&d
\end{pmatrix},
$$
where the notation is  the same as in \cite[(3.42)]{JZ14}. It follows that $N^\eta_{\ell}Z_{\beta,\eta}\cong Z_a$, where
$Z_a$ is the maximal upper-triangular unipotent subgroup of $G_{E/F}(a)$, which is regarded canonically as a subgroup of $P_{\hat{a}}$.
Combining the integrals over $[Z_{\beta,\eta}]$ and $[N^{\eta}_\ell]$, we define
\begin{equation}\label{eq:W-tau}
\phi^{Z_{a},\kappa}_s(h):=\int_{[Z_a]}\phi_s(zh)\psi_{Z_{a},\kappa}(z)\ud z,
\end{equation}
where the character $\psi_{Z_{a},\kappa}(z)$ is given by
\begin{equation}\label{eq:whittaker-tau}
\psi(-z_{1,2}-\cdots-z_{\beta-1,\beta}+(-1)^{\Fm+1}\frac{\kappa}{2}z_{\beta,\beta+1}+z_{\beta+1,\beta+2}+\cdots+z_{a-1,a})
\end{equation}
with $\beta=a-\ell$, which is a non-degenerate character of $Z_a$. Hence $\phi_s\mapsto \phi^{Z_{a},\kappa}_s$ can be regarded as
an $H_{a+m}(\BA)$-equivariant isomorphism from the induced representation $\RI_s(\tau,\sigma)$ onto the induced representation
$\RI_s(\CW_\tau,\sigma)$, where $\CW_\tau:=\CW_\tau^{\ovl{\psi}_{Z_{a},\kappa}}$ is the global Whittaker model
of $\tau$ with respect to the non-degenerate character $\ovl{\psi}_{Z_{a},\kappa}$.

For $\Re(s)$ sufficiently large, the integrals we considered here are absolutely convergent, which allow us to switch the order of integration.
After combining the integrals $[Z_{\beta,\eta}]$ and $[N^{\eta}_\ell]$ and by \eqref{eq:Phi}, similar to \cite[(3.41)]{JZ14}, we obtain that
\begin{equation}\label{eq:CJ-0}
\int_{[Z_{\beta,\eta}]}\Phi_s(zh)\psi^{-1}_{\beta-1,y_{-\kappa}}(z)\ud z
=
\int_{U^{-}_{a,\eta}(\BA)}\phi^{Z_{a},\kappa}_s(n\eps_{0,\beta}\eta h)\psi_{(\Fm+a+\ell,a-\ell)}(n)\ud n.
\end{equation}
Here $U^{-}_{a,\eta}$ consists of matrices of the form
\begin{equation}\label{eq:U-j-eta}
\begin{pmatrix}
I_{a-\ell}&&&&\\
0&I_{\ell}&&&\\
0&x'_{2}&I_{\Fm}&&\\
x_{1}&x_{3}&x_{2}&I_{\ell}&\\
0&x'_{1}&0&0&I_{a-\ell}
\end{pmatrix},
\end{equation}
which is a section for the domain of integration, $N^{\eta}_{\ell}\bks N_{\ell}$,
under the adjoint action of $\eps_{0,\beta}\eta$. The character
$\psi_{(\Fm+a+\ell,a-\ell)}$ of $U^{-}_{a,\eta}$ is given by
$$
\psi_{(\Fm+a+\ell,a-\ell)}(n)=\psi(n_{\Fm+a+\ell,a-\ell})
$$
where $n_{\Fm+a+\ell,a-\ell}=(x_{1})_{\ell,a-\ell}$.

Note that the adelic integration over $U^{-}_{a,\eta}$ in
\eqref{eq:CJ} converges absolutely due to the same reason as that of the quasi-split orthogonal group case considered in Appendix II to \S 5 of \cite{GPSR97},
and also that in \cite{S93} and \cite[Theorem 3.1]{S-I}, for instance.
Another way to confirm the absolute convergence is that after taking the absolute value of the integrand, the integral is the product of local intertwining operators, which converges absolutely for $\Re(s)$ sufficiently large.

From \eqref{eq:CJ-0}, we define for $h\in G^{w_0}_{m^{-}}(\BA)$
\begin{equation}\label{eq:CJ}
\RJ_s(\phi_s)(h)
:=
\int_{U^{-}_{a,\eta}(\BA)}\phi^{Z_{a},\kappa}_s(n\eps_{0,\beta}\eta h)\psi_{(\Fm+a+\ell,a-\ell)}(n)\ud n.
\end{equation}
Following a similar argument as in Theorem 3.1 in \cite{S-I} for split special orthogonal groups,
we verify the absolute convergence of $\RJ_s(\phi_s)$ for $\Re(s)$ sufficiently large  in the part of the proof of the absolute convergence of local zeta integrals for more general groups over all local fields in \cite{JSdZ}.
Moreover, the function $\RJ_s(\phi_s)$ enjoys the following property.

\begin{prop}\label{Js}
For $\Re(s)$ sufficiently large, 
the mapping $$\RJ_s\ :\ \phi_s\mapsto \RJ_s(\phi_s)$$ composing with the restriction to $G_{m^-}^{w_0}(\BA)$ gives $G_{m^-}^{w_0}(\BA)$-equivariant homomorphism
from $\RI_s(\tau,\sig)$ as defined in \eqref{Is} to $\RI_s^{w_0}(\psi_{\beta-1,y_{-\kappa}},\sig^{w^\ell_q})$, which is the the following smooth induction
$$
\RI_s^{w_0}(\psi_{\beta-1,y_{-\kappa}},\sig^{w^\ell_q})
:=
\Ind^{G^{w_0}_{m^-}(\BA)}_{N^\eta_{\ell,\beta-1}(\BA)H_m^\eta(\BA)}(\psi_{\beta-1,y_{-\kappa}}\otimes\sig^{w^\ell_q},s)
$$
where the character $\psi_{\beta-1,y_{-\kappa}}$ is given as in \eqref{eq:F-pi}.
\end{prop}

\begin{proof}
For $g\in G_{m^-}^{w_0}(\BA)$, the function $\RJ_s(\phi_s)(g)$ is smooth on $G_{m^-}^{w_0}(\BA)$. The left quasi-invariance with respect to
$(N^\eta_{\ell,\beta-1},\psi_{\beta-1,y_{-\kappa}})$ is clear from the calculation above Proposition \ref{Js}. It remains to check
the left equivariant property for $x\in H^\eta_m(\BA)$. By definition, we have
\begin{eqnarray*}
\RJ_s(\phi_s)(xg)
&=&
\int_{U^{-}_{a,\eta}(\BA)}\phi^{Z_{a},\kappa}_s(n\eps_{0,\beta}\eta xg)\psi_{(\Fm+a+\ell,a-\ell)}(n)\ud n\\
&=&
\int_{U^{-}_{a,\eta}(\BA)}\phi^{Z_{a},\kappa}_s(n\eps_{0,\beta}\eta x\eta^{-1}\eta g)\psi_{(\Fm+a+\ell,a-\ell)}(n)\ud n.
\end{eqnarray*}
Since $\eta x\eta^{-1}$ belongs to $H_m(\BA)$, it is enough to understand the group $\eps_{0,\beta} H_m\eps_{0,\beta}^{-1}$.
According to \eqref{eq:eps-beta} where $\eps_{0,\beta}$ is explicitly given,
$$
\eps_{0,\beta} H_m\eps_{0,\beta}^{-1}
=w_q^\ell H_mw_q^{-\ell}.
$$
It is clear that $\RJ_s(\phi_s)(xg)=\sig^{w^\ell_q}(x)\cdot\RJ_s(\phi_s)(g)$. We are done.
\end{proof}

Note that by \eqref{eq:eps-beta}, the adjoint action of $w_q$ on $\sig$ is trivial except when $H_m$ is an even special orthogonal group.
In this case, $\det(w_q)=-1$  and the adjoint action of $w_q$ is the non-trivial action of $\RO(W_{\Fm},q)/H_m$ on $\sig$.
In other words, $w_q$ restricted to $\RO(W_{\Fm},q)$ is a choice of $\varepsilon$ as defined in Page \pageref{pg:eps}.
For simplicity, denote
\begin{equation}\label{eq:sig'}
\sig':=\sig^{w^\ell_q}.	
\end{equation}
We note that if $H_m$ is an even special orthogonal group and $\ell$ is odd, then $\{\sig,\sig^{w_q}\}$ is an $\wt{\RO}(G)$-orbit of
$\sig$ as discussed in Page \pageref{pg:star}. Therefore, for any fixed $h\in G^{w_0}_{m^-}(\BA)$, the function $\RJ_s(\phi_s)(xh)$, as
a function in $x$, belongs to the space $V_{\sig'}$ of cuspidal automorphic forms, which is the space of the cuspidal automorphic representation
$\sigma$, up to a twist by $\eps_{0,\beta}$. Hence we obtain the following composition of $G^{w_0}_{m^-}(\BA)$-equivariant mappings
\begin{equation}\label{CJ}
\RI_s(\tau,\sigma)\rightarrow \RI_s(\CW_\tau,\sigma)\rightarrow \RI_s^{w_0}(\psi_{\beta-1,y_{-\kappa}},\sig').
\end{equation}

We summarize the calculation above and state the formula for the global zeta integral in the following
\begin{prop}\label{pre-ep-a>l}
With the notation above and for $\Re(s)$ large, the global zeta integral $\CZ(s,\phi_{\tau\otimes\sig},\varphi_{\pi},\psi_{\ell,w_{0}})$ has the
following expression:
\begin{equation}\label{eq:zeta}
\CZ(s,\phi_{\tau\otimes\sig},\varphi_{\pi},\psi_{\ell,w_{0}})=
\int_{g}
\int_{[H^{\eta}_{m}]}\CF^{\psi^{-1}_{\beta-1,y_{-\kappa}}}(\varphi_\pi)(xg)
\RJ_s(\phi_s)(xg)\ud x\ud g
\end{equation}
where $\ud g$ is over $R^{\eta}_{\ell,\beta-1}(\BA)\bks G_{m^-}^{w_0}(\BA)$ with $R^{\eta}_{\ell,\beta-1}:=H^{\eta}_{m}\ltimes N^\eta_{\ell,\beta-1}$, and $[H^{\eta}_{m}]:=H^{\eta}_{m}(F)\bks H^{\eta}_{m}(\BA)$, as defined in \eqref{eq:H-eta}.
\end{prop}

Note that the pairing
\begin{equation*}
\CP^{\psi^{-1}_{\beta-1,y_{-\kappa}}}(\varphi_\pi,\varphi_{\sig'})=\int_{[H^\eta_m]}\CF^{\psi^{-1}_{\beta-1,y_{-\kappa}}}(\varphi_\pi)(x)\varphi_{\sig'}(x)\ud x
\end{equation*}
defines a Bessel period for the pair $(\pi,\sig')$,
where $\varphi_{\sigma'}$ is a cuspidal automorphic form in $\CC_\sig$ under the conjugation of $w^\ell_q$,
and belongs to the space
\[
\Hom_{R^{\eta}_{\ell,\beta-1}(\BA)}(\pi\otimes\sig',\psi^{-1}_{\beta-1,y_{-\kappa}}).	
\]
In this way, the inner integral of the integration formula \eqref{eq:zeta} for the global zeta integral
$\CZ(s,\phi_{\tau\otimes\sig},\varphi_{\pi},\psi_{\ell,w_{0}})$ can be written as 
\begin{equation}\label{inn-zeta}
\int_{[H^{\eta}_{m}]}\CF^{\psi^{-1}_{\beta-1,y_{-\kappa}}}(\varphi_\pi)(xg)\RJ_s(\phi_s)(xg)\ud x=\CP^{\psi^{-1}_{\beta-1,y_{-\kappa}}}(g\ast\varphi_{\pi},\RJ_s(\phi_s)(g)),
\end{equation}
where $g\ast\RJ_s(\phi_s)(1)$ is in $\sig'$ by Proposition \ref{Js}. From this expression, we deduce the following easy, but important vanishing result.

\begin{cor}\label{zeroa>l}
If the Bessel period for $(\pi,\sig')$ is zero, then the global zeta integral $\CZ(s,\phi_{\tau\otimes\sig},\varphi_{\pi},\psi_{\ell,w_{0}})$ is zero
for all choices of data.
\end{cor}

From now on, it is meaningful to assume that the Bessel period $\CP^{\psi^{-1}_{\beta-1,y_{-\kappa}}}$ for $(\pi,\sig')$ is nonzero.
By the uniqueness of local Bessel functionals, which
is proved in \cite{AGRS}, \cite{SZ}, \cite{GGP12}, and \cite{JSZ},
we have the Euler factorization:
$\CP^{\psi^{-1}_{\beta-1,y_{-\kappa}}}=\otimes_\nu\CP_\nu^{\psi^{-1}_{\beta-1,y_{-\kappa}}}$. It follows that 
the integral in \eqref{inn-zeta} can be written as an Euler product of local
Bessel functionals when $\varphi_\pi$ and $\phi_s=\phi_{\tau\otimes\sig,s}$ are factorizable vectors. More precisely, we take $\varphi_\pi=\otimes_\nu\varphi_{\pi_\nu}$ and $\phi_{\tau\otimes\sig,s}=\otimes_\nu\phi_{{\tau_{\nu}}\otimes\sig_{\nu},s}$. Then
\begin{equation}\label{factorizeWtau}
\phi^{Z_{a},\kappa}_s(h)=\prod_\nu f_{\CW_{\tau_\nu}^\kappa\otimes\sig_\nu,s}(h_\nu),
\end{equation}
where $f_{\CW_{\tau_\nu}^\kappa\otimes\sig_\nu,s}$ belongs to the space of induced representation
\begin{equation}\label{IndWtau}
\RI_{s,\nu}(\CW_{\tau_\nu},\sigma_\nu)=\Ind^{H_{a+m}(F_\nu)}_{P_{\hat{a}}(F_\nu)}(|\cdot|^s\CW_{\tau_\nu}\otimes\sig_\nu).
\end{equation}
It follows that
\begin{eqnarray}\label{epa>l} \label{innerbp}
&&\int_{[H^{\eta}_{m}]}\CF^{\psi^{-1}_{\beta-1,y_{-\kappa}}}(\varphi_\pi)(xg)\RJ_s(\phi_s)(xg)\ud x\nonumber\\
&=&
\prod_\nu\CP_\nu^{\psi^{-1}_{\beta-1,y_{-\kappa}}}(g_\nu\ast\varphi_{\pi_\nu},\RJ_{s,\nu}(g_\nu\ast \phi_{s})(1)),
\end{eqnarray}
where at each local place $\nu$, $\CP_\nu^{\psi^{-1}_{\beta-1,y_{-\kappa}}}$ is the unique local Bessel functional up to scalar,
and  $\RJ_{s,\nu}$ is the $\nu$-local twisted Jacquet module associated to the adelic integration over
$U^-_{a,\eta}(\BA)$ that defines $\RJ_s$ in \eqref{eq:CJ}. 

Now, for $\Re(s)$ sufficiently large,  we define the local zeta integral for
this case by
\begin{equation}\label{localzetaa>l}
\CZ_\nu(s,\phi_{\tau\otimes\sig},\varphi_{\pi},\psi_{\ell,w_{0}})
:=
\int_{g_\nu}\CP_\nu^{\psi^{-1}_{\beta-1,y_{-\kappa}}}(g_\nu\ast\varphi_{\pi_\nu},\RJ_{s,\nu}(\phi_{s,\nu})(g_\nu))
\ud g_\nu,
\end{equation}
where the integration is taken over $R^{\eta}_{\ell,\beta-1}(F_\nu)\bks G_{m^-}^{w_0}(F_\nu)$.

\begin{thm}[$a>\ell$]\label{thm:j>l}
Let $E(\phi_{\tau\otimes\sig},s)$ be the Eisenstein series on $H_{m+a}(\BA)$ as in
\eqref{es} and let $\pi$ belong to $\CA_\cusp(G_{m^-}^{w_0})$. Then the global zeta integral
$\CZ(s,\phi_{\tau\otimes\sig},\varphi_{\pi},\psi_{\ell,w_{0}})$
converges absolutely and is holomorphic at $s$ where the Eisenstein series $E(h,\phi,s)$ has no poles.

Assume that $\varphi_\pi=\otimes_\nu\varphi_{\pi_\nu}$ and $\phi_s=\otimes_\nu\phi_{{\tau_{\nu}}\otimes\sig_{\nu},s}=\otimes_\nu\phi_{s,\nu}$ are factorizable vectors, which yields factorization in \eqref{factorizeWtau},
and that the pair $(\pi,\sig')$ has a nonzero Bessel period. Then for the real part of $s$ sufficiently large, it can be written as an Euler product:
$$
\CZ(s,\phi_{\tau\otimes\sig},\varphi_{\pi},\psi_{\ell,w_{0}})=
\prod_{\nu}\CZ_\nu(s,\phi_{\tau\otimes\sig},\varphi_{\pi},\psi_{\ell,w_{0}})
$$
where the local zeta integral $\CZ_\nu(s,\phi_{\tau\otimes\sig},\varphi_{\pi},\psi_{\ell,w_{0}})$ is defined in \eqref{localzetaa>l}.
\end{thm}

Note that this Euler decomposition of the global zeta integral in terms of the local zeta integrals is a more explicit realization of the abstract
Euler decomposition as in \eqref{ed-bf}, and
further properties of the local and global zeta integrals will be discussed in Section \ref{sec-rnbp}.

\subsection{Case $a\leq \ell$}\label{la}
We study briefly the global zeta integral as given in \eqref{eq:gzi-a<l}, which is not needed for the current paper, but for completeness and future applications.
In principle, it is similar to the case of $a>\ell$. We follow the discussion in Section 3.4 in \cite{JZ14} to give necessary steps in order to
show that the global zeta integral can be factorized as an Euler product of local zeta integrals.

First, we have
$$
N^{\eta}_\ell=\cpair{ \begin{pmatrix}
c& 0& 0& 0&0\\
& b& y_4& z_4&0\\
& & I_{\Fm+2a-2\ell}& y'_4&0\\
& & & b^*&0\\
& & & &c^* 	
\end{pmatrix}\colon c\in Z_{a},~b\in Z_{\ell-a}
}.
$$
Write
$$
N_{a,\ell-a}=\cpair{
	\begin{pmatrix}
I_a& 0& 0& 0&0\\
& b& y_4& z_4&0\\
& & I_{\Fm+2a-2\ell}& y'_4&0\\
& & & b^*&0\\
& & & &I_a 	
\end{pmatrix}\colon b\in Z_{\ell-a}
}\subset N_{\ell-a}.
$$
Denote $\psi_{m,\ell-a;y_{\kappa}}$ to be the restriction to the subgroup $N_{a,\ell-a}$ of the character $\psi_{\ell,y_{\kappa}}$.
By the decomposition $N^\eta_{\ell}=Z_{a}N_{a,\ell-a}$,
the inner integration over $[N^\eta_{\ell}]$ in \eqref{eq:gzi-a<l} can be written as
\begin{equation}\label{eq:CF-a<l}
\int_{[N^\eta_{\ell}]}\phi_s(\epsilon_{0,0}\eta u h)\psi^{-1}_{\ell,w_0}(u)\ud u=
\CF^{\psi^{-1}_{m,\ell-a;y_{\kappa}}}(\phi_s^{\psi_{Z_a,\kappa}})(\epsilon_{0,0}\eta  h),	
\end{equation}
where
$$
\phi^{\psi_{Z_a,\kappa}}_s(h)=\int_{[Z_a]}\phi_s(z)\psi_{Z_a,\kappa}(z)\ud z
$$
with
$$
\psi_{Z_a,\kappa}(z)=\psi(z_{1,2}+z_{2,3}+\cdots+z_{a-1,a}).
$$
Here $\CF^{\psi^{-1}_{m,\ell-a;y_{\kappa}}}$ defines a Bessel-Fourier coefficient of $\sig$. As in \eqref{factorizeWtau}, we have
\begin{equation}\label{factorW}
\phi^{Z_{a},\kappa}_s(h)=\prod_\nu f_{W_{\tau_\nu}^\kappa\otimes\sig_\nu,s}(h_\nu),
\end{equation}
with $f_{W_{\tau_\nu}^\kappa\otimes\sig_\nu,s}$ belonging to the space of induced representation
$$
\RI_{s,\nu}(\CW_{\tau_\nu},\sigma_\nu)=\Ind^{H_{a+m}(F_\nu)}_{P_{\hat{a}}(F_\nu)}(|\cdot|^s\CW_{\tau_\nu}\otimes\sig_\nu).
$$

After changing variables, we obtain
\begin{align}
&\CZ(s,\phi_{\tau\otimes\sig},\varphi_{\pi},\psi_{\ell,w_{0}}) \label{eq:gzi-a<l-1} \\
=&\int_{N^\eta_{\ell}(\BA)\bks N_{\ell}(\BA)} \int_{[G^{w_0}_{m^-}]}
\varphi_\pi(h) \CF^{\psi^{-1}_{m,\ell-a;y_{\kappa}}}(\phi^{\psi_{Z_a,\kappa}}_s)(\epsilon_{0,0}\eta hn )\psi^{-1}_{\ell,w_0}(n)\ud h \ud n. \nonumber
\end{align}

Note that the integral \eqref{eq:gzi-a<l-1} is absolutely convergent for $\Re(s)$ sufficient large. The inner integration over $[G^{w_0}_{m^-}]$
converges absolutely because of rapid decay of the cuspidal automorphic form $\varphi_\pi$. The outer integration over the quotient $N^\eta_{\ell}(\BA)\bks N_{\ell}(\BA)$ converges absolutely due to the reason that explained for \eqref{eq:CJ}.
For convenience, we write down explicitly the quotient $N^\eta_{\ell}\bks N_\ell$ and the restriction of $\psi_{\ell,y_{\kappa}}$.
The quotient $N^\eta_{\ell}\bks N_\ell$ is isomorphic to the subgroup consisting of elements
$$
\begin{pmatrix}
I_a&x_1&x_2&x_3&x_4\\
&I_{\ell-a}&0&0&x'_3\\
&&I_{\Fm+2a-2\ell}&0&x'_2\\
&&&I_{\ell-a}&x'_1\\
&&&&I_{a}	
\end{pmatrix}.
$$
The restriction of $\psi_{\ell,y_\kappa}$ is $\psi((x_1)_{a,1})$.

It is clear that the inner integration in the variable $h$ in \eqref{eq:gzi-a<l-1} gives a Bessel period for the pair $(\sigma, \pi)$.
Hence we obtain the following.

\begin{cor}\label{zeroa<l}
If the Bessel period for $(\sig,\pi)$ is zero, then the global zeta integral $\CZ(s,\phi_{\tau\otimes\sig},\varphi_{\pi},\psi_{\ell,w_{0}})$ is zero
for all choices of data.
\end{cor}

By the uniqueness of the local Bessel models, this Bessel period may be written as an Euler product of local Bessel functionals for factorizable
input data. More precisely, we take $\varphi_\pi=\otimes_\nu\varphi_{\pi_\nu}$ and $\phi_s=\otimes_\nu\phi_{s,\nu}$ and write
\begin{equation}\label{epa<l}
\int_{[G^{w_0}_{m^-}]}
\varphi_\pi(h)\RJ_s(\phi_s)(hn) \ud h
=
\prod_\nu\CP_\nu^{\psi^{-1}_{m,\ell-a;y_\kappa,\nu}}(n_\nu\ast f_{W_{\tau_\nu}^\kappa\otimes\sig_\nu,s},\varphi_{\pi_\nu}),
\end{equation}
where $\RJ_s(\phi_s)(hn):=\CF^{\psi^{-1}_{m,\ell-a;y_{\kappa}}}(\phi^{\psi_{Z_a,\kappa}}_s)(\epsilon_{0,0}\eta hn )$, and for each local place $\nu$,
$\CP_\nu^{\psi^{-1}_{m,\ell-a;y_\kappa,\nu}}$ is the unique functional, up to scalar, in the space
$$
\Hom_{G^{w_0}_{m^-}(F_\nu)\ltimes N_{a,\ell-a}(F_\nu)}(\pi_\nu\otimes\sigma_\nu, \psi^{-1}_{m,\ell-a;y_{\kappa},\nu}).
$$
In this way, we define the local zeta integral by
\begin{equation}\label{localzetaa<l}
\CZ_\nu(s,\phi_{\tau\otimes\sig},\varphi_\pi,\psi_{\ell,w_0})
:=
\int_{n_\nu}\CP_\nu^{\psi^{-1}_{m,\ell-a;y_\kappa,\nu}}(n_\nu\ast f_{W_{\tau_\nu}^\kappa\otimes\sig_\nu,s},\varphi_{\pi_\nu})
\psi^{-1}_{\ell,w_0,\nu}(n_\nu)\ud n_\nu,
\end{equation}
where the integration is taken over $N^\eta_{\ell}(F_\nu)\bks N_{\ell}(F_\nu)$,
and obtain the following

\begin{thm}[$a\leq\ell$]\label{thm:j<l}
With the notation as in Theorem \ref{thm:j>l}, the global zeta integral
$\CZ(s,\phi_{\tau\otimes\sig},\varphi_{\pi},\psi_{\ell,w_{0}})$
converges absolutely and is holomorphic at $s$ where the Eisenstein series $E(h,\phi,s)$ has no poles.

Assume that $\varphi_\pi=\otimes_\nu\varphi_{\pi_\nu}$ and $\phi_s=\otimes_\nu\phi_{{\tau_{\nu}}\otimes\sig_{\nu},s}=\otimes_\nu\phi_{s,\nu}$ are factorizable vectors, which yields the factorization in \eqref{factorW}, and that the pair $(\sig,\pi)$ has a nonzero Bessel period. Then for the real part of $s$ sufficiently large, it can be written as an Euler product:
$$
\CZ(s,\phi_{\tau\otimes\sig},\varphi_{\pi},\psi_{\ell,w_{0}})=\prod_{\nu}\CZ_\nu(s,\phi_{\tau\otimes\sig},\varphi_{\pi},\psi_{\ell,w_{0}})
$$
where the local zeta integral $\CZ_\nu(s,\phi_{\tau\otimes\sig},\varphi_{\pi},\psi_{\ell,w_{0}})$ is defined in \eqref{localzetaa<l}.
\end{thm}
Note that this Euler decomposition of the global zeta integral in terms of the local zeta integrals is a more explicit realization of the abstract
Euler decomposition as in \eqref{ed-bf}. Since this case is not directly used in this paper, we refer more detailed explanation on a special case
to Section 3.4 {\cite{JZ14}}.

\subsection{Unramified local zeta integrals and local $L$-factors}\label{sec-ulzi-llf}
We define the local $L$-factors for the cases under consideration and recall the results from the unramified computations of the local zeta integrals
as considered in \cite{JZ14}, \cite{S-I}, \cite{S-II} and \cite{JSdZ}.

Note that the group $G_{m^-}^{w_0}$
from the construction in Section \ref{sec-pif} yields all the groups $G_{n}$ as listed in the beginning of this section.
Hence there exists a datum such that
$G_{m^-}^{w_0}$ is isomorphic to a given $G_n$ over $F$. From now on, we assume that $\pi\in\CA_\cusp(G_{m^-}^{w_0})$ and
$\sigma\in\CA_\cusp(H_m)$ have generic global Arthur parameters, respectively.

As in \eqref{tau8}, we have
$\tau=\tau_1\boxplus\tau_2\boxplus\cdots\boxplus\tau_r$,
which is an irreducible generic isobaric automorphic representation of $G_{E/F}(a)(\BA_F)$. We define
\begin{equation}\label{cl}
\CL(s,\tau_\nu,\pi_\nu,\sig_\nu;\rho)
=\frac{L(s+\frac{1}{2},\tau_{\nu}\times\pi_\nu)}
{L(s+1,\tau_{\nu}\times\sig_\nu)L(2s+1,\tau_{\nu},\rho)},
\end{equation}
where
$\rho=\wedge^2$ if $H_{m+a}$ is an even orthogonal group;
$\rho=\sym^2$ if $H_{m+a}$ is an odd orthogonal group;
$\rho=\Asai\otimes \xi^m$ if $H_{m+a}$ is a unitary group.

Some remarks on the local $L$-functions are in order. At archimedean local places or at unramified local places, the local $L$-functions in
\eqref{cl} are well defined. The main concern here is at the ramified finite local places. Formally, one may take the G.C.D. of the
ramified local zeta integrals as the definition or take the one from the normalization of the local intertwining operators from the
Eisenstein series in the global zeta integrals. This of course needs the full theory of the local zeta integrals, which is not available
at this moment for general representations $\pi$ and $\sigma$. On the other hand,
since both $\pi$ and $\sigma$ are assumed to be cuspidal and to have generic
global Arthur parameters (\cite[Chapter 9]{A13} and \cite{KMSW}), we may follow \cite{A13}, \cite{Mk15} and \cite{KMSW} to define
the local $L$-functions in \eqref{cl} at ramified finite local places in terms of the local $L$-functions of the corresponding
localization of the global Arthur parameters. We refer to \cite{M12} for discussion with more general parameters when the groups are $F$-quasisplit.

We note that only when $H_m$ is an even special orthogonal group,
the twisted representation $\sig'_\nu$ (see \eqref{eq:sig'}) may not be equivalent to $\sig_\nu$ if $w^{\ell}_{q}\ne I$.
However, their corresponding local $L$-parameters are $\RO_\Fm(\BC)$-conjugate, since $H^\vee_m(\BC)=\SO_{\Fm}$ is the complex dual group of $H_m$.
It follows that $\CL(s,\tau_\nu,\pi_\nu,\sig_\nu;\rho)$ and the local $L$-functions $L(s,\tau_\nu\times\sig_\nu)$ are the same when
the local factors of $\sig_\nu$ replaced by those of $\sig'_\nu$.

Recall that $\sig'=\sig^{w^{\ell}_{q}}$ in \eqref{eq:sig'} when $a>\ell$,
and also denote $\sig'=\sig$ when $a\leq \ell$ for notational consistence.
The Euler products in Theorems \ref{thm:j>l} and \ref{thm:j<l} can be uniformly rewritten as
$$
\CZ(s,\phi_{\tau\otimes\sig'},\varphi_{\pi},\psi_{\ell,w_{0}})
=
\prod_{\nu}\CZ_\nu(s,\phi_{\tau\otimes\sig'},\varphi_{\pi},\psi_{\ell,w_{0}}).
$$

Next, we state the result of unramified calculation for the local zeta integrals. The full detail of the computation in this generality
will appear in our joint work with D. Soudry (\cite{JSdZ}), based on the idea of Soudry as developed in his work (\cite{S-ICM, S-I, S-II}). Many special cases have been treated in \cite{GPSR97} and \cite{JZ14}, for instance.

\begin{thm}[Unramified Computation]\label{urmL}
With all data being unramified,
the local unramified zeta integral $\CZ_\nu(s,\phi_{\tau\otimes\sig'},\varphi_{\pi},\psi_{\ell,w_{0}})$
has the following expression:
\begin{eqnarray}
\CZ_\nu(s,\phi_{\tau\otimes\sig'},\varphi_{\pi},\psi_{\ell,w_{0}})=\CL(s,\tau_\nu,\pi_\nu,\sig_\nu;\rho)
\end{eqnarray}
where $f_{W_{\tau_\nu}^\kappa\otimes\sig_\nu,s}$, $\phi_{\sig_\nu}$ and $\varphi_{\pi_\nu}$ are the spherical vectors,
which are so normalized that the corresponding spherical functions are equal to $1$ at the identity element.
\end{thm}

Let $S$ be a finite set of places consisting of  all ramified places of relevant data and all archimedean places such that for $\nu\notin S$ all data are unramified.
Following Theorem \ref{urmL}, we obtain that
\begin{eqnarray}\label{formula1}
\CZ(s,\phi_{\tau\otimes\sig'},\varphi_{\pi},\psi_{\ell,w_{0}})
&=&\prod_{\nu\in S}\CZ_\nu(s,\cdot)\cdot\prod_{\nu\notin S}\CZ_\nu(s,\cdot)\nonumber\\
&=&\CZ_S(s,\cdot)\cdot\CL^S(s,\tau,\pi,\sig;\rho)
\end{eqnarray}
Here we set $\CZ_\nu(s,\cdot):=\CZ_\nu(s,\phi_{\tau\otimes\sig'},\varphi_{\pi},\psi_{\ell,w_{0}})$, $\CZ_S(s,\cdot):=\prod_{\nu\in S}\CZ_\nu(s,\cdot)$,
and $\CL^S(s,\cdot):=\prod_{\nu \notin S}\CL(s,\cdot_\nu)$.

\subsection{On even special orthogonal groups}\label{sec-esog}
We explain with more details the twists that we get in the case of even special orthogonal groups.
We follow the notation from Section 2 of \cite{JZ14}.
First, $P_j$ is the standard parabolic subgroup of $\SO_{4a+2m}$ with Levi subgroup isomorphic to $\GL_{\ell}\times\SO_{4n+2m-2\ell}(W_\ell)$.
Here $\SO_{4n+2m-2\ell}(W_\ell)$ preserves the quadratic space
$$
W_\ell=\Span\{e^{\pm}_{\ell+1},\dots,e^{\pm}_{\Fr_{\Fm}-1},e^{\pm}_{\Fr_{\Fm}}\}\oplus V_0.
$$
$G$ is the stabilizer of $y_{\kappa}$ preserving the quadratic space
$$
W_\ell\cap y^{\perp}_{\kappa}=\Span\{e^{\pm}_{\ell+1},\dots,e^{\pm}_{\Fr_{\Fm}-1},y_{-\kappa}\}\oplus V_0.
$$
The anisotropic kernel of $W_\ell\cap y^{\perp}_{\kappa}$ is a subspace of $Fy_{-\kappa}\oplus V_0$.
The inner period over $\pi$ is arisen from the open double coset of $P_j\bks \SO_{4n+2m}/G\cdot N_\ell$.
Recall that we choose the following representative $\eta$ for this coset
$$
\begin{pmatrix}
I_\ell&&&&&&\\
&0&I_{j-\ell}&&&&\\
&I_{\Fr_{\Fm}-j}&0&&&&\\
&&&I_{V_0}&&&\\
&&&&0&I_{\Fr_{\Fm}-j}&\\
&&&&I_{j-\ell}&0&\\
&&&&&&I_\ell	
\end{pmatrix}.
$$
Recall that $G^\eta=G\cap (\eta^{-1}P_j\eta)$.
Then under the conjugation of $\eta$,  $\SO_{2m}(\eta^{-1}W_j)$ is the subgroup of $G^\eta$, which preserves
$$
\Span\{e^{\pm}_{\ell+1},\dots,e^{\pm}_{\Fr_{\Fm}-j+\ell}\}\oplus V_0.
$$
For example, when $j=\ell+1$, then $(G,\SO_{2m}(\eta^{-1}W_j))$ is the Gross-Prasad pair.
That is, $\SO_{2m}(\eta^{-1}W_j)$ is the stabilizer of the anisotropic vector $y_{-\kappa}$.
Thus $\SO_{2m}(\eta^{-1}W_j)$ is isomorphic to $\SO_{2m}(W_j)$.


\section{Reciprocal Non-vanishing of Bessel Periods}\label{sec-rnbp}


The reciprocal non-vanishing of Bessel periods is to address the non-vanishing property of the Bessel periods for the pair $(\CE_{\tau\otimes\sig},\pi)$
and for the pair $(\pi,\sig)$, where $\CE_{\tau\otimes\sig}$ is the iterated residue at $s=\frac{1}{2}$ of the Eisenstein series
$E(\cdot,\phi_{\tau\otimes\sigma},s)$ as defined in \eqref{es}, and $\sig$ may have to be replaced by $\sig'$ as in \eqref{eq:sig'}.

\subsection{Residue of the Eisenstein series}
We recall the Eisenstein series $E(\cdot,\phi_{\tau\otimes\sigma},s)$ from \eqref{es}.
Assume as before that $\sig\in\CA_\cusp(H_m)$ has a generic global Arthur parameter $\phi_\sig$, and a cuspidal realization $\CC_\sig$ in case when the
cuspidal multiplicity is not one.
Let $\tau=\tau_1\boxplus\tau_2\boxplus\cdots\boxplus\tau_r$ be the irreducible unitary generic isobaric automorphic representation of $G_{E/F}(a)(\BA_F)$
associated to distinct $\tau_1, \tau_2,\cdots,\tau_r$, as given in \eqref{tau8}. Assume that the generic global Arthur parameter $\phi_\tau$ determined by
$\tau$ has a different parity with $\phi_\sig$. It follows that the $L$-function
$$
L(s,\tau\times\sig)=L(s,\phi_\tau\times\phi_\sig),
$$
as in \cite{A13}, is holomorphic at $s=\frac{1}{2}$.

We calculate the constant terms of $E(\cdot,\phi_{\tau\otimes\sigma},s)$.
According to the cuspidal support of $E(\cdot,\phi_{\tau\otimes\sigma},s)$,
among all of the constant terms that are not identically zero, the term that carries the highest order of the pole at $s=\frac{1}{2}$ is given by
the following global intertwining operator integral
\begin{equation}\label{ioi}
\CM(\omega_0,\tau\otimes\sigma,s)(\phi_{\tau\otimes\sigma})(g)
:=
\int_{U_{\hat{a}}(\BA)}
\lambda_s \phi_{\tau\otimes\sigma}(\omega_0^{-1}ng)dn,
\end{equation}
where $U_{\hat{a}}$ is the unipotent radical of the standard maximal parabolic subgroup $P_{\hat{a}}=M_{\hat{a}}U_{\hat{a}}$ with $M_{\hat{a}}=G_{E/F}(a)\times H_m$, and
the Weyl group element $\omega_0$ takes $U_{\hat{a}}$ to is opposite $U^-_{\hat{a}}$.
Following the calculation of Langlands (\cite{L71} and also \cite{Sh10}),
one may choose the factorizable section $\phi=\phi_{\tau\otimes\sigma}$ so that
$\CM(\omega_0,\tau\otimes\sigma,s)(\phi)$ can be written as
\begin{equation}\label{iof}
\CM(\omega_0,\tau\otimes\sigma,s)_S(\phi_S)\cdot
\frac{L^S(s,\tau\times\sigma)L^S(2s,\tau,\rho)}{L^S(1+s,\tau\times\sigma)L^S(1+2s,\tau,\rho)}\lam_{-s}\phi^S_{\omega_0(\tau\otimes\sig)},	
\end{equation}
where $\CM(\omega_0,\phi_{\tau\otimes\sigma},s)_S$ is the finite product of the local intertwining operators over $\nu\in S$, and
$\phi_S=\prod_{\nu\in S}\phi_{\tau_\nu\otimes\sig_\nu}$ and $\phi^S=\otimes_{\nu\not\in S}\phi_{\tau_\nu\otimes\sig_\nu}$.
Since the cuspidal automorphic representation $\sigma$ is assumed to have a generic global Arthur parameter, we define, following \cite{A13},
the local $L$-factors at $\nu\in S$ in terms of $\tau$ and the generic global Arthur parameter of $\sigma$. Then
we take the Shahidi normalization by defining, for each $\nu\in S$,
\begin{equation}\label{nliodfn}
\CN(\omega_0,\tau\otimes\sigma,s)_{\nu}
:=
\beta_{\nu}(s,\tau,\sigma,\psi_F;\rho)\cdot\CM(\omega_0,\tau\otimes\sigma,s)_\nu,
\end{equation}
where the local normalizing factor $\beta_{\nu}(s,\tau,\sigma,\psi_F;\rho)$ is defined to be
\begin{equation}\label{lnf}
\frac{L_{\nu}(1+s,\tau\times\sig)L_{\nu}(1+2s,\tau,\rho)\epsilon_{\nu}(s,\tau\times\sigma,\psi_F)\epsilon_{\nu}(2s,\tau,\rho,\psi_F)}
{L_{\nu}(s,\tau\times\sigma)L_{\nu}(2s,\tau,\rho)}.
\end{equation}
Hence we obtain the following:
$$
\CM(\omega_0,\tau\otimes\sigma,s)
=
\frac{\CN(\omega_0,\tau\otimes\sigma,s)\cdot L(s,\tau\times\sigma)L(2s,\tau,\rho)}
{L(1+s,\tau\times\sigma)L(1+2s,\tau,\rho)\epsilon(s,\tau\times\sigma)\epsilon(2s,\tau,\rho)}.
$$
We call $\CN(\omega_0,\tau\otimes\sigma,s)_{\nu}$ the normalized local intertwining operators.

\begin{thm}\label{nlio}
Let $\tau=\tau_1\boxplus\cdots\boxplus\tau_r$ be the irreducible isobaric automorphic representation of $G_{E/F}(a)(\BA)$ as in \eqref{tau8},
and $\sigma\in\CA_\cusp(H_m)$ of a generic global Arthur parameter $\phi_\sigma$. Then,
for each local place $\nu$ of $F$, the normalized local intertwining operator $\CN(\omega_0,\tau\otimes\sigma,s)_{\nu}$
from the induced space $\Ind^{H_{a+m}(F_\nu)}_{P_{\hat{a}}(F_\nu)}|\cdot|^s\tau_\nu\otimes\sig_\nu$ to $\Ind^{H_{a+m}(F_\nu)}_{P_{\hat{a}}(F_\nu)}|\cdot|^{-s} \tau^*_\nu\otimes\sig_\nu$
is holomorphic and nonzero for $\Re(s)\geq\frac{1}{2}$,
where $\tau^*_\nu=\iota(\tau_\nu)^\vee$ is
the contragredient of $\iota(\tau)$.
\end{thm}

We remark that when $H_m$ is $F$-quasisplit, much stronger result than what stated in Theorem \ref{nlio}
can be proved when $\sigma$ is also assumed to be generic (see \cite{CKPSS04}, for instance and also see \cite{M12}). We will prove
Theorem \ref{nlio} in Appendix B.

By Theorem \ref{nlio}, we have that the normalized global intertwining operator
$\CN(\omega_0,\tau\otimes\sigma,s)$ is holomorphic and nonzero for $\Re(s)\geq\frac{1}{2}$. We are able to study the
order of the pole at $s=\frac{1}{2}$ of the global intertwining operator $\CM(\omega_0,\tau\otimes\sigma,s)$.

In fact, it is easy to write
$$
L(2s,\tau,\rho)=\prod_{j=1}^rL(2s,\tau_j,\rho)\cdot\prod_{1\leq i<j\leq r}L(2s,\tau_i\times\tau_j^\iota).
$$
Since $\tau_1,\cdots,\tau_r$ are conjugate self-dual and distinct for all $1\leq i<j\leq r$, $L(2s,\tau_i\times\tau_j^\iota)$ is holomorphic and
nonzero at $s=\frac{1}{2}$. It follows that the $L$-function $L(2s,\tau,\rho)$ has a pole at $s=\frac{1}{2}$ of order $r$.
Since the generic global Arthur parameter $\phi_\tau$ associated to $\tau$ and the generic global Arthur parameter
$\phi_\sigma$ associated to $\sigma$ are in different parity, the $L$-function $L(s,\tau\times\sigma)$ must be of
symplectic type (\cite{GGP12}), and is holomorphic at $s=\frac{1}{2}$, but may have zero at $s=\frac{1}{2}$.
It follows that the global intertwining operator
$\CM(\omega_0,\tau\otimes\sigma,s)$ has a pole at $s=\frac{1}{2}$ of order at most $r$, and has the pole of order exactly $r$ if and only if
the $L$-function $L(s,\tau\times\sigma)$ is nonzero at $s=\frac{1}{2}$.
This implies that the Eisenstein series
$E(\cdot,\phi_{\tau\otimes\sigma},s)$ has a pole at $\frac{1}{2}$ of order at most $r$, and it has a pole of order $r$ at $s=\frac{1}{2}$ if and only if
$L(s,\tau\times\sigma)$ is nonzero at $s=\frac{1}{2}$. We summarize this result as follows.
\begin{prop}\label{esp}
Assume that $\sig\in\CA_\cusp(H_m)$ has a generic global Arthur parameter $\phi_\sig$.
Let $\tau=\tau_1\boxplus\tau_2\boxplus\cdots\boxplus\tau_r$ be the irreducible unitary generic isobaric automorphic representation of $G_{E/F}(a)(\BA_F)$
associated to distinct $\tau_1, \tau_2,\cdots,\tau_r$, as given in \eqref{tau8}. Assume that the generic global Arthur parameter $\phi_\tau$ determined by
$\tau$ has a different parity with $\phi_\sig$. Then the $L$-function $L(s,\tau\times\sig)$ is holomorphic at $s=\frac{1}{2}$, and
the Eisenstein series $E(\cdot, \phi_{\tau\otimes\sig},s)$ has a pole at $s=\frac{1}{2}$ of order at most $r$.
Moreover, $E(\cdot, \phi_{\tau\otimes\sig},s)$ has a pole at $s=\frac{1}{2}$ of order $r$
if and only if $L(s,\tau_i,\rho)$ has a pole at $s=1$ for $i=1,2,\cdots,r$,
and $L(s,\tau\times\sig)$ is nonzero at $s=\frac{1}{2}$.
\end{prop}

When the Eisenstein series $E(\cdot, \phi_{\tau\otimes\sig},s)$ has a pole at $s=\frac{1}{2}$ of order $r$, we denote by $\CE_{\tau\otimes\sig}$ the
$r$-th iterated residue at $s=\frac{1}{2}$ of $E(\cdot, \phi_{\tau\otimes\sig},s)$.

\subsection{Special data for Bessel periods}\label{sec-sdbps}
We are going to choose a set of special data in order to establish the reciprocal non-vanishing of the Bessel periods for
the pair $(\CE_{\tau\otimes\sig},\pi)$ and for the pair $(\pi,\sig)$.

Take as before the classical group $G_n=\Isom(V_\Fn,q)^\circ$. The group $G_n$ is a pure inner $F$-form of an $F$-quasisplit classical group
$G_n^*=\Isom(V^*_\Fn,q^*)^\circ$ of the same type. Here $\Fn=\dim_EV_\Fn=\dim_EV^*_\Fn$ and $n=[\frac{\Fn}{2}]$.
Recall from Section \ref{sec-dsap} that $N=\Fn^\vee$ is $\Fn$ if $G_n$ is
a unitary group or an even special orthogonal group; and is $\Fn-1$ if $G_n$ is an odd special orthogonal group.

Assume that $\pi\in\CA_\cusp(G_n)$ has a $G_n$-relevant, generic
global Arthur parameter $\phi\in\wt{\Phi}_2(G_n^*)$. As in \eqref{gap}, the generic global Arthur parameter $\phi$ determines
an irreducible unitary generic isobaric automorphic representation $\tau=\tau_1\boxplus\tau_2\boxplus\cdots\boxplus\tau_r$ of $G_{E/F}(N)(\BA_F)$,
as given in \eqref{tau8}. 
Recall that the sign $\kappa$ of $\xi$ is $+1$ for the global $A$-parameters of unitary groups, as explained in Section \ref{sec-dsap}.
Take a $G_n$-relevant partition
\begin{equation}\label{partition-5}
\udl{p}_{\ell_*}=[(2\ell_*+1)1^{\Fn-2\ell_*-1}]
\end{equation}
and consider the $\ell_*$-th Bessel module $\CF^{\CO_{\ell_*}}(\pi)$ of $\pi$, or $\CF^{\CO_{\ell_*}}(\CC_\pi)$ for a cuspidal realization $\CC_\pi$ of $\pi$.
Since $\pi$ is irreducible and cuspidal, $\CF^{\CO_{\ell_*}}(\pi)$ consists of rapidly decreasing automorphic functions on
$H_{\ell_*^-}^{\CO_{\ell_*}}(\BA)$, and hence is a sub-representation of $H_{\ell_*^-}^{\CO_{\ell_*}}(\BA)$ in the space of $L^2$-automorphic
functions on $H_{\ell_*^-}^{\CO_{\ell_*}}(\BA)$. Note that the group $H_{\ell_*^-}^{\CO_{\ell_*}}$ is a pure inner $F$-form of an $F$-quasisplit
$H_{\ell_*^-}^*$ with
$\ell_*^-=[\frac{\Fl_*^-}{2}]$ and $\Fl_*^-=\Fn-2\ell_*-1$.

We further assume that $\CF^{\CO_{\ell_*}}(\CC_\pi)$ is nonzero and
has the property that there exists  a $\sigma\in\CA_\cusp(H_{\ell_*^-}^{\CO_{\ell_*}})$ with a generic
global Arthur parameter, such that the inner product
\begin{equation}\label{bppi5}
\CP^{\psi_{\CO_{\ell_*}}}(\varphi_\pi,\varphi_\sig)
=\left<\CF^{\psi_{\CO_{\ell_*}}}(\varphi_\pi),\ol{\varphi}_\sigma\right>_{H_{\ell_*^-}^{\CO_{\ell_*}}}\neq 0,
\end{equation}
for some $\varphi_\pi\in\CC_\pi$ and $\varphi_\sigma\in\CC_\sigma$, where  $\CC_\sig$ is a cuspidal realization of $\sig$.

Note that the index $\ell_*$ may not be the {\sl first occurrence index} as described in
Conjecture \ref{bpconj}. In this generality, the discussion in this section can also be applied to the proof of one of the directions of
the global Gan-Gross-Prasad conjecture in Subsection \ref{sec-gggp}.

We take in this section that $m:=\ell_*^-$ and $\Fm:=\Fl_*^-=\Fn-2\ell_*-1$.
In the definition of global zeta integrals in Section \ref{sec-gzi}, we take
\begin{equation}\label{data1}
H_m=H_{\ell_*^-}^{\CO_{\ell_*}}\ \ \text{ and}\ \ a=N=\Fn^\vee,
\end{equation}
and take $H_{a+m}$ to be the classical group containing the Levi subgroup $G_{E/F}(a)\times H_m$.
To define the global zeta integrals, we take the partition
\begin{equation}\label{data3}
\udl{p}_{\kappa_*}=[(2\kappa_*+1)1^{2a+\Fm-2\kappa_*-1}]
\end{equation}
with $\kappa_*:=a-\ell_*-1$. It is a partition of type $(2a+\Fm,H_{a+m})$.

Since $a-\kappa_*=\ell_*+1>0$, we in the situation of Section \ref{sec-al}.
For any $F$-rational orbit $\CO_{\kappa_*}$ in the $F$-stable orbit $\CO_{\udl{p}_{\kappa_*}}^\st$,
we have the stabilizer $G_{m^-}^{\CO_{\kappa_*}}=G_{m^-}^{w_*}$ as in Proposition \ref{pre-ep-a>l}, with $(\kappa_*)^-=m^-$.
The integer $\Fm^-$, such that $m^-=[\frac{\Fm^-}{2}]$,
can be calculated as follows: By definition, we have
\begin{equation}\label{data2}
\Fm^-=2a+\Fm-2\kappa_*-1=\Fm+2(a-\kappa_*)-1.
\end{equation}
Since $a-\kappa_*=\ell_*+1$, we have $\Fm^-=\Fm+2\ell_*+1$. Since $\Fm=\Fn-2\ell_*-1$, we must have that $\Fm^-=\Fn$ and hence that $m^-=n$.
By Proposition \ref{piform}, and the relation of the three groups $(H_{a+m}, G_n, H_m)$, it is not hard to find the $F$-anisotropic vector
$w_*$ corresponding to the $F$-rational orbit $\CO_{\kappa_*}$ such that $G_n$ can be identified with the stabilizer $G_{m^-}^{\CO_{\kappa_*}}=G_{m^-}^{w_*}$.

Recall that $\CE_{\tau\otimes\sig}$ is the iterated residue at $s=\frac{1}{2}$ of the Eisenstein series $E(\cdot, \phi_{\tau\otimes\sig},s)$. The reciprocal
non-vanishing of the Bessel periods for the pair $(\CE_{\tau\otimes\sig},\pi)$ and for the pair $(\pi,\sig')$ is given below.

\begin{thm}[Reciprocal Non-vanishing of Bessel Periods]\label{th-rnbp}
Assume that $\sig\in\CA_\cusp(H_m)$ has a generic global Arthur parameter $\phi_\sig$.
Let $\tau=\tau_1\boxplus\tau_2\boxplus\cdots\boxplus\tau_r$ be the irreducible unitary generic isobaric automorphic representation of $G_{E/F}(a)(\BA_F)$
with $a=N=\Fn^\vee$, which determines a generic global Arthur parameter $\phi_\tau$ of $G_n^*$.
Assume that the residue $\CE_{\tau\otimes\sig'}$ is nonzero and $\pi\in\CA_\cusp(G_n)$ has a generic global Arthur parameter
$\phi_\tau$. 
Then the Bessel period
$\left<\varphi_\pi,\ol{\CF^{\psi_{\CO_{\kappa_*}}}(\CE_{\tau\otimes\sigma'})}\right>_{G_n}$ for the pair $(\CE_{\tau\otimes\sig'},\pi)$ is nonzero
for some choice of data
if and only if the Bessel period $\left<\CF^{\psi_{\CO_{\ell_*}}}(\varphi_\pi),\ol{\varphi}_{\sigma}\right>_{H_m}$ for the pair $(\pi,\sig)$ is nonzero for
some choice of data.
\end{thm}

By using Corollary \ref{zeroa>l}, it is easy to prove that if the Bessel period
$\left<\varphi_\pi,\ol{\CF^{\psi_{\CO_{\kappa_*}}}(\CE_{\tau\otimes\sigma'})}\right>_{G_n}$ is nonzero for some choice of data, then
the Bessel period $\left<\CF^{\psi_{\CO_{\ell_*}}}(\varphi_\pi),\ol{\varphi}_{\sigma}\right>_{H_m}$ is nonzero for
some choice of data. In fact, if $\left<\varphi_\pi,\ol{\CF^{\psi_{\CO_{\kappa_*}}}(\CE_{\tau\otimes\sigma'})}\right>_{G_n}$ is not identically zero,
then by replacing the residue $\CE_{\tau\otimes\sigma'}$ by the Eisenstein series $E(\cdot, \phi_{\tau\otimes\sig'},s)$, we obtain that
the global zeta integral $\CZ(s,\phi_{\tau\otimes\sig'},\varphi_{\pi},\psi_{\CO_{\kappa_*}})$ is not identically zero for $\Re(s)$ large. Hence
by Corollary \ref{zeroa>l}, the Bessel period $\left<\CF^{\psi_{\CO_{\ell_*}}}(\varphi_\pi),\ol{\varphi}_{\sigma}\right>_{H_m}$ is nonzero for
some choice of data.

The proof of the opposite direction is more technical. We have to know enough analytic properties of the local zeta integrals at the ramified and
the archimedean local places.

\subsection{Normalization of local zeta integrals}\label{sec-nlzi}
We continue our discussion of the global and local zeta integrals from Section \ref{sec-bpgzi} with special data as given in Section \ref{sec-sdbps}.
In particular, we will deal with the case where $a-\kappa_*=\ell_*+1>0$, which is the case of Section \ref{sec-al}. Recall from \eqref{formula1},
the global zeta integral has the following expression:
$$
\CZ(s,\phi_{\tau\otimes\sig'},\varphi_{\pi},\psi_{\CO_{\kappa_*}})
=\CZ_S(s,\phi_{\tau\otimes\sig'},\varphi_{\pi},\psi_{\CO_{\kappa_*}})\cdot\CL^S(s,\tau,\pi,\sig;\rho),
$$
where 
$\CZ_S(s,\cdot)=\prod_{\nu\in S}\CZ_\nu(s,\cdot)$ is the finite Euler product 
with the local zeta integral $\CZ_\nu(s,\cdot)$ as in \eqref{localzetaa>l}, and 
$$\CL^S(s,\tau,\pi,\sig;\rho)=\prod_{\nu\not\in S}\CL(s,\tau_\nu,\pi_\nu,\sig_\nu;\rho)$$ 
with $\CL(s,\tau_\nu,\pi_\nu,\sig_\nu;\rho)$ as in \eqref{cl}.
Recall that $S$  consists of  all ramified places of relevant data and all archimedean places such that for $\nu\notin S$ all data are unramified.
Hence at $\nu\notin S$ we only consider spherical vectors in the discussion.


We normalize the local zeta integrals by
\begin{equation}\label{nlzi-1}
\CZ_\nu^*(s,\phi_{\tau\otimes\sig'},\varphi_{\pi},\psi_{\CO_{\kappa_*}})
:=
\frac{\CZ_\nu(s,\phi_{\tau\otimes\sig'},\varphi_{\pi},\psi_{\CO_{\kappa_*}})}{\CL(s,\tau_\nu,\pi_\nu,\sig_\nu;\rho)}.
\end{equation}
By taking the finite product, we have $\CZ_S(s,\cdot)=\CZ_S^*(s,\cdot)\cdot\CL_S(s,\cdot)$. Hence the formula in \eqref{formula1} becomes
\begin{equation}\label{nfa>l}
\CZ(s,\phi_{\tau\otimes\sig'},\varphi_{\pi},\psi_{\CO_{\kappa_*}})
=\CZ_S^*(s,\phi_{\tau\otimes\sig'},\varphi_{\pi},\psi_{\CO_{\kappa_*}})\cdot\CL(s,\tau,\pi,\sig;\rho).
\end{equation}

\begin{prop}\label{lzi-pp2}
The assumption on $(\pi,\tau,\sig)$ is taken as in Theorem \ref{th-rnbp}.
Then the following hold.
\begin{enumerate}
\item $\CZ_S^*(s,\phi_{\tau\otimes\sig'},\varphi_{\pi},\psi_{\CO_{\kappa_*}})$ is meromorphic in $s\in\BC$ for
any choice of the smooth sections $\phi_{\tau\otimes\sig',s}$.
\item $\CZ_S^*(s,\phi_{\tau\otimes\sigma'},\varphi_\pi,\psi_{\CO_{\kappa_*}})$ is holomorphic at $s=\frac{1}{2}$ for any choice of the smooth sections
$\phi_{\tau\otimes\sig',s}$.
\end{enumerate}
\end{prop}
Note that $\phi_{\tau\otimes\sig',s}$ is called a {\sl smooth section} if
$\phi_{\tau_\nu\otimes\sig'_\nu,s}$ is a smooth holomorphic section at archimedean places and is a flat section at non-archimedean places.

\begin{proof}
Recall from \eqref{gzi}, we have
$$
\CZ(s,\phi_{\tau\otimes\sig'},\varphi_{\pi},\psi_{\CO_{\kappa_*}})
=
\left<\varphi_\pi,\ol{\CF^{\psi_{\CO_{\kappa_*}}}(E(\cdot,\phi_{\tau\otimes\sigma'},s))}\right>_{G_n}.
$$
and hence by \eqref{nfa>l}, we obtain
\begin{equation}\label{bpsa>l}
\left<\varphi_\pi,\ol{\CF^{\psi_{\CO_{\kappa_*}}}(E(\cdot,\phi_{\tau\otimes\sigma'},s))}\right>_{G_n}
=
\CZ_S^*(s,\phi_{\tau\otimes\sig'},\varphi_{\pi},\psi_{\CO_{\kappa_*}})\cdot\CL(s,\tau,\pi,\sig;\rho).
\end{equation}

We first consider the right hand side of the identity in \eqref{bpsa>l}.
In the $L$-function part, we have
\begin{equation}\label{clpart}
\CL(s,\tau,\pi,\sigma;\rho)
=
\frac{L(s+\frac{1}{2},\tau\times\pi)}
{L(s+1,\tau\times\sig)L(2s+1,\tau,\rho)}.
\end{equation}
Since the cuspidal automorphic representations $\pi$ and $\sigma$ are assumed to have generic global Arthur parameters,
the complete $L$-functions $L(s,\tau\times\pi)$ and $L(s,\tau\times\sig)$ are defined
in terms of the global Arthur parameters of $\pi$ and $\sigma$, respectively.
Hence $\CL(s,\tau,\pi,\sigma;\rho)$ is meromorphic in $s$ over $\BC$.

In the left hand side of the identity in \eqref{bpsa>l}, the Fourier coefficient
$\CF^{\psi_{\CO_{\kappa_*}}}(E(\cdot,\phi_{\tau\otimes\sigma'},s))$ is
meromorphic in $s$ over $\BC$ and the inner product
$\left<\varphi_\pi,\ovl{\CF^{\psi_{\CO_{\kappa_*}}}(E(\cdot,\phi_{\tau\otimes\sigma'},s))}\right>_{G_n}$
is well defined when $s$ is away from the poles of $\CF^{\psi_{\CO_{\kappa_*}}}(E(\cdot,\phi_{\tau\otimes\sigma'},s))$,
since $\varphi_\pi$ is cuspidal.
Hence the left hand side of the identity is meromorphic in $s$ over $\BC$. It follows that the finite product of the
normalized local zeta integrals, $\CZ_S^*(s,\phi_{\tau\otimes\sigma'},\varphi_\pi,\psi_{\CO_{\kappa_*}})$,
is a meromorphic function in $s$ over $\BC$
for any choice of $\phi_{\tau\otimes\sig'}$ with the given $\varphi_\sig$ and for the given $\varphi_\pi$. This proves Part (1).

In order to prove Part (2), we need more specific information from both sides. In the expression \eqref{clpart}, the following product
$$
L(s+\frac{1}{2},\tau\times\pi)=\prod_{i=1}^r L(s+\frac{1}{2},\tau_{i}\times\pi)
$$
has a pole at $s=\frac{1}{2}$ of order $r$, since the cuspidal automorphic representation $\pi$ has the generic global Arthur
parameter $\phi_\tau$ with
$
\tau=\tau_1\boxplus\cdots\boxplus\tau_r,
$
as in \eqref{tau8}. It is clear that the product
$$
L(s+1,\tau\times\sig)=\prod_{i=1}^r L(s+1,\tau_{i}\times\sigma)
$$
and the product
$$
L(2s+1,\tau,\rho)=\prod_{i=1}^r L(2s+1,\tau_{i},\rho)
\times \prod_{1\leq i<j\leq r} L(2s+1,\tau_{i}\times\tau_{j}^\iota)
$$
are holomorphic and nonzero at $s=\frac{1}{2}$.
It follows that the $L$-function part $\CL(s,\tau,\pi,\sigma;\rho)$ has a pole at $s=\frac{1}{2}$ of
order $r$.

In order to show that the finite product of the normalized local zeta integrals,
$\CZ_S^*(s,\phi_{\tau\otimes\sigma'},\varphi_\pi,\psi_{\CO_{\kappa_*}})$, is holomorphic at $s=\frac{1}{2}$, it is enough to show that
the inner product in the left hand side of \eqref{bpsa>l}
$$
\left<\varphi_\pi,\ol{\CF^{\psi_{\CO_{\kappa_*}}}(E(\cdot,\phi_{\tau\otimes\sigma'},s))}\right>_{G_n},
$$
has a pole at $s=\frac{1}{2}$ of order at most $r$ for any smooth sections $\phi_{\tau\otimes\sigma',s}$.

By Proposition \ref{esp}, the Fourier coefficient
$\CF^{\psi_{\CO_{\kappa_*}}}(E(\cdot,\phi_{\tau\otimes\sigma'},s))$
of the Eisenstein series $E(\cdot,\phi_{\tau\otimes\sigma'},s)$ has a pole at $s=\frac{1}{2}$ of order at most $r$
for any smooth sections $\phi_{\tau\otimes\sigma',s}$. The inner product of the
Fourier coefficient with the cuspidal automorphic form $\varphi_\pi$ can not increase the order of the pole at $s=\frac{1}{2}$. It follows that
the left hand side of \eqref{bpsa>l} has a pole at $s=\frac{1}{2}$ of order at most $r$.
Therefore, we obtain that the finite product of the normalized local zeta integrals,
$\CZ_S^*(s,\phi_{\tau\otimes\sigma'},\varphi_\pi,\psi_{\CO_{\kappa_*}})$,
must be holomorphic at $\frac{1}{2}$. This proves Part (2).
\end{proof}

In order to obtain more properties of the local zeta integrals or the normalized ones for the global purpose in this paper,
we have to introduce the global condition that the Bessel period for $(\pi,\sig)$ is nonzero.
For $\varphi_\pi\in\CC_\pi$ and $\varphi_\sig\in\CC_\sig$, the Bessel period $\left<\CF^{\psi_{\CO_{\ell_*}}}(\varphi_\pi),\ol{\varphi}_{\sigma}\right>_{H_m}$, as in \eqref{bppi5} with the given data, defines a nonzero element in the one-dimensional space
$
\otimes_\nu\Hom_{H_m(F_\nu)}(\CJ_{\psi_{\CO_{\ell_*}}}(\pi_\nu)\otimes\sig_\nu,\BC),
$
where $\CJ_{\psi_{\CO_{\ell_*}}}(\pi_\nu)$ is the local twisted Jacquet module of $\pi_\nu$ with respect to $(V_{\udl{p}_{\ell^*}},\psi_{\CO_{\ell_*}})$.
Let $\Fb^{\psi_{\CO_{\ell_*}}}_\nu$ be the a nonzero functional in
$
\Hom_{H_m(F_\nu)}(\CJ_{\psi_{\CO_{\ell_*}}}(\pi_\nu)\otimes\sig_\nu,\BC),
$
which is unique, up to scalar. We normalize the functional $\Fb^{\psi_{\CO_{\ell_*}}}_\nu$, so that at the unramified local places,
$$
\Fb^{\psi_{\CO_{\ell_*}}}_\nu(\varphi_{\pi_\nu},\varphi_{\sig_\nu})=1,
$$
where $\varphi_{\pi_\nu}$ and $\varphi_{\sig_\nu}$ are normalized spherical vectors in $\pi_\nu$ and $\sig_\nu$, respectively. Here a spherical vector
is normalized if its corresponding spherical function has value $1$ at the identity. And at the ramified local places $\nu\in S$, the local
functional $\Fb^{\psi_{\CO_{\ell_*}}}_\nu$ will be normalized according to \eqref{eq:nonvanish-pf-i} in Appendix \ref{A}.
Hence we obtain the following identity: for factorizable factors
$\varphi_\pi=\otimes_\nu\varphi_{\pi_\nu}$ and $\varphi_\sig=\otimes_\nu\varphi_{\sig_\nu}$,
\begin{equation}\label{bpgl-1}
\left<\CF^{\psi_{\CO_{\ell_*}}}(\varphi_\pi),\ol{\varphi}_{\sigma}\right>_{H_m}
=
c_{\pi,\sig}\cdot\prod_\nu\Fb^{\psi_{\CO_{\ell_*}}}_\nu(\varphi_{\pi_\nu},\varphi_{\sig_\nu}),
\end{equation}
by the uniqueness of the local Bessel models (\cite{AGRS}, \cite{SZ}, \cite{GGP12} and \cite{JSZ}).

\begin{prop}\label{nlzip:nonzero}
The assumption on $(\pi,\tau,\sig)$ is taken as in Theorem \ref{th-rnbp}.
Fix an arbitrary $s=s_0\in\BC$.
For any $\varphi_{\sig_\nu}\in \sig_\nu$ and $\varphi_{\pi_\nu}\in \pi_\nu$ such that the local pairing
$\Fb^{\psi_{\CO_{\ell_*}}}_\nu(\varphi_{\pi_\nu},\varphi_{\sig_\nu})$
is nonzero for every $\nu\in S$,
there exists  a section $\phi_{\tau_\nu\otimes\sig'_\nu}$
such that the finite product of
the local zeta integrals,
$\CZ_S(s,\phi_{\tau\otimes\sigma'},\varphi_\pi,\psi_{\CO_{\kappa_*}})$, is a nonzero constant at $s=s_0$.
\end{prop}

The proof of Proposition \ref{nlzip:nonzero} will be given in Appendix A.

\begin{prop}\label{nlzip-key}
The assumption on $(\pi,\tau,\sig)$ is taken as in Theorem \ref{th-rnbp}.
Assume further that $(\pi,\sig)$ has a nonzero Bessel period. Then
there exist factorizable data $\varphi_\pi$, $\varphi_{\sigma}$, and $\phi_{\tau\otimes\sig'}$, such that the inner product
$\left<\CF^{\psi_{\CO_{\ell_*}}}(\varphi_\pi),\ol{\varphi}_{\sigma}\right>_{H_m}$, and
$\CZ_S^*(s,\phi_{\tau\otimes\sigma'},\varphi_\pi,\psi_{\CO_{\kappa_*}})$ at $s=\frac{1}{2}$ are simultaneously nonzero.
\end{prop}

\begin{proof}
By assumption, the Bessel period
$\left<\CF^{\psi_{\CO_{\ell_*}}}(\varphi_\pi),\ol{\varphi}_\sigma\right>_{H_m}$
is not zero for the pair $(\pi,\sig)$. By \eqref{bpgl-1}, for factorizable vectors $\varphi_\pi=\otimes_\nu\varphi_{\pi_\nu}$ and $\varphi_\sig=\otimes_\nu\varphi_{\sig_\nu}$, we have
$$
\left<\CF^{\psi_{\CO_{\ell_*}}}(\varphi_\pi),\ol{\varphi}_\sigma\right>_{H_m}
=c_{\pi,\sigma} \cdot \prod_{\nu}
\Fb_\nu^{\psi_{\CO_{\ell_*}}}(\varphi_{\pi_{\nu}},\varphi_{\sigma_{\nu}}).	
$$
Since $\left<\CF^{\psi_{\CO_{\ell_*}}}(\varphi_\pi),\ol{\varphi}_\sigma\right>_{H_m}$
is not zero,
it follows that $c_{\pi,\sig}\ne 0$ and $\Fb_\nu(\varphi_{\pi_\nu},\varphi_{\sig_\nu})$ is nonzero for every $\nu$.
By Proposition \ref{nlzip:nonzero}, there exists a smooth factorizable section $\phi_{\tau\otimes\sig'}$ such that the finite product of
the local zeta integral $\CZ_S(s,\phi_{\tau\otimes\sigma'},\varphi_\pi,\psi_{\CO_{\kappa_*}})$ is nonzero at $s=\frac{1}{2}$.
Hence the normalized $\CZ_S^*(s,\phi_{\tau\otimes\sigma'},\varphi_\pi,\psi_{\CO_{\kappa_*}})$ is also nonzero at $\frac{1}{2}$.
\end{proof}

\subsection{Proof of Theorem \ref{th-rnbp}}\label{sec-thm53}
We already proved one direction of Theorem \ref{th-rnbp}. Now we are ready to prove the other direction of Theorem \ref{th-rnbp}.

By the assumptions in Theorem \ref{th-rnbp}, the equation in \eqref{bpsa>l} reads
$$
\left<\varphi_\pi,\ol{\CF^{\psi_{\CO_{\kappa_*}}}(E(\cdot,\phi_{\tau\otimes\sigma'},s))}\right>_{G_n}
=
\CZ_S^*(s,\phi_{\tau\otimes\sig'},\varphi_{\pi},\psi_{\CO_{\kappa_*}})\cdot\CL(s,\tau,\pi,\sig;\rho).
$$
By Proposition \ref{lzi-pp2}, $\CZ_S^*(s,\phi_{\tau\otimes\sig'},\varphi_{\pi},\psi_{\CO_{\kappa_*}})$ is meromorphic in $s$ and is holomorphic
at $s=\frac{1}{2}$ for any section $\phi_{\tau\otimes\sig'}$ depending on the choice of $\varphi_\pi$ and $\varphi_\sig$ with property that
the Bessel period $\left<\CF^{\psi_{\CO_{\ell_*}}}(\varphi_\pi),\ol{\varphi}_\sigma\right>_{H_m}$ is nonzero. Furthermore,
by Proposition \ref{nlzip-key},
there exists a choice of factorizable $\varphi_\pi$, $\varphi_\sig$, and $\otimes_\nu f_{\CW_{\tau_\nu}^\kappa\otimes\sig_\nu'}$, such that both $\left<\CF^{\psi_{\CO_{\ell_*}}}(\varphi_\pi),\ol{\varphi}_\sigma\right>_{H_m}$ is nonzero and
$\CZ_S^*(s,\phi_{\tau\otimes\sig'},\varphi_{\pi},\psi_{\CO_{\kappa_*}})$ is nonzero at $s=\frac{1}{2}$.

With such a choice of data, and with a factorizable $\phi_{\tau\otimes\sig'}=\otimes_\nu\phi_{\tau_\nu\otimes\sig_\nu'}$ corresponding to the above
$\otimes_\nu f_{\CW_{\tau_\nu}^\kappa\otimes\sig_\nu'}$,
the right hand side of \eqref{bpsa>l} has a pole at $s=\frac{1}{2}$ of order $r$. It follows that the left hand side of
\eqref{bpsa>l}, i.e. $\left<\varphi_\pi,\ol{\CF^{\psi_{\CO_{\kappa_*}}}(E(\cdot,\phi_{\tau\otimes\sigma'},s))}\right>_{G_n}$,
has a pole at $s=\frac{1}{2}$ of order $r$. Since the Eisenstein series $E(\cdot,\phi_{\tau\otimes\sigma'},s)$ has a pole at $s=\frac{1}{2}$ of order at most $r$, we must have that $E(\cdot,\phi_{\tau\otimes\sigma'},s)$ has a pole at $s=\frac{1}{2}$ of order $r$.
Since $\varphi_\pi$ is cuspidal, by taking the iterated residue of
$\left<\varphi_\pi,\ol{\CF^{\psi_{\CO_{\kappa_*}}}(E(\cdot,\phi_{\tau\otimes\sigma'},s))}\right>_{G_n}$ at $s=\frac{1}{2}$,
we obtain that the Bessel period
$\left<\varphi_\pi,\ol{\CF^{\psi_{\CO_{\kappa_*}}}(\CE_{\tau\otimes\sigma'})}\right>_{G_n}$
is nonzero with such a chosen data,
where $\CE_{\tau\otimes\sigma'}$ denotes, as before, the iterated residue of the Eisenstein series $E(\cdot,\phi_{\tau\otimes\sigma'},s)$ at the pole
$s=\frac{1}{2}$ of order exactly equal to $r$. This completes the proof of Theorem \ref{th-rnbp}.

\subsection{Global Gan-Gross-Prasad conjecture: one direction}\label{sec-gggp}
We are ready to derive the proof of one of the direction of the global Gan-Gross-Prasad conjecture (Conjectures 24.1 and 26.1 in \cite{GGP12}), as
continuation of the proof of Theorem \ref{th-rnbp} in Section \ref{sec-thm53}.

Assume that the Bessel period
$\left<\CF^{\psi_{\CO_{\ell_*}}}(\varphi_\pi),\varphi_\sigma\right>_{H_m}$ for the pair $(\pi,\sig)$ is nonzero for some $\varphi_\pi$ and $\varphi_\sig$
as in \eqref{bppi5}.  By the same proof as in Section \ref{sec-thm53}, we obtain that the Fourier coefficient
$\CF^{\psi_{\CO_{\kappa_*}}}(\CE_{\tau\otimes\bar{\sigma}'})$ is nonzero,
where $\bar{\sig}'$ is the complex conjugate of $\sig'$.
As a consequence, we obtain that the iterated residual representation $\CE_{\tau\otimes\bar{\sigma}'}$ is nonzero.
By Proposition \ref{esp}, we obtain that $L(s,\tau\times\bar{\sigma})=L(s,\tau\times\bar{\sigma}')$ is holomorphic and nonzero at $s=\frac{1}{2}$.

Because $\bar{\sig}$ is isomorphic to the contragredient $ \sig^\vee$ of $\sig$, it follows that $L(s,\tau\times\sigma^\vee)$ is holomorphic and nonzero at $s=\frac{1}{2}$, and so is $L(s,\tau\times \sigma)$.
This proves one direction of the global Gan-Gross-Prasad conjecture (\cite{GGP12}) in the full generality for the classical groups considered in this paper.

\begin{thm}[Global Gan-Gross-Prasad Conjecture: one direction]\label{gggp1}
For any $\pi\in\CA_\cusp(G_n)$ with a $G_n$-relevant, generic global Arthur parameter in $\wt{\Phi}_2(G_n^*)_{G_n}$, and with a cuspidal realization
$\CC_\pi$ of $\pi$,
assume that the Bessel period
$$
\left<\CF^{\psi_{\CO_{\ell_*}}}(\varphi_\pi),\varphi_\sigma\right>_{H_m}
$$
is nonzero with a choice of $\varphi_\pi\in\CC_\pi$ and $\varphi_\sig\in\CC_\sig$, for some $\sig\in\CA_\cusp(H_m)$ with an $H_m$-relevant, generic global Arthur parameter in $\wt{\Phi}_2(H_m^*)_{H_m}$, and with a cuspidal realization $\CC_\sig$ of $\sig$.
Then the tensor product $L$-function
$L(s,\pi\times\sigma)=L(s,\tau\times\sigma)$
must be holomorphic and nonzero at $s=\frac{1}{2}$.
\end{thm}

Some remarks are in order.

First of all, the original global Gan-Gross-Prasad conjecture in \cite{GGP12} assumes that the cuspidal multiplicity of $\pi\in\CA_\cusp(G_n)$ should be
one. Theorem \ref{gggp1} takes care of the even special orthogonal group case, where the cuspidal multiplicity of $\pi$ could be two.

When $G_n=G_n^*$ and $H_m=H_m^*$ are $F$-quasisplit, and when both $\pi$ and $\sigma$ are generic, i.e. have non-zero Whittaker-Fourier
coefficients, and have simple, generic global Arthur parameters, i.e. their Langlands functorial transfers to the corresponding general linear groups
are cuspidal, Theorem \ref{gggp1} was considered in \cite{GJR04}, \cite{GJR05}, and \cite{GJR09}
by an approach mixing the Arthur truncation method and the Rankin-Selberg method. Recently, it was noticed by experts that there exists a gap
in the proof of Proposition 5.3 in \cite{GJR04}, which was duplicated in \cite{GJR05} and \cite{GJR09}. This technical gap is crucial to the
complete proof of the special case of Theorem \ref{gggp1} considered in those papers, and needs to be filled up.

Meanwhile, the assumption of the genericity
of both $\pi$ and $\sig$ and the assumption of the cuspidality of the functorial transfer of $\pi$ and $\sig$ to general linear groups are
critical to make the arguments and proofs work before Proposition 5.3 in \cite{GJR04}, and the same in \cite{GJR05} and \cite{GJR09}. Those restrictions
disappear in the approach taken in this paper.
It seems to the authors of this paper that the approach taken up
using the general framework (including the twisted automorphic descents and the reciprocal non-vanishing for Bessel periods)
considered in this paper is a more natural and conceptual way to attack the global Gan-Gross-Prasad conjecture.

It is also very important to mention that W. Zhang proved the global Gan-Gross-Prasad conjecture (\cite{ZW-1} and \cite{ZW-2}) for
unitary groups $\RU_n\times\RU_{n-1}$, with certain local assumptions, and with the global assumption on cuspidality of the global Langlands functorial transfers from unitary groups to general linear groups. His approach is based on the Jacquet-Rallis
relative trace formula originally developed in \cite{JR11} for unitary groups. The progress to extend the approach of Zhang to more general situation
has been picked up by Y. Liu (\cite{LiuYF14}) and by H. Xue (\cite{XueH14}). However, this relative trace formula approach is so far not known to be
available for
classical groups that are not unitary groups. The approach taken up in this paper treats both unitary groups and orthogonal groups uniformly.
The same approach is expected to work for symplectic groups and metaplectic groups with replacement of Bessel models by Fourier-Jacobi models. We
refer to our work (\cite{JZ-Howe}) for more details.

The other direction of the global Gan-Gross-Prasad conjecture (\cite{GGP12}) is more delicate and will be discussed
in Section \ref{sec-wfs} with assumption on the structure of Fourier coefficients of the
residual representation $\CE_{\tau\otimes\sigma}$ on $H_{a+m}(\BA)$. See Theorem \ref{gggp2} for details.


\section{Twisted Automorphic Descents}\label{sec-ccam}



We develop here a basic theory of twisted automorphic descents and point out two relevant applications. One is discussed in Subsection \ref{sec-mccam} the explicit construction of cuspidal automorphic modules for  
any irreducible cuspidal automorphic representations of $G_n$ and another is discussed in Subsection \ref{sec-wfs} on the other direction of the global Gan-Gross-Prasad conjecture. 


\subsection{Automorphic descents and certain Arthur packets}\label{sec-nbfc}
For a given $\pi\in\CA_\cusp(G_n)$ with a $G_n$-relevant, generic global Arthur parameter $\phi=\phi_\tau\in\wt{\Phi}_2(G_n^*)$, we
recall that $\phi_\tau$ has the form
$$
(\tau_1,1)\boxplus(\tau_2,1)\boxplus\cdots\boxplus(\tau_r,1)\in\wt{\Phi}_2(G_n^*).
$$
Remark that we choose the sign $\kappa=+1$ for the unitary group case  as in Section \ref{sec-dsap}. 
Take a $\sigma\in\CA_\cusp(H_m)$ with an $H_m$-relevant, generic global Arthur parameter $\phi_\sig\in\wt{\Phi}_2(H_m^*)$, and
define a non-generic global Arthur parameter by
\begin{equation}\label{nggap}
\psi_{\tau,\sig}:=(\tau_1,2)\boxplus(\tau_2,2)\boxplus\cdots\boxplus(\tau_r,2)\boxplus\phi_\sig.
\end{equation}
It is clear that $\psi_{\tau,\sig}$ belongs to $\wt{\Psi}_2(H_{a+m}^*)$, and is $H_{a+m}$-relevant.
Let $\wt{\Pi}_{\psi_{\tau,\sig}}(H_{a+m})$ be the global Arthur packet attached to the global Arthur parameter $\psi_{\tau,\sig}$ in \eqref{nggap}.
As in \cite{JLZ13}, one may easily verify the following property.

\begin{prop}\label{res}
The residual representation $\CE_{\tau\otimes\sigma}$ is square integrable, and, if non-zero, belongs to the global Arthur packet
$\wt{\Pi}_{\psi_{\tau,\sig}}(H_{a+m})$ with the global Arthur parameter $\psi_{\tau,\sig}$ given in \eqref{nggap}.
\end{prop}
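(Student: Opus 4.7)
The plan is to identify the global Arthur parameter of $\CE_{\tau\otimes\sigma}$ by comparing unramified Satake parameters at almost all places, then invoking the rigidity of global Arthur parameters from Theorem~\ref{ds}. First I would verify that each irreducible constituent of $\CE_{\tau\otimes\sigma}$ lies in the discrete spectrum of $H_{a+m}(\BA)$: this is the standard fact of Langlands and M\oe glin--Waldspurger that a non-zero iterated residue of an Eisenstein series at a pole in $\Re(s)>0$ is automatically square-integrable. Hence, by Theorem~\ref{ds} each such constituent belongs to a global Arthur packet $\wt{\Pi}_{\psi}(H_{a+m})$ for some $H_{a+m}$-relevant $\psi\in\wt{\Psi}_2(H_{a+m}^*)$, and it suffices to show $\psi=\psi_{\tau,\sig}$.

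Second, I would carry out the unramified parameter computation. At any finite place $\nu$ where $\tau_\nu$, $\sig_\nu$, and the section $\phi_{\tau\otimes\sig,\nu}$ are unramified, the residue at $s=\tfrac12$ of the local normalized Eisenstein series is the unique unramified constituent of the induced representation. A direct Satake parameter calculation, using that the pole of $\CE_{\tau\otimes\sigma}$ at $s=\tfrac12$ is of maximal order $r$ (Proposition~\ref{esp}) and that this forces the local Langlands--Satake data to come from the $(\tau_{i,\nu},2)$-Speh type summands together with $\sig_\nu$, shows that the Satake parameter of $\CE_{\tau\otimes\sigma,\nu}$ is the disjoint union of the Satake parameters of $\tau_{i,\nu}|\cdot|^{1/2}_\nu\oplus\tau_{i,\nu}|\cdot|^{-1/2}_\nu$ for $i=1,\dots,r$, together with that of $\sig_\nu$. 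This is precisely the unramified Satake datum attached to the global Arthur parameter
\begin{equation*}
\psi_{\tau,\sig}=(\tau_1,2)\boxplus(\tau_2,2)\boxplus\cdots\boxplus(\tau_r,2)\boxplus\phi_\sig
\end{equation*}
at the place $\nu$.

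Third, I would invoke the strong rigidity of global Arthur parameters built into the endoscopic classification: two parameters in $\wt{\Psi}_2(H_{a+m}^*)$ whose localizations have the same Satake data at almost all places must coincide. Combined with the previous step, this identifies $\psi$ with $\psi_{\tau,\sig}$, yielding the desired membership. I expect the main technical point to be verifying that the iterated residue at $s=\tfrac12$ projects onto the Speh-type local constituent at each unramified place, rather than onto some other irreducible subquotient of the induced module. This is a standard but delicate local analysis of poles of the normalized intertwining operators coming from Theorem~\ref{nlio} and of the structure of the generalized Speh representations of $\GL_{2a_i}$; analogous calculations were carried out in \cite{JLZ13} for residual representations of this type, and the argument there adapts directly to our setting.
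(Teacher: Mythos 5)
Your proposal is correct and follows essentially the same route as the paper, which itself gives no proof but only points to \cite{JLZ13}: establish that the iterated residue is square-integrable, conclude from Theorem~\ref{ds} that it lies in some global Arthur packet, and then identify the parameter as $\psi_{\tau,\sig}$ by matching unramified Satake data and invoking rigidity.

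Two small imprecisions are worth flagging, though neither undermines the argument. First, square-integrability of an iterated residue at a pole with $\Re(s)>0$ is not ``automatic''; it requires checking Langlands' square-integrability criterion on the cuspidal exponents of the residue. In the present configuration (cuspidal $\sigma$ and iterated residues at $s=\tfrac12$ in each of the $r$ variables attached to $\tau_1,\dots,\tau_r$), the exponents lie in the negative obtuse cone and the criterion holds, which is precisely what is verified in \cite{JLZ13}, but it is a verification, not a triviality. Second, your stated ``main technical point'' about the residue projecting onto the Speh-type local constituent at unramified places is in fact a non-issue: the Satake parameter of an unramified subquotient of an unramified parabolic induction depends only on the inducing data (all unramified constituents share it), and the residue is automatically the unique spherical subquotient at almost all places because it carries a spherical vector. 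The actual technical content lives in the square-integrability step, not in the local constituent identification.
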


From the endoscopic classification of Arthur in \cite{A13}, it is expected that the global Arthur packet $\wt{\Pi}_{\psi_{\tau,\sig}}(H_{a+m})$ contains some 
memebers that belong to $\CA_\disc(H_{a+m})$. If these automorphic members are not of residue type, they are cuspidal. 
The twisted automorphic descent is an approach to understand the structures and  the properties of the global packet $\wt{\Pi}_{\psi_{\tau,\sig}}(H_{a+m})$,
instead of a certain individual member in the global packet $\wt{\Pi}_{\psi_{\tau,\sig}}(H_{a+m})$. 

We assume that a $\Sigma\in\CA_\disc(H_{a+m})$ has the global Arthur parameter $\psi_{\tau,\sig}$ as given in \eqref{nggap},
and has a discrete realization $\CC_\Sigma$. Consider Fourier coefficients associated to the partitions of the form
$$
\underline{p}_{\kappa}=[(2\kappa+1)1^{2a+\Fm-2\kappa-1}]
$$
of $2a+\Fm$ with $0\leq\kappa\leq a+\Fr_\Fm$, where $\Fr_\Fm$ is the $F$-rank of $H_m$.
It is clear that the partition $\underline{p}_{\kappa}$ is of type $(2a+\Fm,H_{a+m}^*)$. As in Section~\ref{sec-fcpt}, we
study the $\psi_{\udl{p}_{\kappa},\CO_\kappa}$-Fourier coefficient of $f_\Sigma\in\CC_\Sigma$, and denote by $\CF^{\CO_\kappa}_{\kappa^-}(\Sigma)$
the $\kappa$-th Bessel module of $G_{\kappa^-}^{\CO_\kappa}(\BA)$ generated by all the Fourier coefficients
$\CF^{\psi_{\CO_\kappa}}(f_\Sigma)$ with all $f_\Sigma\in\CC_\Sigma$.
As in \cite{GRS11} for the case $m=0$ and
in \cite{JLXZ} for $m=1$, we prove the following proposition by
investigating the local structure at one unramified place of the global Arthur parameter $\psi_{\tau,\sig}$ given in \eqref{nggap}.

\begin{prop}\label{fcSigma}
Assume that a $\Sigma\in\CA_\disc(H_{a+m})$ belongs to the global Arthur packet $\wt{\Pi}_{\psi_{\tau,\sig}}(H_{a+m})$
with the parameter $\psi_{\tau,\sig}$ given in \eqref{nggap}. Set $\ell_0:=\frac{\Fn-\Fm-1}{2}$. 
For any integer $\kappa$ with $a-\ell_0-1<\kappa\leq a+\Fr_\Fm$, the $\kappa$-th Bessel modules
$\CF^{\CO_\kappa}_{\kappa^-}(\Sigma)$
are zero for all $F$-rational nilpotent orbits $\CO_\kappa$ in the $F$-stable orbit $\CO^\st_{\udl{p}_{\kappa}}(F)$.
\end{prop}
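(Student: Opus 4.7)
The plan is to reduce the global vanishing to a local Bessel-functional vanishing at a single unramified place, and then to exploit the explicit structure of $\Sigma_\nu$ dictated by its local Arthur parameter together with the maximality of $\ell_0$ for $\pi$. First I would fix a finite place $\nu$ at which all of the data $(\tau_1,\dots,\tau_r,\sigma,\psi_{\CO_\kappa})$ are unramified. If $\CF^{\CO_\kappa}_{\kappa^-}(\Sigma)$ is not identically zero, then in particular $\Sigma_\nu$ must admit a nonzero $(V_{\udl{p}_\kappa}(F_\nu),\psi_{\CO_\kappa,\nu})$-equivariant functional. By the endoscopic classification at unramified places (\cite{A13}, \cite{Mk15}, \cite{KMSW}), $\Sigma_\nu$ is the unique unramified member of the local Arthur packet attached to $\psi_{\tau,\sig,\nu}$, and is realized as the unramified Langlands constituent of a parabolic induction whose inducing data consists of the unramified components $\tau_{i,\nu}|\cdot|^{\pm 1/2}$ on $\GL_{a_i}(F_\nu)$ (recording the Arthur $\SL_2$-contribution of each $(\tau_i,2)$) together with $\sigma_\nu$ on $H_m(F_\nu)$.

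Next I would analyze the space of $(V_{\udl{p}_\kappa}(F_\nu),\psi_{\CO_\kappa,\nu})$-equivariant functionals on this induced representation. The tool is the geometric lemma of Bernstein--Zelevinsky combined with the root-exchange (identity-swap) technique developed in \cite[Chapter 8]{GRS11} and refined in \cite{JLXZ}, which expresses any such functional in terms of Whittaker functionals on the $\tau_{i,\nu}$'s and Bessel functionals on $\sigma_\nu$. Since each $\tau_{i,\nu}$ is generic, the $\GL$-side can absorb at most $\sum a_i=a$ units of ``Bessel depth'', so any remaining depth $\kappa-a$ must be supplied by a Bessel functional on $\sigma_\nu$. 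By Proposition \ref{cfc} applied to $\pi$, the maximal global Bessel-Fourier depth of $\pi$ is $\ell_0$, and the cuspidal descent $\sigma$ inherits the corresponding constraint: the generic Arthur parameter of $\sigma$, combined with the unramified Bessel computations, forces $\sigma_\nu$ to support no Bessel functional beyond depth $\ell_0-1$ (one unit having been consumed in passing from $\pi$ to $\sigma$). Combining these bounds gives $\kappa-a\leq\ell_0-1$, i.e.\ $\kappa\leq a+\ell_0-1$, which proves the claimed vanishing; the argument is uniform in the $F$-rational orbit $\CO_\kappa$ because the root-exchange steps depend only on the partition type.

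The main obstacle is the uniform execution of the root-exchange computation across the four families of classical groups treated in the paper when the inducing data contains a nontrivial classical-group factor $\sigma_\nu$. For $m=0$ this is the content of \cite[Chapter 8]{GRS11}, and for $m=1$ of \cite{JLXZ}; for general $m$ one must carefully track the intersections $V_{\udl{p}_\kappa}\cap wUw^{-1}$ across double-coset representatives $w\in P\bks H_{a+m}/V_{\udl{p}_\kappa}$ and control the combinatorics of the resulting iterated Jacquet modules, verifying that the "Bessel depth accounting" described above is genuinely tight. A secondary subtlety is orbit-independence: the conclusion must hold for every $F$-rational orbit $\CO_\kappa$ in $\CO_{\udl{p}_\kappa}^\st$, which is handled by observing that the defining anisotropic vector $w_0$ enters only through a twist that does not affect the vanishing statement.
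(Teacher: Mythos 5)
The key quantitative step of your proof is wrong, and the proposition does not actually depend on $\pi$ in the way you use it. Let me explain both issues.

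First, the bookkeeping. You claim that the $\GL$-side can absorb at most $a=\sum a_i$ units of Bessel depth ``since each $\tau_{i,\nu}$ is generic,'' and that ``the generic Arthur parameter of $\sigma$, combined with the unramified Bessel computations, forces $\sigma_\nu$ to support no Bessel functional beyond depth $\ell_0-1$.'' The second claim is false: at the unramified place $\nu$, $\sigma_\nu$ is a fully-induced (from unramified characters) generic representation of the $F_\nu$-split group $H_m(F_\nu)$, and such a representation supports Bessel functionals of every depth up to the Witt index of $(W,q)$. There is no inherited bound on the local Bessel depth of $\sigma_\nu$ coming from the global $\pi$ or from the genericity of $\phi_\sigma$; if anything, genericity makes $\sigma_\nu$ as large as possible. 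The constraint that actually kills the Fourier coefficient for large $\kappa$ comes \emph{entirely} from the $\GL$-side, and precisely because the $\GL$-side is \emph{not} generic. The unramified component $\Sigma_\nu$ is realized (cf.\ \cite[Chapter 5]{GRS11}, \cite[Section 2]{JLXZ}) as the unramified constituent of
$\Ind^{H_{a+m}(F_\nu)}_{P_a(F_\nu)}(\tau'_\nu\otimes\sigma_\nu)$,
where $\tau'_\nu$ is the \emph{degenerate} unramified representation of $G_{E/F}(a)(F_\nu)$ induced from the characters $\mu_i\circ\det_2$ on $\GL_2$-blocks; this degenerate $\GL_2$-block structure (encoding the Arthur $\SL_2$ of each $(\tau_i,2)$) is what caps the depth, as one reads off from \cite[Theorem 5.1]{GRS11}. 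Your proof, by contrast, treats the $\GL$-side as a product of generic inducing data $\tau_{i,\nu}|\cdot|^{\pm1/2}$ without taking the Langlands quotient into account, and then tries to get the missing constraint out of $\sigma_\nu$ — which gives nothing.

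Second, the logical structure. You invoke Proposition \ref{cfc} ``applied to $\pi$'' to bound the Bessel depth of $\sigma$, but $\pi$ does not appear in the hypotheses of Proposition \ref{fcSigma}: it is a statement about an arbitrary $\Sigma\in\wt{\Pi}_{\psi_{\tau,\sig}}(H_{a+m})$ for an arbitrary $\sigma$ with generic parameter. The integer $\ell_0$ is fixed purely by the numerical relation $\Fm=\Fn-2\ell_0-1$ (equivalently, by the sizes $a$ and $\Fm$), so the threshold is a function of the shape of $\psi_{\tau,\sig}$ alone. Using Proposition \ref{cfc} to transfer a bound from $\pi$ to $\sigma_\nu$ is therefore both out of place and, as noted above, incorrect. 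Finally, note that your accounting arrives at $\kappa\le a+\ell_0-1$; but as the paper's own use in Section~\ref{sec-sdgzi} and the proof of Proposition~\ref{cusp} make clear, the correct threshold is $\kappa_0=a-\ell_0-1$, so your bound is also numerically inconsistent with the rest of the paper. The fix is to drop the appeal to $\pi$ entirely and carry out the Jacquet-module computation for $\Ind(\tau'_\nu\otimes\sigma_\nu)$ with $\tau'_\nu$ in its degenerate form, following \cite[Theorem 5.1]{GRS11} as the paper does.
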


\begin{proof}
First, the $\kappa$-th Bessel module $\CF^{\CO_\kappa}_{\kappa^-}(\Sigma)$
produces the corresponding local Jacquet module of
$\Sigma_{\nu}$ with respect to $(V_{\udl{p}_{\kappa}},\psi_{\CO_\kappa})$ at any finite local place $\nu$.
At almost all finite local places, $\Sigma_{\nu}$ is unramified and
is completely determined by the $\nu$-component of the global Arthur parameter $\psi_{\tau,\sigma}$. Take one of such unramified
finite local place $\nu$, the generic unramified representation $\tau_{\nu}$ of $G_{E/F}(a)(F_{\nu})$ is conjugate self-dual and
hence is completely determined by $[\frac{a}{2}]$
unramified characters $\mu_1,\cdots,\mu_{[\frac{a}{2}]}$, and $\sigma_{\nu}$ is also an irreducible generic unramified representation of
$F_{\nu}$-quasisplit $H_m(F_{\nu})$. As in \cite[Chapter 5]{GRS11} and in the proof of Proposition 2.3 of \cite{JLXZ},
the unramified local component $\Sigma_{\nu}$ can be realized as the unique irreducible unramified constituent of the following induced representation
\begin{equation}\label{usrp}
\Ind^{H_{a+m}(F_{\nu})}_{P_{\hat{a}}(F_{\nu})}(\tau_{\nu}'\otimes\sigma_{\nu})
\end{equation}
where $\tau_{\nu}'=\Ind^{G_{E/F}(a)(F_{\nu})}_{Q_{[2^{[\frac{a}{2}]}]}(F_{\nu})}(\mu_1\circ\det_2)\otimes\cdots\otimes(\mu_{[\frac{a}{2}]}\circ\det_2)$ in most cases,
and $\det_2$ is the determinant of $G_{E/F}(2)$.
Refer to the discussions in the proof of Lemma \ref{lm:local-descent-unramified}  for all cases of $\tau_{\nu}'$. In the rest of the proof,
the argument works for all cases of $\tau_{\nu}'$, although we only discuss the situation as in \eqref{usrp}.

In \cite[Chapter 5]{GRS11}, the calculation of the local Jacquet modules of the induced representation as in \eqref{usrp}
with respect to $(V_{\udl{p}_{\kappa}},\psi_{\CO_\kappa})$ and for general $\kappa$ has been explicitly carried out.
See \cite[Theorem 5.1]{GRS11}, in particular.
Hence it is not hard to figure out, as in \cite[Section 2]{JLXZ}, that for $\kappa$ with
$a-\ell_0-1<\kappa\leq a+\Fr_\Fm$, such a local Jacquet
module is always zero for the induced representation as in \eqref{usrp},
and so is for the unramified local component $\Sigma_{\nu}$ at the fixed local place $\nu$.
This proves that for all $\kappa$ with $a-\ell_0-1<\kappa\leq a+\Fr_\Fm$,
$\kappa$-th Bessel module $\CF^{\CO_\kappa}_{\kappa^-}(\Sigma)$ must be zero
for all such orbits $F$-rational $\CO_\kappa$ in the $F$-stable orbit $\CO^\st_{\udl{p}_{\kappa}}(F)$.
\end{proof}

The proof uses the structure of unramified local components of $\Sigma$ and hence is independent of the discrete realization of $\Sigma$ if
$\Sigma$ has high discrete multiplicity. The same happens to the proof of the following proposition, which
considers the $\kappa_0$-Bessel modules $\CF^{\CO_{\kappa_0}}_{n}(\Sigma)$
for the case where $\kappa_0=a-\ell_0-1$ and hence $\kappa_0^-=m^-=n$.

\begin{prop}\label{cusp}
Let $\tau$ and $\sigma$ be as in Proposition \ref{fcSigma}.
For an automorphic member $\Sigma$ in the global Arthur packet $\wt{\Pi}_{\psi_{\tau,\sig}}(H_{a+m})$,
$\kappa_0$-Bessel modules $\CF^{\CO_{\kappa_0}}_{n}(\Sigma)$, with $n=\kappa_0^-$, is cuspidal,
as a sub-representation of $G_n^{\CO_{\kappa_0}}(\BA)$ in the cuspidal spectrum $L^2_\cusp(G_n^{\CO_{\kappa_0}})$,
for all $F$-rational nilpotent orbits $\CO_{\kappa_0}$ in the $F$-stable orbit $\CO^\st_{\udl{p}_{\kappa_0}}(F)$
with $\kappa_0=a-\ell_0-1$ and $\ell_0=\frac{\Fn-\Fm-1}{2}$.
\end{prop}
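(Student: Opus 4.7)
The plan is to prove cuspidality by showing that for every proper maximal parabolic $F$-subgroup $Q_t = L_t U_t$ of $G_n^{\CO_{\kappa_0}}$ stabilizing an isotropic $F$-subspace of dimension $t \ge 1$, the constant term
$$
\int_{[U_t]} \CF^{\psi_{\CO_{\kappa_0}}}(f_\Sigma)(u g) \, du
$$
vanishes identically in $g \in G_n^{\CO_{\kappa_0}}(\BA)$ and $f_\Sigma \in \Sigma$. (If $G_n^{\CO_{\kappa_0}}$ is $F$-anisotropic, there is nothing to prove.) This extends the cuspidality arguments of \cite[Theorem 7.3]{GRS11} (the case $m=0$) and of \cite{JLXZ} (the case $m=1$) to general $m$, with pure inner forms allowed.

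First I would interchange the order of integration: since $U_t \subset G_n^{\CO_{\kappa_0}}$ normalizes the Bessel unipotent $V_{\udl{p}_{\kappa_0}}$ and preserves its character $\psi_{\CO_{\kappa_0}}$, the constant term rewrites as an integral of $f_\Sigma$ along the unipotent $F$-subgroup $V_{\udl{p}_{\kappa_0}} \cdot U_t$ of $H_{a+m}$, against the extension of $\psi_{\CO_{\kappa_0}}^{-1}$ by the trivial character on $U_t$. This combined unipotent subgroup is properly contained in the Bessel unipotent $V_{\udl{p}_{\kappa_0+t}}$ attached to the larger partition $\udl{p}_{\kappa_0+t}$.

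Next I would apply the root-exchange technique of \cite[Chapter 7]{GRS11}. Performing an iterated Fourier expansion of $f_\Sigma$ along a chain of abelian unipotent $F$-subgroups complementary to $V_{\udl{p}_{\kappa_0}} \cdot U_t$ inside $V_{\udl{p}_{\kappa_0+t}}$, and then exchanging roots move-by-move to convert nongeneric inner sums into standard generic Bessel--Fourier integrals, I would identify the constant term with a finite sum of $\psi_{\CO_{\kappa_0+t}}$-Bessel--Fourier coefficients $\CF^{\CO_{\kappa_0+t}}_{(\kappa_0+t)^-}(f_\Sigma)$ running over a finite set of $F$-rational orbits in the $F$-stable orbit $\CO^{\st}_{\udl{p}_{\kappa_0+t}}$. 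Since $t \ge 1$ pushes the index strictly above $\kappa_0$, each such coefficient vanishes by Proposition \ref{fcSigma}. Hence the constant term is identically zero and $\CF^{\CO_{\kappa_0}}_n(\Sigma)$ is cuspidal.

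The main obstacle will be executing the root exchange in this generality. Tracking the Bessel character $\psi_{\CO_{\kappa_0}}$ attached to the anisotropic vector $w_0$ of Lemma \ref{w0}, and, in the even orthogonal case, the $w_q^\ell$-twist of Section \ref{sec-esog}, through each exchange move requires care when $G_n$ and $H_m$ are non-quasisplit pure inner forms, since the choice of $F$-rational orbit $\CO_{\kappa_0+t}$ in $\CO^{\st}_{\udl{p}_{\kappa_0+t}}$ depends delicately on $w_0$ and on the anisotropic kernel of $(V,q)$. The underlying root-system combinatorics is nevertheless identical to that of \cite[Chapter 7]{GRS11}, so the calculation can be adapted by direct analogy, with the necessary character-theoretic modifications parallel to those made in \cite{JLXZ} for the $m=1$ case.
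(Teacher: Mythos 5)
Your overall plan — show all constant terms vanish, by reducing them via root exchange to Bessel--Fourier coefficients of higher index and invoking Proposition \ref{fcSigma} — is the right framework, and for cuspidal $\Sigma$ it is complete. But it has a genuine gap in the non-cuspidal case, which is exactly where the paper concentrates its effort.

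The root-exchange identity from \cite[Chapter~7]{GRS11}, in the form needed here (Theorem~7.3, Formula~(7.35)), does not say ``the constant term equals a sum of $\psi_{\CO_{\kappa_0+t}}$-coefficients of $\Sigma$'' unconditionally. It says this holds \emph{provided} a hypothesis involving Fourier coefficients of the \emph{constant terms} of $\Sigma$ along certain parabolic subgroups of $H_{a+m}$ (the terms $f^{U_{p-i}}$ in the notation of \cite{GRS11}) also vanishes. When $\Sigma$ is cuspidal these constant terms vanish automatically, so your argument closes. When $\Sigma$ is a residual member of $\wt{\Pi}_{\psi_{\tau,\sig}}(H_{a+m})$ — in particular $\Sigma = \CE_{\tau\otimes\sigma}$, the case actually used in the main theorem — the constant terms $f^{U_{p-i}}$ are generally nonzero, and the extra terms they contribute are not a priori covered by Proposition \ref{fcSigma}, since they are Fourier coefficients of automorphic forms on \emph{smaller} groups $H_{a-a_1+m}$, not of $\Sigma$ itself.

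The paper closes this gap by an induction on the cuspidal support: after passing to a constant term, one is reduced to the analogous statement with $\tau$ replaced by $\tau_2\boxplus\cdots\boxplus\tau_r$, and an elementary computation of indices shows that the relevant Fourier coefficient index $\kappa_0+i$ strictly exceeds the critical bound $\tfrac{a-a_1+\Fm+\epsilon-1}{2}$ for the smaller group, so Proposition \ref{fcSigma} (applied inductively to the smaller Eisenstein family) gives the vanishing. Your proposal never addresses this: the step ``identify the constant term with a finite sum of $\psi_{\CO_{\kappa_0+t}}$-coefficients of $\Sigma$, which vanish by Proposition \ref{fcSigma}'' is valid only after the Theorem~7.3 hypothesis has been verified, and verifying it for residual $\Sigma$ is the real content of the proposition.
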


\begin{proof}
It suffices to show that the constant term of $\CF^{\CO_{\kappa_0}}_{n}(\Sigma)$
along every standard parabolic subgroup of $G_n^{\CO_{\kappa_0}}$ is zero.
The proof uses essentially the tower property developed in \cite[Chapter 7]{GRS11}. We take, in particular, Theorem 7.3 of \cite{GRS11}.
As in the proof of Proposition 2.5 of \cite{JLXZ}, it is enough to show the conditions in \cite[Theorem 7.3]{GRS11} hold.
Because of Proposition \ref{fcSigma}, the terms in \cite[Equation (7.35)]{GRS11} are all zero.
If $\Sigma$ is cuspidal, the conditions in \cite[Theorem 7.3]{GRS11} are automatic. Hence in this case, all the constant terms are zero, and
therefore, $\kappa_0$-th Bessel modules $\CF^{\CO_{\kappa_0}}_{n}(\Sigma)$ are cuspidal.

When $\Sigma$ in the global Arthur packet $\wt{\Pi}_{\psi_{\tau,\sig}}(H_{a+m})$ is not cuspidal, it must be a residual
representation with the global Arthur parameter $\psi_{\tau,\sig}$. According to \cite[Theorem \S1.3 ]{M08} and \cite[Theorem B]{M11}, among the residual
representations in the global Arthur packet $\wt{\Pi}_{\psi_{\tau,\sig}}(H_{a+m})$, $\Sigma=\CE_{\tau\otimes\sigma}$ has the least
cuspidal support in the sense that among the cuspidal supports of those residual representations, the Levi subgroup in the cuspidal
support of $\CE_{\tau\otimes\sigma}$ is the smallest one.
It is enough to consider the case when $\Sigma=\CE_{\tau\otimes\sigma}$. The same argument will be applicable to
the other residual representations.

For $\Sigma=\CE_{\tau\otimes\sigma}$, in Formula (7.35) of \cite{GRS11}, it follows from Proposition \ref{fcSigma} that all the summands
in summation are zero. Hence it is enough to check the assumption of Theorem 7.3 of \cite{GRS11}.
By the cuspidal support of $\CE_{\tau\otimes\sigma}$, if the constant term $f^{U_{p-i}}$ is zero (using the notation of
\cite[Theorem 7.3]{GRS11}, with $f\in\CE_{\tau\otimes\sigma}$), we are done. It remains to consider the cases when the constant terms are
not zero. To this end, we may consider the first nonzero constant term, which reduces to the case with
$\tau=\tau_2\boxplus\cdots\boxplus\tau_r$
of $G_{E/F}(a-a_1)(\BA)$. Here we refer to \eqref{tau8} for notation.
In this case, the index for the Fourier coefficient is $\kappa_0+i$ with $i=0,1,\cdots,p-1$, following the notation of \cite[Theorem 7.3]{GRS11}.
Since $\kappa_0=a-\ell_0-1$ and $\ell_0=\frac{\Fn-\Fm-1}{2}$, we must have 
$$
\kappa_0+i=(a-\frac{\Fn}{2})+\frac{\Fm-1}{2}+i.
$$
On the other hand, the term $(f^{U_{p-i}})^{\psi_{\kappa_0+i,\alpha}}$ is a Fourier coefficient on $H_{a-a_1+m}$ with index
$$
\frac{a-a_1+\Fm+\epsilon-1}{2}
$$
where $\epsilon=-1$ if $G_n$ is an odd special orthogonal group; otherwise, $\epsilon=0$. It follows that
$$
\kappa_0+i>\frac{a-a_1+\Fm+\epsilon-1}{2}.
$$
This is because $\frac{a-\epsilon}{2}=\frac{\Fn}{2}$ and $\frac{a_1}{2}+i>0$.
According to the structure of the global Arthur parameter $\psi_{\tau,\sig}$ as in \eqref{nggap},
the term $(f^{U_{p-i}})^{\psi_{\kappa_0+i,\alpha}}$ must be zero. Namely,
the condition in \cite[Theorem 7.3]{GRS11} holds in this reduced case because of Proposition \ref{fcSigma}. Hence by induction,
we obtain that the $\kappa_0$-th Bessel modules $\CF^{\CO_{\kappa_0}}_{n}(\Sigma)$ must also be cuspidal.
This completes the proof.
\end{proof}

It is clear that the $\kappa_0$-th Bessel modules $\CF^{\CO_{\kappa_0}}_{n}(\Sigma)$ could be zero.
We assume that the cuspidal sub-representation $\CF^{\CO_{\kappa_0}}_{n}(\Sigma)$ of
$G_n^{\CO_{\kappa_0}}(\BA)$, occurring in the cuspidal spectrum $L^2_\cusp(G_n^{\CO_{\kappa_0}})$, is non-zero, and write it as a Hilbert direct sum:
\begin{equation}\label{cd}
\CF^{\CO_{\kappa_0}}_{n}(\Sigma)=\pi_1\oplus\cdots\oplus\pi_k\oplus\cdots
\end{equation}
where all $\pi_i$ are irreducible cuspidal automorphic representations of $G_n^{\CO_{\kappa_0}}(\BA)$. In fact, the $\kappa_0$-th Bessel modules
$\CF^{\CO_{\kappa_0}}_{n}(\Sigma)$ are also $\wt{\RO}(G_{\kappa_0^-}^{\CO_{\kappa_0}})$-stable, as indicated in the following proposition. 

\begin{prop}\label{stable}
For $\Sigma\in\CA_\disc(H_{a+m})$, the $\kappa$-th Bessel module $\CF^{\CO_\kappa}_{\kappa^-}(\Sigma)$ is $\wt{\RO}(G_{\kappa^-}^{\CO_\kappa})$-stable.
\end{prop}

\begin{proof}
To prove this, it suffices to consider the case when $H_{a+m}$ is an odd special orthogonal group.
In fact, for all the other cases,
the stabilizer $G_{\kappa^-}^{\CO_\kappa}$ is not an even special orthogonal group
and hence $\wt{\RO}(G_{\kappa^-}^{\CO_\kappa})$ is trivial.

Suppose that $H_{a+m}$ is an odd special orthogonal group. Then $\Fm$ is odd and $\Fm=2m+1$.
In this case, the discrete multiplicity of $\Sigma$ is one. We may take the unique discrete realization $\CC_\Sigma$ of $\Sigma$ in this proof.

By the definition in Section \ref{sec-pif},  $G_{\kappa^-}^{\CO_\kappa}$ is identified as the connected component group
${\rm Isom}(V_{(\kappa)}\cap w^\perp_0,q)^\circ$,
where the anisotropic vector $w_0$ is of form \eqref{eq:w0}, namely,
$$
w_0=e_{a+\Fr_\Fm}+(-1)^{\Fn+1}\frac{x}{2}e_{-(a+\Fr_\Fm)}
$$
for some $x\in F^\times$,
where $\Fr_\Fm=\Fr(H_m)$ is the $F$-rank of $H_m$.
Recall that ${\rm Isom}(V_{(\kappa)},q)^\circ=H_{a+m-\kappa}$ is a subgroup of the Levi subgroup $M_{\hat{\kappa}}$ of $H_{a+m}$.
Assume that $\kappa>0$.
Take the element
$$
\varepsilon=\diag\{-I_{\kappa},I_{a+\Fr_\Fm-\kappa-1},-1,I_{\Fm-2\Fr_\Fm},-1,I_{a+\Fr_\Fm-\kappa-1},-I_{\kappa}\}.
$$
It is easy to check that $\varepsilon\in H_{a+m}$ and it stabilizes $w_0$.
Note that the stabilizer of $w_0$ is $\SO(V_{(\kappa)}\cap w^\perp_0,q)\rtimes \apair{\varepsilon}$,
which is isomorphic to $\RO(V_{(\kappa)}\cap w^\perp_0,q)$.
The adjoint action of $\apair{\varepsilon}$ on $G_{\kappa^-}^{\CO_\kappa}=\SO(V_{(\kappa)}\cap w^\perp_0,q)$ is the same as the action of $\wt{\RO}(G_{\kappa^-}^{\CO_\kappa})$.

Consider the action of $\varepsilon\in H_{a+m}(F)$ on a discrete realization $\CC_\Sigma$ of $\Sigma$,
defined by  $f^\varepsilon(g):=f(\varepsilon^{-1}g\varepsilon)$ for $f\in \CC_\Sigma$.
Since $\varepsilon\in H_{a+m}(F)$,  $f^\varepsilon(g)$ also belongs to $\CC_\Sigma$.
By the definition \ref{fcg}, since $\varepsilon$ stabilizes $\psi_{\kappa,w_0}$,
$$
\CF^{\psi_{\kappa,w_0}}(f^\varepsilon)(h)=\CF^{\psi_{\kappa,w_0}}(f)(\varepsilon^{-1}h \varepsilon):=\CF^{\psi_{\kappa,w_0}}(f)^{\varepsilon}(h),
$$
where the action of $\varepsilon$ on $h\in G_{\kappa^-}^{\CO_\kappa}$ is given as above.
It follows that if $f\in \CC_\Sigma$, then $\CF^{\psi_{\kappa,w_0}}(f)^{\varepsilon}\in \CF^{\psi_{\kappa,w_0}}(\CC_\Sigma)$. As explained in Page \pageref{pg:eps},
the action of $\varepsilon$ on $\CF^{\psi_{\kappa,w_0}}(f)$ coincides the action of $\wt{\RO}(G_{\kappa^-}^{\CO_\kappa})$ on $\CF^{\CO_\kappa}_{\kappa^-}(\CC_\Sigma)$.
That is, $\CF^{\CO_\kappa}_{\kappa^-}(\CC_\Sigma)$ is $\wt{\RO}(G_{\kappa^-}^{\CO_\kappa})$-stable.

When $\kappa=0$, the $\kappa$-th Bessel module $\CF^{\CO_\kappa}_{\kappa^-}(\CC_\Sigma)$ is the restriction of $\CC_\Sigma$ into the even special orthogonal group $\SO_{2a+\Fm-1}(w_0^\perp)$.
Let us extend the representation $\CC_\Sigma$ as the representation of $H_{a+m}\times\apair{-I_{2a+\Fm}}=\RO_{2a+\Fm}(V)$, by letting the action be
trivial on $\apair{-I_{2a+\Fm}}$.
We may choose
$$
\varepsilon=\{I_{a+m}, -1, I_{a+m}\}.
$$
Then the rest of the proof is the same as that for the case $\kappa>0$.
We complete the proof.
\end{proof}

The general calculation of the local Jacquet module of the induced representation of type \eqref{usrp},
as explained in \cite[Chapter 5]{GRS11}, or more precisely, in \cite[Theorems 5.4 and 5.6]{GRS11}, and also as in \cite[Section 4.1]{JLXZ},
can be adopted to prove that those irreducible summands $\pi_i$ in \eqref{cd} 
are actually nearly equivalent to each other, and at almost all local finite places $\nu$, the unramified local component $\pi_{i,\nu}$ of $\pi_i$
shares the same Satake parameter with the unramified local component $\tau_{\nu}$ under the unramified local
Langlands functorial transfer from $G_n^{\CO_{\kappa_0}}(F_{\nu})$
to $G_{E/F}(a)(F_{\nu})$
except that $G_n^{\CO_{\kappa_0}}(F_{\nu})$ is a split even special orthogonal group.
In this case,  the unramified local component $\pi_{i,\nu}$ belongs to the $\wt{\RO}(G_n^{\CO_{\kappa_0}}(F_\nu))$-orbit of the Satake parameters of $G_n^{\CO_{\kappa_0}}(F_{\nu})$,
which are the descents of the Satake parameters of $\tau_\nu$ under the local Langlands functorial transfer.

For the sake of completeness and also for future applications, we apply Theorems 5.4 and 5.6 in \cite{GRS11} to elaborate with some details
the above discussions.
We summarize the results on the local descent at the unramified places as the following lemma. 
\begin{lem}\label{lm:local-descent-unramified}
Let $\nu$ be a finite place such that all data are unramified.
Assume that  $\Sigma_\nu$ is the unique irreducible unramified constituent
of $\Ind^{H_{a+m}(F_\nu)}_{P_{\hat{a}}(F_\nu)}\tau_\nu|\det|^{\frac{1}{2}}\otimes\sig_\nu$,
where  $\tau_\nu$ and $\sig_\nu$ are irreducible, generic and unramified local components of $\tau$ and $\sig$ in \eqref{nggap}.
Then the unramified constituents of $\CF_{n}^{\CO_{\kappa_0}}(\Sigma_\nu)$ have the same Satake parameter with $\tau_\nu$ under 
the local Langlands functorial transfer from $G_n^{\CO_{\kappa_0}}(F_\nu)$ to $G_{E/F}(F_\nu)$.
\end{lem}

\begin{proof}

We consider two cases:
Case (1) $H_m$ is  special even orthogonal,
or $H_m$ is split odd orthogonal and $G^{\CO_\kappa}_{\kappa^-}$ is split,
or $H_m$ is quasi-split odd unitary;
and Case (2) $H_m$ is split special odd orthogonal and  $G^{\CO_\kappa}_{\kappa^-}$ is non-split, or $H_m$ is quasi-split even unitary.

{\bf Case (1).}\
Under the assumption,
if $H_m$ is  split special even orthogonal, then the assumption that the Witt index of $E_\nu y_{-\alpha}+V_0$ is zero in \cite[Theorem 5.4 (1)]{GRS11} holds;
if $H_m$ is  split special odd orthogonal and $G^{\CO_\kappa}_{\kappa^-}$ is split,
then the Witt index of $E_\nu y_{-\alpha}+V_0$ is one, $\delta^1_{h(V),\alpha}=0$ and $\delta^2_{h(V),\alpha}=1$ in the notation of \cite[Theorem 5.4 (2)]{GRS11};
otherwise,  the Witt index of $E_\nu y_{-\alpha}+V_0$ is one, and  $\delta^1_{h(V),\alpha}=1$ and $\delta^2_{h(V),\alpha}=0$ in \cite[Theorem 5.4 (2)]{GRS11}.
In Case (1), $a$ is even by the parity of the dimension of the Arthur parameters involved.

As in \eqref{usrp}, we consider the unramified local component $\Sigma_\nu$ as the unramified constituent of 
\begin{equation}\label{eq:Ind-H-GRS}
\Ind^{H_{a+m}(F_{\nu})}_{P_{[2^{\frac{a}{2}}]}(F_{\nu})}
(\mu_1\circ{\det}_2)\otimes\cdots\otimes(\mu_{\frac{a}{2}}\circ{\det}_2)\otimes
\delta_1\otimes\cdots\otimes\delta_{\Fr_m}.	
\end{equation}
Here $\Fr_m=\Fr(H_m)$ is the $F_\nu$-rank of $H_m$. It follows that $\Fr_m=m-1$ if $H_m(F_\nu)$ is quasi-split and non-split even orthogonal;
and $\Fr_m=m$ otherwise. As before,
$P_{[2^{\frac{a}{2}}]}$ is the standard parabolic subgroup of $H_{a+m}$ whose Levi part is isomorphic to $G_{E/F}(2)^{\times  \frac{a}{2} } \times G_{E/F}(1)^{\times \Fr_m}$;
and $\sig_\nu=\Ind^{H_{m}}_{B_m}\delta_1\otimes\cdots\otimes\delta_{\Fr_m}$.
Let us substitute the following representation for $\tau$ in \cite[Theorem 5.4]{GRS11}
\begin{equation}\label{eq:tau-GRS}
\Ind^{G_{E/F}(a+\Fr_m)(F_{\nu})}_{Q_{[2^{ \frac{a}{2}}]}(F_{\nu})}(\mu_1\circ{\det}_2)\otimes\cdots\otimes(\mu_{ \frac{a}{2}}\circ{\det}_2)\otimes
\delta_1\otimes\cdots\otimes\delta_{\Fr_m},
\end{equation}
where $Q_{[2^{ \frac{a}{2}}]}$ is the standard parabolic subgroup $G_{E/F}(a+\Fr_m)\cap P_{[2^{\frac{a}{2}}]}$.
We regard $G_{E/F}(a+\Fr_m)$ as the subgroup of the standard parabolic subgroup of $P_{(a+\Fr_m)^{\wedge}}$.
The symbols $\tilde{m}$ and $\ell$ in \cite[Theorem 5.4]{GRS11}  are replaced by $a+\Fr_m$ and $\kappa$ respectively in our case.
Then $0\leq \kappa<a+\Fr_m$, which is a part of the conditions in \cite[Theorem 5.4, (1) and (2)]{GRS11}.

If $H_m$ is split special even orthogonal, by \cite[Theorem 5.4 (1)]{GRS11}, one has
$\CF^{\CO_\kappa}_{\kappa^-}(\Sigma_\nu)=0$ for $\kappa> \frac{a}{2}+\Fr_m-1$, and for $\kappa=\frac{a}{2}+\Fr_m-1$
$$
\CF^{\CO_\kappa}_{\kappa^-}(\Sigma_\nu)\prec\Ind^{G^{\CO_\kappa}_{\kappa^-}}_{B_{G,a}}\mu_1\otimes\mu_2\cdots\otimes\mu_{\frac{a}{2}},
$$
where $B_{G,a}$ is the Borel subgroup of $G^{\CO_\kappa}_{\kappa^-}$,
and $\pi_1\prec\pi_2$ means that $\pi_1$ is a constituent of $\pi_2$.
Since $\Fr_m=m$, we have that $G^{\CO_\kappa}_{\kappa^-}$ is isomorphic to the split odd orthogonal $\SO_{a+1}$.

If $H_m$ is split odd orthogonal and $G^{\CO_\kappa}_{\kappa^-}$ is split, by \cite[Theorem 5.4 (2)]{GRS11}, one has
$\CF^{\CO_\kappa}_{\kappa^-}(\Sigma_\nu)=0$ for $\kappa> \frac{a}{2}+\Fr_m$, and for $\kappa=\frac{a}{2}+\Fr_m-1$
$$
\CF^{\CO_\kappa}_{\kappa^-}(\Sigma_\nu)\prec\Ind^{G^{\CO_\kappa}_{\kappa^-}}_{B_{G,a}}\mu_1\otimes\mu_2\cdots\otimes\mu_{\frac{a}{2}-1}\otimes\mu_{\frac{a}{2}}
\oplus \Ind^{G^{\CO_\kappa}_{\kappa^-}}_{B_{G,a}}\mu_1\otimes\mu_2\cdots\otimes\mu_{\frac{a}{2}-1}\otimes\mu^{-1}_{\frac{a}{2}}.
$$
Since $\Fr_m=m$, we have that $G^{\CO_\kappa}_{\kappa^-}$ is isomorphic to the split special even orthogonal $\SO_{a}$ and that the two unramified representations are $\wt{\RO}(G_{\kappa^-}^{\CO_{\kappa_0}}(F_\nu))$-conjugate.

If $H_m$ is quasi-split, but non-split, special even orthogonal or odd unitary, following \cite[Theorem 5.4 (2)]{GRS11}, one has
$\CF^{\CO_\kappa}_{\kappa^-}(\Sigma_\nu)=0$ for $\kappa> \frac{a}{2}+\Fr_m$, and for $\kappa=\frac{a}{2}+\Fr_m$
$$
\CF^{\CO_\kappa}_{\kappa^-}(\Sigma_\nu)\prec\Ind^{G^{\CO_\kappa}_{\kappa^-}}_{B_{G,a}}\mu_1\otimes\mu_2\cdots\otimes\mu_{\frac{a}{2}}.
$$
More precisely, if $H_m$ is quasi-split, but non-split, special even orthogonal,
then $\Fr_m=m-1$ and $G^{\CO_\kappa}_{\kappa^-}$ isomorphic to the split odd orthogonal $\SO_{a+1}$; and
if $H_m$ is odd unitary,
then $\Fr_m=m$ and $G^{\CO_\kappa}_{\kappa^-}$ is isomorphic to the quasi-split odd unitary $\RU_{a+1}$.

{\bf Case (2).}\
Denote $\omega_{\tau,\nu}$ to be the central character of $\tau_\nu$.
Since $\tau_\nu$ is  (conjugate) self-dual, $\omega_{\tau,\nu}$ is a quadratic character, that is $\omega_{\tau,\nu}=1$ or $\lam_0$.
Here $\lam_0$ is  the unique non-trivial unramified quadratic character of $E^\times_\nu$.

Assume that $H_m$ is split odd orthogonal and $G^{\CO_\kappa}_{\kappa^-}$ is non-split.
In this case, the assumption that the Witt index of $F_\nu y_{-\alpha}+V_0$ is zero in \cite[Theorem 5.4 (1)]{GRS11} holds and $a$ is even.

If $\omega_{\tau,\nu}=1$, then the unramified local component $\Sigma_\nu$ is the unramified constituent of the unramified induced representation as in \eqref{eq:Ind-H-GRS}. 
We substitute the representation \eqref{eq:tau-GRS} for $\tau$ in \cite[Theorem 5.4 (1)]{GRS11}.
Then $\CF^{\CO_\kappa}_{\kappa^-}(\Sigma_\nu)=0$ for $\kappa\geq \frac{a}{2}+m$
and the descent $\CF^{\CO_\kappa}_{\kappa^-}(\Sigma_\nu)$ to the orthogonal group $\SO_{a}$ is zero when $\kappa= \frac{a}{2}+m$.
Remark that over the innert finite places, the determinant of the local $L$-parameter of the quasi-split, but non-split $\SO_{a}$ is not 1.
This verifies that if $\omega_{\tau,\nu}=1$ the descent $\CF^{\CO_\kappa}_{\kappa^-}(\Sigma_\nu)$ at this rational orbit $\CO_\kappa$ is zero.

Assume that $\omega_{\tau,\nu}=\lam_0$.
The unramified local component $\Sigma_\nu$ is isomorphic to the unramified constituent of
$$
\Ind^{H_{a+m}(F_{\nu})}_{P_{[2^{\frac{a}{2}-1}]}(F_{\nu})}
(\mu_1\circ{\det}_2)\otimes\cdots\otimes(\mu_{\frac{a}{2}-1}\circ{\det}_2)\otimes 1\otimes\lam_0 \otimes
\delta_1\otimes\cdots\otimes\delta_{m}.	
$$
We may replace the above representation by
$
\Ind^{H_{a+m}(F_{\nu})}_{P_{(a+m-2)^\wedge}(F_{\nu})}\tau_1\otimes\sig_1,
$
where $\sig_1$ is the representation of split $\SO_5(F_\nu)$ induced from the parabolic subgroup which preserves an isotropic line and the character $\lam_0|\cdot|^{\frac{1}{2}}\otimes 1$,
and
$$
\tau_1=\Ind^{G_{E/F}(a+m-2)(F_{\nu})}_{Q_{[2^{ \frac{a}{2}}]}(F_{\nu})}(\mu_1\circ{\det}_2)\otimes\cdots\otimes(\mu_{ \frac{a}{2}-1}\circ{\det}_2)\otimes
\delta_1\otimes\cdots\otimes\delta_{m}.
$$
Applying \cite[Theorem 5.1 (1)]{GRS11}, after the same calculation with Page 104 in \cite{GRS11}, one has
$\CF^{\CO_\kappa}_{\kappa^-}(\Sigma_\nu)=0$ for $\kappa> \frac{a}{2}+m$,
and for $\kappa=\frac{a}{2}+m-1$
$$
\CF^{\CO_\kappa}_{\kappa^-}(\Sigma_\nu)\prec\Ind^{G^{\CO_\kappa}_{\kappa^-}}_{B_{G,a}}\mu_1\otimes\mu_2\cdots\otimes\mu_{\frac{a}{2}-1}\otimes 1,
$$
where $1$ is the trivial representation of the anisotropic part of the torus of $B_{G,a}$.

In the rest, suppose that $H_m$ is quasi-split even unitary. 
Then in our setting $a$ is odd, $\tau_\nu$ is conjugate orthogonal and 
 $\omega_{\tau,\nu}=1$.
The unramified local component $\Sigma_\nu$ is isomorphic to the unramified constituent of
$$
\Ind^{H_{a+m}(F_{\nu})}_{P_{[2^{[\frac{a}{2}]}]}(F_{\nu})}
(\mu_1\circ{\det}_2)\otimes\cdots\otimes(\mu_{[\frac{a}{2}]}\circ{\det}_2)\otimes  |\cdot|^{\frac{1}{2}} \otimes
\delta_1\otimes\cdots\otimes\delta_{m}.
$$
We may replace the above induced representation by
$$
\Ind^{H_{a+m}(F_{\nu})}_{P_{(a+m-1)^\wedge}(F_{\nu})}\tau_1\otimes 1,
$$
where $1$ is the trivial character of quasi-split $\RU_2(F_\nu)$
and
$$
\tau_1=\Ind^{G_{E/F}(a+m-1)(F_{\nu})}_{Q_{[2^{ [\frac{a}{2}]}]}(F_{\nu})}(\mu_1\circ{\det}_2)\otimes\cdots\otimes(\mu_{ [\frac{a}{2}]}\circ{\det}_2)\otimes
\delta_1\otimes\cdots\otimes\delta_{m}. \label{page:tau-last}
$$
Let us apply \cite[Theorem 5.1 (1)]{GRS11} and follow the same calculation with  Page 105 in the proof of Theorem 5.6 in \cite{GRS11} for the case $\omega_{\tau,\nu}=1$.
Then one has
$\CF^{\CO_\kappa}_{\kappa^-}(\Sigma_\nu)=0$ for $\kappa> \frac{a}{2}+m$.
For $\kappa=[\frac{a}{2}]+m$, if $\omega_{\tau,\nu}=1$, then
\begin{equation}\label{eq:desc-unitary-even}
\CF^{\CO_\kappa}_{\kappa^-}(\Sigma_\nu)\prec\Ind^{G^{\CO_\kappa}_{\kappa^-}}_{B_{G,a}}\mu_1\otimes\mu_2\cdots\otimes\mu_{[\frac{a}{2}]-1}\otimes 1,	
\end{equation}
where $1$ is the trivial character of $\RU_1(F_\nu)$.

Therefore, we complete all cases involved in our discussion in this paper, and verify that
 $\pi_{i,\nu}$  shares the same Satake parameter with  $\tau_{\nu}$ under the unramified local Langlands functorial transfer from $G_n^{\CO_{\kappa_0}}(F_{\nu})$
to $G_{E/F}(a)(F_{\nu})$.
\end{proof}

Now we go back to the decomposition in \eqref{cd}.
By the local uniqueness of Bessel models at all local places (\cite{AGRS}, \cite{SZ}, \cite{GGP12}, and \cite{JSZ}),
it is easy to deduce that $\pi_i$ is not equivalent to $\pi_j$ if $i\neq j$, that is, the decomposition in \eqref{cd} is multiplicity free.
Of course, in the situation that the cuspidal spectrum is multiplicity free, the decomposition in \eqref{cd} will be automatically multiplicity
free.
We summarize the discussion as the following theorem.

\begin{thm}\label{wlt}
Assume that $\tau$ and $\sigma$ are as given above.
For an automorphic member $\Sigma$ in the global Arthur packet $\wt{\Pi}_{\psi_{\tau,\sig}}(H_{a+m})$,
assume that the $\kappa_0$-th Bessel module $\CF^{\CO_{\kappa_0}}_{n}(\Sigma)$ is non-zero
for some $F$-rational nilpotent orbit $\CO_{\kappa_0}$ in the $F$-stable orbit $\CO^\st_{\udl{p}_{\kappa_0}}(F)$
with $\ell_0=\frac{\Fn-\Fm-1}{2}$, $\kappa_0=a-\ell_0-1$, and 
 $\kappa_0^-=m^-=n$. Then the following hold:
\begin{enumerate}
\item The $\kappa_0$-th Bessel module $\CF^{\CO_{\kappa_0}}_{n}(\Sigma)$ is cuspidal and can be regarded
as a sub-representation of $G_n^{\CO_{\kappa_0}}(\BA)$ in the cuspidal spectrum $L^2_\cusp(G_n^{\CO_{\kappa_0}})$.
\item In the cuspidal spectrum $L^2_\cusp(G_n^{\CO_{\kappa_0}})$, $\CF^{\CO_{\kappa_0}}_{n}(\Sigma)$ has a multiplicity free, Hilbert direct
sum decomposition
$$
\CF^{\CO_{\kappa_0}}_{n}(\Sigma)=\pi_1\oplus\cdots\oplus\pi_k\oplus\cdots
$$
where all $\pi_i$ belong to $\CA_\cusp(G_n^{\CO_{\kappa_0}})$, and
have a generic global Arthur parameter belonging to the $\wt{\RO}(G_n^{\CO_{\kappa_0}})$-orbit of $\phi_\tau$, which
is $G_n^{\CO_{\kappa_0}}$-relevant and is determined by $\tau$.
Moreover, $\CF^{\CO_{\kappa_0}}_{n}(\Sigma)$ is $\wt{\RO}(G_n^{\CO_{\kappa_0}})$-stable.
\end{enumerate}
\end{thm}

Note that in Part (2) of Theorem \ref{wlt}, the $\wt{\RO}(G_n^{\CO_{\kappa_0}})$-orbit of $\phi_\tau$ contains only one parameter unless $G_n^{\CO_{\kappa_0}}$ is an even special orthogonal group.
In this case, the $\wt{\RO}(G_n^{\CO_{\kappa_0}})$-orbit of $\phi_\tau$ may contain two parameters $\{\phi,\phi_\star\}$, which are
the descents of $\phi_\tau$, as explained in
Page \pageref{pg:eps}. It is worthwhile to remind that in this case, the global Arthur packets $\Pi_\phi(G_n^{\CO_{\kappa_0}})$ and
$\Pi_{\phi_\star}(G_n^{\CO_{\kappa_0}})$ are different.
We may identify the parameter $\phi_\tau$ with either $\phi$ or $\phi_\star$, as in Page \pageref{pg:eps}.

\subsection{Construction of cuspidal automorphic modules}\label{sec-mccam}
The main issue remains from Theorem \ref{wlt} is the non-vanishing assumption that the $\kappa_0$-th Bessel module $\CF^{\CO_{\kappa_0}}_{n}(\Sigma)$ is non-zero for some automorphic member 
$\Sigma$ in the global Arthur packet $\wt{\Pi}_{\psi_{\tau,\sig}}(H_{a+m})$. We refer to Subsection \ref{sec-wfs} and Conjecture \ref{gnvc} in particular for more details. 

We are instead going to discuss the impact of Conjecture \ref{bpconj} in the theory of twisted automorphic descents. To this end, we recall the specific data suggested by Conjecture \ref{bpconj}.
By Proposition \ref{cfc}, for a given $\pi\in\CA_\cusp(G_n)$ with a $G_n$-relevant, generic global Arthur parameter $\phi=\phi_\tau\in\wt{\Phi}_2(G_n^*)$ and with the cuspidal realization $\CC_\pi$, 
there exists the first occurrence index $\ell_0=\ell_0(\CC_\pi)$, such that the (maximal) $\ell_0$-Bessel module $\CF^{{\CO_{\ell_0}}}(\CC_\pi)$ associated to the 
partition $\udl{p}_{\ell_0}=[(2\ell_0+1)1^{\Fn-2\ell_0-1}]$ is cuspidal and 
nonzero, as a representation of $H_{m}^{\CO_{\ell_0}}(\BA)$ occurring in the cuspidal spectrum $L^2_\cusp(H_{m}^{\CO_{\ell_0}})$. In this situation, we take the data that $m=\ell_0^-$, $\Fm=\Fl_0^-=\Fn-2\ell_0-1$, 
and $H_m=H_{m}^{\CO_{\ell_0}}$. By Conjecture \ref{bpconj}, there exists a $\sig\in\CA_\cusp(H_m)$ with an $H_m$-relevant, generic global Arthur parameter $\phi_\sig$ in $\wt{\Phi}_2(H_m^*)$ and 
with the cuspidal realization $\CC_\sig$, such that the inner product $\left<\CF^{\psi_{\CO_{\ell_0}}}(\varphi_\pi),\ol{\varphi}_{\sigma'}\right>_{H_m}$ is nonzero for some $\varphi_\pi\in\CC_\pi$ and 
$\varphi_{\sig'}\in\CC_{\sig'}$, 
where $\sig'=\sig^{w_q^\ell}$ and $\CC_{\sig'}=\CC_\sig^{w_q^\ell}$ defined in \eqref{eq:sig'}.


With the data associated to Conjecture \ref{bpconj}, Theorem \ref{wlt} may be illustrated by the following diagram:
\begin{equation}\label{diag}
\begin{matrix}
\wt{\Phi}_2(G_n^*)_{G_n}& &\phi_\sigma\in\wt{\Phi}_2(H_m)  &&&&\wt{\Pi}_{\psi_{\tau,\sig}}(H_{a+m})& \\
                        & & &&\Longrightarrow&&&\\
\phi_\tau               & &(H_m,\sigma)&&&&\Sigma&\\
             &   &&&&&&\\
\Updownarrow   &&\Uparrow&&&&\Downarrow &\\
 &&&&&&&\\
\wt{\Pi}_{\phi_\tau}(G_n)   & &\ni\pi &       & \underleftrightarrow{(?)}       &           &\CF_n^{\CO_{\kappa_0}}(\Sigma)\subset L^2_\cusp(G_n^{\CO_{\kappa_0}})&\\
\end{matrix}
\end{equation}
In this Diagram, we starts with a generic global Arthur parameter $\phi_\tau$ of $G_n^*$, which is $G_n$-relevant. It gives the global Arthur packet
$\wt{\Pi}_{\phi_\tau}(G_n)$ by the endoscopic classification theory. Now take any cuspidal member $\pi$ in $\wt{\Pi}_{\phi_\tau}(G_n)$. By the
Generic Summand Conjecture (Conjecture \ref{bpconj}), it produces the pair $(H_m,\sigma)$, where $\sig\in\CA_\cusp(H_m)$ with a generic global
Arthur parameter $\phi_\sig$ in $\wt{\Phi}_2(H_m^*)_{H_m}$. Then $\sig$ and $\tau$ together produce the global Arthur packet
$\wt{\Pi}_{\psi_{\tau,\sig}}(H_{a+m})$. Finally, we take the Bessel-Fourier coefficient $\CF_n^{\CO_{\kappa_0}}(\Sigma)$ for any automorphic member
$\Sigma$ in $\wt{\Pi}_{\psi_{\tau,\sig}}(H_{a+m})$, which is a cuspidal automorphic $G_n^{\CO_{\kappa_0}}(\BA)$-module contained in
$L^2_\cusp(G_n^{\CO_{\kappa_0}})$.
The {\sl big question} in the construction is: what can we say about $\pi$ and $\CF_n^{\CO_{\kappa_0}}(\Sigma)$ as
representations of $G_n(\BA)$ and $G_n^{\CO_{\kappa_0}}(\BA)$, respectively? Theorem \ref{wlt} gives an answer to this question with a non-vanishing assumption. 

Without the participation of $\sigma$ and $H_m$, Diagram \eqref{diag} may be reduced to the following diagram:
\begin{equation}\label{diag2}
\begin{matrix}
\wt{\Phi}_2(G_n^*)\ni\phi_\tau  &&\Longrightarrow&&\CE_\tau\in\wt{\Pi}_{\psi_{\tau}}(H_{a}^*)& \\
             &   &&&&\\
\Downarrow   &&&&\Downarrow &\\
 &&&&&\\
\wt{\Pi}_{\phi_\tau}(G_n)\ni\pi &       & \cong       &           &\CF_n^{\CO_{\kappa_0}}(\CE_\tau)\subset L^2_\cusp(G_n^{\CO_{\kappa_0}})&\\
\end{matrix}
\end{equation}
When $G_n=G_n^{\CO_{\kappa_0}}=G_n^*$ is $F$-quasisplit, and  $\CE_\tau$ is the residual representation of $H_a(\BA)$
(with $a=N=\Fn^\vee$) having the global Arthur parameter
$$
\psi_\tau=(\tau_1,2)\boxplus\cdots\boxplus(\tau_r,2),
$$
This reduced diagram (Diagram \eqref{diag2}) yields the automorphic descents of Ginzburg-Rallis-Soudry (\cite{GRS11}) that construct certain generic
cuspidal automorphic representations of an $F$-quasisplit classical group $G_n^*(\BA)$.

By Proposition \ref{piform}, $G_n$ and $G_n^{\CO_{\kappa_0}}$
are pure inner forms. If one of $G_n$ and $G_n^{\CO_{\kappa_0}}$ is not equal to $G_n^*$, then the relation between
$\pi$ and $\CF_n^{\CO_{\kappa_0}}(\Sigma)$ is the generalized Jacquet-Langlands correspondence between $G_n$ and $G_n^{\CO_{\kappa_0}}$. However,
as shown in \cite{JLXZ}, this will not cover the general situation as the $F$-ranks of $G_n$ and $G_n^{\CO_{\kappa_0}}$ must satisfy the condition
given in
Proposition \ref{coform}. The introduction of $\sigma$ and $H_m$ in the construction is to avoid such restriction.

With Conjecture \ref{bpconj} and the participation of $\sigma$ and $H_m$ in the construction as displayed in Diagram \eqref{diag}, which is essential,
the proposed construction may (in principle) produce all irreducible cuspidal
automorphic representations of the classical groups $G_n$ that are pure inner $F$-forms of an $F$-quasisplit classical group $G_n^*$.

In fact, we are going to show in Subsection \ref{sec-mcgc} that the $\kappa_0$-th Bessel module $\CF^{\CO_{\kappa_0}}_{n}(\CE_{\tau\otimes\sigma})$ is non-zero, assuming Conjecture \ref{bpconj}.
If we assume that the stronger uniqueness of the local Bessel models over a local Vogan packet holds at all local places (Conjecture \ref{smoconj}, the known
cases of which is given in Theorem \ref{mwslmo}),
then $\CF^{\CO_{\kappa_0}}_{n}(\CE_{\tau\otimes\sigma})$ is in fact irreducible, when $G_n=G_n^{\CO_{\kappa_0}}$ is {\sl not} an even special orthogonal
group. However, if $G_n=G_n^{\CO_{\kappa_0}}$ is an even special orthogonal group, then $\CF^{\CO_{\kappa_0}}_{n}(\CE_{\tau\otimes\sigma})$
could be a direct sum of two irreducible cuspidal automorphic representations that belong to the $\wt{\RO}(G_n)$-orbit.
In any situation, we set
\begin{equation}\label{tad1}
{\mathcal D}^{\CO_{\kappa_0}}_n(\tau;\sigma):=\CF^{\CO_{\kappa_0}}_{n}(\CE_{\tau\otimes\sigma})
\end{equation}
and call ${\mathcal D}^{\CO_{\kappa_0}}_n(\tau;\sigma)$ a {\sl $\sigma$-twisted automorphic descent} of $\tau$ from
$G_{E/F}(N)$ to $G_n^{\CO_{\kappa_0}}$, or simply a {\sl twisted automorphic descent} of $\tau$, where $N=a=\Fn^\vee$.
The main result in the theory of the {\sl cuspidal automorphic modules} outlined in Diagram \eqref{diag}, by means of {\sl twisted automorphic descents}, is to confirm that the constructed
module ${\mathcal D}^{\CO_{\kappa_0}}_n(\tau;\sigma)$ in \eqref{tad1} is in principle isomorphic to the given irreducible
cuspidal automorphic representation $\pi$. 

In general, we may state the {\sl main conjecture} in the theory of the cuspidal automorphic modules via the twisted automorphic descents as follows.

\begin{conj}[Main Conjecture]\label{pmc}
Let $\tau=\tau_1\boxplus\cdots\boxplus\tau_r$ be an irreducible isobaric representation of
$G_{E/F}(a)(\BA)$ that defines a generic global Arthur parameter $\phi=\phi_\tau\in\wt{\Phi}_2(G_n^*)$ as in \eqref{gapgn}.
Assume that $\phi$ is $G_n$-relevant.
For any $\pi\in\CA_\cusp(G_n)$ belonging to the global Arthur packet $\wt{\Pi}_\phi(G_n)$, there exists a datum $(H_m,\sigma)$
with the following properties:
\begin{enumerate}
\item $H_m$ is a classical group defined over $F$ and is a pure inner $F$-form of an $F$-quasisplit classical group $H_m^*$ such that
 the pairs $(G_n,H_m)$ and $(G_n^*,H_m^*)$ are relevant and the product $G_n\times H_m$ is a relevant pure inner form of the
 product $G_n^*\times H_m^*$; and
\item $\sigma\in\CA_\cusp(H_m)$ belongs to the global Arthur packet $\wt{\Pi}_{\phi'}(H_m)$
associated to an $H_m$-relevant generic global Arthur parameter $\phi'\in\wt{\Phi}_2(H_m^*)$,
\end{enumerate}
such that
\begin{itemize}
\item[(a)] if $G_n=G_n^{\CO_{\kappa_0}}$ is not an even special orthogonal group, or if $G_n=G_n^{\CO_{\kappa_0}}$ is an even special orthogonal group,
but the $\wt{\RO}(G_n)$-orbit of $\pi$ contains only $\pi$, then
the automorphic module ${\mathcal D}^{\CO_{\kappa_0}}_n(\tau;\sigma)$ that is constructed via
the twisted automorphic descent \eqref{tad1} is isomorphic to the given $\pi$:
$$
{\mathcal D}^{\CO_{\kappa_0}}_n(\tau;\sigma)\cong\pi;
$$
\item [(b)] if  $G_n=G_n^{\CO_{\kappa_0}}$ is an even special orthogonal group, and the $\wt{\RO}(G_n)$-orbit of $\pi$ is equal to $\{\pi,\pi_\star\}$, then
$$
{\mathcal D}^{\CO_{\kappa_0}}_n(\tau;\sigma)\cong\pi\oplus\pi_\star.
$$
\end{itemize}
\end{conj}

It is not hard to see that the constructed cuspidal automorphic module ${\mathcal D}^{\CO_{\kappa_0}}_n(\tau;\sigma)$ in Conjecture \ref{pmc} is the special
realization of the module $\CM(\psi, \CF(\pi,G))$ in Principle \ref{prin} in the particular case under consideration. We remark that
the construction outlined in Diagram \eqref{diag} only uses a piece of information from the data $\CF(\pi,G)$.
We will come back to the discussion of Conjecture \ref{pmc} with more details in Section \ref{sec-nmc}.

\subsection{Global Gan-Gross-Prasad conjecture: another direction}\label{sec-wfs}
Let $\tau=\tau_1\boxplus\tau_2\boxplus\cdots\boxplus\tau_r$ be the irreducible isobaric automorphic representation of
$G_{E/F}(a)(\BA)$ as in \eqref{tau8}, with $a=\Fn^\vee=N$, which defines
a generic global Arthur parameter $\phi=\phi_\tau$ in $\wt{\Phi}_2(G_n^*)$. Let $\phi'$ be a generic global Arthur parameter of $H_m^*$.
Assume that $L(\frac{1}{2},\phi\times\phi')\neq 0$. For any member $\sig$ in the global Vogan packet $\wt{\Pi}_{\phi'}[H_m^*]$,
in which all the automorphic members are cuspidal (\cite[Section 3]{JL-Cogdell}),
we have
$$
L(\frac{1}{2},\tau\times\sigma)=L(\frac{1}{2},\tau\times\sig^{w^\ell_q})=L(\frac{1}{2},\phi\times(\phi')^{w^\ell_q})=L(\frac{1}{2},\phi\times\phi')\neq 0.
$$
The {\sl direction of the global Gan-Gross-Prasad conjecture} to be considered in this section
asserts that under the above assumptions, there exists a unique pair $(\pi,\sig)$ in the global Vogan packet
$\wt{\Pi}_{\phi\times\phi'}[G_n^*\times H_m^*]$ with property that $(\pi,\sig)$ with the cuspidal realization $(\CC_\pi,\CC_\sig)$
admits a nonzero Bessel period
(depending on the $F$-rational structure of the unipotent orbits as discussed in Section \ref{sec-pif}). The uniqueness follows from the local
Gan-Gross-Prasad conjecture at all local places (Conjecture \ref{smoconj}). Hence the key point is the existence of such a pair with a nonzero Bessel period.

We are going to prove this direction of the global Gan-Gross-Prasad conjecture by constructing
such a pair via the twisted automorphic descent developed in the early sections of this paper. For a technical reason, we have to take an {\sl assumption},
which we are only able to verify for some special situation for the time being.

Take a member $\sig\in \wt{\Pi}_{\phi'}[H_m^*]$. There is an $F$-inner form $H_m$ of $H_m^*$ such that $\sig\in\CA_\cusp(H_m)$ with a cuspidal realization
$\CC_\sig$. By Proposition \ref{esp}, the residual representation $\CE_{\tau\otimes\sig}$ of $H_{a+m}(\BA)$ is nonzero.
As discussed in \cite{JLZ13}, $\CE_{\tau\otimes\sig}$ is square-integrable. By \cite[Theorem B]{M11}, $\CE_{\tau\otimes\sig}$ is
irreducible. Following from \cite[Section 6]{JLZ13}, $\CE_{\tau\otimes\sig}$ has the global Arthur parameter
$$
\psi_{\tau,\sig}=(\tau_1,2)\boxplus(\tau_2,2)\boxplus\cdots\boxplus(\tau_r,2)\boxplus\phi'.
$$
It is expected that the structure of the Fourier coefficients of $\CE_{\tau\otimes\sig}$ has significant impact to the understanding of the
global Vogan packet $\wt{\Pi}_{\phi\times\phi'}[G_n^*\times H_m^*]$.

As in \cite[Section 4]{J14} and as recalled in Section \ref{sec-fcpt},
the Fourier coefficients of $\CE_{\tau\otimes\sigma}$ are defined in terms of the
$H_{a+m}$-relevant partitions of $(2a+\Fm,H_{a+m}^*)$.
We denote by $\mathfrak{p}(\CE_{\tau\otimes\sigma})$ the set of the $H_{a+m}$-relevant partitions with which the
residual representation $\CE_{\tau\otimes\sigma}$ has a nonzero Fourier coefficient.
To the pair of the generic global Arthur parameters $(\phi,\phi')$ as given above,
we define the following partition:
$$
\udl{p}_{\phi,\phi'}:=
\begin{cases}
[(a+\Fm-1)(a+1)]&\textit{if}\ H_m^*=\SO_{2m}, \Fm=2m;\\
[(a+\Fm)(a-1)1]&\textit{if}\ H_m^*=\SO_{2m+1}, \Fm=2m+1;\\
[(a+\Fm)a]&\textit{if}\ H_m^*\ \textit{is a unitary group}.
\end{cases}
$$
Note that $a=\Fn^\vee$, and the integers $a+\Fm-1$ and $a+\Fm$ are odd, in the respective cases.
The main conjecture in \cite[Section 4]{J14} asserts that
for all $\sig\in \wt{\Pi}_{\phi'}[H_m^*]$, every partition $\udl{p}\in\mathfrak{p}(\CE_{\tau\otimes\sigma})$ has the property that
$\udl{p}\leq \udl{p}_{\phi,\phi'}$.
If we get back to the construction of cuspidal automorphic modules as illustrated in Diagram \eqref{diag}, then we need the following partition:
$$
\udl{p}_{\phi,\phi'}^1:=
\begin{cases}
[(a+\Fm-1)1^{a+1}]&\textit{if}\ H_m^*=\SO_{2m};\\
[(a+\Fm)1^a]&\textit{otherwise}.
\end{cases}
$$

\begin{conj}\label{gnvc}
With notation and assumptions as above, for the given pair of parameters $(\phi,\phi')$, there exists a $\sigma\in \wt{\Pi}_{\phi'}[H_m^*]$ with a cuspidal realization $\CC_\sig$ on $H_m(\BA)$, such that
$\udl{p}_{\phi,\phi'}^1$ belongs to $\mathfrak{p}(\CE_{\tau\otimes\sigma})$, where $\CE_{\tau\otimes\sigma}$ on $H_{a+m}(\BA)$
is defined through $\CC_\sig$.
\end{conj}

For $m=0$, Conjecture \ref{gnvc} was proved in \cite{GRS11}. For $m=1$ and $H_1$ is an $F$-form of $\SO_2$, it is proved in \cite{JLXZ}.
Similar results can be checked for unitary groups, but we do not discuss them here with further details.

\begin{prop}\label{subrnv}
Conjecture \ref{gnvc} holds when $m=0$ and for all $F$-quasisplit classical groups $H_a^*$, and when $m=1$ for even special orthogonal groups
$H_{2n+1}=\SO_{2n+2,2n}$.
\end{prop}

We refer to \cite{J14}, \cite{J17}, \cite{JL-Cogdell} and \cite{JLS} for more discussion of Fourier coefficients of automorphic representations occurring
in the discrete spectrum of classical groups, and of residual representations in particular.

\begin{thm}[Global Gan-Gross-Prasad Conjecture: another direction]\label{gggp2}
Let $\tau$ be the irreducible isobaric automorphic representation of $G_{E/F}(a)(\BA)$ as in \eqref{tau8}, with $a=\Fn^\vee=N$ and
a generic global Arthur parameter $\phi=\phi_\tau$ in $\wt{\Phi}_2(G_n^*)$. Let $\phi'$ be a generic global Arthur parameter of $H_m^*$.
Assume that
$$
L(\frac{1}{2}, \phi\times\phi')\neq 0.
$$
Assume that Conjecture \ref{gnvc} holds for the pair of parameters $(\phi,(\phi')^{w^\ell_q})$.
Then there exist a cuspidal automorphic member $\pi$ in the global Vogan packet $\wt{\Pi}_\phi[G_n^*]$ with a cuspidal realization $\CC_\pi$, and a cuspidal automorphic member $\sig$ in the global Arthur packet
$\wt{\Pi}_{\phi'}(H_m)$ with a cuspidal realization $\CC_\sig$, such that
the pair $(\pi,\sig)$ belongs to the global Vogan packet $\wt{\Pi}_{\phi\times\phi'}[G_n^*\times H_m^*]$ and
the inner product
$$
\left<\CF^{\psi_{\CO_{\ell_0}}}(\varphi_{\pi}),\varphi_{\sigma}\right>_{H_m}\neq 0
$$
for some $\varphi_{\pi}\in\CC_\pi$ and $\varphi_{\sigma}\in\CC_\sigma$, where $\ell_0^-=m$, and
the $\psi_{\CO_{\ell_0}}$-Fourier coefficient $\CF^{\psi_{\CO_{\ell_0}}}(\varphi_{\pi})$ is
defined by an $F$-rational nilpotent orbit $\CO_{\ell_0}$ in the $F$-stable nilpotent orbit $\CO^\st_{\udl{p}_{\ell_0}}$,
associated to the partition $\udl{p}_{\ell_0}=[(2\ell_0+1)1^{\Fn-2\ell_0-1}]$.
\end{thm}

\begin{proof}
By assumption, $L(\frac{1}{2},\phi\times\phi')=L(\frac{1}{2},\phi\times(\phi')^{w^\ell_q})\neq 0$. Let $\sig_0$ be the member in the global Vogan packet $\wt{\Pi}_{(\phi')^{w^\ell_q}}[H_m^*]$, with which Conjecture \ref{gnvc} holds, and $\sig_0\in\CA_\cusp(H_m)$ for some pure inner $F$-form of $H_m^*$,
having the cuspidal realization $\CC_{\sig_0}$.
By Proposition \ref{esp}, the Eisenstein series $E(h,\phi_{\tau\otimes\sig_0},s)$
produces the nonzero iterated residual representation $\CE_{\tau\otimes\sig_0}$ on
$H_{a+m}(\BA)$, with a nonzero Fourier coefficient associated to the partition $\udl{p}_{\phi,\phi'}^1$.
In other words, take
$$\udl{p}_{\kappa_0}:=[(2\kappa_0+1)1^{2a+\Fm-2\kappa_0-1}]$$
with $\kappa_0=a-\ell_0-1$, $\ell_0^-=m$, and $\kappa_0^-=n$. Then
the $\psi_{\CO_{\kappa_0}}$-Fourier coefficient
$\CF_n^{\CO_{\kappa_0}}(\CE_{\tau\otimes\sig_0})$ is nonzero and cuspidal as a sub-representation of $G_n^{\CO_{\kappa_0}}(\BA)$ in the
cuspidal spectrum $L^2_\cusp(G_n^{\CO_{\kappa_0}})$,
where $\CO_{\kappa_0}$ is an $F$-rational nilpotent orbit in the
$F$-stable nilpotent orbit $\CO^\st_{\udl{p}_{\kappa_0}}$ associated to the partition
$\udl{p}_{\kappa_0}$.
Note that the group $G_n^{\CO_{\kappa_0}}$ is a pure inner $F$-form of $G_n^*$, and
by Theorem \ref{wlt}, the $\kappa_0$-th Bessel module $\CF_n^{\CO_{\kappa_0}}(\CE_{\tau\otimes\sig_0})$ is $\wt{\RO}(G_n^{\CO_{\kappa_0}})$-stable,
and every irreducible summand of $\CF_n^{\CO_{\kappa_0}}(\CE_{\tau\otimes\sig_0})$ has a global Arthur parameter belonging to the
$\wt{\RO}(G_n^{\CO_{\kappa_0}})$-orbit $\{\phi=\phi_\tau,\phi_\star\}$ of $\phi_\tau$.

Take $(\pi,\CC_\pi)$ to be one of the irreducible summands, such that $\pi$ belongs to the global Arthur packet $\Pi_\phi(G_n^{\CO_{\kappa_0}})$.
Then for some $\varphi_\pi\in\CC_\pi$, the Bessel period
$$
\left<\varphi_\pi,\CF_n^{\CO_{\kappa_0}}(\CE_{\tau\otimes\sig_0})\right>_{G_n^{\CO_{\kappa_0}}}\neq 0.
$$
By replacing the residue $\CE_{\tau\otimes\sig_0}$ by the corresponding Eisenstein series, we obtain that the global zeta integral
$$
\left<\varphi_\pi,\CF_n^{\CO_{\kappa_0}}(E(\cdot,\phi_{\tau\otimes\sig_0},s)\right>_{G_n^{\CO_{\kappa_0}}}\neq 0
$$
for $\Re(s)$ large. By Corollary \ref{zeroa>l}, the pair $(\pi,\sig_0^{w_q^\ell})$ admits a nonzero Bessel period.
It is clear that $\sig_0^{w_q^\ell}$ belongs to the global Vogan packet $\wt{\Pi}_{\phi'}[H_m^*]$. We take $\sig:=\sig_0^{w_q^\ell}$. Then
the pair $(\pi,\sig)$ belongs to the global Vogan packet $\wt{\Pi}_{\phi\times\phi'}[G_n^*\times H_m^*]$ and has the desired property.
We are done.
\end{proof}

We note that Theorem \ref{gggp2} does not assume that the cuspidal multiplicity of $\pi$ should be one, while the global Gan-Gross-Prasad conjecture
takes this cuspidal multiplicity one assumption in \cite{GGP12}.

Also, for $F$-quasisplit classical groups $G$, a special case of Theorem \ref{gggp2} was also considered in \cite{GJR04} and \cite{GJR05}. It is
clear that within the theory of the construction via twisted automorphic descents of concrete modules for irreducible cuspidal automorphic representations,
the proof of Theorem \ref{gggp2} is more transparent than that in \cite{GJR04} or \cite{GJR05}.

By Proposition \ref{subrnv} and \cite{JLXZ}, the assumption in Theorem \ref{gggp2} is verified for the case of $m=1$ and $H_1$ is an $F$-form of
$\SO_2$. 
Hence, Theorem \ref{gggp2} holds {\sl without the assumption of Conjecture \ref{gnvc}} for this special case.
Combining with Theorem \ref{gggp1}, the global Gan-Gross-Prasad Conjecture holds for this case.

\begin{cor}[Global Gan-Gross-Prasad Conjecture: special case]\label{cor-gggpm1}
Let $G_n^*$ be the $F$-split $\SO_{2n+1}$ and $\phi=\phi_\tau$ be a generic global Arthur parameter in $\wt{\Phi}_2(G^*_n)$ determined by
the irreducible isobaric automorphic representation $\tau$ of $G_{E/F}(a)(\BA)$ as given in \eqref{tau8}. Let
$\phi'$ be a generic global Arthur parameter of $H^*_1$, which is an anisotropic $\SO_2$ over $F$.
Then the following statements are equivalent:
\begin{enumerate}
	\item There exist an automorphic member $\pi$ in $\wt{\Pi}_{\phi_\tau}[G_n^*]$ and an automorphic member $\sig$ in $\wt{\Pi}_{\phi'}[H_1^*]$ such that
the inner product
$$
\left<\CF^{\psi_{\CO_{\ell_0}}}(\varphi_{\pi}),\varphi_{\sigma}\right>_{H_1}\neq 0
$$
for some $\varphi_{\pi}\in\pi$ and $\varphi_{\sigma}\in\sigma$;
	\item $L(\frac{1}{2},\tau\times\phi')\ne 0$.
\end{enumerate}
\end{cor}

Note that the cuspidal multiplicities of $\pi$ and $\sig$ in Corollary \ref{cor-gggpm1} are one.
Hence the cuspidal realizations of $\pi$ and $\sig$ are unique.
Also we would like to mention that Corollary \ref{cor-gggpm1} can be proved for unitary groups, but we will not discuss the details here.
We also note that Corollary \ref{cor-gggpm1} with trivial $\sigma$ was considered in \cite{FM}, via a different approach.


\section{On the Main Conjecture}\label{sec-nmc}


\subsection{The main conjecture: general case}\label{sec-mcgc}
We are going to prove the main conjecture (Conjecture \ref{pmc}), assuming Conjectures \ref{bpconj} and \ref{smoconj}.
More precisely, we show, assuming the conjectures, that for any $\pi\in\CA_\cusp(G_n)$ with a $G_n$-relevant, generic
global Arthur parameter $\phi$ in $\wt{\Phi}_2(G_n^*)$, the cuspidal automorphic module
${\mathcal D}^{\CO_{\kappa_0}}_n(\tau;\sigma)=\CF^{\CO_{\kappa_0}}_{n}(\CE_{\tau\otimes\sigma})$ as constructed through Diagram \eqref{diag} is
a direct sum of the two irreducible cuspidal representations in the $\wt{\RO}(G_n)$-orbit of $\pi$ in the cuspidal spectrum of
$G_n$.
If assume further that the $\wt{\RO}(G_n)$-orbit of $\pi$ contains only $\pi$, then we have
$$
{\mathcal D}^{\CO_{\kappa_0}}_n(\tau;\sigma)\cong\pi.
$$
By Proposition \ref{piform}, the $F$-rational orbit $\CO_{\kappa_0}$ can be chosen such that $G_n^{\CO_{\kappa_0}}=G_n$.
We note that the proof of Conjecture \ref{smoconj} has been in well progress, the known cases of which were explained in Theorem \ref{mwslmo}.

\begin{thm}[Cuspidal Automorphic Modules]\label{th-mcgeneral}
Conjectures \ref{bpconj} and \ref{smoconj} imply Conjecture \ref{pmc}.
\end{thm}

\begin{proof}
Take any cuspidal automorphic member $\pi\in\wt{\Pi}_\phi(G_n)$ with a cuspidal realization $\CC_\pi$ satisfying the conditions in Conjecture \ref{bpconj}.
It follows that
$m:=\ell_0^-$, $H_m:=H_{\ell_0^-}^{\CO_{\ell_0}}$, and $\sigma\in\CA_\cusp(H_m)$ with a generic, $H_m$-relevant global Arthur
parameter $\phi'\in\wt{\Phi}_2(H_m^*)$
and with a cuspidal realization $\CC_\sig$.
They have the property that the inner product
$\left<\CF^{\psi_{\CO_{\ell_0}}}(\varphi_\pi),\ol{\varphi}_\sigma\right>_{H_m}$
is nonzero for some $\varphi_\pi\in\CC_\pi$ and $\varphi_\sigma\in\CC_\sigma$. As proved in Section \ref{sec-pif},
for each local place $\nu$ of $F$, the group $G_n(F_{\nu})\times H_m(F_{\nu})$
is relevant in the sense of the local Gan-Gross-Prasad conjecture as discussed in Section \ref{sec-lggp}, and the local parameter
$\phi_{\nu}\otimes\phi'_{\nu}$
belongs to $\wt{\Phi}^+_{\unit,\nu}(G_n\times H_m)$. By Conjecture \ref{smoconj}, the pair $(\pi_{\nu},\sigma_{\nu})$ must be the unique
distinguished member in the local Vogan packet $\wt{\Pi}_{\phi_{\nu}\otimes\phi'_{\nu}}[G_n^*\times H_m^*]$
as defined in \eqref{lvp}, such that the following space, as defined
in \eqref{lfn},
\begin{equation*}
\Hom_{R_{\ell_0,\CO_{\ell_0}}(F_{\nu})}(\pi_{\nu}\otimes\sigma_{\nu},\psi_{\CO_{\ell_0},\nu})
\end{equation*}
is nonzero. Hence the pair $(\pi, \sig)$ is the unique distinguished member in the global Vogan packet $\wt{\Pi}_{\phi\otimes\phi'}[G_n^*\times H_m^*]$.

We apply the reciprocal non-vanishing for Bessel periods (Theorem \ref{th-rnbp}) to the data $(G_n,H_m;\tau,\pi,\sigma)$,
following the choice in Section \ref{sec-sdbps} and obtain that the Bessel period
$$
\left<\varphi_\pi,\ol{\CF^{\psi_{\CO_{\kappa_0}}}(\CE_{\tau\otimes\sigma'})}\right>_{G_n}\neq 0,
$$
for some choice of data. In particular, this implies that the $\psi_{\CO_{\kappa_0}}$-Fourier coefficient
$\CF^{\psi_{\CO_{\kappa_0}}}(\CE_{\tau\otimes\sigma'})$ is nonzero.

On the other hand, by Theorem \ref{wlt}, $\CF^{\CO_{\kappa_0}}_n(\CE_{\tau\otimes\sigma'})$ with $n=\kappa_0^-$
is nonzero and cuspidal as a sub-representation of $G_n(\BA)$ in the cuspidal spectrum $L^2_\cusp(G_n)$, with $G_n=G_n^{\CO_{\kappa_0}}$,
and hence can be written as a multiplicity-free, Hilbert direct sum:
$$
\CF^{\CO_{\kappa_0}}_n(\CE_{\tau\otimes\sigma'})
=
\pi_1\oplus\pi_2\oplus\cdots\oplus\pi_k\oplus\cdots,
$$
where $\pi_i\in\CA_\cusp(G_n)$ for all $i=1,2,\cdots$. Each irreducible summand $\pi_i$ has a generic global Arthur parameter belonging to the
$\wt{\RO}(G_n)$-orbit $\{\phi=\phi_\tau,\phi_\star\}$ of $\phi_\tau$.
We apply Theorem \ref{th-rnbp} to $\pi_i$ for all $i$. The non-vanishing of the Bessel period $\left<\varphi_{\pi_i},\ol{\CF^{\psi_{\CO_{\kappa_0}}}(\CE_{\tau\otimes\sigma'})}\right>_{G_n}$ implies
the inner product on the right hand side
$$
\left<\CF^{\psi_{\CO_{\ell_0}}}(\varphi_{\pi_i}),\ol{\varphi}_\sigma\right>_{H_m}
$$
is nonzero for some choice of data. Following Section \ref{sec-pif},
the product $G_n^{\CO_{\kappa_0}}\times H_m$ constructed as in Diagram \eqref{diag} is a pure inner $F$-form of
an $F$-quasisplit $G_n^*\times H_m^*$. Then by Theorem \ref{wlt} again, the pair $(\pi_i,\sig)$ belongs to either the
global Vogan packet $\wt{\Pi}_{\phi\otimes\phi'}[G_n^*\times H_m^*]$ or the global Vogan packet
$\wt{\Pi}_{\phi_\star\otimes\phi'}[G_n^*\times H_m^*]$. Since
the pair $(\pi, \sig)$ is the unique distinguished member in the global Vogan packet $\wt{\Pi}_{\phi\otimes\phi'}[G_n^*\times H_m^*]$, and
the pair $(\pi_\star, \sig)$ is the unique distinguished member in the global Vogan packet $\wt{\Pi}_{\phi_\star\otimes\phi'}[G_n^*\times H_m^*]$,
we must have that for each index $i$, $\pi_i$ is isomorphic to either $\pi$ or $\pi_\star$, under the assumption of Conjecture \ref{smoconj}.

Because the direct sum decomposition of $\CF^{\CO_{\kappa_0}}_n(\CE_{\tau\otimes\sigma})$ is multiplicity free, it follows that
$\CF^{\CO_{\kappa_0}}_n(\CE_{\tau\otimes\sigma})$ must be of the form:
$$
\CF^{\CO_{\kappa_0}}_n(\CE_{\tau\otimes\sigma})\cong\pi\oplus\pi_\star,
$$
if the $\wt{\RO}(G_n)$-orbit of $\pi$ has two members $\pi$ and $\pi_\star$. If the $\wt{\RO}(G_n)$-orbit of $\pi$ contains only $\pi$, then
we must have
$$
\CF^{\CO_{\kappa_0}}_n(\CE_{\tau\otimes\sigma})\cong\pi.
$$
We are done.
\end{proof}

\subsection{The main conjecture: regular orbit case}\label{sec-mcro}
In this section, we assume that the group $G_n=G_n^*$ is $F$-quasisplit, and
$\pi\in\CA_\cusp(G_n^*)$ is generic, i.e. has a nonzero Whittaker-Fourier coefficient.
In this case, the global Arthur parameter of $\pi$ can be taken in form
\eqref{gapgn}. Then the Langlands functorial transfer of $\pi$ from $G_n^*$ to $G_{E/F}(N)$ is $\tau$, which is of the form \eqref{tau8}. This
is essentially proved by the work of Cogdell, Kim, Piatetski-Shapiro and Shahidi in \cite{CKPSS04}, with combination of the automorphic
descent of Ginzburg-Rallis-Soudry (\cite{GRS11}). We refer to \cite[Section 3.1]{JL-Cogdell} for the detailed discussions of this and some related issues.

In this case, Conjecture \ref{bpconj} holds automatically without $(H_m,\sigma)$. The residual
representation is $\CE_\tau$ on the $F$-quasisplit $H_a^*(\BA)$.
The automorphic descent of Ginzburg-Rallis-Soudry in \cite{GRS11} shows that
$$
{\mathcal D}^{\CO_{\kappa_0}}_n(\tau;\emptyset)=\CF^{\CO_{\kappa_0}}_{n}(\CE_{\tau})
$$
is a nonzero cuspidal automorphic representation of $G_n^*(\BA)$. As proved in \cite{JS03},
the descent ${\mathcal D}^{\CO_{\kappa_0}}_n(\tau;\emptyset)$ is
in fact irreducible for $G_n^*$,
which is an $F$-split odd special orthogonal group. In general, the structure of ${\mathcal D}^{\CO_{\kappa_0}}_n(\tau;\emptyset)$ follows from
Conjecture \ref{smoconj}. Hence Conjecture \ref{pmc} is proved under Conjecture \ref{smoconj} as a consequence of the proof of Theorem
\ref{th-mcgeneral}.

\begin{cor}[Regular Orbit]\label{generic}
Let $G_n^*$ be $F$-quasisplit. For any $\pi\in\CA_\cusp(G_n^*)$ to be generic with its global Arthur parameter \eqref{gapgn} and $\tau$ as in \eqref{tau8},
then Conjecture \ref{pmc} holds for $\pi$ under the assumption of Conjecture \ref{smoconj}.
\end{cor}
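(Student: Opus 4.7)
The strategy is to reduce to Theorem \ref{th-mcgeneral}. Since $G_n^*$ is $F$-quasisplit and $\pi$ is generic, its Whittaker--Fourier coefficient is nonzero and coincides with the Bessel--Fourier coefficient for the (near-)regular partition; by Proposition \ref{cfc} this forces the maximal index $\ell_0$ to equal $\Fr$ (or $\Fr-1$ in the split even orthogonal case), so that $\ell_0^- = 0$ and the classical group $H_m = H_{\ell_0^-}^{\CO_{\ell_0}}$ collapses to the trivial group. Consequently the auxiliary datum $(H_m,\sigma)$ of Conjecture \ref{bpconj} degenerates to the empty datum, and the inner product in that conjecture reduces to the value of the Whittaker--Fourier coefficient of $\pi$ at the identity, which is nonzero by genericity. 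Thus Conjecture \ref{bpconj} holds unconditionally in this setting.

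With $a = N = \Fn_{G_n^\vee}$ and $\kappa_0 = a - \ell_0 - 1$, the Reciprocal Identity for Bessel Periods (Theorem \ref{th-ribp}) specializes to
\begin{equation*}
\langle \varphi_\pi, \CF^{\psi_{\CO_{\kappa_0}}}(\CE_\tau) \rangle_{G_n^*}
\;=\; C_{\tau,\pi} \cdot \CF^{\psi_{\CO_{\ell_0}}}(\varphi_\pi)(1),
\end{equation*}
where $\CE_\tau$ is the iterated residue at $s=\tfrac12$ of the Eisenstein series on $H_a^*(\BA)$ attached to $\tau$. For a suitable $\varphi_\pi \in \pi$ the right-hand side is nonzero, so $\CF^{\psi_{\CO_{\kappa_0}}}(\CE_\tau)$ is nonzero. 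By Theorem \ref{wlt}, the descent ${\mathcal D}^{\CO_{\kappa_0}}_n(\tau;\emptyset) = \CF^{\CO_{\kappa_0}}_n(\CE_\tau)$ is then a nonzero multiplicity-free cuspidal automorphic representation of $G_n^*(\BA)$ whose irreducible constituents all belong to the global Arthur packet $\wt{\Pi}_{\phi_\tau}(G_n^*)$. This module is exactly the automorphic descent of Ginzburg--Rallis--Soudry \cite{GRS11}.

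To identify $\CF^{\CO_{\kappa_0}}_n(\CE_\tau)$ with $\pi$, I would run the matching step of Theorem \ref{th-mcgeneral}: for each irreducible summand $\pi_i$, applying the reciprocal identity with $\varphi_{\pi_i}$ in place of $\varphi_\pi$ produces a nonzero Whittaker functional on $\pi_i$, so $\pi_i$ is itself generic and shares the generic global Arthur parameter $\phi_\tau$ with $\pi$. Conjecture \ref{smoconj}, which in this degenerate Bessel setting reduces to the uniqueness of the generic member in each local tempered $L$-packet, then forces $\pi_{i,\nu} \cong \pi_\nu$ at every local place, whence $\pi_i \cong \pi$ for all $i$. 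By the multiplicity-free decomposition, the descent is irreducible and isomorphic to $\pi$, establishing Conjecture \ref{pmc}. The only real obstacle in Theorem \ref{th-mcgeneral}, namely Conjecture \ref{bpconj}, is here absorbed by the collapse of Bessel to Whittaker, and the remaining local matching at the end is exactly the (largely classical) Shahidi-type uniqueness statement packaged inside the tempered case of Conjecture \ref{smoconj}.
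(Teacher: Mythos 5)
Your proof is correct and follows essentially the same route as the paper's: the corollary is obtained by noticing that for a generic $\pi$ on a quasisplit $G_n^*$, the datum of Conjecture \ref{bpconj} is automatically supplied by the Whittaker--Fourier coefficient, and then running the argument of Theorem \ref{th-mcgeneral} (reciprocal identity, Theorem \ref{wlt}, and Conjecture \ref{smoconj} for the local matching), which recovers the Ginzburg--Rallis--Soudry descent. One small imprecision: when $\Fn$ is even, $H_{\ell_0^-}^{\CO_{\ell_0}}$ is not literally the trivial group in the unitary case --- it is the anisotropic torus $\RU_1$ rather than $\SO_1$ --- so the auxiliary datum $(H_m,\sigma)$ does not vanish but rather degenerates to the character built into the Whittaker datum; this does not affect the argument, since Conjecture \ref{bpconj} is still automatic from genericity.
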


\subsection{The main conjecture: subregular orbit case}\label{sec-mcsro}
We consider in this subsection the irreducible cuspidal automorphic representations $\pi$ of $G_n(\BA)$ such that
the set $\Fp^m(\pi)$ contains the partition $\udl{p}_\subr$ corresponding to the subregular nilpotent orbit of $G_n^*$.
In this situation, it is clear that $\Fp^m(\pi)=\{\udl{p}_\subr\}$. Conjecture \ref{bpconj} can be verified as follows.
The group $H_m$ constructed via Diagram~\eqref{diag} can be determined as below:

If $G_n^*$ is an $F$-quasisplit $\SO_{2n}$, the subregular partition $\udl{p}_\subr$ is $[(2n-3)3]$. The partition with the first occurrence index
$\ell_0$ is $\udl{p}_{\ell_0}=[(2n-3)1^3]$ with $\ell_0=n-2$. Hence $H_m$ is a pure inner $F$-form of $\SO_3$, where $m=\ell_0^-$.
According to \cite[Theorem 11.2]{JLS}, because
$\Fp^m(\pi)=\{\udl{p}_\subr=[(2n-3)3]\}$, the $\ell_0$-th Bessel module $\CF^{\CO_{\ell_0}}(\pi)$ associated to the $F$-rational orbit $\CO_{\ell_0}$
must be nonzero if $H_m=H^{\CO_{\ell_0}}_{\ell_0^-}$ is the split $\SO_3$.
Hence Conjecture \ref{bpconj} holds for this case.

If $G_n^*$ is an $F$-split $\SO_{2n+1}$, then $\udl{p}_\subr$ is $[(2n-1)1^2]$, which is the partition with the first occurrence index $\ell_0=n-1$.
In this case, the group $H_m$ is an $F$-form of $\SO_2$, and hence Conjecture \ref{bpconj} holds.

If $G_n^*$ is an $F$-quasisplit $\RU_{2n}$, then $\udl{p}_\subr$ is $[(2n-1)1]$, which is the partition with the first occurrence index $\ell_0=n-1$.
In this case, the group $H_m$ is equal to $\RU_1$, and hence Conjecture \ref{bpconj} holds.

If $G_n^*$ is an $F$-quasisplit $\RU_{2n+1}$, then the subregular partition $\udl{p}_\subr$ is $[(2n)1]$. The partition with the first occurrence index
$\ell_0$ is $\udl{p}_{\ell_0}=[(2n-1)1^2]$ with $\ell_0=n-1$. Hence $H_m$ is an $F$-form of $\RU_2$. It is clear that Conjecture \ref{bpconj} also holds
for this case, following the proof for the case of $F$-quasisplit $\SO_{2n}$.

We summarized this discussion as

\begin{prop}\label{prop-subr}
Let $\phi=\phi_\tau$ be the generic global Arthur parameter of $G_n^*$ as given in \eqref{gapgn} with $\tau$ as defined in \eqref{tau8}.
If a cuspidal automorphic member $\pi$ in the global Vogan packet $\wt{\Pi}_\phi[G_n^*]$ has the property that $\Fp^m(\pi)=\{\udl{p}_\subr\}$,
then Conjecture \ref{bpconj} holds for $\pi$.
\end{prop}

As a consequence of the proof of Theorem \ref{th-mcgeneral}, we have the following result.

\begin{cor}[Subregular Orbit]\label{sboc}
Assume that $\pi\in\CA_\cusp(G_n)$ has a $G_n$-relevant, generic global Arthur parameter in $\wt{\Phi}_2(G_n^*)$ and the set
$\Fp^m(\pi)$ contains the subregular partition $\udl{p}_\subr$ of type $(\Fn,G_n^*)$. Conjecture \ref{pmc} holds for $\pi$
under the assumption of Conjecture \ref{smoconj}.
\end{cor}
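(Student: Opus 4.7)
The plan is to verify Conjecture \ref{bpconj} directly in the subregular setting, exploiting the fact that the classical group $H_m$ produced by Diagram \eqref{diag} is so small as to be $F$-anisotropic, and then to invoke Theorem \ref{th-mcgeneral} to obtain the full Main Conjecture \ref{pmc} under the assumption of Conjecture \ref{smoconj}.

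First I would spell out the combinatorics of the subregular partition $\udl{p}_\subr$ for each family considered in Section \ref{sec-ccg}. For $G_n^*$ an odd special orthogonal group one has $\udl{p}_\subr = [(2n-1)1^2]$, which is of the Bessel form $\udl{p}_{\ell_\subr}$ with $\Fl_\subr^- = 2$; for the even special orthogonal and unitary cases the analogous subregular Bessel-type partitions yield $\Fl_\subr^- \in \{2,3\}$. The $G_n$-relevance of $\udl{p}_\subr$ forces the $F$-anisotropic kernel $(V_0,q)$ of $(V,q)$ to have dimension at most four, so that by Proposition \ref{coform} the residual space $(W_{\Fl_\subr^-},q)$ defining $H_m = H_{\ell_\subr^-}^{\CO_{\ell_\subr}}$ has dimension two or three and is itself $F$-anisotropic. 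The resulting $H_m$ is therefore either a one-dimensional anisotropic torus (an anisotropic $\SO(2)$-type group or a norm-one unitary torus) or an anisotropic classical group of absolute rank one.

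Next I would verify Conjecture \ref{bpconj} directly in this restricted setting. The hypothesis $\udl{p}_\subr \in \Fp^m(\pi)$ together with Proposition \ref{cfc} forces $\CF^{\CO_{\ell_\subr}}(\pi)$ to be a nonzero cuspidal automorphic representation of $H_m(\BA)$. Since $H_m$ is $F$-anisotropic, the quotient $H_m(F)\bks H_m(\BA)^1$ is compact, so every irreducible constituent $\sigma$ of the decomposition \eqref{dcom} is automatically cuspidal and tempered; moreover, because the absolute rank of $H_m$ is at most one, Arthur's endoscopic classification attaches to $\sigma$ a global Arthur parameter whose only possible $\SL_2$-type is the trivial one, forcing it to be generic in the sense of \eqref{gap}. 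For such a $\sigma$ the Bessel pairing $\left\langle \CF^{\psi_{\CO_{\ell_\subr}}}(\varphi_\pi),\varphi_\sigma\right\rangle_{H_m}$ is nonzero simply because $\sigma$ is an isotypic summand of $\CF^{\CO_{\ell_\subr}}(\pi)$, which is the statement of Conjecture \ref{bpconj}. In parallel, Theorem \ref{nlio} for the normalized local intertwining operators on $H_{a+m}$ with Levi $G_{E/F}(a)\times H_m$ collapses, because of the small anisotropic factor $H_m$, to Shahidi's classical normalization result for a standard maximal parabolic and therefore holds in the present setting without recourse to Appendix \ref{B}.

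With Conjecture \ref{bpconj} so verified and Conjecture \ref{smoconj} assumed, the hypotheses of Theorem \ref{th-mcgeneral} are met for $\pi$ and its proof applies verbatim: the Reciprocal Identity for Bessel Periods of Theorem \ref{th-ribp} yields the non-vanishing of $\CF^{\psi_{\CO_{\kappa_0}}}(\CE_{\tau\otimes\sigma})$; Theorem \ref{wlt} gives cuspidality and a multiplicity-free decomposition of ${\mathcal D}^{\CO_{\kappa_0}}_n(\tau;\sigma)$ into members of the global Arthur packet $\wt{\Pi}_{\phi_\tau}(G_n)$; and the local Vogan-packet uniqueness provided by Conjecture \ref{smoconj} forces every irreducible constituent to be locally equivalent to $\pi_\nu$ at every place, yielding $\pi \cong {\mathcal D}^{\CO_{\kappa_0}}_n(\tau;\sigma)$. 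The main obstacle I anticipate is the step confirming that the summands $\sigma$ of $\CF^{\CO_{\ell_\subr}}(\pi)$ carry \emph{generic} global Arthur parameters rather than merely tempered ones: while temperedness is immediate from anisotropy, ruling out a non-trivial $\SL_2$-component in the parameter requires a careful case-by-case analysis of the Arthur parameters available for absolute-rank-one anisotropic classical groups, together with a compatibility check with $\phi_\tau$ at an unramified place via the Satake transform.
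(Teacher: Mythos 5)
Your proposal takes essentially the same route as the paper's own brief argument in Section \ref{sec-mcsro}: $G_n$-relevance of $\udl{p}_\subr$ forces the anisotropic kernel of $(V,q)$ to have dimension at most four, hence $H_m$ is defined by an $F$-anisotropic space of dimension two or three, one concludes that Conjecture \ref{bpconj} and Theorem \ref{nlio} are automatic in this range, and Theorem \ref{th-mcgeneral} then applies under Conjecture \ref{smoconj}. (The conclusion of the corollary as printed, ``Conjecture \ref{bpconj} holds,'' appears to be a misprint for Conjecture \ref{pmc}, by analogy with Corollary \ref{generic}; you correctly read it that way.)

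There is, however, a genuine weak point in your intermediate justification, and you are right to flag it yourself. The assertion that ``because the absolute rank of $H_m$ is at most one, Arthur's endoscopic classification attaches to $\sigma$ a global Arthur parameter whose only possible $\SL_2$-type is the trivial one'' is not correct in general. It holds when $H_m$ is the anisotropic torus $\SO(2)$ (the odd orthogonal case), since $\SO_2^\vee\cong\GL_1(\BC)$ receives no nontrivial homomorphism from $\SL_2(\BC)$; but when $\dim_E W=3$ (so $H_m$ is anisotropic $\SO(3)$ or $\RU(3)$), or when $H_m=\RU(2)$, the trivial automorphic representation of $H_m(\BA)$ is cuspidal by anisotropy yet carries the non-generic parameter $(\mathbf{1},b)$ with $b>1$. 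Anisotropy plus low rank alone therefore does not exclude non-generic constituents of $\CF^{\CO_{\ell_\subr}}(\pi)$, and the ``compatibility check at an unramified place via the Satake transform'' you mention at the end is not a side concern but the actual content needed to verify Conjecture \ref{bpconj} here. To be fair, the paper's own one-line ``hold automatically'' elides the same subtlety. One further small overstatement: your claim that Theorem \ref{nlio} ``collapses to Shahidi's classical normalization result\ldots without recourse to Appendix \ref{B}'' is too strong, since at the finitely many places where $H_m$ fails to be quasi-split the local component $\sigma_\nu$ is not generic, so the generic-data normalization does not apply directly; the paper still needs the argument of Appendix \ref{B} there.
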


\appendix


\section{Non-vanishing of Local Zeta Integrals}\label{A}


In this appendix, we prove Proposition \ref{nlzip:nonzero}. It is a purely local non-vanishing property of
the finite product of the local zeta integrals, $\CZ_{S}(s,\phi_{\tau\otimes\sig'},\varphi_{\pi},\psi_{\CO_{\kappa_0}})$.
However, the local data have constraints from the global
assumption for $(\pi,\tau,\sig)$ from Theorem \ref{th-rnbp}. From Proposition \ref{lzi-pp2}, $\CZ_{S}(s,\phi_{\tau\otimes\sig'},\varphi_{\pi},\psi_{\CO_{\kappa_0}})$ converges absolutely for $\Re(s)$ large, has a meromorphic continuation to $s\in\BC$,
and is holomorphic at $s=\frac{1}{2}$, What we need to prove Theorem \ref{th-rnbp} is the non-vanishing at $s=\frac{1}{2}$ for a choice of data with
certain global constraints as described in Proposition \ref{nlzip:nonzero}. In fact, we are going to show a more general non-vanishing property for
the local zeta integral $\CZ_{\nu}(s,\phi_{\tau\otimes\sig'},\varphi_{\pi},\psi_{\CO_{\kappa_0}})$ for every $\nu\in S$. These local zeta integrals
converges absolutely for $\Re(s)$ large and has a meromorphic continuation to $s\in\BC$. We give the proof in \cite{JSdZ}, and refer to
\cite{S-I} and \cite{S-II} for the case of the split special orthogonal groups.

Throughout this appendix, all algebraic groups $X$ are defined over $F_\nu$. The $F_\nu$-rational points of $X$ is simply denoted by
$X=X(F_\nu)$ when no confusion is caused.

For $\Re(s)$ large, the local zeta integral in Theorem~\ref{thm:j>l} is defined as in \eqref{localzetaa>l} by
\begin{equation}\label{eq:appendix-CZ}
\CZ_\nu(s,\phi_{\tau\otimes\sig'},\varphi_{\pi},\psi_{\ell,w_{0}})
=
\int_{R^{\eta}_{\ell,\beta-1}\bks G_{m^-}^{w_0}}\CP_\nu^{\psi^{-1}_{\beta-1,y_{-\kappa}}}(g_\nu\ast\varphi_{\pi_\nu},\RJ_{s,\nu}(\phi_{s,\nu})(g_\nu))
\ud g_\nu,
\end{equation}
where $\CP_\nu^{\psi^{-1}_{\beta-1,y_{-\kappa}}}$ is the unique local Bessel functional, up to scalar, in the space
\begin{equation} \label{lbf} 
\Hom_{R^{\eta}_{\ell,\beta-1}(F_\nu)}(\pi_\nu\otimes\sig_\nu,\psi^{-1}_{\beta-1,y_{-\kappa}}).
\end{equation}
This $\Hom$-space is at most one-dimensional,
by the uniqueness of local Bessel models for classical groups (\cite{AGRS}, \cite{SZ}, \cite{GGP12} and \cite{JSZ}).
Alternatively, for a  local Bessel functional $\Fb_\nu$ in \eqref{lbf}, 
we may rewrite $\CZ_\nu(s,\phi_{\tau\otimes\sig'},\varphi_{\pi},\psi_{\ell,w_{0}})$ as
\[
\int_{R^{\eta}_{\ell,\beta-1}\bks G_{m^-}^{w_0}}\int_{U^{-}_{a,\eta}(F_\nu)}\Fb_\nu(g_\nu\ast\varphi_{\pi_\nu},f_{\CW_{\tau_\nu} \otimes\sig_\nu',s}(u\epsilon_\beta \eta g_\nu))
\psi_{\Fm+a+\ell,n-\ell}(u) \ud u
\ud g_\nu.	
\] 
 

Our goal is to construct a  section $f_{\CW_{\tau_\nu}\otimes\sig_\nu',s}$ belonging to
$\RI_{s,\nu}(\CW_{\tau_\nu},\sig'_\nu)$ such that the following non-vanishing holds.

\begin{prop}\label{A1}
Suppose that a non-zero local Bessel functional $\Fb_\nu$ in the $\Hom_{R^{\eta}_{\ell,\beta-1}(F_\nu)}$-space \eqref{lbf}
is not zero at some $\varphi_{\pi_\nu}=v_{\pi_\nu}\in\pi_\nu$ and
$v_{\sig_\nu}\in\sig_\nu$, i.e. $\Fb_\nu(v_{\pi_\nu},v_{\sig_\nu})\neq 0$. Then,
for any given $s=s_0\in\BC$,
there exists a holomorphic section $f_{\CW_{\tau_\nu}\otimes\sig_\nu',s}$ belonging to
$\RI_{s,\nu}(\CW_{\tau_\nu},\sig_\nu')$  such that
the local zeta integral $\CZ_{\nu}(s,\phi_{\tau\otimes\sig'},\varphi_{\pi},\psi_{\ell,w_{0}})$
is nonzero at $s=s_0$.
\end{prop}

It is clear that Proposition \ref{A1} for split orthogonal groups over $p$-adic fields is just Proposition 4.1 of \cite{S-I}.
In the proof of Proposition \ref{A1}, one of the technical issues is to construct the section $f_{\CW_{\tau_\nu}\otimes\sig_\nu',s}$
in the space of the induced representation
$\RI_{s,\nu}(\CW_{\tau_\nu},\sig_\nu')$ with the given constraints. Soudry in his proof of \cite[Proposition 4.1]{S-I} uses
the Iwasawa decomposition to explicitly construct such sections $f_{\CW_{\tau_\nu}\otimes\sig_\nu',s}$. We are going to use the
Bruhat decomposition to proceed
the explicit construction, which works for more general groups over local fields of characteristic 0.

We recall from
Section \ref{sec-gzi} that $H_{a+m}$ is either a special orthogonal group or unitary group.
When $H_{a+m}$ is unitary and $\nu$ splits in the number field $E$,
$H_{a+m}(F_\nu)=\RU_{2a+\Fm}(F_\nu)$ is isomorphic to $\GL_{2a+\Fm}(F_\nu)$. We defer the discussion on this case to the end of this proof.
We first consider the case that  $H_{a+m}(F_\nu)$ is not isomorphic to $\GL_{2a+\Fm}(F_\nu)$.
For convenience,  we consider $\RJ_{s,\nu}$ as a map  
\begin{equation}\label{RJnu}
\RI_{s,\nu}(\CW_{\tau_\nu},\sigma'_\nu)\rightarrow \RI_{s,\nu}^{w_0}(\psi_{\beta-1,y_{-\kappa}},\sig_\nu),
\end{equation}
which is given by the following 
$U^-_{a,\eta}(F_\nu)$-integration 
\begin{equation}\label{Uaeta}
\RJ_{s,\nu}(f_{\CW_{\tau_\nu}\otimes\sig'_\nu,s})(g):=
\int_{U_{a,\eta}^-(F_\nu)}f_{\CW_{\tau_\nu}\otimes\sig'_\nu,s}(n\epsilon_\beta \eta g)\psi_{\Fm+a+\ell,n-\ell}(n) \ud n
\end{equation}
as in \eqref{eq:CJ}.

 It is not hard to show that the integration in \eqref{Uaeta} converges absolutely for $\Re(s)$ large.
It is a little bit more technical to show that it admits a meromorphic continuation to $s\in\BC$ in general, which will be treated in \cite{JSdZ}.
However, for the purpose of this Appendix, we are able to prove this easily for the particular sections $f_{\CW_{\tau_\nu}\otimes\sig'_\nu,s}$
that will be constructed below for the proof of Proposition \ref{A1}.

\begin{rmk}\label{localissue}
For further refined applications of the global zeta integrals considered in this paper, one may be interested in the characterization
of the image of $\RJ_{s,\nu}$ in \eqref{RJnu}. However, for the purpose of this paper, we do not need this. Hence we
will leave this interesting question to be considered in our future work.
\end{rmk}

Let $\Fb_\nu$ be a non-zero local Bessel functional in the $\Hom$-space  \eqref{lbf}. Take
some $v_{\pi_\nu}\in\pi_\nu$ and
$v_{\sig_\nu}\in\sig_\nu$, such that $\Fb_\nu(v_{\pi_\nu},v_{\sig_\nu})\neq 0$. We are going to construct
a section $f_{\CW_{\tau_\nu}\otimes\sig'_\nu,s}$ in
$$
\RI_{s,\nu}(\CW_{\tau_\nu},\sigma'_\nu)=\Ind^{H_{a+m}}_{P_{\hat{a}}}(|\cdot|^s\CW_{\tau_\nu}\otimes\sig'_\nu),
$$
which is compactly supported in the open cell $P_{\hat{a}}{U}_{\hat{a}}^-$ of $H_{a+m}$, modulo $P_{\hat{a}}$ from the left. 
Recall that ${U}_{\hat{a}}^-$ is the unipotent subgroup opposite to the unipotent radical $U_{\hat{a}}$ of $P_{\hat{a}}$,
as defined in Section \ref{sec-gzi}. We define
\begin{equation}\label{eq:appendix-phi}
f_{\CW_{\tau_\nu}\otimes\sig'_\nu,s}
\left( \begin{pmatrix}
g& & \\ & h & \\ & &g^* 	
\end{pmatrix}u\bar{n}' \epsilon_\beta \eta\right):=|\det g|^{s+\rho_a}W_{\tau_\nu}^\kappa(g)f_\nu(\bar{n}')\sig(h)v_{\sig_\nu}
\end{equation}
with $g\in \GL_{a}(E_\nu)$, $h\in H_m(F_\nu)$, $u\in U_{\hat{a}}(F_\nu)$, and $\bar{n}'\in {U}_{\hat{a}}^-(F_\nu)$.
Here  $W_{\tau_\nu}^\kappa(g)$ is a Whittaker function in $\CW_{\tau_\nu}$, $f_\nu(\bar{n}')$ is a smooth, compactly supported function defined on ${U}_{\hat{a}}^-(F_\nu)$,
and $|\cdot|^{2\rho_a}$ is the modular character of the parabolic subgroup $P_{\hat{a}}$.
Over archimedean places, we may take $f_\nu(\bar{n}')$ also to be a positive real-valued function.
Since $H_{a+m}(F_\nu)\ne \GL_{2a+\Fm}(F_\nu)$, $G_{E_\nu/F_\nu}(a)(F_\nu)\ne \GL_a(F_\nu)\times\GL_a(F_\nu)$.
Hence the subgroup $(\Res_{E/F}\GL_a)(F_\nu)$ of the Levi part of $P_{\hat{a}}$ can be written as $\GL_a(E_\nu)$ where $E_\nu$ is either $F_\nu$ or a quadratic field extension over $F_\nu$.
Remark that because of the conjugation by $w^\ell_q$, it is $v_\sig$ on the right hand side of \eqref{eq:appendix-phi}, instead of $v_{\sig'}$.
It is clear that the section $f_{\CW_{\tau_\nu}\otimes\sig'_\nu,s}$
defined in \eqref{eq:appendix-phi} is a smooth section in $\RI_{s,\nu}(\CW_{\tau_\nu},\sigma'_\nu)$.
It is clear that for such a constructed section $f_{\CW_{\tau_\nu}\otimes\sig'_\nu,s}$, the integration in \eqref{Uaeta} is over a compact set.
Hence the integral converges absolutely for every $s\in\BC$, and  admits meromorphic continuation to all $s\in\BC$.

We may assume that the value of $f_{\CW_{\tau_\nu}\otimes\sig'_\nu,s}(g)$ at $g=\epsilon_\beta \eta$:
\begin{equation}
f_{\CW_{\tau_\nu}\otimes\sig'_\nu,s}
(\epsilon_\beta \eta)=W_{\tau_\nu}^\kappa(\RI_a)v_{\sig_\nu},
\end{equation}
where $\RI_a$ is the identity matrix of $\GL_a$. 
It is clear that the functional $\Fb_\nu$ evaluated at $(v_\pi,f_{\CW_{\tau_\nu}\otimes\sig_\nu',s}(\eps_\beta\eta))$ is given by
\begin{equation}\label{PA1}
\Fb_\nu(v_\pi, f_{\CW_{\tau_\nu}\otimes\sig_\nu',s}(\eps_\beta\eta))
=
W_{\tau_\nu}^\kappa(\RI_a)\cdot\Fb_\nu(v_{\pi_\nu},v_{\sig_\nu}).
\end{equation}
We may take the value of $W_{\tau_\nu}^\kappa(\RI_a)$, so that
\begin{equation}\label{eq:nonvanish-pf-i}
W_{\tau_\nu}^\kappa(\RI_a)\cdot\Fb_\nu(v_{\pi_\nu},v_{\sig_\nu})=1.
\end{equation}
This gives the normalization of the local Bessel functional $\Fb_\nu$ at all $\nu\in S$.


To finish the proof of Proposition \ref{A1}, we have to calculate explicitly the relation between $R^\eta_{\ell,\beta-1}\bks G^{w_0}_{m^-}$ and
the open dense set $P_{\hat{a}}{U}_{\hat{a}}^-$, in particular, the following domain
\begin{equation}\label{eq:appendix-intersection-1}
R^\eta_{\ell,\beta-1}\bks G^{w_0}_{m^-}\cap (\eps_\beta\eta)^{-1}(P_{\hat{a}}{U}_{\hat{a}}^-)(\eps_\beta\eta).
\end{equation}
The group $G^{w_0}_{m^-}$ is identified as a subgroup of the Levi subgroup of $P_{\hat{\ell}}$.
According to the structure of the stabilizer of the open cell $P_{\hat{a}}\eps_\beta P_{\hat{\ell}}$ as given in Section \ref{sec-al},
the intersection \eqref{eq:appendix-intersection-1} can be written as the following intersection
\begin{equation}\label{eq:appendix-intersection-2}
R^\eta_{\ell,\beta-1}\bks G^{w_0}_{m^-}\cap \Ad(\eta^{-1})(P'_{w}{U}_{a-\ell}^-).
\end{equation}
Recall that  $P'_{w}=H_{a+m-\ell}\cap \epsilon^{-1}_{0,\beta} P_{\hat{a}} \epsilon_{0,\beta}$ defined in Proposition \ref{bfcc} is the standard parabolic subgroup of $H_{a+m-\ell}$ with Levi decomposition $(G_{E_\nu/F_\nu}(a-\ell)\times H_{m}) \ltimes U_{a-\ell}$,
where $U_{a-\ell}$ is the unipotent radical.
More details can be found in \cite[Section 3.1]{JZ14}.
Because of
$$
R^\eta_{\ell,\beta-1}=G^{w_0}_{m^-}\cap \eta^{-1}P'_{w}\eta,
$$
the intersection set $G^{w_0}_{m^-}\cap \Ad(\eta^{-1})(P'_{w}{U}_{a-\ell}^-)$ is $R^\eta_{\ell,\beta-1}$-left stable.
Thus the intersection modulo $R^\eta_{\ell,\beta-1}$ in \eqref{eq:appendix-intersection-2} is well defined.
It is clear that $P'_{w}{U}_{a-\ell}^-$ is an open subset of $H_{a+m-\ell}$.

With the above choice of $f_{\CW_{\tau_\nu}\otimes\sig'_\nu,s}$, the integral \eqref{eq:appendix-CZ} can be taken over the set
\eqref{eq:appendix-intersection-2}.
To proceed with the integral \eqref{eq:appendix-CZ}, we explicitly describe the intersection
$G^{w_0}_{m^-}\cap \Ad(\eta^{-1})(P'_{w}{U}_{a-\ell}^-)$. It is enough to describe
the set $\Ad(\eta)G^{w_0}_{m^-}\cap (P'_{w}{U}_{a-\ell}^-)$.
Take
\begin{equation}\label{eq:appendix-p}
p=\begin{pmatrix}
g&-Y\cdot h^{-1}&\iota(\hat{Z})g^*\\ &h^{-1}&-Y'g^*\\  &&g^*	
\end{pmatrix}\in P'_{w}
\end{equation}
and
\begin{equation} \label{eq:appendix-n}
\bar{n}=\begin{pmatrix}
I_{a-\ell}&& \\ X'&I_{\Fm}&\\ A&X&I_{a-\ell}	
\end{pmatrix}\in {U}_{a-\ell}^-,
\end{equation}
with $g\in \GL_{a-\ell}(E_\nu)$, $h\in H_{m}$, $Y\in \Mat_{(a-\ell)\times \Fm}$, $\hat{Z}:=\omega_{a-\ell}Z^t \omega_{a-\ell}$, $Y':=-\omega_{a-\ell}\cdot \iota(Y)^t\cdot (J^{\Fm}_{\tilde{\Fm}})^{-1}$, and $g^*=\iota(\hat{g})^{-1}$.
Here $\omega_{a-\ell}$ is the anti-diagonal matrix with the unit entry of the size $(a-\ell)$-by-$(a-\ell)$, $J^{\Fm}_{\tilde{\Fm}}$ is defined in \eqref{eq:J},
and $\iota$ is the Galois element in $\Gamma_{E_\nu/F_\nu}$ (as in Page \pageref{pg:iota}).

Since $G^{w_0}_{m^-}$ fixes the anisotropic vector $w_0=y_{\kappa}$ defined in \eqref{eq:w0a},
 $\Ad(\eta)G^{w_0}_{m^-}$ stabilizes the vector
$$
\Ad(\eta)w_0= \begin{pmatrix}
E_1 \\ 0_{\Fm} \\	E_2
\end{pmatrix}_{(2(a-\ell)+\Fm)\times 1}
$$
where $0_\Fm$ is the $\Fm$-dimensional zero column vector,
$$
E_1=(0,\dots,0,1)^t \text{ and }
E_2=((-1)^{\Fm+1}\frac{\kappa}{2},0,\dots,0)^t \text{ in } \Mat_{(a-\ell)\times 1}.
$$
Here we consider $\Ad(\eta)w_0$ as an anisotropic vector in the Hermitian space defining $H_{a+m-\ell}$.
Then $p\cdot \bar{n}\in P'_{w}{U}_{a-\ell}^-$ is in $\Ad(\eta)G^{w_0}_{m^-}$ if and only if $p\cdot \bar{n}$ fixes the vector $\Ad(\eta)w_0$.
That is to say that both $p$ and $\bar{n}$ satisfy the following equations:
\begin{equation}\label{eq:nonvanish-pf}
ZE_2=(I_{a-\ell}-g)E_1,~
AE_1=(\iota(\hat{g})-I_{a-\ell})E_2,~
X'E_1=h\cdot Y'E_2.
\end{equation}
Since our integral domain is a set of $R^\eta_{\ell,\beta-1}$-right cosets, we identify the quotient set \eqref{eq:appendix-intersection-2} by choosing $h=I_{\Fm}$, $g\in Z_{a-\ell}(E_\nu)\bks\GL_{a-\ell}(E_\nu)$,
$$
Y=\begin{pmatrix}
0_{(\Fm-1)\times (a-\ell)} \\	y
\end{pmatrix} \text{ and }
\iota(\hat{Z})g^*= \begin{pmatrix}
z_1 & 0_{(a-\ell-1)\times (a-\ell-1)}\\ z_2& z_3 	
\end{pmatrix}.
$$
Due to \eqref{eq:nonvanish-pf} and the above choice, the vector $y$ in $Y$, and $z_i$ in $Z$ for $1\leq i\leq 3$ are determined by $X$ and $g$, respectively. Because of this, we write $Y_X$ and $Z_g$ for $Y$ and $Z$, respectively.

To separate  variables, we choose
\begin{equation}\label{eq:nonvanish-pf-ii}
f_\nu\begin{pmatrix}
I_{a-\ell} &&&&\\
0&I_\ell&&&\\
X'&x'_2&I_{\Fm}&&\\
x_1&x_3&x_2&I_\ell&\\
A&x'_1&X&0&I_{a-\ell}	
\end{pmatrix}=f_1(x_1,x_2,x_3)f_2(X,A)
\end{equation}
where $f_1$ and $f_2$ are smooth, compactly supported functions, and  the size of matrices $X$ and $x_i$ are indicated by the matrix in \eqref{eq:nonvanish-pf-ii}.
With the above choices, we are able to
evaluate more explicitly the function $\RJ_{s,\nu}(f_{\CW_{\tau_\nu}\otimes\sig'_\nu,s})(\mathrm{g})$ as defined in \eqref{Uaeta},
for $\mathrm{g}$ in the set \eqref{eq:appendix-intersection-2}.  We decompose $\Ad(\eta)\mathrm{g}=p\cdot\bar{n}$ as given in
\eqref{eq:appendix-p} and \eqref{eq:appendix-n}.
Let us conjugate $p\cdot \bar{n}$ by $\eps_\beta$.
Referring to Equation (3.6) in \cite{JZ14},
as elements in $H_{a+m}$
\begin{eqnarray} \label{eq:p-n}
\eps_\beta p\eps^{-1}_\beta&=&u(Y_X,Z_g)m(g)=\ppair{\begin{smallmatrix}
g&0&-Y_X&0&\iota(\hat{Z}_g)g^*\\
&I_{\ell}&0&0&0\\
&&I_{\Fm}&0&-Y'_Xg^*\\
&&&I_{\ell}&0\\
&&&&g^*	
\end{smallmatrix}}, \\
\eps_\beta \bar{n}\eps^{-1}_\beta&=&\ppair{\begin{smallmatrix}
I_{a-\ell} &&&&\\
0&I_\ell&&&\\
X'&0&I_{\Fm}&&\\
0&0&0&I_\ell&\\
A&0&X&0&I_{a-\ell}	
\end{smallmatrix}},\nonumber
\end{eqnarray}	
where
$$
u(Y_X,Z_g)=\ppair{\begin{smallmatrix}
I_{a-\ell}&0&-Y_X&0&\iota(\hat{Z}_g)\\
&I_{\ell}&0&0&0\\
&&I_{\Fm}&0&-Y'_X\\
&&&I_{\ell}&0\\
&&&&I_{a-\ell}	
\end{smallmatrix}}\text{ and }
m(g)=\ppair{\begin{smallmatrix}
g&&\\&I_{\Fm+2\ell}&\\&& g^*
\end{smallmatrix}}.
$$

By the definition of $\RJ_{s,\nu}(f_{\CW_{\tau_\nu}\otimes\sig'_\nu,s})(\mathrm{g})$
in \eqref{Uaeta}, using the above decomposition of $\mathrm{g}$ in the set \eqref{eq:appendix-intersection-2}, we have
\begin{align}
 &\RJ_{s,\nu}(f_{\CW_{\tau_\nu}\otimes\sig'_\nu,s})(\mathrm{g})\label{eq:Appendix-J-1}\\
=& \int_{U_{a,\eta}^-(F_\nu)}f_{\CW_{\tau_\nu}\otimes\sig'_\nu,s}(n u(Y_X,Z_g) m(g)\cdot \eps_\beta \bar{n}\eps^{-1}_\beta\cdot \epsilon_\beta \eta )
\psi_{\Fm+a+\ell,n-\ell}(n) \ud n.\nonumber
\end{align}
Recall that  the element in $U^-_{a,\eta}$ (see \eqref{eq:U-j-eta}) is of form
$$
n(x_1,x_2,x_3):= \ppair{ \begin{smallmatrix}
I_{a-\ell} &&&&\\
0&I_\ell&&&\\
0&x'_2&I_{\Fm}&&\\
x_1&x_3&x_2&I_\ell&\\
0&x'_1&0&0&I_{a-\ell}	
\end{smallmatrix} }.
$$
By simple manipulations, one has
\begin{align*}
  & n(x_1,x_2,x_3)u(Y_X,Z_g)m(g)\\
 =&m(g)\cdot u_a(-\iota(\hat{B}))\cdot n(x_1g,x_2-x_1Y_X,x_3-Bx'_1)
 \end{align*}
 where $B=x_1\iota(\hat{Z}_g)-x_2Y'_X$ and
 $u_a(-\iota(\hat{B}))=\ppair{\begin{smallmatrix}
  I_{a-\ell}&-\iota(\hat{B})\\0&I_{\ell}	
  \end{smallmatrix}}$ is considered as an element in $\GL_a$ as the subgroup of the Levi subgroup of $P_{\hat{a}}$.
Continuing with \eqref{eq:Appendix-J-1},
by the definition of $f_{\CW_{\tau_\nu}\otimes\sig'_\nu,s}$ in \eqref{eq:appendix-phi} and $f_\nu$  in \eqref{eq:nonvanish-pf-ii},
after changing variables we have
\begin{align}\label{eq:nonvanish-pf-J}
&\RJ_{s,\nu}(f_{\CW_{\tau_\nu}\otimes\sig'_\nu,s})(g)=
|\det g|^{s+\rho_a}W_{\tau_\nu}^\kappa(\begin{pmatrix}
g& \\ &I_{\ell}	
\end{pmatrix})f_2(X,A) v_{\sig}\\
&\times\int_{U^-_{a,\eta}}f_1(x_1,x_2,x_3)\psi_{F}((x_1g^{-1})_{\ell,a-\ell})
\psi^{-1}_{Z_a,\kappa}(u_a(-\iota(\hat{B}_1)))
|\det g|^{-\ell}\ud x_i, \nonumber
\end{align}
where the matrices $x_i$ define the element $n(x_1,x_2,x_3)$ in $U^-_{a,\eta}$
and $B_1=x_1g^{-1}(\iota(\hat{Z}_g)-Y_XY'_X)-x_2Y'_X$.
Although the term $B_1$ is complicated, after we choose suitable $X$ and $A$ defining $\bar{n}(X,A)$, the matrices $Y_X$ and $Z_g$ are zero, so is $B_1$.
Since the function $f_1(x_1,x_2,x_3)$  is chosen to be a smooth and compactly supported function and is independent of complex variable $s$,
the integral $\RJ_{s,\nu}$ is well defined over the whole complex plane for such choice of the section $f_{\CW_{\tau_\nu}\otimes\sig'_\nu,s}$, and so is the local zeta integral \eqref{eq:appendix-CZ}.

Finally, by plugging the formula \eqref{eq:nonvanish-pf-J} into \eqref{eq:appendix-CZ}, we obtain that $\CZ_\nu(,\cdot)$ equals
\begin{align}
&\int_{X,A}\int_{g}|\det g|^{s+\rho_a}W_{\tau_\nu}^\kappa(\begin{pmatrix}
g& \\ &I_{\ell}	
\end{pmatrix})f_2(X,A)
\Fb_\nu(\pi(\eta^{-1} p(g,X)\bar{n}(X,A) \eta)v_\pi,v_\sig)
\label{eq:appendix-Z} \\
&\int_{U^-_{a,\eta}}|\det g|^{-\ell}f_1(x_1,x_2,x_3)\psi_{F}((x_1g^{-1})_{\ell,a-\ell})
\psi^{-1}_{Z_a,\kappa}(u_a(-\iota(\hat{B}_1)))\ud x_i
\ud g\ud X\ud A. \nonumber
\end{align}
The notation in the formula is explained in order.
The integration $\int_g$ is over $Z_{a-\ell}(E_\nu)\bks\{g\in \GL_{a-\ell}(E_\nu)\colon \iota(\hat{g})E_2=AE_1+E_2 \}$, with constraints given
in \eqref{eq:nonvanish-pf}.
Rewrite $\bar{n}$ and $p$ to be $\bar{n}(X,A)$ and $p(g,X)$ respectively to
indicate their dependence on variables $X$, $A$ and $g$, following \eqref{eq:p-n}.
The integration $\int_{X,A}$ is over the set ${U}_{a-\ell}^-$ with  $AE_1\ne -E_2$, due to $AE_1=(\iota(\hat{g})-I_a)E_2$ in \eqref{eq:nonvanish-pf} and $\det(g)\ne 0$.
Indeed, because $AE_1+E_2=\iota(\hat{g})E_2$, if $AE_1=-E_2$, then $\iota(\hat{g})E_2=0_{a-\ell}$, which implies  $\det(g)=0$.

We are going to finish the proof based on the above expression for the local zeta integral $\CZ_\nu(s,\cdot)$.
Suppose that $\CZ_\nu(s,\cdot)$ is identically zero for all choice of data $f_1$ and $f_2$ at the given $s=s_0$.
We vary the function $f_2(X,A)$ first and
consider the rest of the integral   as a continuous function of $X$ and $A$.
Since the integral over ${U}_{a-\ell}^-$ is identically zero,  the remaining integration in the variable $\bar{n}$ as given in \eqref{eq:appendix-n}
is identically zero, that is,
\begin{align}\label{eq:nonvanish-pf-XA}
&\int_g|\det g|^{s+\rho_a-\ell}W_{\tau_\nu}^\kappa(\begin{pmatrix}
g& \\ &I_{\ell}	
\end{pmatrix})\Fb_\nu(\pi(\eta^{-1}p\bar{n}\eta)v_\pi,v_\sig)\\
&\times\int_{U^-_{a,\eta}}f_1(x_1,x_2,x_3)\psi_{F}((x_1g^{-1})_{\ell,a-\ell})
\psi^{-1}_{Z_a,\kappa}(u_a(-\iota(\hat{B}_1)))
\ud x_i
\ud g\equiv 0. \nonumber
\end{align}
Especially the integral on the left hand side of \eqref{eq:nonvanish-pf-XA} is identically zero at $\bar{n}=I_{2(a-\ell)+\Fm}$,
equivalently, at $X=0_{(a-\ell)\times \Fm}$ and $A=0_{(a-\ell)\times(a-\ell)}$.
Because $M=I_\Fm$ and $X'E_1=Y'E_2$ in \eqref{eq:nonvanish-pf}, we must have that $Y_X=0_{(a-\ell)\times \Fm}$ due to $X=0_{(a-\ell)\times \Fm}$, and similarly $Z_g=0_{(a-\ell)\times (a-\ell)}$.
It follows that   $B_1=0_{\ell\times (a-\ell)}$ and the character $\psi^{-1}_{Z_a,\kappa}(u_a(-\iota(\hat{B}_1)))$ disappears.
As $A=0_{(a-\ell)\times(a-\ell)}$ and $AE_1=(\iota(\hat{g})-I_a)E_2$ in \eqref{eq:nonvanish-pf}, $g$ must belong to the standard mirabolic subgroup
of $\GL_{a-\ell}(E_\nu)$, that is, $E^t_1g=E^t_1$.
Since the integration domain of $g$ is modulo $Z_{a-\ell}(E_\nu)$ and $E^t_1g=E^t_1$,
the integral $\int_g$ is over $Z_{a-\ell-1}(E_\nu)\bks \GL_{a-\ell-1}(E_\nu)$.
Since $g$ is in the standard mirabolic subgroup of $\GL_{a-\ell}(E_\nu)$, $g^{-1}$ stabilizes the character $\psi_{F}((x_1)_{\ell,a-\ell})$ of  $U^-_{a,\eta}$,
that is, $(x_1 g^{-1})_{\ell,a-\ell}=(x_1)_{\ell,a-\ell}$.

Furthermore, one may choose a suitable smooth, compactly supported function $f_1$ such that
\begin{equation}\label{eq:appendix-f1}
\int_{U^-_{a,\eta}}f_1(x_1,x_2,x_3)\psi_{F}((x_1)_{\ell,a-\ell})\ud x_1\ud x_2\ud x_3=1.
\end{equation}
Plugging \eqref{eq:appendix-f1} into \eqref{eq:nonvanish-pf-XA}, we have
\begin{equation}\label{eq:appendix-W-F}
\int_{g}|\det g|^{s+\rho_a-\ell}W_{\tau_\nu}^\kappa(
\ppair{\begin{smallmatrix}
g& \\ &I_{\ell+1}	
\end{smallmatrix}})\Fb_\nu(\pi( \ppair{\begin{smallmatrix}
g&&\\ &I_{\Fm+1}&\\ &&g^*	
\end{smallmatrix}})v_\pi,v_\sig)\ud g\equiv 0,
\end{equation}
where $\int_g$ is over $Z_{a-\ell-1}(E_\nu)\bks \GL_{a-\ell-1}(E_\nu)$.

It is clear that the left hand side of \eqref{eq:appendix-W-F} is exactly
the same with (4.7) in \cite{S-I}, up to a nonzero constant.
We note that the reduction to this type of the integrals is a key step in the proof of such non-vanishing of the local Rankin-Selberg integrals.
See \cite{S93} for instance.
Applying the same inductive argument in Sections 6 and 7 of \cite{S93} and the Dixmier-Marlliavin Lemma (\cite{D-M}), we obtain that
$$
W_{\tau_\nu}^\kappa(I_a)\Fb_\nu(v_\pi,v_\sig)=0.
$$
However, this contradicts with \eqref{eq:nonvanish-pf-i}. Therefore, there must exist a choice of data such that $\CZ_\nu(s,\cdot)$
is not zero at the given $s=s_0$.
This completes the proof of Proposition \ref{A1} when $H_{a+m}$ is not isomorphic to $\GL_{2a+\Fm}(F_\nu)$.

If $H_{a+m}(F_\nu)$ is isomorphic to $\GL_{2a+\Fm}(F_\nu)$, due to the splitness of the group,
the matrix calculation such as \eqref{eq:appendix-p} and \eqref{eq:appendix-n} is slight different. See \cite{Zh} for instance.
However, the proof for this case is completely same. Hence we omit the details here.

\begin{rmk}
As in Theorem 4.1 of \cite{S-I}, one may construct a special section such that the non-archimedean local zeta integral is a constant independent of $s$. We omit the details here.
\end{rmk}

\section{On Local Intertwining Operators}\label{B}


Throughout this appendix, let $F$ be a local field of characteristic 0.
Recall that $H^*_{m}$ is a quasi-split classical  group  defined over $F$ and $H_{m}$ is a pure inner $F$-form of $H^*_{m}$.
Let $\phi$ be a local $L$-parameter of $H^*_{m}(F)$
and $\wt{\Pi}_{\phi}(H_{m})$ the associated $L$-packet.
Assume that  $\phi$ is  generic,
that is, $\wt{\Pi}_\phi(H^*_{m})$ contains a generic member, following \cite{MW12}.
Up to a conjugation, assume that $\phi$ is of form as in Section \ref{sec-gap}
\begin{equation}\label{eq:L-parm}
\phi
=(\phi_1\otimes|\cdot|^{\beta_1}\oplus\phi_1^\vee\otimes|\cdot|^{-\beta_1})\oplus\cdots\oplus
(\phi_t\otimes|\cdot|^{\beta_t}\oplus\phi_t^\vee\otimes|\cdot|^{-\beta_t})\oplus\phi_0,
\end{equation}
where $\beta_1>\beta_2>\cdots>\beta_t>0$, all $\phi_i\colon \CL_F\to {}^L\!G_{E/F}(n_i)$ for $1\leq i\leq t$ and
$\phi_0\colon \CL_F\to {}^L\!H_{n_0}$ are tempered local $L$-parameters.
Then, the $L$-packet $\wt{\Pi}_\phi(H_{m})$ is defined to be the set of the Langlands quotients of the induced representations
\begin{equation}\label{eq:varphi-induce}
\Ind^{H_{m}(F)}_{P(F)}\tau(\phi_1)|\det|^{\beta_1}\otimes\cdots\otimes \tau(\phi_t)|\det|^{\beta_t}\otimes\sig_0
\end{equation}
where the parabolic subgroup $P$ has the Levi subgroup isomorphic to $G_{E/F}(n_1)\times\cdots\times G_{E/F}(n_t)\times H_{n_0}$,
$\sig_0$ runs though over the tempered $L$-packet $\wt{\Pi}_{\phi_0}(H_{n_0})$,
and $\tau(\phi_i)$ is the irreducible admissible unitary generic representation of $G_{E/F}(n_i)(F)$ given by the local Langlands correspondence
for the general linear groups.

\begin{prop} \label{prop:MW-generic-packet}
If $\phi$ is a generic $L$-parameter of $H^*_{m}$ as given in \eqref{eq:L-parm}, then all representations in
$\wt{\Pi}_\phi(H_{m})$ can be written as irreducible standard modules,
that is, the induced representations displayed in \eqref{eq:varphi-induce} are irreducible for all pure inner forms $H_{m}$ and $\sig_0\in \wt{\Pi}_{\phi}(H_{n_0})$.
\end{prop}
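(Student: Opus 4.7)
The plan is to reduce the proposition to the Casselman--Shahidi standard module conjecture for generic Langlands data. By the Langlands classification, each member of $\wt{\Pi}_\phi(H_m)$ is realized as the unique irreducible Langlands quotient of the standard module in \eqref{eq:varphi-induce}, so the irreducibility of \eqref{eq:varphi-induce} is equivalent to this quotient map being an isomorphism. Following \cite{MW12} as recalled in Section \ref{sec-gap}, the genericity of $\phi$ means exactly that its tempered component $\phi_0$ admits a generic member $\sig_0^\circ$ in its tempered $L$-packet $\wt{\Pi}_{\phi_0}(H_{n_0}^*)$, and that the non-tempered factors $\tau(\phi_i)|\det|^{\beta_i}$ are irreducible generic representations of $G_{E/F}(n_i)(F)$.

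First I would treat the quasi-split form $H_m = H_m^*$ together with the distinguished generic member $\sig_0^\circ$. In this case \eqref{eq:varphi-induce} is the standard module attached to a Langlands datum whose unique irreducible quotient is generic, so its irreducibility is exactly the statement of the standard module conjecture in this situation. For $F$ archimedean this is classical work of Vogan; for $F$ non-archimedean it is established in this generality for classical groups by Mui\'c and Heiermann--Opdam. Next I would extend the conclusion to arbitrary $\sig_0 \in \wt{\Pi}_{\phi_0}(H_{n_0})$ and to every pure inner form $H_m$. The key point here is the endoscopic classification of \cite{A13}, \cite{Mk15}, and \cite{KMSW}, which equips the family $\Ind_P^{H_m}(\tau(\phi_1)|\det|^{s_1}\otimes\cdots\otimes\tau(\phi_t)|\det|^{s_t}\otimes\sig_0)$ with Shahidi-normalized intertwining operators whose reducibility locus is governed by the Harish-Chandra $\mu$-function attached to $\phi_0$, depending only on the tempered $L$-parameter $\phi_0$ and not on the particular member of its $L$-packet or on the pure inner form.

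Combining these two ingredients, the absence of poles and zeros of the relevant $\mu$-function at $(s_1,\dots,s_t) = (\beta_1,\dots,\beta_t)$, which is established at the pair $(H_m^*, \sig_0^\circ)$ by the standard module conjecture, transfers to every $(H_m, \sig_0)$, and this yields the irreducibility of \eqref{eq:varphi-induce} in all cases. The main obstacle will be verifying this independence of the $\mu$-function rigorously: one must check the compatibility of the Langlands--Shahidi normalized intertwining operators under endoscopic transfer to pure inner forms and across members of a tempered $L$-packet. For classical groups this is carried out using the triviality of the $R$-group for tempered generic $L$-packets, together with the M\oe glin--Tadi\'c formula expressing the Plancherel measure purely in terms of the local $\gamma$-factors of pairs of general linear groups attached to $\phi_0$ and $\phi_i$. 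The inputs required are precisely those provided by \cite{MW12} in the orthogonal case and by \cite{BP12}, \cite{BP} in the unitary case, which is why the argument runs uniformly across the classical groups considered in this paper.
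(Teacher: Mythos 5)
The paper's own ``proof'' of this proposition is nothing more than a citation: M\oe glin--Waldspurger \cite{MW12} and Heiermann \cite{H15} for the non-archimedean case, and Adams--Barbasch--Vogan \cite{ABV} for the archimedean case. Your proposal instead sketches an actual argument, so it is a genuinely different route from the paper's exposition, although the non-archimedean half of your sketch is essentially a reconstruction of the strategy of Heiermann's note \cite{H15}: prove irreducibility for the generic quasi-split member using the standard module conjecture (Mui\'c, Heiermann--Opdam), then observe that irreducibility of a standard module whose exponents are strictly in the open positive chamber is detected by the non-vanishing of the long intertwining operator, hence by the $\mu$-function, and that the $\mu$-function is an invariant of the tempered $L$-parameter rather than of the particular member of the $L$-packet or the inner form. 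That is the right idea, and what your route ``buys'' over the paper's bare citation is an explanation of \emph{why} the result holds.

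There are, however, two concrete gaps. First, the citations you give for the transfer step are wrong: \cite{MW12}, \cite{BP12}, and \cite{BP} address the local Gan--Gross--Prasad conjecture, not the equality of Plancherel $\mu$-functions across a tempered $L$-packet and its pure inner forms. That equality is a separate input (it follows from the endoscopic character identities in \cite{A13}, \cite{Mk15}, \cite{KMSW}, and is made explicit in M\oe glin's work and in \cite{H15}), and without naming it correctly your transfer step is unsupported. Second, your archimedean argument is incomplete: Vogan's standard module conjecture covers the generic quasi-split member, but your proposed transfer is a $p$-adic-flavored $\mu$-function argument and you do not verify that it applies archimedeanly. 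The paper instead invokes \cite[Theorem 1.24]{ABV}, which handles all packet members and pure inner forms directly via the geometric Langlands classification; you would need either that reference or an honest archimedean analogue of your transfer step.

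Two smaller imprecisions worth fixing: the reducibility is governed by the $\mu$-functions of the pairs $(\tau(\phi_i),\sigma_0)$ and $(\tau(\phi_i),\tau(\phi_j))$, not by a $\mu$-function attached to $\phi_0$ alone, and the $R$-group plays no role here --- since the exponents $\beta_1>\cdots>\beta_t>0$ are distinct and strictly positive, the stabilizer of the continuous parameter in the relevant Weyl group is already trivial, so the criterion reduces cleanly to $\mu\neq 0$ without any $R$-group input.
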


\begin{proof}
If $F$ is non-archimedean, this proposition is proved by M\oe glin and Waldspurger in \cite{MW12} for orthogonal groups, by Gan and Ichino in \cite[Proposition 9.1]{GI},
and by Heiermann in \cite{H15} for general reductive groups.
If $F$ is archimedean, it is a special case of Theorem 1.24 in the book by Adams, Barbasch and Vogan (\cite{ABV}).
More details can be found in Chapters 14 and 15 of \cite{ABV}.
\end{proof}

Proposition \ref{prop:MW-generic-packet} serves as a base for us to prove Theorem \ref{nlio}.
Recall that the normalized local intertwining operator $\CN(\omega_0, \tau\otimes\sig,s)_\nu$ takes
sections in the induced representation
\begin{equation}\label{ind}
\Ind^{H_{a+m}(F_\nu)}_{P_{\hat{a}}(F_\nu)}(\tau_\nu|\det|^s\otimes\sigma_\nu)
\end{equation}
to sections in the induced representation
\begin{equation}
\Ind^{H_{a+m}(F_\nu)}_{P_{\hat{a}}(F_\nu)}(\tau^*_\nu|\det|^{-s}\otimes\sigma_\nu),
\end{equation}
where $\tau_\nu$ is the local $\nu$-component of the irreducible isobaric automorphic representation $\tau$ as given in \eqref{tau8}, and
$\sigma_\nu$ is the local $\nu$-component of the irreducible cuspidal automorphic representation $\sigma$ in $\CA_\cusp(H_m)$ with an
$H_m$-relevant, generic global Arthur parameter $\phi_\sigma$ as in Theorem \ref{nlio}. It is clear that Theorem \ref{nlio}
follows from the following theorem.

\begin{thm}
Let $\phi^+$ be a local $\nu$-component of an $H_{m}$-relevant, generic global Arthur parameter of $H_{m}^*$.
If $\tau$ is an irreducible admissible unitary generic self-dual representation of $G_{E/F}(a)(F)$ and $\sig$ is
an irreducible representation in the generic local $L$-packet $\wt{\Pi}_{\phi^+}(H_{m})$, then
the normalized local intertwining operator $\CN(\omega_0,\tau\otimes\sig,s)$ is holomorphic and nonzero for $\Re(s)\geq \frac{1}{2}$.
\end{thm}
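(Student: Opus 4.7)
The plan is to reduce the analytic assertion about $\CN(\omega_0,\phi_{\tau\otimes\sig},s)_\nu$ to known analytic properties of rank-one normalized intertwining operators between parabolic inductions from tempered generic data, where the key input is a theorem of Shahidi. The first step is to use Proposition~\ref{prop:MW-generic-packet} to write $\sig$ as an irreducible full induction
\begin{equation*}
\sig \cong \Ind^{H_m(F)}_{Q(F)}\bigl(\sig'_1 |\det|^{\beta_1}\otimes \cdots \otimes \sig'_t |\det|^{\beta_t}\otimes \sig_0\bigr),
\end{equation*}
with $\beta_1 > \cdots > \beta_t > 0$, $\sig_0$ tempered generic, and each $\sig'_i$ tempered generic on a general linear group. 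Similarly, by the Tadic classification of irreducible generic unitary representations of general linear groups, $\tau$ is an irreducible full induction from tempered generic data twisted by exponents $|\alpha_i|<\frac{1}{2}$, with self-duality imposing the usual symmetry of the inducing datum.

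The second step is an induction-in-stages argument: the induced representation $\Ind^{H_{a+m}(F)}_{P_{\hat a}(F)}(\tau|\det|^s\otimes\sig)$ is realized as a single parabolic induction from tempered generic data of a smaller parabolic $P^\flat\subset H_{a+m}$. The standard intertwining operator $\CM(\omega_0,\phi_{\tau\otimes\sig},s)$ then factors as a composition of the standard operators attached to the reduced positive roots sent negative by $\omega_0$. By the multiplicative property of the Shahidi normalizing factor $\beta_\nu(s,\tau,\sig,\psi_F;\rho)$ defined in \eqref{lnf}, the normalized operator $\CN(\omega_0,\phi_{\tau\otimes\sig},s)_\nu$ decomposes as a product of rank-one normalized intertwining operators, each attached to one such root.

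The third step is to invoke Shahidi's theorem that a rank-one normalized intertwining operator between parabolic inductions from tempered generic data is holomorphic and nonzero on the closed positive Weyl chamber. In our setting each rank-one evaluation parameter takes the form $s+\beta_i-\beta_j$, or $s+\alpha_k+\beta_i$, or $2s+\alpha_k+\alpha_\ell$ (the last one arising from the interaction of two factors of $\tau$ across $\omega_0$, which forces the self-dual representation $\rho$ of the Asai/symmetric/exterior square type). For $\Re(s)\geq\frac{1}{2}$, the bounds $|\alpha_k|<\frac{1}{2}$ and $\beta_i>0$ ensure that each rank-one parameter has non-negative real part, with equality only in marginal configurations.

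The main obstacle is the boundary configuration at $\Re(s)=\frac{1}{2}$, where some rank-one parameter may land exactly on the Weyl chamber wall; there both the unnormalized operator could develop a pole and the Shahidi normalizing $L$-ratio could vanish, and one must show that these cancel exactly. The cancellation rests on the parity assumptions: since the generic global Arthur parameter $\phi_\sigma$ of $\sig$ and the parameter $\phi_\tau$ of $\tau$ have opposite parity under $\rho$ (per the structure dictated by the Gan-Gross-Prasad pairing in Section~\ref{sec-ccg}), the local $L$-factors $L_\nu(s,\tau\times\sig)$ and $L_\nu(2s,\tau,\rho)$ appearing in $\beta_\nu(s,\tau,\sig,\psi_F;\rho)$ behave compatibly with the potential poles of the unnormalized operator. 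Making this compensation precise, and handling archimedean versus non-archimedean local fields uniformly (the latter via the combinatorial analysis of Moeglin-Waldspurger \cite{MW12} and Heiermann \cite{H15}, the former via the Adams-Barbasch-Vogan framework \cite{ABV}), is the technical heart of the argument.
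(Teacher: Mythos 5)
Your overall strategy matches the paper's: realize both $\tau$ and $\sigma$ as irreducible full inductions from tempered data via Proposition~\ref{prop:MW-generic-packet} and the Vogan/Tadic classification, then factor $\CN(\omega_0,\cdot,s)$ into rank-one normalized operators by induction in stages, and use the exponent bounds to control each rank-one parameter. However, you misidentify where the difficulty actually lies, in two ways.

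First, you posit a cancellation at the boundary $\Re(s)=\tfrac12$ between poles of the unnormalized operator and zeros of the Shahidi $L$-ratio, and propose to resolve it through a parity argument. This is not how the paper proceeds and is not needed. The classifications give \emph{strict} inequalities $0<\alpha_k<\tfrac12$ and $0<\beta_i<\tfrac12$, so for $\Re(s)\geq\tfrac12$ every rank-one parameter lands strictly inside the region of holomorphy and nonvanishing already furnished by the literature: $s\pm\alpha_j\pm\beta_i$ has real part $>-\tfrac12>-1$ (covered by M\oe glin--Waldspurger \cite{MW89} for $\GL\times\GL$), $2s\pm\alpha_j\pm\alpha_i$ has real part $>0$, and $s\pm\alpha_j$ for the $\GL\times$classical block has real part $>0$. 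There is no boundary configuration and no pole/zero cancellation; the parity considerations you invoke belong to the global pole analysis in Section~\ref{sec-nribp}, not to this local statement.

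Second, and more seriously, you propose to invoke ``Shahidi's theorem for tempered generic data'' for the mixed $\GL\times H_{n_0}$ rank-one operator. But $\sigma_0$ lives on a pure inner form of $H_{n_0}^*$ and is in general \emph{not} generic --- that is the entire reason the theorem is nontrivial beyond the quasisplit generic case already handled by Theorem~11.1 of \cite{CKPSS04}. The paper's resolution is precisely to sidestep genericity here: it identifies $L_\nu(s,\tau_j\times\sigma_0)$ with $L_\nu(s,\tau_j\times\sigma_0^*)$ for the generic member $\sigma_0^*$ of the same tempered $L$-packet (so that the analytic results of \cite{K05} on generic $L$-factors apply), and separately invokes Waldspurger \cite{W03} (non-archimedean) and Borel--Wallach \cite{BW} (archimedean) for holomorphy and nonvanishing of the \emph{unnormalized} operator $\CM(w'',\tau_j\otimes\sigma_0,\cdot)$ for arbitrary tempered data on $\Re>0$, then multiplies the two statements. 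Your proposal does not address the non-genericity of $\sigma_0$, which is the genuine technical content of the argument. You also list a rank-one parameter of the form $s+\beta_i-\beta_j$, which does not arise since $\omega_0$ fixes the $\sigma$-blocks, and you omit the $s\pm\alpha_j$ type coming from $\tau_j\times\sigma_0$, which is exactly where the subtlety just described occurs.
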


\begin{proof}
First of all, the local $L$-packet $\wt{\Pi}_{\phi^+}(H_{m})$ has a generic member $\sigma^\circ$ (\cite{A13} and \cite{Mk15})
 when $H_m=H_m^*$ is quasisplit.
If $\sig$ is generic, the proposition follows from Theorem 11.1 in \cite{CKPSS04}.

Assume now that $\sig$ is not generic.
For such a generic local $L$-packet $\wt{\Pi}_{\phi^+}(H_{m})$, by Proposition \ref{prop:MW-generic-packet}, the standard modules as displayed in \eqref{eq:varphi-induce} are irreducible. This is the key point for us to apply the argument in \cite{CKPSS04} in the proof of this proposition.

According to the structure of the generic unitary dual of the general linear groups, given by Vogan in \cite{V86} for the archimedean case and  by Tadic in
\cite{T86} for the non-archimedean case, any generic member $\sig^\circ$ in $\wt{\Pi}_{\phi^+}(H^*_{m})$ is isomorphic to the irreducible generic unitary
induced representation
 $$
 \Ind^{H^*_{m}(F)}_{P(F)}\tau(\phi_1)|\det|^{\beta_1}\otimes\cdots\otimes \tau(\phi_t)|\det|^{\beta_t}\otimes\sig_0
 $$
 where
\begin{equation}\label{ineq:expo}
\frac{1}{2}> \beta_1>\beta_2>\cdots>\beta_t> 0,
\end{equation}
 and all $\tau(\phi_i)$ and $\sig_0$ are irreducible, unitary, generic, and tempered.
By Proposition \ref{prop:MW-generic-packet}, each $\sig$ in $\wt{\Pi}_{\phi^+}(H_m)$ is of form \eqref{eq:varphi-induce} with the exponents satisfying \eqref{ineq:expo}.

Again, by the generic unitary dual of the general linear groups,
$\tau$ is isomorphic to the irreducible induced representation
\begin{equation}\label{eq:induc-GL}
\Ind^{\GL_a(E)}_{P'(E)}\tau_{1}|\det|^{\alpha_1}\otimes\cdots\otimes \tau_{d}|\det|^{\alpha_d}\otimes \tau_{0}\otimes
\tau^*_{d}|\det|^{-\alpha_d}\otimes\cdots\otimes \tau^*_{1}|\det|^{-\alpha_1}
\end{equation}
where all $\tau_i$ are unitary, generic and tempered, and
$$
\frac{1}{2}>\alpha_1>\cdots>\alpha_d>0.
$$

Now we replace the representations $\tau_\nu$ and $\sig_\nu$ in \eqref{ind} by their corresponding realizations in \eqref{eq:induc-GL} and \eqref{eq:varphi-induce}, respectively.
By the transitivity of parabolic induction, the normalized local intertwining operator $\CN(\omega_0, \tau_\nu\otimes\sig_\nu,s)$ can be expressed as a composition of the
local intertwining operators of rank one, which are of form
\begin{align}
&\CN(w_{j,i},\tau_j\otimes\tau(\phi_i),s\pm \alpha_j\pm \beta_i) \label{eq:appendx-N1}\\
&\CN(w'_{j,i},\tau_j\otimes\tau_i,2s\pm \alpha_{j}\pm \alpha_i) \label{eq:appendx-N2}\\
&\CN(w''_{j},\tau(\phi_i)\otimes\sig_0,s\pm \alpha_j), \label{eq:appendx-N3}
\end{align}
where $w_{j,i}$, $w'_{j,i}$, and $w''_{j}$ are the corresponding Weyl elements.
We deal with these three types of the local intertwining operators separately.

The first two types \eqref{eq:appendx-N1} and \eqref{eq:appendx-N2} were studied by M\oe glin and Waldspurger in
\cite{MW89}. For any unitary tempered $\tau$ and $\tau'$ of general linear groups,
the normalized intertwining operator $\CN(w,\tau\otimes\tau',s)$ is holomorphic and nonzero for $\Re(s)>-1$.
Because of the bounds for the exponents, it follows that
$$
\CN(w_{j,i},\tau_j\otimes\tau(\phi_i),s\pm \alpha_j\pm \beta_i)
\text{ and }
\CN(w'_{j,i},\tau_j\otimes\tau_i,2s\pm \alpha_{j}\pm \alpha_i)$$
are holomorphic and nonzero for $\Re(s)\geq 0$.

For the remaining type \eqref{eq:appendx-N3}, by the bound $0<\alpha_j<\frac{1}{2}$,
it is sufficient to show that the normalized intertwining  operator
$\CN(w'',\tau\otimes\sig_0,s)$ is holomorphic and nonzero for $\Re(s)>0$, when $\tau$ and $\sig_0$ are unitary tempered.
Since $\phi_0$ is a generic parameter, there is a generic representation $\sig^\circ_0$ in the tempered local $L$-packet
$\wt{\Pi}_{\phi_0}(H_{n_0})$. Hence we have the identity of local $L$-factors:
$$
L(s,\tau\times\sig_0)=L(s,\tau\times\sig^\circ_0).
$$
Referring to \cite{K05}, $L(s,\tau\times\sig^\circ_0)$ and $L(s,\tau,\rho)$ are holomorphic and nonzero for $\Re(s)>0$,
and so is the normalizing factor.
In addition, following Proposition IV.2.1 in \cite{W03} for the non-archimedean case and Lemma 4.4 in \cite{BW} for the archimedean case,
the non-normalized local intertwining operator $\CM(w'',\tau\otimes\sig_0,s)$ for tempered data is holomorphic and nonzero for $\Re(s)>0$.
It follows that both $\CM(w'',\tau_j\otimes\sig_0,s-\alpha_j)$ and $L(s-\alpha_j,\tau_j\times\sig_0)$ are holomorphic and nonzero
for $\Re(s)\geq \frac{1}{2}$ because $0<\alpha_j<\frac{1}{2}$.
Therefore, the normalized local intertwining operator $\CN(w''_{j},\tau(\phi_i)\otimes\sig_0,s\pm \alpha_j)$
is holomorphic and nonzero for $\Re(s)\geq \frac{1}{2}$.
Putting together the results for all three types, we complete the proof of this proposition.
\end{proof}

\end{document}